\newcommand{\arxiv}[1]{\href{http://arxiv.org/abs/#1}{arXiv:#1}} 
\newcommand{\bbN}{{\mathbb{N}}}
\newcommand{\bbR}{{\mathbb{R}}}
\newcommand{\bbZ}{{\mathbb{Z}}}
\newcommand{\bbC}{{\mathbb{C}}}
\newcommand{\cA}{{\mathcal A}}
\newcommand{\cB}{{\mathcal B}}
\newcommand{\cD}{{\mathcal D}}
\newcommand{\cE}{{\mathcal E}}
\newcommand{\cG}{{\mathcal G}}
\newcommand{\cH}{{\mathcal H}}
\newcommand{\cK}{{\mathcal K}}
\newcommand{\cM}{{\mathcal M}}
\newcommand{\cN}{{\mathcal N}}
\newcommand{\cO}{{\mathcal O}}
\newcommand{\cS}{{\mathcal S}}
\newcommand{\cV}{{\mathcal V}}
\newcommand{\cW}{{\mathcal W}}
\newcommand{\cX}{{\mathcal X}}
\newcommand{\no}{\notag}
\newcommand{\lb}{\label}
\newcommand{\ol}{\overline}
\newcommand{\wti}{\widetilde}
\newcommand{\Oh}{O}
\newcommand{\oh}{o}
\newcommand{\f}{\frac}
\newcommand{\loc}{\text{\rm{loc}}}
\newcommand{\bi}{\bibitem}
\newcommand{\hatt}{\widehat}
\newcommand{\p}{\prime}
\renewcommand{\Re}{\mathop\mathrm{Re}}
\renewcommand{\Im}{\mathop\mathrm{Im}}
\renewcommand{\ge}{\geqslant}
\renewcommand{\le}{\leqslant}
\DeclareMathOperator{\dom}{dom}
\DeclareMathOperator{\ran}{ran}
\DeclareMathOperator*{\slim}{s-lim}
\DeclareMathOperator{\AC}{AC}
\DeclareMathOperator{\SL}{SL}
\newcommand{\dott}{\,\cdot\,}
\newcommand{\abs}[1]{\lvert#1\rvert}
\newcommand{\norm}[1]{\left\Vert#1\right\Vert}
\newcommand{\Om}{\Omega}
\newcommand{\dOm}{{\partial\Omega}}
\newcommand{\si}{\sigma}
\newcommand{\ga}{\gamma}
\newcommand{\LdOm}{L^2(\dOm;d^{n-1} \omega)}
\allowdisplaybreaks \numberwithin{equation}{section}
\newtheorem{theorem}{Theorem}[section]
\newtheorem{proposition}[theorem]{Proposition}
\newtheorem{lemma}[theorem]{Lemma}
\newtheorem{corollary}[theorem]{Corollary}
\newtheorem{definition}[theorem]{Definition}
\newtheorem{hypothesis}[theorem]{Hypothesis}
\newtheorem{example}[theorem]{Example}
\theoremstyle{remark}
\newtheorem{remark}[theorem]{Remark}
\begin{document}

\title[On the Krein--von Neumann Extension]{A Survey on the Krein--von Neumann Extension, the corresponding Abstract Buckling Problem, and Weyl-Type Spectral Asymptotics for Perturbed Krein Laplacians \\ in Nonsmooth Domains}

\author[M.\ S.\ Ashbaugh]{Mark S.\ Ashbaugh}
\address{Department of Mathematics,
University of Missouri, Columbia, MO 65211, USA}
\email{ashbaughm@missouri.edu}
\urladdr{http://www.math.missouri.edu/personnel/faculty/ashbaughm.html}

\author[F.\ Gesztesy]{Fritz Gesztesy}
\address{Department of Mathematics,
University of Missouri, Columbia, MO 65211, USA}
\email{gesztesyf@missouri.edu}
\urladdr{http://www.math.missouri.edu/personnel/faculty/gesztesyf.html}

\author[M.\ Mitrea]{Marius Mitrea}
\address{Department of Mathematics, University of
Missouri, Columbia, MO 65211, USA}
\email{mitream@missouri.edu}
\urladdr{http://www.math.missouri.edu/personnel/faculty/mitream.html}

\author[R.\ Shterenberg]{Roman Shterenberg}
\address{Department of Mathematics, University of Alabama at Birmingham, 
Birmingham, AL 35294, USA}
\email{shterenb@math.uab.edu} 

\author[G.\ Teschl]{Gerald Teschl}
\address{Faculty of Mathematics\\
Nordbergstrasse 15\\ 1090 Wien\\ Austria\\ and International Erwin Schr\"odinger
Institute for Mathematical Physics\\ Boltzmanngasse 9\\ 1090 Wien\\ Austria} 
\email{Gerald.Teschl@univie.ac.at}
\urladdr{http://www.mat.univie.ac.at/\~{}gerald/}

\thanks{Based upon work partially supported by the US National Science
Foundation under Grant Nos.\ DMS-0400639 and
FRG-0456306 and the Austrian Science Fund (FWF) under Grant No.\ Y330.}
\dedicatory{Dedicated with great pleasure to Michael Demuth on the occasion of 
his 65th birthday.}
\thanks{Appeared in {\it Mathematical Physics, Spectral Theory and Stochastic Analysis}, M.\ Demuth and W.\ Kirsch (eds.), Operator Theory: Advances and Applications, Vol.\ 232, Birkh\"auser, Basel, 2013, pp.\ 1--106.}

\date{October 22, 2012}
\subjclass[2000]{Primary 35J25, 35J40, 35P15; Secondary 35P05, 46E35, 47A10, 47F05.}
\keywords{Lipschitz domains, Krein Laplacian, eigenvalues, spectral
analysis, Weyl asymptotics, buckling problem}

\begin{abstract}
In the first (and abstract) part of this survey we prove the unitary equivalence of the inverse of the 
Krein--von Neumann extension 
(on the orthogonal complement of its kernel) of a densely defined, closed, strictly positive
operator, $S\geq \varepsilon I_{\mathcal{H}}$ for some $\varepsilon >0$ in a Hilbert space 
$\mathcal{H}$ to an abstract buckling problem operator. 

In the concrete case where $S=\overline{-\Delta|_{C_0^\infty(\Omega)}}$ in 
$L^2(\Omega; d^n x)$ for $\Omega\subset\mathbb{R}^n$ an open, bounded (and sufficiently 
regular) set, this recovers, as a particular case of a general result due to G. Grubb, that 
the eigenvalue problem for the Krein Laplacian $S_K$ (i.e., the Krein--von Neumann extension of $S$), 
\[
S_K v = \lambda v, \quad \lambda \neq 0, 
\]
is in one-to-one correspondence with the problem of {\em the buckling of a clamped plate},
\[
(-\Delta)^2u=\lambda (-\Delta) u \, \text{ in } \, \Omega, \quad \lambda \neq 0, 
\quad u\in H_0^2(\Omega),  
\]
where $u$ and $v$ are related via the pair of formulas 
\[
u = S_F^{-1} (-\Delta) v, \quad v = \lambda^{-1}(-\Delta) u,
\]
with $S_F$ the Friedrichs extension of $S$.

This establishes the Krein extension as a natural object in elasticity theory
(in analogy to the Friedrichs extension, which found natural applications in quantum
mechanics, elasticity, etc.). 

In the second, and principal part of this survey, we study spectral properties for 
$H_{K,\Omega}$, the Krein--von Neumann 
extension of the perturbed Laplacian $-\Delta+V$ (in short, the perturbed Krein Laplacian) 
defined on $C^\infty_0(\Omega)$, where $V$ is measurable, bounded and nonnegative, in 
a bounded open set $\Omega\subset\mathbb{R}^n$ belonging to a class of 
nonsmooth domains which contains all convex domains, along with all domains 
of class $C^{1,r}$, $r>1/2$. (Contrary to other uses of the notion of ``domain'', a domain in this 
survey denotes an open set without any connectivity hypotheses. In addition, by a ``smooth domain'' 
we mean a domain with a sufficiently smooth, typically, a $C^\infty$-smooth, boundary.) In particular, 
in the aforementioned context we establish the Weyl asymptotic formula
\[
\#\{j\in\mathbb{N}\,|\,\lambda_{K,\Omega,j}\leq\lambda\}
= (2\pi)^{-n} v_n |\Omega|\,\lambda^{n/2}+O\big(\lambda^{(n-(1/2))/2}\big)
\, \mbox{ as }\, \lambda\to\infty,
\]
where $v_n=\pi^{n/2}/ \Gamma((n/2)+1)$ denotes the volume of the unit ball
in $\mathbb{R}^n$, $|\Omega$ denotes the volume of $\Omega$, and $\lambda_{K,\Omega,j}$, 
$j\in\mathbb{N}$, are the
non-zero eigenvalues of $H_{K,\Omega}$, listed in increasing order
according to their multiplicities. We prove this formula by showing
that the perturbed Krein Laplacian (i.e., the Krein--von Neumann extension of
$-\Delta+V$ defined on $C^\infty_0(\Omega)$) is spectrally equivalent to the 
buckling of a clamped plate problem, and using an abstract result of Kozlov
from the mid 1980's. Our work builds on that of Grubb in the early 1980's,
who has considered similar issues for elliptic operators in smooth domains,
and shows that the question posed by Alonso and Simon in 1980
pertaining to the validity of the above Weyl asymptotic formula
continues to have an affirmative answer in this nonsmooth setting. 

We also study certain exterior-type domains $\Omega = \mathbb{R}^n\backslash K$, 
$n\geq 3$, with $K\subset \mathbb{R}^n$ compact and vanishing Bessel capacity  
$B_{2,2} (K) = 0$, to prove equality of Friedrichs and Krein Laplacians in 
$L^2(\Omega; d^n x)$, that is, $-\Delta|_{C_0^\infty(\Omega)}$ has a unique 
nonnegative self-adjoint extension in $L^2(\Omega; d^n x)$. 
\end{abstract}

\maketitle

\newpage 

{\scriptsize \tableofcontents}

\section{Introduction}
\label{s1}

In connection with the first and abstract part of this survey, the connection between the Krein--von Neumann extension and an abstract buckling problem, suppose that $S$ is a densely defined, symmetric, closed operator with nonzero deficiency indices in a separable complex Hilbert space $\cH$ that satisfies 
\begin{equation}
S\geq \varepsilon I_{\cH} \, \text{ for some $\varepsilon >0$,}    \lb{1.1}
\end{equation}
and denote by $S_K$ and $S_F$ the Krein--von Neumann and Friedrichs extensions of 
$S$, respectively (with $I_{\cH}$ the identity operator in $\cH$).

Then an abstract version of Proposition\ 1 in Grubb \cite{Gr83}, describing an  intimate connection between the nonzero eigenvalues of the Krein--von Neumann extension of an appropriate minimal elliptic differential operator of order $2m$, 
$m\in\bbN$, and nonzero eigenvalues of a suitable higher-order buckling problem (cf.\ Example \ref{e3.6}), to be proved in Lemma \ref{l11.3}, can be summarized as  follows:
\begin{align}
& \text{There exists $0 \neq v \in \dom(S_K)$ satisfying } \,  S_K v = \lambda v, 
\quad \lambda \neq 0,   \lb{1.1a} \\
& \text{if and only if}  \no \\
& \text{there exists a $0 \neq u \in \dom(S^* S)$ such that } \, 
S^* S u = \lambda S u,   \lb{1.1b} 
\end{align}
and the solutions $v$ of \eqref{1.1a} are in one-to-one correspondence with the solutions $u$ of \eqref{1.1b} given by the pair of formulas
\begin{equation}
u = (S_F)^{-1} S_K v,    \quad  v = \lambda^{-1} S u.   \lb{1.1c}
\end{equation}

Next, we will go a step further and describe a unitary equivalence result going beyond the connection between the eigenvalue problems \eqref{1.1a} and \eqref{1.1b}: Given $S$, we introduce the following sesquilinear forms in $\cH$,
\begin{align}
a(u,v) & = (Su,Sv)_{\cH}, \quad u, v \in \dom(a) = \dom(S),    \lb{1.2} \\
b(u,v) & = (u,Sv)_{\cH}, \quad u, v \in  \dom(b) = \dom(S).    \lb{1.3} 
\end{align}
Then $S$ being densely defined and closed, implies that the sesquilinear form $a$ is also densely defined and closed, and thus one can introduce the Hilbert space 
\begin{equation}
\cW=(\dom(S), (\cdot,\cdot)_{\cW})    \lb{1.4}
\end{equation}
with associated scalar product 
\begin{equation}
(u,v)_{\cW}=a(u,v) = (Su,Sv)_{\cH}, \quad u, v \in \dom(S).   \lb{1.5}
\end{equation}
Suppressing for simplicity the continuous embedding operator of $\cW$ 
into $\cH$, we now introduce the following operator $T$ in $\cW$ by
\begin{align} 
(w_1,T w_2)_{\cW} & = a( w_1,T w_2) = b(w_1,w_2) 
= (w_1,S w_2)_{\cH},  \quad w_1, w_2 \in \cW.    \lb{1.8}
\end{align}
One can prove that $T$ is self-adjoint, nonnegative, and bounded and we will call $T$ the 
{\it abstract buckling problem operator} associated with the Krein--von Neumann extension $S_K$ of $S$.  

Next, introducing the Hilbert space $\hatt \cH$ by 
\begin{equation} 
\hatt \cH = [\ker (S^*)]^{\bot} = \big[I_{\cH} - P_{\ker(S^*)}\big] \cH  
= \big[I_{\cH} - P_{\ker(S_K)}\big] \cH = [\ker (S_K)]^{\bot}, 
\end{equation}
where $P_{\cM}$ denotes the orthogonal projection onto the subspace 
$\cM \subset \cH$, we introduce the operator
\begin{equation}
\hatt S: \begin{cases} \cW \to \hatt \cH,  \\
w \mapsto S w,  \end{cases}      \lb{1.12}
\end{equation}
and note that $\hatt S\in\cB(\cW,\hatt \cH)$ maps $\cW$ unitarily onto $\hatt \cH$. 

Finally, defining the {\it reduced Krein--von Neumann operator}  
$\hatt S_K$ in $\hatt \cH$ by 
\begin{equation} 
\hatt S_K:=S_K|_{[\ker(S_K)]^{\bot}}   \, \text{ in $\hatt \cH$,}   \label{1.13}
\end{equation} 
we can state the principal unitary equivalence result to be proved in Theorem \ref{t11.3}:

The inverse of the reduced Krein--von Neumann operator $\hatt S_K$ in 
$\hatt \cH$ and the abstract buckling problem operator $T$ in $\cW$ are unitarily equivalent, 
\begin{equation}
\big(\hatt S_K\big)^{-1} = \hatt S T (\hatt S)^{-1}.    \lb{1.20}
\end{equation}
In addition, 
\begin{equation}
\big(\hatt S_K\big)^{-1} = U_S \big[|S|^{-1} S |S|^{-1}\big] (U_S)^{-1}.    \lb{1.20a}
\end{equation}

Here we used the polar decomposition of $S$, 
\begin{equation}
S = U_S |S|,  \, \text{ with } \,  
|S| = (S^* S)^{1/2} \geq \varepsilon I_{\cH}, \; \varepsilon > 0,  \, \text{ and } \, 
U_S \in \cB\big(\cH,\hatt \cH\big) \, \text{ unitary,}    \lb{1.19b}
\end{equation}
and one observes that the operator $|S|^{-1} S |S|^{-1}\in\cB(\cH)$ in \eqref{1.20a} 
is self-adjoint in $\cH$. 

As discussed at the end of Section \ref{s3}, one can readily rewrite the abstract linear pencil buckling eigenvalue problem \eqref{1.1b}, $S^* S u = \lambda S u$, 
$\lambda \neq 0$, in the form of the standard eigenvalue problem
$|S|^{-1} S |S|^{-1} w = \lambda^{-1} w$, $\lambda \neq 0$, $w = |S| u$, and hence establish the connection between \eqref{1.1a}, \eqref{1.1b} and \eqref{1.20}, 
\eqref{1.20a}.  
 
As mentioned in the abstract, the concrete case where $S$ is given by  
$S=\overline{-\Delta|_{C_0^\infty(\Omega)}}$ in $L^2(\Omega; d^n x)$, then yields the spectral equivalence between the inverse of the reduced Krein--von Neumann extension $\hatt S_K$ of $S$ and the problem of the buckling of a clamped plate. 
More generally, Grubb \cite{Gr83} actually treated the case where $S$ is generated by an appropriate elliptic differential expression of order $2m$, $m\in\bbN$, and also introduced the higher-order analog of the buckling problem; we briefly summarize this in Example \ref{e3.6}. 

The results of this connection between an abstract buckling problem and the Krein--von Neumann 
extension in Section \ref{s2a} originally appeared in \cite{AGMST10}.

Turning to the second and principal part of this survey, the Weyl-type spectral asymptotics 
for perturbed Krein Laplacians, let $-\Delta_{D,\Om}$ be the Dirichlet Laplacian associated with an open
set $\Omega\subset\bbR^n$, and denote by $N_{D,\Om}(\lambda)$
the corresponding spectral distribution function (i.e., the number of
eigenvalues of $-\Delta_{D,\Om}$ not exceeding $\lambda$).
The study of the asymptotic behavior of $N_{D,\Om}(\lambda)$ as $\lambda\to\infty$
has been initiated by Weyl in 1911--1913 (cf.\ \cite{We12a}, \cite{We12}, and the 
references in \cite{We50}), in response to a question
posed in 1908 by the physicist Lorentz, pertaining to the equipartition
of energy in statistical mechanics. When $n=2$ and $\Omega$ is a bounded
domain with a piecewise smooth boundary, Weyl has shown that
\begin{equation} \label{WA-1}
N_{D,\Om}(\lambda)=\frac{|\Om|}{4\pi}\lambda+o(\lambda)
\, \mbox{ as }\, \lambda\to\infty,
\end{equation}
along with the three-dimensional analogue of \eqref{WA-1}. (We recall our convention 
to denote the volume of $\Omega \subset \bbR^n$ by $|\Omega|$.) In particular,
this allowed him to complete a partial proof of Rayleigh, going back to 1903.
This ground-breaking work has stimulated a great deal of activity in the
intervening years, in which a large number of authors have provided sharper
estimates for the remainder, and considered more general elliptic operators
equipped with a variety of boundary conditions. For a general elliptic
differential operator $\cA$ of order $2m$ ($m\in\bbN$), with smooth coefficients,
acting on a smooth subdomain $\Om$ of an $n$-dimensional smooth manifold,
spectral asymptotics of the form
\begin{equation} \label{WA-2}
N_{D,\Om}(\cA;\lambda)=(2\pi)^{-n}\biggl(\int_{\Omega}dx\int_{a^0(x,\xi)<1}d\xi\biggr)
\lambda^{n/(2m)}+O\big(\lambda^{(n-1)/(2m)}\big)
\, \mbox{ as }\, \lambda\to\infty,
\end{equation}
where $a^0(x,\xi)$ denotes the principal symbol of $\cA$, have then been
subsequently established in increasing generality
(a nice exposition can be found in \cite{Ag97}).
At the same time, it has been realized that, as the smoothness of the
domain $\Om$ (by which we mean smoothness of the boundary of $\Omega$) and the 
coefficients of $\cA$ deteriorate, the degree of detail
with which the remainder can be described decreases accordingly.
Indeed, the smoothness of the boundary of the underlying domain
$\Omega$ affects both the nature of the remainder in \eqref{WA-2}, as well
as the types of differential operators and boundary conditions for which
such an asymptotic formula holds. Understanding this correlation then
became a central theme of research. For example, in the case of the Laplacian
in an arbitrary bounded, open subset $\Om$ of $\bbR^n$,
Birman and Solomyak have shown in \cite{BS70} (see also \cite{BS71}, \cite{BS72}, 
\cite{BS73}, \cite{BS79}) that
the following Weyl asymptotic formula holds
\begin{equation} \label{Wey-1}
N_{D,\Om}(\lambda)=(2\pi)^{-n}v_n|\Omega|\,\lambda^{n/2}+o\big(\lambda^{n/2}\big)
\, \mbox{ as }\, \lambda\to\infty,
\end{equation} 
where $v_n$ denotes the volume of the unit ball in $\bbR^n$,
and $|\Omega|$ stands for the $n$-dimensional Euclidean volume of $\Omega$.
(Actually, \eqref{Wey-1} extends to unbounded $\Omega$ with finite volume $|\Omega|$, 
but this will not be addressed in this survey.) 
On the other hand, it is known that \eqref{Wey-1} may fail for
the Neumann Laplacian $-\Delta_{N,\Om}$. Furthermore, if $\alpha\in(0,1)$ then
Netrusov and Safarov have proved that
\begin{equation} \label{Wey-2}
\Omega\in{\rm Lip}_{\alpha}\,\text{ implies }\,
N_{D,\Om}(\lambda)=(2\pi)^{-n}v_n|\Omega|\,\lambda^{n/2} 
+ O\big(\lambda^{(n-\alpha)/2}\big) \, \mbox{ as }\, \lambda\to\infty,
\end{equation} 
where ${\rm Lip}_{\alpha}$ is the class of bounded domains whose
boundaries can be locally described by means of graphs of functions
satisfying a H\"older condition of order $\alpha$; this result is sharp.
See \cite{NS05} where this intriguing result (along with others, similar
in spirit) has been obtained. Surprising connections between Weyl's asymptotic
formula and geometric measure theory have been explored in \cite{Cae95},
\cite{HL97}, \cite{LF06} for fractal domains. Collectively, this body of work shows
that the nature of the Weyl asymptotic formula is intimately related not
only to the geometrical properties of the domain (as well as the type of
boundary conditions), but also to the smoothness properties of its boundary 
(the monographs by Ivrii \cite{Iv98} and Safarov and Vassiliev \cite{SV97} contain a wealth of 
information on this circle of ideas).

These considerations are by no means limited to the Laplacian; see
\cite{Cae98} for the case of the Stokes operator, and \cite{BF07}, \cite{BS87} 
for the case the Maxwell system in nonsmooth domains.
However, even in the case of the Laplace operator, besides $-\Delta_{D,\Om}$
and $-\Delta_{N,\Om}$ there is a multitude of other concrete extensions of
the Laplacian $-\Delta$ on $C^\infty_0(\Om)$ as a nonnegative,
self-adjoint operator in $L^2(\Om;d^nx)$.
The smallest (in the operator theoretic order sense) such realization has been
introduced, in an abstract setting, by M.\ Krein \cite{Kr47}. Later it was realized that in the case 
where the symmetric operator, whose self-adjoint extensions are sought, has a strictly positive 
lower bound, Krein's construction coincides with one that von Neumann had discussed in 
his seminal paper \cite{Ne29}  in 1929.   

For the purpose of this introduction we now 
briefly recall the construction of the Krein--von Neumann extension
of appropriate $L^2(\Om; d^n x)$-realizations of the  differential operator $\cA$ 
of order $2m$, $m\in\bbN$, 
\begin{align}
& \cA = \sum_{0 \leq |\alpha| \leq 2m} a_{\alpha}(\cdot) D^{\alpha},   \lb{Wey-3} \\
& D^{\alpha} = (-i \partial/\partial x_1)^{\alpha_1} \cdots  
(-i\partial/\partial x_n)^{\alpha_n}, 
\quad \alpha =(\alpha_1,\dots,\alpha_n) \in \bbN_0^n,    \lb{Wey-3A} \\
& a_{\alpha} (\cdot) \in C^\infty(\ol \Om),  \quad 
C^\infty(\ol \Om) = \bigcap_{k\in\bbN_0} C^k(\ol \Om),   \lb{Weyl-3B}
\end{align}
where
$\Omega\subset\bbR^n$ is a bounded $C^\infty$ domain. Introducing the particular 
$L^2(\Om; d^n x)$-realization $A_{c,\Om}$ of $\cA$ defined by 
\begin{equation}
A_{c,\Om} u = \cA u, \quad u \in \dom (A_{c,\Om}):=C^\infty_0(\Om),  \lb{Wey-3a}
\end{equation}
we assume the coefficients $a_\alpha$ in $\cA$ are chosen such that 
$A_{c,\Om}$ is symmetric, 
\begin{equation} \label{Wey-4}
(u, A_{c,\Om} v)_{L^2(\Om;d^nx)}=(A_{c,\Om} u, v)_{L^2(\Om;d^nx)},
\quad u,v\in C^\infty_0(\Om),
\end{equation} 
has a (strictly) positive lower bound, that is, there exists $\kappa_0>0$ such that
\begin{equation} \label{Wey-4bis}
(u, A_{c,\Om} u)_{L^2(\Om;d^nx)}\geq\kappa_0\,\|u\|^2_{L^2(\Om;d^nx)},
\quad u\in C^\infty_0(\Om),
\end{equation} 
and is strongly elliptic, that is, there exists $\kappa_1>0$ such that 
\begin{equation} \label{Wey-5}
a^0(x,\xi):= \Re\bigg(\sum_{|\alpha|=2m}
a_{\alpha}(x) \xi^{\alpha}\bigg) \geq \kappa_1\,|\xi|^{2m},
\quad x\in\ol{\Om}, \; \xi \in\bbR^n.
\end{equation} 
Next, let $A_{min,\Om}$ and $A_{max,\Om}$ be the $L^2(\Om;d^nx)$-realizations
of $\cA$ with domains (cf.\ \cite{Ag97}, \cite{Gr09})
\begin{align}\label{Wey-6} 
\dom (A_{min,\Om})&:=H^{2m}_0(\Om),    \\
\dom (A_{max,\Om})&:=\big\{u\in L^2(\Omega;d^nx)\,\big|\, \cA u\in L^2(\Omega;d^nx)\big\}. 
\end{align}
Throughout this manuscript, 
$H^s(\Om)$ denotes the $L^2$-based Sobolev space of order $s\in\bbR$ in $\Om$,
and $H_0^{s}(\Omega)$ is the subspace of $H^{s}(\bbR^n)$ consisting
of distributions supported in $\ol{\Om}$ (for $s>\frac{1}{2}$,
$\big(s-\frac{1}{2}\big)\notin\bbN$, the space $H_0^{s}(\Omega)$ can be alternatively
described as the closure of $C^\infty_0(\Om)$ in $H^s(\Om)$).
Given that the domain $\Om$ is smooth, elliptic regularity implies
\begin{equation} \label{Kre-DefY}
(A_{min,\Om})^*=A_{max,\Om}\, \mbox{ and }\, \ol{A_{c,\Om}}=A_{min,\Om}.
\end{equation} 
Functional analytic considerations (cf.\ the discussion in Section \ref{s2})
dictate that the Krein--von Neumann (sometimes also called the  ``soft'') extension 
$A_{K,\Om}$ of $A_{c,\Om}$ on
$C^\infty_0(\Om)$ is the $L^2(\Om;d^nx)$-realization of $A_{c,\Om}$ with domain 
(cf.\ \eqref{SK} derived abstractly by Krein)
\begin{equation} \label{Kre-DefX}
\dom(A_{K,\Om})=\dom\big(\ol{A_{c,\Om}}\big)\,
\dot{+}\ker\big((A_{c,\Om})^*\big).
\end{equation} 
Above and elsewhere, $X\dot{+}Y$ denotes the direct sum of two
subspaces, $X$ and $Y$, of a larger space $Z$, with the property that $X\cap Y=\{0\}$.
Thus, granted \eqref{Kre-DefY}, we have
\begin{align}\label{Gr-r2}
\begin{split}
\dom(A_{K,\Om}) &= \dom(A_{min,\Om})\,\dot{+}\ker(A_{max,\Om})   \\
&= H^{2m}_0(\Om)\,\dot{+}\,
\big\{u\in L^2(\Om;d^nx)\,\big|\,Au=0\mbox{ in }\Omega\big\}.
\end{split}
\end{align}
In summary, for domains with smooth boundaries, $A_{K,\Om}$ is the self-adjoint 
realization of $A_{c,\Om}$ with domain given by \eqref{Gr-r2}. 

Denote by $\gamma^{m}_D u:=
\bigl(\gamma_N^ju\bigr)_{0\leq j\leq m-1}$ the Dirichlet trace operator
of order $m\in\bbN$ (where $\nu$ denotes the outward unit normal
to $\Om$ and $\gamma_N u:=\partial_{\nu}u$ stands for the normal derivative,
or Neumann trace), and let $A_{D,\Om}$ be the Dirichlet (sometimes also called the ``hard'')
realization of $A_{c,\Om}$ in $L^2(\Omega;d^nx)$ with domain
\begin{equation} \label{Wey-8}
\dom(A_{D,\Om}):=\big\{u\in H^{2m}(\Omega)\,\big|\,\gamma^{m}_D u=0\big\}.
\end{equation} 
Then $A_{K,\Om}$, $A_{D,\Om}$ are ``extremal'' in the following sense:
Any nonnegative self-adjoint extension $\widetilde{A}$ in $L^2(\Om;d^nx)$ of
$A_{c,\Om}$ (cf.\ \eqref{Wey-3a}), necessarily satisfies 
\begin{equation} \label{Wey-10}
A_{K,\Om} \leq \widetilde{A} \leq A_{D,\Om}
\end{equation} 
in the sense of quadratic forms (cf.\ the discussion surrounding \eqref{AleqB}). 

Returning to the case where $A_{c,\Om}=-\Delta|_{C^\infty_0(\Om)}$, for a bounded
domain $\Om$ with a $C^\infty$-smooth boundary, $\partial\Om$, the corresponding Krein--von Neumann extension admits the following description 
\begin{align}
\begin{split}
& -\Delta_{K,\Om} u:= -\Delta u,   \\ 
& \; u\in \dom(-\Delta_{K,\Om}):=\{v\in\dom(-\Delta_{max,\Om})\,|\,
\gamma_N v +M_{D,N,\Om}(\gamma_D v)=0\},    \label{Wey-11}
\end{split}
\end{align}
where $M_{D,N,\Om}$ is (up to a minus sign) an energy-dependent Dirichlet-to-Neumann map, or Weyl--Titchmarsh operator for the Laplacian. Compared with 
\eqref{Gr-r2}, the description
\eqref{Wey-11} has the advantage of making explicit the boundary condition implicit
in the definition of membership to $\dom(-\Delta_{K,\Om})$.
Nonetheless, as opposed to the classical Dirichlet and Neumann boundary
condition, this turns out to be {\it nonlocal} in nature, as it involves
$M_{D,N,\Om}$ which, when $\Om$ is smooth, is a boundary
pseudodifferential operator of order $1$. Thus, informally speaking, 
\eqref{Wey-11} is the realization of  the Laplacian with the boundary condition
\begin{equation} \label{B.A-1}
\partial_\nu u=\partial_{\nu}H(u)\, \mbox{ on }\, \partial\Omega,
\end{equation} 
where, given a reasonable function $w$ in $\Om$, $H(w)$ is the harmonic
extension of the Dirichlet boundary trace $\gamma^0_D w$ to $\Omega$ 
(cf.\ \eqref{2.5}).

While at first sight the nonlocal boundary condition 
$\gamma_N v +M_{D,N,\Om}(\gamma_D v)=0$ in \eqref{Wey-11} for the Krein Laplacian 
$-\Delta_{K,\Om}$ may seem familiar from the abstract 
approach to self-adjoint extensions of semibounded symmetric operators within the theory 
of boundary value spaces, there are some crucial distinctions in the concrete case of 
Laplacians on (nonsmooth) domains which will be delineated at the end of Section \ref{s8}. 

For rough domains, matters are more delicate as the nature of the
boundary trace operators and the standard elliptic regularity theory
are both fundamentally affected. Following work in \cite{GM11}, here we
shall consider the class of {\it quasi-convex domains}. The latter is
the subclass of bounded, Lipschitz domains in $\bbR^n$ characterized by
the demand that
\begin{enumerate}
\item[$(i)$] there exists a sequence of relatively compact, $C^2$-subdomains 
exhausting the original domain, and whose second fundamental forms are bounded
from below in a uniform fashion (for a precise formulation see Definition \ref{Def-AC}),
\end{enumerate}
or
\begin{enumerate}
\item[$(ii)$] near every boundary point 
there exists a suitably small $\delta>0$, such that
the boundary is given by the graph of a function
$\varphi:\bbR^{n-1}\to\bbR$ (suitably rotated and translated) which
is Lipschitz and whose derivative satisfy the pointwise $H^{1/2}$-multiplier
condition
\begin{equation} \label{MaS-T4}
\sum_{k=1}^{n-1}\|f_k\,\partial_k\varphi_j\|_{H^{1/2}(\bbR^{n-1})}\leq\delta
\sum_{k=1}^{n-1}\|f_k\|_{H^{1/2}(\bbR^{n-1})},
\quad f_1,...f_{n-1}\in H^{1/2}(\bbR^{n-1}).
\end{equation} 
\end{enumerate}
See Hypothesis \ref{h.Conv} for a precise formulation.
In particular, \eqref{MaS-T4} is automatically satisfied when
$\omega(\nabla\varphi,t)$, the modulus of continuity of $\nabla\varphi$
at scale $t$, satisfies the square-Dini condition
(compare to \cite{MS85}, \cite{MS05}, where this type of domain was 
introduced and studied), 
\begin{equation} \label{MaS-T7}
\int_0^1\Bigl(\frac{\omega(\nabla\varphi;t)}{t^{1/2}}\Bigr)^2\,\frac{dt}{t}
<\infty.
\end{equation} 
In turn, \eqref{MaS-T7} is automatically satisfied if the Lipschitz
function $\varphi$ is of class $C^{1,r}$ for some $r>1/2$.
As a result, examples of quasi-convex domains include: 
\begin{enumerate}
\item[$(i)$] All bounded (geometrically) convex domains.  
\item[$(ii)$] All bounded Lipschitz domains satisfying 
a uniform exterior ball condition (which, informally speaking, 
means that a ball of fixed radius can be ``rolled'' along the
boundary). 
\item[$(iii)$] All open sets which are the image 
of a domain as in $(i),(ii)$ above under a $C^{1,1}$-diffeomorphism.  
\item[$(iv)$] All bounded domains of class $C^{1,r}$ for some $r>1/2$. 
\end{enumerate}
We note that being quasi-convex is a local property of the
boundary. The philosophy behind this concept
is that Lipschitz-type singularities are allowed in the boundary as long
as they are directed outwardly (see Figure\ 1 on p.\ \pageref{Pic}).
The key feature of this class of domains is the fact that the classical
elliptic regularity property
\begin{equation} \label{Df-H1}
\dom(-\Delta_{D,\Om})\subset H^2(\Om),\quad 
\dom(-\Delta_{N,\Om})\subset H^2(\Om)
\end{equation} 
remains valid. In this vein, it is worth recalling that the presence of a
single re-entrant corner for the domain $\Omega$ invalidates \eqref{Df-H1}.
All our results in this survey are actually valid for the class of
bounded Lipschitz domains for which \eqref{Df-H1} holds.
Condition \eqref{Df-H1} is, however, a regularity assumption on the
boundary of the Lipschitz domain $\Om$ and the class of quasi-convex domains
is the largest one for which we know \eqref{Df-H1} to hold.
Under the hypothesis of quasi-convexity, it has been shown
in \cite{GM11} that the Krein Laplacian $-\Delta_{K,\Om}$ (i.e., the Krein--von 
Neumann extension of the Laplacian $-\Delta$ defined on $C^\infty_0(\Omega)$)
in \eqref{Wey-11} is a well-defined self-adjoint operator which agrees
with the operator constructed using the recipe in \eqref{Gr-r2}.

The main issue of this survey is the study of the
spectral properties of $H_{K,\Om}$, the Krein--von Neumann extension of the
perturbed Laplacian
\begin{equation} \label{Per-D}
-\Delta+V\, \mbox{ on }\,  C^\infty_0(\Omega),
\end{equation} 
in the case where both the potential $V$ and the domain $\Om$ are
nonsmooth. As regards the former, we shall assume that
$0\leq V\in L^\infty(\Omega;d^nx)$, and we shall assume that
$\Omega\subset\bbR^n$ is a quasi-convex domain (more on this shortly).
In particular, we wish to clarify the extent to which a Weyl asymptotic
formula continues to hold for this operator. For us, this undertaking was
originally inspired by the discussion by Alonso and Simon in \cite{AS80}.
At the end of that paper, the authors comment to the effect that
{\it ``It seems to us that the Krein extension of $-\Delta$, i.e.,
$-\Delta$ with the boundary condition $\eqref{B.A-1}$,  
is a natural object and therefore worthy of further study. For example: Are the 
asymptotics of its nonzero eigenvalues given by Weyl's formula?''}
Subsequently we have learned that when $\Omega$ is $C^\infty$-smooth
this has been shown to be the case by Grubb in \cite{Gr83}.
More specifically, in that paper Grubb has proved that if 
$N_{K,\Om}(\cA;\lambda)$ denotes the number of nonzero eigenvalues of 
$A_{K,\Om}$ (defined as in \eqref{Gr-r2}) not exceeding $\lambda$, then
\begin{equation} \label{Df-H2}
\Omega\in C^\infty\,\text{ implies }\,
N_{K,\Om}(\cA;\lambda)=C_{A,n}\lambda^{n/(2m)}+O\big(\lambda^{(n-\theta)/(2m)}\big)
\, \mbox{ as }\, \lambda\rightarrow\infty,
\end{equation} 
where, with $a^0(x,\xi)$ as in \eqref{Wey-5},
\begin{equation} \label{Df-H3}
C_{A,n}:=(2\pi)^{-n}\int_\Omega d^nx \int_{a^0(x,\xi)<1} d^n \xi
\end{equation} 
and
\begin{equation} \label{Df-H4}
\theta:=\max\,\Bigl\{\frac{1}{2}-\varepsilon\,,\,\frac{2m}{2m+n-1}\Bigl\},
\, \mbox{ with $\varepsilon>0$ arbitrary}.
\end{equation} 
See also \cite{Mi94}, \cite{Mi06}, and most recently, \cite{Gr12}, where the authors  derive a sharpening of the remainder in \eqref{Df-H2} to any $\theta<1$.
To show \eqref{Df-H2}--\eqref{Df-H4}, Grubb has reduced the eigenvalue problem
\begin{equation} \label{Df-H5}
\cA u=\lambda\,u,\quad u\in\dom(A_{K,\Om}),\; \lambda>0,
\end{equation} 
to the higher-order, elliptic system
\begin{equation}
\begin{cases}
\cA^2 v=\lambda\, \cA v\,\mbox{ in }\,\Om,
\\
\gamma^{2m}_D v =0\,\mbox{ on }\,\dOm,
\\
v\in C^\infty(\ol{\Omega}). 
\end{cases}   \label{Df-H6}
\end{equation}
Then the strategy is to use known asymptotics for the spectral distribution
function of regular elliptic boundary problems, along with perturbation
results due to Birman, Solomyak, and Grubb (see the
literature cited in \cite{Gr83} for precise references). It should be noted
that the fact that the boundary of $\Omega$ and the coefficients of
$\cA$ are smooth plays an important role in Grubb's proof. First, 
this is used to ensure that \eqref{Kre-DefY} holds which, in turn, allows for
the concrete representation \eqref{Gr-r2} (a formula which in effect
lies at the start of the entire theory, as Grubb adopts this as the {\it definition}
of the domains of the Krein--von Neumann extension). In addition, at a more technical level,
Lemma 3 in \cite{Gr83} is justified by making appeal to the
theory of pseudo-differential operators on $\partial\Omega$, assumed
to be an $(n-1)$-dimensional $C^\infty$ manifold.
In our case, that is, when dealing with the Krein--von Neumann extension
of the perturbed Laplacian \eqref{Per-D}, we establish the following theorem: 

\begin{theorem}\label{Th-InM}
Let $\Om\subset\bbR^n$ be a quasi-convex domain, assume that 
$0 \leq V\in L^\infty(\Om; d^nx)$, and denote by
$H_{K,\Om}$ the Krein--von Neumann extension of the perturbed Laplacian
\eqref{Per-D}. Then there exists a sequence of numbers
\begin{equation} \label{Xmam-1}
0<\lambda_{K,\Om,1} \leq \lambda_{K,\Om,2}\leq\cdots\leq\lambda_{K,\Om,j}
\leq\lambda_{K,\Om,j+1} \leq\cdots
\end{equation} 
converging to infinity, with the following properties.
\begin{enumerate}
\item[$(i)$] The spectrum of $H_{K,\Om}$ is given by
\begin{equation} \label{XMi-7}
\sigma(H_{K,\Om})=\{0\}\cup\{\lambda_{K,\Om,j}\}_{j\in\bbN},
\end{equation} 
and each number $\lambda_{K,\Om,j}$, $j\in\bbN$, is an eigenvalue for
$H_{K,\Om}$ of finite multiplicity.
\item[$(ii)$] There exists a countable family of orthonormal
eigenfunctions for $H_{K,\Om}$ which span the orthogonal
complement of the kernel of this operator. More precisely, there exists
a collection of functions $\{w_j\}_{j\in\bbN}$ with the following properties:
\begin{align} \label{Xmam-21}
& w_j\in\dom(H_{K,\Om})\, \mbox{ and }\, 
H_{K,\Om}w_j=\lambda_{K,\Om,j}w_j, \; j\in\bbN,
\\ 
& (w_j,w_k)_{L^2(\Om;d^nx)}=\delta_{j,k}, \; j,k\in\bbN,
\label{Xmam-22}\\
& L^2(\Omega;d^nx)=\ker(H_{K,\Om})\,\oplus\, 
\ol{{\rm lin. \, span} \{w_j\}_{j\in\bbN}},\, \mbox{ $($orthogonal direct sum$)$.}
\label{Xmam-23}
\end{align} 
If $V$ is Lipschitz then $w_j\in H^{1/2}(\Omega)$ for every $j$ and, in fact, 
$w_j\in C^\infty(\overline{\Omega})$ for every $j$ if $\Omega$ is $C^\infty$ and 
$V\in C^\infty(\overline{\Omega})$.  
\item[$(iii)$] The following min-max principle holds:
\begin{align} \label{Xmam-26}
\hspace*{6mm} 
& \lambda_{K,\Om,j}
=\min_{\stackrel{W_j\text{ subspace of }H^2_0(\Om)}{\dim(W_j)=j}}
\bigg(\max_{0\not=u\in W_j} \bigg(\frac{\|(-\Delta+V)u\|^2_{L^2(\Om;d^nx)}}
{\|\nabla u\|^2_{(L^2(\Om;d^nx))^n}+\|V^{1/2}u\|^2_{L^2(\Om;d^nx)}}\bigg)\bigg),  \no \\
& \hspace*{10.85cm} j\in\bbN.
\end{align} 
\item[$(iv)$] If
\begin{equation} \label{Ymam-1}
0<\lambda_{D,\Om,1} \leq\lambda_{D,\Om,2} \leq\cdots\leq\lambda_{D,\Om,j}
\leq\lambda_{D,\Om,j+1} \leq\cdots
\end{equation} 
are the eigenvalues of the perturbed Dirichlet Laplacian $-\Delta_{D,\Om}$
$($i.e., the Friedrichs extension of \eqref{Per-D} in $L^2(\Om;d^nx)$$)$, listed according to their multiplicities, then
\begin{equation} \label{Xmam-39}
0< \lambda_{D,\Om,j} \leq\lambda_{K,\Om,j}, \quad j\in\bbN,
\end{equation} 
Consequently introducing the spectral distribution functions
\begin{equation} \label{Xmam-44}
N_{X,\Om}(\lambda):=\#\{j\in\bbN\,|\,\lambda_{X,\Om,j} \leq\lambda\},
\quad X\in\{D,K\},
\end{equation} 
one has
\begin{equation} \label{Xmam-45}
N_{K,\Om}(\lambda)\leq N_{D,\Om}(\lambda).  
\end{equation} 
\item[$(v)$] Corresponding to the case $V\equiv 0$,
the first nonzero eigenvalue $\lambda^{(0)}_{K,\Om,1}$ of $-\Delta_{K,\Om}$ satisfies
\begin{equation} \label{Xmx-4}
\lambda^{(0)}_{D,\Om,2} \leq \lambda^{(0)}_{K,\Om,1}\, \mbox{ and }\, 
\lambda^{(0)}_{K,\Om,2} \leq\frac{n^2+8n+20}{(n+2)^2}\lambda^{(0)}_{K,\Om,1}.
\end{equation} 
In addition,
\begin{equation} \label{Xmx-1}
\sum_{j=1}^n\lambda^{(0)}_{K,\Om,j+1}
<(n+4)\lambda^{(0)}_{K,\Om,1}
-\frac{4}{n+4}(\lambda^{(0)}_{K,\Om,2}-\lambda^{(0)}_{K,\Om,1})
\le (n+4)\lambda^{(0)}_{K,\Om,1},
\end{equation} 
and
\begin{equation} \label{Xmx-3}
\sum_{j=1}^k \big(\lambda^{(0)}_{K,\Om,k+1}-\lambda^{(0)}_{K,\Om,j}\big)^2 
\leq\frac{4(n+2)}{n^2}
\sum_{j=1}^k \big(\lambda^{(0)}_{K,\Om,k+1}-\lambda^{(0)}_{K,\Om,j}\big)
\lambda^{(0)}_{K,\Om,j} 
\quad k\in\bbN.
\end{equation} 
Moreover, if $\Om$ is a bounded, convex domain in $\bbR^n$,
then the first two Dirichlet eigenvalues and the first nonzero eigenvalue
of the Krein Laplacian in $\Om$ satisfy
\begin{equation} \label{A-P.1U}
\lambda^{(0)}_{D,\Om,2} \leq \lambda^{(0)}_{K,\Om,1} \leq 4\,\lambda^{(0)}_{D,\Om,1}.
\end{equation} 
\item[$(vi)$] The following Weyl asymptotic formula holds:
\begin{equation} \label{Xkko-12}
N_{K,\Om}(\lambda)
=(2\pi)^{-n}v_n|\Omega|\,\lambda^{n/2}+O\big(\lambda^{(n-(1/2))/2}\big)
\, \mbox{ as }\, \lambda\to\infty,
\end{equation} 
where, as before, $v_n$ denotes the volume of the unit ball in $\bbR^n$,
and $|\Omega|$ stands for the $n$-dimensional Euclidean volume of $\Omega$.
\end{enumerate}
\end{theorem}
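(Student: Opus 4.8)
The plan is to reduce all six assertions, via the abstract unitary equivalence of Theorem~\ref{t11.3}, to the concrete clamped-plate buckling problem for $-\Delta+V$ on $H_0^2(\Om)$, and then to invoke Kozlov's asymptotic theorem for the Weyl formula. Set $S:=\overline{(-\Delta+V)|_{C_0^\infty(\Om)}}$ in $L^2(\Om;d^nx)$. Since $\Om$ is bounded and $0\le V\in L^\infty(\Om;d^nx)$, Poincar\'e's inequality yields $S\ge\varepsilon I$ for some $\varepsilon>0$, so $S$ is densely defined, closed, with infinite deficiency indices, and satisfies \eqref{1.1}; moreover $S_K=H_{K,\Om}$ while the Friedrichs extension $S_F$ is the perturbed Dirichlet Laplacian whose eigenvalues appear in \eqref{Ymam-1}. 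The quasi-convexity of $\Om$ enters decisively: by the regularity property \eqref{Df-H1} and \cite{GM11} one has $\dom(S)=H_0^2(\Om)$, and the operator $\hatt S_K$ of \eqref{1.13} is realized concretely as in \eqref{Wey-11} (with $V$ added). Hence the abstract buckling operator $T$ of \eqref{1.8} acts in $\cW=H_0^2(\Om)$ endowed with the inner product $(u,v)\mapsto((-\Delta+V)u,(-\Delta+V)v)_{L^2(\Om;d^nx)}$, and Lemma~\ref{l11.3}, Theorem~\ref{t11.3} (the equivalences \eqref{1.1a}--\eqref{1.1c} and \eqref{1.20a}) put the nonzero spectrum of $H_{K,\Om}$, with multiplicities, in bijection with the reciprocals of the eigenvalues of
\begin{equation*}
(-\Delta+V)^2 u=\la\,(-\Delta+V)u\ \text{ in }\ \Om,\qquad u\in H_0^2(\Om),\quad \la\neq 0,
\end{equation*}
the associated eigenfunction of $H_{K,\Om}$ being $v=\la^{-1}(-\Delta+V)u$.

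For parts (i) and (ii), I would first show that $T$ is compact. By \eqref{1.20a} this amounts to compactness of $|S|^{-1}S|S|^{-1}=|S|^{-1}U_S$, hence of $|S|^{-1}=(S^*S)^{-1/2}$, hence of $(S^*S)^{-1}$; but $(S^*S)^{-1}$ maps $L^2(\Om;d^nx)$ boundedly into $H_0^2(\Om)$ (using $\|u\|_{H^2(\Om)}\le C\|Su\|\le C'\|S^*Su\|$ on $\dom(S^*S)$), which embeds compactly into $L^2(\Om;d^nx)$ by Rellich's theorem. Thus $(\hatt S_K)^{-1}$ is compact, self-adjoint and nonnegative, so it has purely discrete spectrum accumulating only at $0$; inverting and relabelling its positive eigenvalues produces \eqref{Xmam-1}, and since $\ker(H_{K,\Om})$ — the null space of the maximal realization of $-\Delta+V$ — is nontrivial (in fact infinite-dimensional), we obtain \eqref{XMi-7}. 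Transporting an orthonormal eigenbasis of $(\hatt S_K)^{-1}$ from $\hatt\cH$ back to $L^2(\Om;d^nx)$ gives the family $\{w_j\}$ satisfying \eqref{Xmam-21}--\eqref{Xmam-23}, and the Sobolev regularity claimed in (ii) follows from $w_j=\la_{K,\Om,j}^{-1}(-\Delta+V)u_j$ with $u_j\in H_0^2(\Om)$, combined with the boundary-regularity theory for $H_{K,\Om}$ of \cite{GM11} (bootstrapped when $\Om$ and $V$ are $C^\infty$).

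Turning to parts (iii)--(v), item (iii) is the Courant--Fischer min-max for the compact self-adjoint $T$ on $\cW$ carried over to $H_0^2(\Om)$ via $\la_{K,\Om,j}=\mu_j^{-1}$, where $\mu_j$ are the eigenvalues of $T$; since $(u,Tu)_\cW=(u,(-\Delta+V)u)_{L^2(\Om;d^nx)}=\|\nabla u\|^2+\|V^{1/2}u\|^2$ (integrating by parts on $H_0^2(\Om)\subset H_0^1(\Om)$) and $(u,u)_\cW=\|(-\Delta+V)u\|^2$, this is exactly \eqref{Xmam-26}. For (iv), fix a $j$-dimensional $W_j\subset H_0^2(\Om)$; Cauchy--Schwarz gives $\big(\|\nabla u\|^2+\|V^{1/2}u\|^2\big)^2=(u,(-\Delta+V)u)^2\le\|u\|^2\,\|(-\Delta+V)u\|^2$ for $u\in W_j$, so the buckling Rayleigh quotient dominates the Dirichlet Rayleigh quotient on $W_j$; maximizing over $W_j$, minimizing over such $W_j$, and enlarging the admissible class from $H_0^2(\Om)$ to $H_0^1(\Om)$ yields \eqref{Xmam-39}, hence \eqref{Xmam-45}. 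The universal inequalities in (v) are obtained by feeding into \eqref{Xmam-26} with $V\equiv0$ trial functions built from the first buckling eigenfunction $u_1\in H_0^2(\Om)$ and the Cartesian coordinates (e.g.\ Gram--Schmidt orthogonalizations of $\{x_k u_1\}_{1\le k\le n}$ against lower eigenfunctions, all still in $H_0^2(\Om)$): the Payne--P\'olya--Weinberger, Hile--Protter and Yang type arguments transfer to any quasi-convex $\Om$, giving \eqref{Xmx-4}, \eqref{Xmx-1}, \eqref{Xmx-3}; the bound $\la^{(0)}_{D,\Om,2}\le\la^{(0)}_{K,\Om,1}$ is Payne's inequality, and for convex $\Om$ the bound $\la^{(0)}_{K,\Om,1}\le 4\,\la^{(0)}_{D,\Om,1}$ in \eqref{A-P.1U} follows from the known first-eigenvalue buckling/Dirichlet comparison on convex domains.

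Finally, for the Weyl law (vi) I would apply Kozlov's abstract theorem on the spectral asymptotics of variational eigenvalue problems (cf.\ the Introduction) to the quotient of quadratic forms in \eqref{Xmam-26}. One checks its hypotheses: the numerator form $a[u]=\|(-\Delta+V)u\|^2_{L^2(\Om;d^nx)}$ is equivalent to $\|u\|^2_{H^2(\Om)}$ on $H_0^2(\Om)$ (from $\|\Delta u\|^2\le 2\|(-\Delta+V)u\|^2+2\|V\|_\infty^2\|u\|^2$, $\|u\|_{H^2(\Om)}^2\le C(\|\Delta u\|^2+\|u\|^2)$ for $u\in H_0^2(\Om)$, and $\|u\|\le\varepsilon^{-1}\|Su\|$), while the denominator form $b[u]=\|\nabla u\|^2+\|V^{1/2}u\|^2$ is bounded by $\|u\|^2_{H^1(\Om)}$ with $H_0^2(\Om)\hookrightarrow H^1(\Om)$ compact. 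The principal symbols are $a^0(x,\xi)=|\xi|^4$ and $b^0(x,\xi)=|\xi|^2$, so Kozlov's order of growth is $\la^{n/(2(2-1))}=\la^{n/2}$ and the leading coefficient is $(2\pi)^{-n}\int_\Om d^nx\int_{|\xi|^2<1}d^n\xi=(2\pi)^{-n}v_n|\Om|$; since the top-order part of $a$ has constant coefficients and $\Om$ is quasi-convex (so that \eqref{Df-H1} is available), Kozlov's remainder estimate specializes to $O\big(\la^{(n-(1/2))/2}\big)$, which is \eqref{Xkko-12}. The main obstacle, apart from bookkeeping, is twofold: establishing $\dom\big(\overline{(-\Delta+V)|_{C_0^\infty(\Om)}}\big)=H_0^2(\Om)$ in the nonsmooth setting — precisely where quasi-convexity, equivalently \eqref{Df-H1}, is indispensable and without which the whole reduction to the buckling pencil collapses — and verifying that Kozlov's hypotheses genuinely hold for the pair $(a,b)$ on a merely quasi-convex domain and that they deliver the sharp remainder exponent $1/2$.
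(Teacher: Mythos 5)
Your strategy is essentially the paper's own: use quasi-convexity (via \eqref{Df-H1} and \cite{GM11}) to identify $\dom\big(\overline{(-\Delta+V)|_{C_0^\infty(\Om)}}\big)=H_0^2(\Om)$, pass to the buckling pencil $(-\Delta+V)^2u=\lambda(-\Delta+V)u$ on $H_0^2(\Om)$, obtain discreteness and the min-max \eqref{Xmam-26} from a compact self-adjoint operator on $H_0^2(\Om)$ with the inner product $((-\Delta+V)\cdot,(-\Delta+V)\cdot)_{L^2}$, deduce \eqref{Xmam-39} by Cauchy--Schwarz, import the buckling literature for (v), and apply Kozlov for (vi). The paper carries out the compactness and spectral analysis concretely (the operator $B=-(-\Delta+V)^{-2}(-\Delta+V)$ factored through the compact embedding $L^2\hookrightarrow H^{-2}$, plus an Analytic Fredholm argument), while you route it through the abstract unitary equivalence \eqref{1.20a}; your compactness argument for $|S|^{-1}$ via $\|u\|_{H^2(\Om)}\le C\|S^*Su\|_{L^2(\Om;d^nx)}$ and Rellich is sound and equivalent in content. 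Parts (i)--(iv) and (vi) are therefore in order.

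Two sub-claims are under-justified as written. First, the regularity $w_j\in H^{1/2}(\Omega)$ for Lipschitz $V$ does not follow from $w_j=\lambda_{K,\Om,j}^{-1}(-\Delta+V)u_j$ with $u_j\in H_0^2(\Om)$ together with generic ``boundary-regularity theory for $H_{K,\Om}$'': from $u_j\in H^2$ alone one only gets $w_j\in L^2$. The paper's route is to observe that $\Delta^2u_j\in H^{s-2}(\Om)$ (using the multiplier property of a Lipschitz $V$) and then invoke the sharp $H^{5/2}$ regularity for fourth-order problems on Lipschitz domains from \cite{PV95}; without that specific half-derivative gain the claim is not established. Second, the upper bound in \eqref{A-P.1U} for an arbitrary bounded convex $\Om$ is not simply ``known'': Payne's inequality \cite{Pa60} is proved for smooth domains (with nonnegative boundary mean curvature). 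The paper must combine it with a smooth exhaustion $\Om_m\nearrow\Om$, the continuity of Dirichlet eigenvalues under such exhaustions \cite{BV65}, and the domain monotonicity $\lambda^{(0)}_{K,\Om,1}\le\lambda^{(0)}_{K,\Om_m,1}$ coming from the min-max \eqref{Xmam-26} (which you do have). Neither gap affects the core assertions, but both need to be filled explicitly.
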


This theorem answers the question posed by Alonso and Simon in \cite{AS80}
(which corresponds to $V\equiv 0$), and further extends the work by Grubb
in \cite{Gr83} in the sense that we allow nonsmooth domains and coefficients. 
To prove this result, we adopt Grubb's strategy and show that the eigenvalue problem
\begin{equation} \label{Df-H7}
(-\Delta+V)u=\lambda\,u,\quad u\in\dom(H_{K,\Om}),\; \lambda>0,
\end{equation} 
is equivalent to the following fourth-order problem
\begin{equation}\label{Df-H8} 
\begin{cases}
(-\Delta+V)^2w=\lambda\,(-\Delta+V)w\, \mbox{ in }\, \Om,
\\\
\gamma_D w=\gamma_N w=0\, \mbox{ on } \,\dOm,
\\
w\in\dom(-\Delta_{max}).
\end{cases}
\end{equation}
This is closely related to the so-called problem of the {\it buckling of a 
clamped plate},
\begin{equation}\label{Df-H8F}
\begin{cases}
-\Delta^2 w=\lambda\,\Delta w \,\mbox{ in }\, \Om,
\\
\gamma_D w =\gamma_N w =0\, \mbox{ on }\, \dOm,
\\
w\in\dom(-\Delta_{max}), 
\end{cases}
\end{equation}
to which \eqref{Df-H8} reduces when $V\equiv 0$.  From a physical point of 
view, the nature of the later boundary value problem can be described as 
follows. In the two-dimensional setting, the bifurcation problem for a clamped,
homogeneous plate in the shape of $\Omega$, with uniform lateral compression
on its edges has the eigenvalues $\lambda$ of the problem \eqref{Df-H8}
as its critical points. In particular, the first eigenvalue of \eqref{Df-H8}
is proportional to the load compression at which the plate buckles.

One of the upshots of our work in this context is establishing a definite
connection between the Krein--von Neumann extension of the Laplacian and the 
buckling problem \eqref{Df-H8F}. In contrast to the smooth case, since in our 
setting the solution $w$ of \eqref{Df-H8} does not exhibit any extra 
regularity on the Sobolev scale $H^s(\Om)$, $s\geq 0$, other than membership 
to $L^2(\Om;d^nx)$, a suitable interpretation of the boundary
conditions in \eqref{Df-H8} should be adopted. (Here we shall rely on the
recent progress from \cite{GM11} where this issue has been resolved
by introducing certain novel boundary Sobolev spaces, well-adapted to
the class of Lipschitz domains.) We nonetheless find this trade-off,
between the $2$nd-order boundary problem \eqref{Df-H7} which has
nonlocal boundary conditions, and the boundary problem \eqref{Df-H8}
which has local boundary conditions, but is of fourth-order, very useful.
The reason is that \eqref{Df-H8} can be rephrased, in view of
\eqref{Df-H1} and related regularity results developed in \cite{GM11},
in the form of
\begin{equation} \label{XMM-2}
(-\Delta+V)^2 u=\lambda\,(-\Delta+V)u \,\mbox{ in } \,\Omega,\quad
u\in H^2_0(\Omega).
\end{equation} 
In principle, this opens the door to bringing onto the stage the theory of
generalized eigenvalue problems, that is, operator pencil problems of the form
\begin{equation} \label{XMM-3}
Tu=\lambda\,Su,
\end{equation} 
where $T$ and $S$ are certain linear operators in a Hilbert space.
Abstract results of this nature can be found for instance, in \cite{LM08}, \cite{Pe68}, 
\cite{Tr00} (see also \cite{Le83}, \cite{Le85}, where this is applied to 
the asymptotic distribution of eigenvalues).  We, however, find it more 
convenient to appeal to a version of \eqref{XMM-3} which emphasizes the 
role of the symmetric forms
\begin{align} \label{Xkko-13}
& a(u,v):=\int_{\Om}d^nx\,\ol{(-\Delta+V)u}\,(-\Delta+V)v,
\quad u,v\in H^2_0(\Om),
\\
& b(u,v):=\int_{\Om}d^nx\,\ol{\nabla u}\cdot\nabla v
+\int_{\Om}d^nx\,\ol{V^{1/2}u}\,V^{1/2}v,\quad u,v\in H^2_0(\Om),
\label{Xkko-13F}
\end{align} 
and reformulate \eqref{XMM-2} as the problem of finding $u\in H^2_0(\Om)$
which satisfies
\begin{equation} \label{Xkko-14}
a(u,v)=\lambda\,b(u,v) \quad v\in H^2_0(\Om).
\end{equation} 
This type of eigenvalue problem, in the language of bilinear forms associated
with differential operators, has been studied by Kozlov in a series
of papers \cite{Ko79}, \cite{Ko83}, \cite{Ko84}. In particular, in \cite{Ko84},
Kozlov has obtained Weyl asymptotic formulas in the case where the underlying
domain $\Omega$ in \eqref{Xkko-13} is merely Lipschitz, and the lower-order
coefficients of the quadratic forms \eqref{Xkko-13}--\eqref{Xkko-13F}
are only measurable and bounded (see Theorem \ref{T-Koz}
for a precise formulation). Our demand that the potential $V$ is in
$L^\infty(\Om;d^nx)$ is therefore inherited from Kozlov's theorem.
Based on this result and the fact that the
problems \eqref{Xkko-13}--\eqref{Xkko-14} and \eqref{Df-H7} are
spectral-equivalent, we can then conclude that \eqref{Xkko-12} holds.
Formulas \eqref{Xmx-4}--\eqref{Xmx-3} are also a byproduct of the connection
between \eqref{Df-H7} and \eqref{Df-H8} and known spectral estimates for 
the buckling plate problem from \cite{As04}, \cite{As09}, \cite{AL96}, 
\cite{CY06}, \cite{HY84}, \cite{Pa55}, \cite{Pa67}, \cite{Pa91}.  Similarly, 
\eqref{A-P.1U} for convex domains is based on the connection between 
\eqref{Df-H7} and \eqref{Df-H8} and the eigenvalue inequality relating 
the first eigenvalue of a fixed membrane and that of the buckling problem 
for the clamped plate as proven in \cite{Pa60} (see also \cite{Pa67}, 
\cite{Pa91}).  

In closing, we wish to point out that in the $C^\infty$-smooth setting,
Grubb's remainder in \eqref{Df-H2}, with the improvement to any 
$\theta < 1$ in \cite{Gr12}, \cite{Mi94}, \cite{Mi06}, is sharper than that
in \eqref{Xkko-12}. However, the main novel feature of our Theorem \ref{Th-InM}
is the low regularity assumptions on the underlying domain $\Omega$,
and the fact that we allow a nonsmooth potential $V$.
As was the case with the Weyl asymptotic formula for the classical
Dirichlet and Neumann Laplacians (briefly reviewed at the beginning of this
section), the issue of regularity (or lack thereof) has always been of
considerable importance in this line of work (as early as
1970, Birman and Solomyak noted in \cite{BS70} that
``{\it there has been recently some interest in obtaining the classical
asymptotic spectral formulas under the weakest possible hypotheses}.''). 
The interested reader may consult
the paper \cite{BS79} by Birman and Solomyak (see also \cite{BS72}, 
\cite{BS73}), as well as
the article \cite{Da97} by Davies for some very readable, highly
informative surveys underscoring this point (collectively, these papers
also contain more than 500 references concerning this circle of ideas).

We note that the results in Sections \ref{s3}--\ref{s8} originally appeared in 
\cite{GM11}, while those in Sections \ref{s10}--\ref{s1vi} originally 
appeared in \cite{AGMT10}. 

Finally, a notational comment: For obvious reasons in connection 
with quantum mechanical applications, we will, with a slight abuse of 
notation, dub $-\Delta$ (rather than $\Delta$) as the ``Laplacian'' in 
this survey.

\section{The Abstract Krein--von Neumann Extension}
\label{s2}

To get started, we briefly elaborate on the notational conventions used
throughout this survey and especially throughout this section which collects abstract 
material on the Krein--von Neumann extension. Let $\cH$ be a separable complex Hilbert space, 
$(\dott,\dott)_{\cH}$ the scalar product in $\cH$ (linear in
the second factor), and $I_{\cH}$ the identity operator in $\cH$.
Next, let $T$ be a linear operator mapping (a subspace of) a
Banach space into another, with $\dom(T)$ and $\ran(T)$ denoting the
domain and range of $T$. The closure of a closable operator $S$ is
denoted by $\ol S$. The kernel (null space) of $T$ is denoted by
$\ker(T)$. The spectrum, essential spectrum, and resolvent set of a closed linear operator 
in $\cH$ will be denoted by $\sigma(\cdot)$, $\sigma_{\rm ess}(\cdot)$, 
and $\rho(\cdot)$, respectively. The
Banach spaces of bounded and compact linear operators on $\cH$ are
denoted by $\cB(\cH)$ and $\cB_\infty(\cH)$, respectively. Similarly,
the Schatten--von Neumann (trace) ideals will subsequently be denoted
by $\cB_p(\cH)$, $p\in (0,\infty)$. The analogous notation $\cB(\cX_1,\cX_2)$,
$\cB_\infty (\cX_1,\cX_2)$, etc., will be used for bounded, compact,
etc., operators between two Banach spaces $\cX_1$ and $\cX_2$. 
Moreover, $\cX_1\hookrightarrow \cX_2$ denotes the continuous embedding
of the Banach space $\cX_1$ into the Banach space $\cX_2$. 
In addition, $U_1 \dotplus U_2$ denotes the direct sum of the subspaces $U_1$ 
and $U_2$ of a Banach space $\cX$; and $V_1 \oplus V_2$ represents the orthogonal direct 
sum of the subspaces $V_j$, $j=1,2$, of a Hilbert space $\cH$. 

Throughout this manuscript, if $X$ denotes a Banach space, $X^*$ 
denotes the {\it adjoint space} of continuous conjugate linear functionals 
on $X$, that is, the {\it conjugate dual space} of $X$ (rather than the usual 
dual space of continuous linear functionals 
on $X$). This avoids the well-known awkward distinction between adjoint 
operators in Banach and Hilbert spaces (cf., e.g., the pertinent discussion 
in \cite[p.\ 3, 4]{EE89}). 

Given a reflexive Banach space $\cV$ and $T \in\cB(\cV,\cV^*)$,
the fact that $T$ is self-adjoint is defined by the requirement that
\begin{equation}\label{B.5}
{}_{\cV}\langle u,T v \rangle_{\cV^*}
= {}_{\cV^*}\langle T u, v \rangle_{\cV}
= \ol{{}_{\cV}\langle v, T u \rangle_{\cV^*}}, \quad u, v \in \cV,
\end{equation}
where in this context bar denotes complex conjugation, $\cV^*$ is the conjugate dual of $\cV$, and
${}_{\cV}\langle\dott,\dott\rangle_{\cV^*}$ stands for the $\cV, \cV^*$ pairing.

A linear operator $S:\dom(S)\subseteq\cH\to\cH$, is called {\it symmetric}, if
\begin{equation}\label{Pos-2}
(u,Sv)_\cH=(Su,v)_\cH, \quad u,v\in \dom (S).
\end{equation}
If $\dom(S)=\cH$, the classical Hellinger--Toeplitz theorem guarantees that $S\in\cB(\cH)$, in which situation $S$ is readily seen to be self-adjoint. In general, however, symmetry is a considerably weaker property than self-adjointness and a classical problem in functional analysis is that of determining all self-adjoint extensions in $\cH$ of a given unbounded symmetric operator of equal and nonzero deficiency indices. (Here self-adjointness of an operator $\wti S$ in $\cH$, is 
of course defined as usual by $\big(\wti S\big)^* = \wti S$.) In this manuscript we will be particularly interested in this question within the class of 
densely defined (i.e., $\ol{\dom(S)}=\cH$), nonnegative operators (in fact, in most instances $S$ will even turn out to be strictly positive) and we focus almost exclusively on self-adjoint extensions that are nonnegative operators. In the latter scenario, there are two distinguished constructions which we will briefly review next. 

To set the stage, we recall that a linear operator $S:\dom(S)\subseteq\cH\to \cH$
is called {\it nonnegative} provided
\begin{equation}\label{Pos-1}
(u,Su)_\cH\geq 0, \quad u\in \dom(S).
\end{equation}
(In particular, $S$ is symmetric in this case.) $S$ is called {\it strictly positive}, if for some 
$\varepsilon >0$, $(u,Su)_\cH\geq \varepsilon \|u\|_{\cH}^2$, $u\in \dom(S)$. 
Next, we recall that $A \leq B$ for two self-adjoint operators in $\cH$ if 
\begin{align}
\begin{split}
& \dom\big(|A|^{1/2}\big) \supseteq \dom\big(|B|^{1/2}\big) \, \text{ and } \\ 
& \big(|A|^{1/2}u, U_A |A|^{1/2}u\big)_{\cH} \leq \big(|B|^{1/2}u, U_B |B|^{1/2}u\big)_{\cH}, \quad  
u \in \dom\big(|B|^{1/2}\big),      \lb{AleqB} 
\end{split}
\end{align}
where $U_C$ denotes the partial isometry in $\cH$ in the polar decomposition of 
a densely defined closed operator $C$ in $\cH$, $C=U_C |C|$, $|C|=(C^* C)^{1/2}$. (If 
in addition, $C$ is self-adjoint, then $U_C$ and $|C|$ commute.) 
We also recall (\cite[Section\ I.6]{Fa75}, \cite[Theorem\ VI.2.21]{Ka80}) that if $A$ and $B$ are both 
self-adjoint and nonnegative in $\cH$, then 
\begin{equation}
0 \leq A\leq B  \, \text{ if and only if } \, (B + a I_\cH)^{-1} \leq (A + a I_\cH)^{-1} 
\, \text{ for all $a>0$,}     \label{PPa-1} 
\end{equation}
(which implies $0 \leq A^{1/2 }\leq B^{1/2}$) and
\begin{equation}
\ker(A) =\ker\big(A^{1/2}\big)
\end{equation}
(with $C^{1/2}$ the unique nonnegative square root of a nonnegative self-adjoint operator $C$ in $\cH$).

For simplicity we will always adhere to the conventions that $S$ is a linear, unbounded, densely defined, nonnegative (i.e., $S\geq 0$) operator in $\cH$, and that $S$ has nonzero deficiency indices.  In particular,
\begin{equation}
{\rm def} (S) = \dim (\ker(S^*-z I_{\cH})) \in \bbN\cup\{\infty\}, 
\quad z\in \bbC\backslash [0,\infty), 
\lb{DEF}
\end{equation}
is well-known to be independent of $z$. 
Moreover, since $S$ and its closure $\ol{S}$ have the same self-adjoint extensions in $\cH$, we will without loss of generality assume that $S$ is closed in the remainder of this section.

The following is a fundamental result to be found in M.\ Krein's celebrated 1947 paper
 \cite{Kr47} (cf.\ also Theorems\ 2 and 5--7 in the English summary on page 492): 
 
\begin{theorem}\label{T-kkrr}
Assume that $S$ is a densely defined, closed, nonnegative operator in $\cH$. Then, among all 
nonnegative self-adjoint extensions of $S$, there exist two distinguished ones, $S_K$ and $S_F$, which are, respectively, the smallest and largest
$($in the sense of order between self-adjoint operators, cf.\ \eqref{AleqB}$)$ such extension. Furthermore, a nonnegative self-adjoint operator $\widetilde{S}$ is a self-adjoint extension of $S$ if and only if $\widetilde{S}$ satisfies 
\begin{equation}\label{Fr-Sa}
S_K\leq\widetilde{S}\leq S_F.
\end{equation}
In particular, \eqref{Fr-Sa} determines $S_K$ and $S_F$ uniquely. \\

In addition,  if $S\geq \varepsilon I_{\cH}$ for some $\varepsilon >0$, one has 
$S_F \geq \varepsilon I_{\cH}$, and 
\begin{align}
\dom (S_F) &= \dom (S) \dotplus (S_F)^{-1} \ker (S^*),     \lb{SF}  \\
\dom (S_K) & = \dom (S) \dotplus \ker (S^*),    \lb{SK}   \\
\dom (S^*) & = \dom (S) \dotplus (S_F)^{-1} \ker (S^*) \dotplus \ker (S^*)  \no \\
& = \dom (S_F) \dotplus \ker (S^*),    \lb{S*} 
\end{align}
in particular, 
\begin{equation} \label{Fr-4Tf}
\ker(S_K)= \ker\big((S_K)^{1/2}\big)= \ker(S^*) = \ran(S)^{\bot}.
\end{equation} 
\end{theorem}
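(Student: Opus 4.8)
The plan is to prove Theorem~\ref{T-kkrr} by first establishing the existence and extremality of $S_F$ via the standard Friedrichs construction, then constructing $S_K$ directly through the explicit domain formula \eqref{SK}, and finally verifying the order relations and the remaining identities \eqref{SF}--\eqref{Fr-4Tf} under the strict positivity hypothesis. For the Friedrichs extension $S_F$, I would introduce the closure of the form $u \mapsto (u,Su)_{\cH}$ on $\dom(S)$; since $S \geq 0$, this form is closable and its closure is associated (via the first representation theorem) with a nonnegative self-adjoint operator $S_F$, which by construction is the largest nonnegative self-adjoint extension — any nonnegative self-adjoint extension $\wti S$ has form domain contained in that of $S_F$ and agrees with the form of $S_F$ on $\dom(S)$, giving $\wti S \leq S_F$ in the sense of \eqref{AleqB}. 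When $S \geq \varepsilon I_{\cH}$, the form inequality is inherited by $S_F$, so $S_F \geq \varepsilon I_{\cH}$ and in particular $0 \in \rho(S_F)$, which is exactly what makes $(S_F)^{-1}$ well-defined and bounded.

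Next I would construct $S_K$. Assuming $S \geq \varepsilon I_{\cH}$, I would define an operator on the domain $\dom(S) \dotplus \ker(S^*)$ by declaring it to act as $S$ on $\dom(S)$ and as $0$ on $\ker(S^*)$; the directness of the sum follows because $\ran(S) = [\ker(S^*)]^{\bot}$ is closed (as $S \geq \varepsilon I_{\cH}$ forces $\ran(S)$ closed) so $\dom(S) \cap \ker(S^*) = \{0\}$. One checks this operator is symmetric, nonnegative, and in fact self-adjoint — the cleanest route is to verify it coincides with the operator whose inverse (on $\ran(S_K) = \ran(S)$, with $\ker(S_K) = \ker(S^*)$) is the bounded nonnegative operator obtained by restricting $(S_F)^{-1}$ appropriately, or equivalently to use Krein's resolvent formula. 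Extremality — that $S_K$ is the smallest nonnegative self-adjoint extension, i.e. $S_K \leq \wti S$ for every nonnegative self-adjoint extension $\wti S$ — I would obtain by passing to inverses using \eqref{PPa-1}: on the relevant subspaces one shows $(\wti S + aI_{\cH})^{-1} \leq (S_K + aI_{\cH})^{-1}$ for all $a > 0$, which reduces to a form inequality among extensions that is tractable because all such $\wti S$ restrict to $S$ on $\dom(S)$. The converse direction of the "if and only if" — that any $\wti S$ satisfying \eqref{Fr-Sa} is a self-adjoint extension of $S$ — follows since $S_K \leq \wti S \leq S_F$ pins the form domain and form values on $\dom(S)$, forcing $\wti S \supseteq S$.

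For the domain decompositions, \eqref{SK} is the definition, \eqref{SF} follows from von Neumann's formula $\dom(S^*) = \dom(S) \dotplus \ker(S^* - iI_{\cH}) \dotplus \ker(S^* + iI_{\cH})$ combined with the fact that $S_F$, having bounded inverse, satisfies $\dom(S_F) = \dom(S) \dotplus (S_F)^{-1}\ker(S^*)$ — the key point being that for $w \in \ker(S^*)$, the element $(S_F)^{-1} w \in \dom(S_F) \subseteq \dom(S^*)$ and $S^*(S_F)^{-1}w = S_F (S_F)^{-1} w = w$, so modulo $\dom(S)$ these span a complement. Then \eqref{S*} is the concatenation of \eqref{SF} and \eqref{SK} inside $\dom(S^*)$, and directness across all three summands uses $\dom(S_F) \cap \ker(S^*) = \{0\}$ (again from $0 \in \rho(S_F)$). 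Finally \eqref{Fr-4Tf}: $\ker(S_K) = \ker(S^*)$ is immediate from \eqref{SK} and the defining action of $S_K$; $\ker(S^*) = \ran(S)^{\bot} = \ran(\ol S)^{\bot}$ is the standard orthogonality relation; and $\ker(S_K) = \ker((S_K)^{1/2})$ is the general fact about nonnegative self-adjoint operators recalled just before the theorem statement.

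The main obstacle I anticipate is the self-adjointness of the candidate operator for $S_K$ defined by the explicit formula \eqref{SK}: symmetry and nonnegativity are routine, but showing the operator has no proper symmetric extension — equivalently, that $\dom(S^*_K) = \dom(S) \dotplus \ker(S^*)$ rather than something larger — requires a genuine argument. The efficient way around this is to construct $S_K$ abstractly first (e.g.\ as the inverse of a carefully chosen bounded nonnegative self-adjoint operator on $\ran(S)$, extended by zero on $\ker(S^*)$, which is manifestly self-adjoint), then \emph{prove} its domain equals the right-hand side of \eqref{SK} and that it is extremal; the explicit formula then becomes a computed consequence rather than a definition one must validate. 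Managing the bookkeeping between the full space $\cH$, the subspace $\ran(S)$, and the orthogonal splitting $\cH = \ran(S) \oplus \ker(S^*)$ — and keeping the partial-isometry conventions in \eqref{AleqB} straight when comparing extensions with nontrivial kernels — is the part that demands the most care.
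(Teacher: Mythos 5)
The paper gives no proof of Theorem \ref{T-kkrr}; it is quoted from Krein's 1947 paper (with Alonso--Simon and Ando--Nishio cited for modern treatments), so your proposal must stand on its own. Its architecture is the standard one, and the routine items (the Friedrichs construction via the closed form, the decompositions \eqref{SF}--\eqref{Fr-4Tf} from the bounded invertibility of $S_F$ and the orthogonal splitting $\cH=\ran(S)\oplus\ker(S^*)$) would go through essentially as you describe. But note first a directional error: in the order \eqref{AleqB}, $A\leq B$ requires $\dom\big(|B|^{1/2}\big)\subseteq\dom\big(|A|^{1/2}\big)$, so to obtain $\wti S\leq S_F$ you need $\dom\big((S_F)^{1/2}\big)\subseteq\dom\big(\wti S^{1/2}\big)$ --- the Friedrichs extension is the largest operator precisely because its form domain (the form closure of $\dom(S)$) is the \emph{smallest} --- whereas you wrote that the form domain of $\wti S$ is contained in that of $S_F$, which is the reverse inclusion and would prove the wrong extremality.

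More seriously, the two assertions that carry the weight of the theorem --- that the operator defined on $\dom(S)\dotplus\ker(S^*)$ by $u+w\mapsto Su$ is self-adjoint, and that it lies below every nonnegative self-adjoint extension --- are only named, not proved. You correctly identify that one should realize $\big(\hatt S_K\big)^{-1}$ as the compression of $(S_F)^{-1}$ to $[\ker(S^*)]^{\bot}$ (Krein's formula \eqref{SKinv}), but verifying that this self-adjoint operator extends $S$ and has domain exactly $\dom(S)\dotplus\ker(S^*)$ is the substance of the proof and is absent; likewise ``reduces to a form inequality \dots that is tractable'' is not an argument for minimality (the clean routes are the variational characterization \eqref{an-T1}--\eqref{an-T2} plus Cauchy--Schwarz, or the resolvent comparison \eqref{PPa-1} applied to Krein's formula). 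In addition, the first half of the theorem is asserted for every densely defined, closed, nonnegative $S$, while your construction of $S_K$ uses $S\geq\varepsilon I_{\cH}$ in an essential way: for merely nonnegative $S$ the sum $\dom(S)+\ker(S^*)$ need not be direct and \eqref{SK} fails, the correct general description being \eqref{Fr-2X}; a limiting argument from $S+\varepsilon I_{\cH}$ or the Ando--Nishio construction is needed to cover that case. Finally, the converse half of the ``if and only if'' needs more than ``pins the form values on $\dom(S)$'': to conclude $\wti S\supseteq S$ from the representation theorem you must show that the form of $\wti S$ equals $(u,Sv)_{\cH}$ for \emph{all} $u\in\dom\big(\wti S^{1/2}\big)$ and $v\in\dom(S)$, not merely for $u\in\dom(S)$, and this step genuinely uses both inequalities in \eqref{Fr-Sa}.
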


Here the operator inequalities in \eqref{Fr-Sa} are understood in the sense of 
\eqref{AleqB} and hence they can  equivalently be written as
\begin{equation}
(S_F + a I_{\cH})^{-1} \le \big(\wti S + a I_{\cH}\big)^{-1} \le (S_K + a I_{\cH})^{-1} 
\, \text{ for some (and hence for all\,) $a > 0$.}    \lb{Res}
\end{equation}

We will call the operator $S_K$ the {\it Krein--von Neumann extension}
of $S$. See \cite{Kr47} and also the discussion in \cite{AS80}, \cite{AT03}, \cite{AT05}. It should be
noted that the Krein--von Neumann extension was first considered by von Neumann 
\cite{Ne29} in 1929 in the case where $S$ is strictly positive, that is, if 
$S \geq \varepsilon I_{\cH}$ for some $\varepsilon >0$. (His construction appears in the proof of Theorem 42 on pages 102--103.) However, von Neumann did not isolate the extremal property of this extension as described in \eqref{Fr-Sa} and \eqref{Res}. M.\ Krein \cite{Kr47}, \cite{Kr47a} was the first to systematically treat the general case $S\geq 0$ and to study all nonnegative self-adjoint extensions of $S$, illustrating the special role of the {\it Friedrichs extension} (i.e., the ``hard'' extension) $S_F$ of $S$ and the Krein--von Neumann (i.e., the ``soft'') extension $S_K$ of $S$ as extremal cases when considering all nonnegative extensions of $S$. For a recent exhaustive treatment of 
self-adjoint extensions of semibounded operators we refer to \cite{AT02}--\cite{AT09}. 

For classical references on the subject of self-adjoint extensions of semibounded operators (not necessarily restricted to the Krein--von Neumann extension) we refer to Birman \cite{Bi56}, \cite{Bi08}, Friedrichs \cite{Fr34}, Freudenthal \cite{Fr36}, Grubb \cite{Gr68}, \cite{Gr70},  
Krein \cite{Kr47a}, {\u S}traus \cite{St73}, and Vi{\u s}ik \cite{Vi63} (see also the monographs by Akhiezer and Glazman \cite[Sect.\ 109]{AG81a}, 
Faris \cite[Part III]{Fa75}, and the recent book by Grubb \cite[Sect.\ 13.2]{Gr09}).  

An intrinsic description of the Friedrichs extension $S_F$ of $S\geq 0$ due to 
Freudenthal \cite{Fr36} in 1936 describes $S_F$ as the operator 
$S_F:\dom(S_F)\subset\cH\to\cH$ given by   
\begin{align}
& S_F u:=S^*u,   \no \\
& u \in \dom(S_F):=\big\{v\in\dom(S^*)\,\big|\,  \mbox{there exists} \, 
\{v_j\}_{j\in\bbN}\subset \dom(S),    \label{Fr-2} \\
& \quad \mbox{with} \, \lim_{j\to\infty}\|v_j-v\|_{\cH}=0  
\mbox{ and } ((v_j-v_k),S(v_j-v_k))_\cH\to 0 \mbox{ as }  j,k\to\infty\big\}.     \no 
\end{align}
Then, as is well-known,  
\begin{align}
& S_F \geq 0,  \label{Fr-4}  \\
& \dom\big((S_F)^{1/2}\big)=\big\{v\in\cH\,\big|\, \mbox{there exists} \, 
\{v_j\}_{j\in\bbN}\subset \dom(S),   \label{Fr-4J} \\
& \quad \mbox{with} \lim_{j\to\infty}\|v_j-v\|_{\cH}=0  
\mbox{ and } ((v_j-v_k),S(v_j-v_k))_\cH\to 0\mbox{ as }
j,k\to\infty\big\},  \no 
\end{align}
and
\begin{equation}\label{Fr-4H}
S_F=S^*|_{\dom(S^*)\cap\dom((S_{F})^{1/2})}.
\end{equation}

Equations \eqref{Fr-4J} and \eqref{Fr-4H} are intimately related to the definition of $S_F$ via (the closure of) the sesquilinear form generated by $S$ as follows: One introduces the sesquilinear form
\begin{equation}
q_S(f,g)=(f,Sg)_{\cH}, \quad f, g \in \dom(q_S)=\dom(S). 
\end{equation}
Since $S\geq 0$, the form $q_S$ is closable and we denote by $Q_S$ the closure of 
$q_S$. Then $Q_S\geq 0$ is densely defined and closed. By the first and second representation theorem for forms (cf., e.g., \cite[Sect.\ 6.2]{Ka80}), $Q_S$ is uniquely associated with a nonnegative, self-adjoint operator in $\cH$. This operator is precisely the Friedrichs extension, $S_F \geq 0$, of $S$, and hence,
\begin{align}
\begin{split} 
& Q_S(f,g)=(f,S_F g)_{\cH}, \quad f \in \dom(Q_S), \, g \in \dom(S_F),  \lb{Fr-Q} \\ 
& \dom(Q_S) = \dom\big((S_F)^{1/2}\big).     
\end{split} 
\end{align}

An intrinsic description of the Krein--von Neumann extension $S_K$ of $S\geq 0$ has been given by Ando and Nishio \cite{AN70} in 1970, where $S_K$ has been characterized as the operator 
$S_K:\dom(S_K)\subset\cH\to\cH$ given by
\begin{align} 
& S_Ku:=S^*u,   \no \\
& u \in \dom(S_K):=\big\{v\in\dom(S^*)\,\big|\,\mbox{there exists} \, 
\{v_j\}_{j\in\bbN}\subset \dom(S),    \label{Fr-2X}  \\ 
& \quad \mbox{with} \, \lim_{j\to\infty} \|Sv_j-S^*v\|_{\cH}=0  
\mbox{ and } ((v_j-v_k),S(v_j-v_k))_\cH\to 0 \mbox{ as } j,k\to\infty\big\}.  \no
\end{align}

By \eqref{Fr-2} one observes that shifting $S$ by a constant commutes with the operation of taking the Friedrichs extension of $S$, that is, for any 
$c\in\bbR$,
\begin{equation}
(S + c I_{\cH})_{F} = S_F + c I_{\cH},    \lb{Fr-c}
\end{equation}
but by \eqref{Fr-2X}, the analog of \eqref{Fr-c} for the Krein--von Neumann extension 
$S_K$ fails.

At this point we recall a result due to Makarov and Tsekanovskii \cite{MT07}, concerning symmetries (e.g., the rotational symmetry exploited in Section \ref{s1vi}), and more generally, a scale invariance,  shared by $S$, $S^*$, $S_F$, and $S_K$ (see also \cite{HK09}). Actually, we will prove a slight extension of the principal result in \cite{MT07}:

\begin{proposition}  \lb{p2.2a}
Let $\mu > 0$, suppose that $V, V^{-1} \in \cB(\cH)$, and assume $S$ to be a densely defined, closed, nonnegative operator in $\cH$ satisfying  
\begin{equation}
V S V^{-1} = \mu S,    \lb{VS}
\end{equation}
and
\begin{equation}
V S V^{-1} = (V^*)^{-1} S V^* \, \text{ $($or equivalently, $(V^* V)^{-1} S (V^* V) = S$\,$)$.}  
\end{equation}
Then also $S^*$, $S_F$, and $S_K$ satisfy 
\begin{align}
(V^* V)^{-1} S^* (V^* V) &= S^*,  \,\,\,\quad  V S^* V^{-1} = \mu S^*,    \lb{VS*} \\
(V^* V)^{-1} S_F (V^* V) &= S_F,  \, \quad  V S_F V^{-1} = \mu S_F,  \lb{VSF}\\
(V^* V)^{-1} S_K (V^* V) &= S_K,  \quad V S_K V^{-1} = \mu S_K.  \lb{VSK} 
\end{align}
\end{proposition}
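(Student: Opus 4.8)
The plan is to read off each assertion from the intrinsic, sequential descriptions of $S^*$, $S_F$, and $S_K$ — namely \eqref{Fr-2} for the Friedrichs extension and \eqref{Fr-2X} for the Krein--von Neumann extension — once a few purely algebraic consequences of the two hypotheses on $V$ have been recorded. First I would note that, since $V,V^{-1}\in\cB(\cH)$, the relation \eqref{VS} forces $V^{\pm1}\dom(S)=\dom(S)$ and $SV^{-1}=\mu V^{-1}S$ on $\dom(S)$. Taking adjoints in \eqref{VS} and in $(V^*V)^{-1}S(V^*V)=S$ (using that $V^*V$ is bounded, self-adjoint and boundedly invertible) yields $(V^*V)^{-1}S^*(V^*V)=S^*$ and $(V^*)^{-1}S^*V^*=\mu S^*$; writing $V=(V^*)^{-1}(V^*V)$ and using that $V^*V$ commutes with $S^*$, the second of these upgrades to $VS^*V^{-1}=\mu S^*$, which is \eqref{VS*} (in particular $V^{\pm1}\dom(S^*)=\dom(S^*)$ and $S^*V^{-1}=\mu V^{-1}S^*$ on $\dom(S^*)$). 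A short computation combining $V^{-1}SV=\mu^{-1}S$ with $(V^*V)S(V^*V)^{-1}=S$ gives $(VV^*)S(VV^*)^{-1}=S$; hence both $V^{-1}$ and $V^*$ again satisfy the hypotheses of the proposition, now with $\mu$ replaced by $\mu^{-1}$. It therefore suffices to establish the scaling relations $VS_FV^{-1}=\mu S_F$ and $VS_KV^{-1}=\mu S_K$: the domain equalities $V\dom(S_F)=\dom(S_F)$ and $V\dom(S_K)=\dom(S_K)$ follow by applying the corresponding containments to $V^{-1}$, and the $(V^*V)$-invariances in \eqref{VSF}, \eqref{VSK} then follow by additionally applying the scaling relations to $V^*$ in place of $V$ (with $S^*$ in place of $S$, via \eqref{VS*}) and composing.

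The one genuinely analytic ingredient is a form inequality: if $R\in\cB(\cH)$ is self-adjoint and commutes with $S$ in the sense that $R\dom(S)=\dom(S)$ and $RS=SR$ on $\dom(S)$, then
\begin{equation*}
(Ru,Su)_{\cH}\leq\|R\|_{\cB(\cH)}\,(u,Su)_{\cH},\quad u\in\dom(S).
\end{equation*}
I would prove this by writing $\|R\|_{\cB(\cH)}I_{\cH}-R=B^2$ with $B=\big(\|R\|_{\cB(\cH)}I_{\cH}-R\big)^{1/2}\in\cB(\cH)$ nonnegative, observing (via approximation of $B$ by polynomials in $R$ and closedness of $S$) that $B$ also maps $\dom(S)$ into itself and commutes with $S$ there, and concluding $\big((\|R\|_{\cB(\cH)}I_{\cH}-R)u,Su\big)_{\cH}=(Bu,SBu)_{\cH}\geq 0$ since $S\geq 0$.

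With this in hand, the argument for $S_F$ runs as follows. Let $v\in\dom(S_F)$ with approximating sequence $\{v_j\}_{j\in\bbN}\subset\dom(S)$ as in \eqref{Fr-2}, so $v_j\to v$ in $\cH$ and $\big((v_j-v_k),S(v_j-v_k)\big)_{\cH}\to 0$. Set $w_j:=V^{-1}v_j\in\dom(S)$; then $w_j\to V^{-1}v$ in $\cH$, and since $SV^{-1}=\mu V^{-1}S$,
\begin{align*}
\big((w_j-w_k),S(w_j-w_k)\big)_{\cH}&=\mu\big((VV^*)^{-1}(v_j-v_k),S(v_j-v_k)\big)_{\cH}\\
&\leq\mu\|(VV^*)^{-1}\|_{\cB(\cH)}\,\big((v_j-v_k),S(v_j-v_k)\big)_{\cH}\to 0,
\end{align*}
the inequality being the form estimate with $R=(VV^*)^{-1}$ (legitimate since $VV^*$ commutes with $S$). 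As $V^{-1}v\in\dom(S^*)$ with $S^*V^{-1}v=\mu V^{-1}S^*v$, the description \eqref{Fr-2} yields $V^{-1}v\in\dom(S_F)$ and $S_FV^{-1}v=\mu V^{-1}S_Fv$. Applying this to $V^{-1}$ in place of $V$ gives the reverse containment, hence $V\dom(S_F)=\dom(S_F)$ and $VS_FV^{-1}=\mu S_F$. The argument for $S_K$ is verbatim the same, using \eqref{Fr-2X} instead of \eqref{Fr-2}; the only extra point is that $\|Sw_j-S^*(V^{-1}v)\|_{\cH}=\mu\|V^{-1}(Sv_j-S^*v)\|_{\cH}\to 0$, which is immediate from $Sw_j=\mu V^{-1}Sv_j$ and $S^*(V^{-1}v)=\mu V^{-1}S^*v$.

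The step I expect to be the main obstacle is the form inequality together with the verification that $(VV^*)^{-1}$ — and the auxiliary square root $B$ — really commute with $S$ on all of $\dom(S)$ (not merely on a core), since this is where the nonnegativity and closedness of $S$ enter. Everything else is bookkeeping with bounded, boundedly invertible operators and the intrinsic extension formulas; one should, however, take care at the outset to record that the two hypotheses force $V^{\pm1}$ to preserve $\dom(S)$ and $\dom(S^*)$, which is what makes the substitutions $w_j=V^{-1}v_j$ legitimate.
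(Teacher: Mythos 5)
Your proof is correct and follows essentially the same route as the paper's: derive \eqref{VS*} by taking adjoints, and then read off \eqref{VSF} and \eqref{VSK} from the intrinsic characterizations \eqref{Fr-2} and \eqref{Fr-2X} of $S_F$ and $S_K$. The only difference is that you make explicit---via the form inequality for the bounded positive operator $(VV^*)^{-1}$ commuting with $S$---the step that the paper compresses into ``replacing $S$ and $S^*$ in \eqref{Fr-2} and \eqref{Fr-2X}'', so your write-up supplies detail the paper leaves implicit rather than taking a different approach.
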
 
\begin{proof}
Applying \cite[p.\ 73, 74]{We80}, \eqref{VS} yields $V S V^{-1} = (V^*)^{-1} S V^*$. The latter 
relation is equivalent to $(V^* V)^{-1} S (V^* V) = S$ and hence also equivalent to 
$(V^* V) S (V^* V)^{-1} = S$. Taking adjoints (and applying \cite[p.\ 73, 74]{We80} again) 
then yields $(V^*)^{-1} S^* V^* = V S^* V^{-1}$; the latter is equivalent to 
$(V^* V)^{-1} S^* (V^* V) = S^*$ and hence also equivalent to $(V^* V) S^* (V^* V)^{-1} = S$. 
Replacing $S$ and $S^*$ by $(V^* V)^{-1} S (V^* V)$ and $(V^* V)^{-1} S^* (V^* V)$, respectively, 
in \eqref{Fr-2}, and subsequently, in \eqref{Fr-2X}, then yields that 
\begin{equation}
(V^* V)^{-1} S_F (V^* V) = S_F  \, \text{ and } \,  
(V^* V)^{-1} S_K (V^* V) = S_K. 
\end{equation}
The latter are of course equivalent to 
\begin{equation}
(V^* V) S_F (V^* V)^{-1} = S_F  \, \text{ and } \,  
(V^* V) S_K (V^* V)^{-1} = S_K. 
\end{equation}
Finally, replacing $S$ by $V S V^{-1}$ and $S^*$ by $V S^* V^{-1}$ in  \eqref{Fr-2}  
then proves $V S_F V^{-1} = \mu S_F$. Performing the same replacement in  \eqref{Fr-2X} 
then yields $V S_K V^{-1} = \mu S_K$.  
\end{proof}

If in addition, $V$ is unitary (implying $V^* V = I_{\cH}$), Proposition \ref{p2.2a} immediately reduces to \cite[Theorem\ 2.2]{MT07}. In this special case one can also provide a quick alternative proof by directly invoking the inequalities \eqref{Res} and the fact that they are preserved under unitary equivalence. 

Similarly to Proposition \ref{p2.2a}, the following results also immediately follow from the characterizations \eqref{Fr-2} and \eqref{Fr-2X} of $S_F$ and $S_K$, respectively:

\begin{proposition}  \lb{p2.3}  
Let $U\colon\cH_1\to\cH_2$ be unitary from $\cH_1$ onto $\cH_2$ and assume $S$ to be a densely defined, closed, nonnegative operator in $\cH_1$ with adjoint 
$S^*$, Friedrichs extension $S_F$, and Krein--von Neumann extension $S_K$ in 
$\cH_1$, respectively. Then the adjoint, Friedrichs extension, and Krein--von Neumann extension of the nonnegative, closed, densely defined, symmetric operator $USU^{-1}$ in $\cH_2$ are given by 
\begin{align}
[USU^{-1}]^* = US^*U^{-1},  \quad 
[USU^{-1}]_F = US_F U^{-1},  \quad 
[USU^{-1}]_K = US_K U^{-1}
\end{align}
in $\cH_2$, respectively.
\end{proposition}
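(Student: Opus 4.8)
The plan is to verify the three identities directly from the intrinsic descriptions of the adjoint, the Friedrichs extension, and the Krein--von Neumann extension recalled earlier in this section, exactly as the statement suggests by invoking ``the characterizations \eqref{Fr-2} and \eqref{Fr-2X}.'' First I would record the elementary facts about conjugation by a unitary. If $U\colon\cH_1\to\cH_2$ is unitary, then for any densely defined operator $S$ in $\cH_1$ the operator $USU^{-1}$ with domain $\dom(USU^{-1}) = U[\dom(S)]$ is densely defined in $\cH_2$, and $[USU^{-1}]^* = U S^* U^{-1}$; this is standard (e.g.\ \cite[p.\ 73, 74]{We80}). Since $U$ preserves inner products, $USU^{-1}$ inherits symmetry, nonnegativity, and closedness from $S$. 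Moreover $\ker([USU^{-1}]^*) = U[\ker(S^*)]$, and one checks that $USU^{-1}$ has the same (nonzero) deficiency indices as $S$. This settles the adjoint identity and puts us in a position to apply the intrinsic formulas.

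Next I would treat the Friedrichs extension. By \eqref{Fr-2}, $\dom(S_F)$ consists of those $v \in \dom(S^*)$ that are approximated in $\cH_1$-norm by a sequence $\{v_j\}\subset\dom(S)$ which is Cauchy in the form norm $f\mapsto (f,Sf)_{\cH_1}^{1/2}$. Apply $U$: the element $Uv$ lies in $\dom([USU^{-1}]^*) = U[\dom(S^*)]$, the sequence $\{Uv_j\}$ lies in $U[\dom(S)] = \dom(USU^{-1})$, and because $U$ is isometric we have $\|Uv_j - Uv\|_{\cH_2} = \|v_j - v\|_{\cH_1}\to 0$ and
\begin{equation*}
\big((Uv_j - Uv_k), (USU^{-1})(Uv_j - Uv_k)\big)_{\cH_2} = \big((v_j-v_k), S(v_j-v_k)\big)_{\cH_1} \to 0.
\end{equation*}
Hence $Uv \in \dom([USU^{-1}]_F)$, and on this domain $[USU^{-1}]_F$ acts as $[USU^{-1}]^* = U S^* U^{-1}$, so $[USU^{-1}]_F (Uv) = U S^* v = U S_F v$. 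Running the same argument with $U^{-1}$ in place of $U$ (which is also unitary) gives the reverse inclusion, so $[USU^{-1}]_F = U S_F U^{-1}$. The Krein--von Neumann case is identical, replacing \eqref{Fr-2} by \eqref{Fr-2X}: the only difference is that the approximation requirement is $\|Sv_j - S^*v\|_{\cH_1}\to 0$, which under $U$ becomes $\|(USU^{-1})(Uv_j) - [USU^{-1}]^*(Uv)\|_{\cH_2}\to 0$, again by isometry of $U$; the form-Cauchy condition transforms exactly as above. Symmetry of the hypothesis under $U\leftrightarrow U^{-1}$ then yields $[USU^{-1}]_K = U S_K U^{-1}$.

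There is essentially no obstacle here; the proof is a routine ``transport of structure'' along the unitary, and the mild bookkeeping point to be careful about is merely that all three intrinsic characterizations are phrased purely in terms of $\cH$-norm convergence and the quadratic form $(\cdot,S\,\cdot)_{\cH}$, both of which are manifestly preserved by a unitary. (Alternatively, for $S_F$ and $S_K$ one could instead argue via the extremal characterization \eqref{Fr-Sa}: conjugation by $U$ is an order isomorphism on nonnegative self-adjoint operators, so it maps the smallest and largest nonnegative self-adjoint extensions of $S$ to the smallest and largest of $USU^{-1}$, respectively; but the form-characterization argument above is the most direct and is what the surrounding text points to.)
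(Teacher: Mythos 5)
Your proof is correct and follows exactly the route the paper indicates: the paper states that Proposition \ref{p2.3} ``immediately follows from the characterizations \eqref{Fr-2} and \eqref{Fr-2X}'' of $S_F$ and $S_K$, and your argument simply writes out that transport of the two intrinsic characterizations (plus the standard adjoint identity) along the unitary $U$. No gaps.
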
 

\begin{proposition}  \lb{p2.4}  
Let $J\subseteq \bbN$ be some countable index set and consider 
$\cH = \bigoplus_{j\in J} \cH_j$ and $S=\bigoplus_{j\in J} S_j$, where each $S_j$ is a densely defined, closed, nonnegative operator in 
$\cH_j$, $j\in J$. Denoting by $(S_j)_F$ and $(S_j)_K$ the Friedrichs and 
Krein--von Neumann extension of $S_j$ in $\cH_j$, $j\in J$, one infers  
\begin{equation}
S^*=\bigoplus_{j\in J} \; (S_j)^*, \quad S_F=\bigoplus_{j\in J} \; (S_j)_F, 
\quad S_K = \bigoplus_{j\in J} \; (S_j)_K. 
\end{equation}
\end{proposition}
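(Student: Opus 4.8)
The plan is to prove each of the three identities in Proposition~\ref{p2.4} separately, and to reduce everything to the two intrinsic characterizations \eqref{Fr-2} and \eqref{Fr-2X}, exactly as was done for Propositions~\ref{p2.2a} and \ref{p2.3}. First I would record the elementary fact that $S=\bigoplus_{j\in J}S_j$ with each $S_j$ densely defined, closed, and nonnegative is itself densely defined, closed, and nonnegative in $\cH=\bigoplus_{j\in J}\cH_j$, so that its Friedrichs and Krein--von Neumann extensions are well defined. The formula $S^*=\bigoplus_{j\in J}(S_j)^*$ is standard (an element $(u_j)_{j\in J}\in\cH$ lies in $\dom(S^*)$ if and only if $u_j\in\dom((S_j)^*)$ for each $j$ and $\sum_{j\in J}\|(S_j)^*u_j\|_{\cH_j}^2<\infty$, in which case $S^*(u_j)_{j\in J}=((S_j)^*u_j)_{j\in J}$); I would cite this rather than belabor it.

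The heart of the matter is the two extension formulas. For the Friedrichs extension, I would argue as follows. Write $u=(u_j)_{j\in J}\in\dom(S^*)$. By \eqref{Fr-2}, $u\in\dom(S_F)$ if and only if there is a sequence $\{v^{(k)}\}_{k\in\bbN}\subset\dom(S)$ with $\|v^{(k)}-u\|_{\cH}\to 0$ and $\big((v^{(k)}-v^{(\ell)}),S(v^{(k)}-v^{(\ell)})\big)_{\cH}\to 0$ as $k,\ell\to\infty$; and similarly $u_j\in\dom((S_j)_F)$ for each $j$ if and only if such approximating sequences exist componentwise. For the ``only if'' direction, an approximating sequence $v^{(k)}=(v_j^{(k)})_{j\in J}$ for $u$ restricts in each slot $j$ to an approximating sequence for $u_j$, because both $\|\cdot\|_{\cH}$-convergence and the form-Cauchy condition dominate their $j$-th components (using $\big((v^{(k)}-v^{(\ell)}),S(v^{(k)}-v^{(\ell)})\big)_{\cH}=\sum_{j\in J}\big((v_j^{(k)}-v_j^{(\ell)}),S_j(v_j^{(k)}-v_j^{(\ell)})\big)_{\cH_j}$ and nonnegativity of each summand). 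For the ``if'' direction one is handed, for each $j$, an approximating sequence $\{v_j^{(k)}\}_{k}$ for $u_j$, and must assemble a single approximating sequence for $u$ in $\cH$; here I would use a standard diagonal/truncation argument — pass to a subsequence in each slot so the $k$-th term is within $2^{-k}$ of $u_j$ in norm and the form-Cauchy tail is controlled, then for the $k$-th member of the assembled sequence keep the first $N_k$ slots (with $N_k\uparrow\infty$ chosen using $\sum_{j>N}\|u_j\|_{\cH_j}^2\to 0$) and set the remaining slots to $0$, noting $(0,\cdot)_{\cH_j}$ contributes nothing to the form. This produces a sequence in $\dom(S)=\bigoplus_{j}\dom(S_j)$ (finite support, hence genuinely in the algebraic direct sum) realizing \eqref{Fr-2} for $u$. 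The argument for $S_K$ is verbatim the same with \eqref{Fr-2X} in place of \eqref{Fr-2}: replace the norm-approximation condition $\|v_j^{(k)}-u_j\|_{\cH_j}\to0$ by $\|S_jv_j^{(k)}-(S_j)^*u_j\|_{\cH_j}\to0$ (which again splits over $j$ since $S^*u=((S_j)^*u_j)_{j}$), keeping the same form-Cauchy condition.

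The main obstacle — and it is a mild one — is the bookkeeping in the ``if'' direction for infinite $J$: the componentwise approximating sequences a priori have unbounded support when assembled naively, so one must truncate, and one must check that truncation does not spoil either the convergence to $u$ in $\cH$ or the form-Cauchy property. Both are immediate from nonnegativity of the summands (so dropping slots only decreases the form value) together with the tail estimate on $\|u\|_{\cH}^2=\sum_j\|u_j\|_{\cH_j}^2$; there is no analytic subtlety beyond choosing the cutoff indices $N_k$ to grow slowly enough. Alternatively, and perhaps more cleanly, I would phrase the whole proof at the level of quadratic forms via \eqref{Fr-Q}: the form $Q_S$ associated with $S_F$ is the closure of $q_S=\bigoplus_j q_{S_j}$, and the closure of an orthogonal direct sum of closable nonnegative forms is the orthogonal direct sum of their closures, whence $S_F=\bigoplus_j (S_j)_F$ by uniqueness in the representation theorem; for $S_K$ one can then invoke \eqref{Fr-4Tf} together with the known description of $S_K$ as the form sum / the "soft" extension, but since the paper has foregrounded the Ando--Nishio characterization \eqref{Fr-2X}, I would present the direct argument above as the primary route and mention the form-theoretic one only as a remark.
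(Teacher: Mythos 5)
Your proposal is correct and follows the same route as the paper, which states Propositions \ref{p2.3} and \ref{p2.4} as immediate consequences of the intrinsic characterizations \eqref{Fr-2} and \eqref{Fr-2X}; you have simply supplied the componentwise splitting and the truncation/diagonal bookkeeping (where, for the form-Cauchy tails, the relevant summable quantity is $\sum_{j}(u_j,(S_j)_F u_j)=(u,S^*u)_{\cH}<\infty$ rather than $\|u\|_{\cH}^2$ alone) that the paper leaves implicit. The form-theoretic alternative you mention for $S_F$ is also sound, but the direct argument via \eqref{Fr-2} and \eqref{Fr-2X} is exactly what the paper intends.
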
 

The following is a consequence of a slightly more general result formulated
in \cite[Theorem 1]{AN70}: 

\begin{proposition}\label{Pr-an}
Let $S$ be a densely defined, closed, nonnegative operator in $\cH$.
Then $S_K$, the Krein--von Neumann extension of $S$, has the
property that
\begin{equation}\label{an-T1}
\dom\big((S_K)^{1/2}\big)=\biggl\{u\in\cH\,\bigg|\,\sup_{v\in\dom(S)}
\frac{|(u,Sv)_\cH|^2}{(v,Sv)_\cH}
<+\infty\biggr\},
\end{equation}
and
\begin{equation}\label{an-T2}
\big\|(S_K)^{1/2}u\big\|^2_\cH=\sup_{v\in\dom(S)}
\frac{|(u,Sv)_\cH|^2}{(v,Sv)_\cH}, \quad  u\in\dom\big((S_K)^{1/2}\big).  
\end{equation}
\end{proposition}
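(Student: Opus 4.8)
The plan is to derive Proposition~\ref{Pr-an} directly from the intrinsic characterization of the Krein--von Neumann extension recorded in \eqref{Fr-2X}, together with the first and second representation theorems for closed, nonnegative sesquilinear forms. The strategy is to identify the closed form $\mathfrak{q}_K$ associated with $S_K$ and to show that its form domain $\dom\big((S_K)^{1/2}\big)$ and the squared graph norm $\big\|(S_K)^{1/2}u\big\|_{\cH}^2$ coincide with the right-hand sides of \eqref{an-T1} and \eqref{an-T2}. Throughout one uses that $S_K$ is nonnegative and self-adjoint, so that $\dom\big((S_K)^{1/2}\big)$ equipped with $\big\|(S_K)^{1/2}\,\cdot\,\big\|_{\cH}^2 + \|\cdot\|_{\cH}^2$ is complete, and that $\big((S_K)^{1/2}u,(S_K)^{1/2}v\big)_{\cH} = (u,S_K v)_{\cH}$ whenever $v\in\dom(S_K)$.

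First I would show that the right-hand set in \eqref{an-T1}, call it $\cD$, together with the expression $N(u):=\sup_{v\in\dom(S)} |(u,Sv)_{\cH}|^2/(v,Sv)_{\cH}$ (with the convention that terms with $(v,Sv)_{\cH}=0$ contribute $0$, which is legitimate since $(v,Sv)_{\cH}=0$ forces $Sv=0$ by Cauchy--Schwarz for the nonnegative form $q_S$, hence $(u,Sv)_{\cH}=0$ as well), defines a candidate form. For $u\in\cD$ the linear functional $v\mapsto (u,Sv)_{\cH}$ is bounded relative to the pre-Hilbert norm $(v,Sv)_{\cH}^{1/2}$ on $\dom(S)$; one polarizes to obtain a sesquilinear form on $\cD$ and checks it is nonnegative. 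The heart of the argument is to verify that this form is \emph{closed}: if $u_j\in\cD$, $u_j\to u$ in $\cH$, and $N(u_j-u_k)\to 0$, then for each fixed $v\in\dom(S)$ with $(v,Sv)_{\cH}>0$ the numbers $(u_j,Sv)_{\cH}$ form a Cauchy sequence, converging to $(u,Sv)_{\cH}$, and a uniformity argument over the unit ball $\{v\in\dom(S):(v,Sv)_{\cH}\le 1\}$ gives $N(u-u_j)\to 0$ and $u\in\cD$. Closedness then guarantees, via the representation theorems, that $\cD$ with $N$ is the form of a unique nonnegative self-adjoint operator $B$.

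Next I would show $B = S_K$. On one hand, $S\subseteq B$: for $u\in\dom(S)$ one has, by Cauchy--Schwarz applied to the nonnegative form $q_S$, $|(u,Sv)_{\cH}|^2 \le (u,Su)_{\cH}(v,Sv)_{\cH}$, so $u\in\cD$ and $N(u)\le (u,Su)_{\cH}$; conversely taking $v=u$ yields $N(u)\ge (u,Su)_{\cH}$, hence $N(u)=(u,Su)_{\cH}=q_S(u,u)$, which shows $B$ extends $S$ as a nonnegative self-adjoint operator. On the other hand, $B\le S_F$: indeed, for $u\in\dom\big((S_F)^{1/2}\big)=\dom(Q_S)$ one approximates $u$ by $v_j\in\dom(S)$ in the $Q_S$-norm and passes to the limit in $|(u,Sv)_{\cH}|^2\le Q_S(u,u)(v,Sv)_{\cH}$ to get $u\in\cD$ with $N(u)\le Q_S(u,u)$; thus $\dom(Q_S)\subseteq\dom\big(\mathfrak{q}_B\big)$ with $\mathfrak{q}_B\le Q_S$ there, i.e.\ $B\le S_F$ in the sense of \eqref{AleqB}. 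But by Theorem~\ref{T-kkrr}, $S_K$ is the \emph{smallest} nonnegative self-adjoint extension of $S$, so $S_K\le B\le S_F$; to pin down $B=S_K$ I would instead argue that $B$ is dominated by \emph{every} nonnegative self-adjoint extension of $S$: if $\widetilde S\supseteq S$ is nonnegative self-adjoint with form $\widetilde{\mathfrak{q}}$, then for $u\in\dom\big(\widetilde{\mathfrak{q}}\,\big)$ and $v\in\dom(S)$, $|(u,Sv)_{\cH}|=|\widetilde{\mathfrak{q}}(u,v)|\le\widetilde{\mathfrak{q}}(u,u)^{1/2}(v,Sv)_{\cH}^{1/2}$, giving $u\in\cD$ and $N(u)\le\widetilde{\mathfrak{q}}(u,u)$, hence $B\le\widetilde S$. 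Taking $\widetilde S = S_K$ gives $B\le S_K$, and combined with $S_K\le B$ this yields $B=S_K$. Finally, unwinding the identification $B=S_K$ through the second representation theorem gives $\dom\big((S_K)^{1/2}\big)=\dom\big(\mathfrak{q}_B\big)=\cD$, which is \eqref{an-T1}, and $\big\|(S_K)^{1/2}u\big\|_{\cH}^2=\mathfrak{q}_B(u,u)=N(u)$, which is \eqref{an-T2}.

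The main obstacle I anticipate is the closedness verification for the form $(\cD,N)$: one must turn the pointwise-in-$v$ Cauchy condition into convergence of the supremum $N(u-u_j)$, which requires handling the supremum over the (generally non-compact) unit ball of $(\dom(S),q_S)$ carefully, most cleanly by realizing $N(u)^{1/2}$ as the norm of a bounded conjugate-linear functional on the completion of $(\dom(S),q_S)$ and invoking completeness of the dual. A secondary technical point is the degenerate case $(v,Sv)_{\cH}=0$, which must be dispatched at the outset (as noted above) so that the supremum is well defined; since we only assume $S\ge 0$ rather than $S\ge\varepsilon I_{\cH}$, this case can genuinely occur and is not merely a formality. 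Everything else is a direct application of the representation theorems and the extremal characterization of $S_K$ in Theorem~\ref{T-kkrr}.
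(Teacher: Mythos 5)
The paper does not actually prove Proposition~\ref{Pr-an}; it is quoted as a consequence of Theorem~1 of Ando--Nishio \cite{AN70}, followed only by the remark on the $0/0$ convention. Your proposal therefore supplies a genuinely independent, self-contained argument, and its overall architecture is sound: build the candidate form $(\cD,N)$ by realizing $N(u)^{1/2}$ as the norm of the functional $\ell_u\colon v\mapsto (u,Sv)_{\cH}$ on the completion $\cH_S$ of $(\dom(S),q_S)$, use completeness of the dual to get closedness, let $B$ be the associated nonnegative self-adjoint operator, and then squeeze $B=S_K$ between ``$B$ extends $S$'' (whence $S_K\le B$ by the minimality in Theorem~\ref{T-kkrr}) and ``$B\le\widetilde S$ for every nonnegative self-adjoint extension $\widetilde S$'' (whence $B\le S_K$), finishing with antisymmetry of the order via \eqref{PPa-1}. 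The closedness step, the handling of the degenerate case $(v,Sv)_{\cH}=0$, and the domination estimate $N(u)\le\widetilde{\mathfrak q}(u,u)$ are all correct as you describe them. This is close in spirit to Ando--Nishio's original argument, but it is a legitimate route and buys a proof that uses nothing beyond Krein's extremality theorem and the representation theorems.

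The one place where your justification, as written, is not valid is the inference ``$N(u)=q_S(u,u)$ for $u\in\dom(S)$, which shows $B$ extends $S$.'' Agreement of a closed form with $q_S$ on $\dom(S)\times\dom(S)$ does \emph{not} in general imply that the associated operator extends $S$: take $\cH=L^2((0,1);dx)$, $S$ the Dirichlet Laplacian with $\dom(S)=\{v\in H^2((0,1))\,|\,v(0)=v(1)=0\}$, and $\mathfrak t(u,v)=\int_0^1\overline{u'}v'\,dx$ on $H^1((0,1))$; then $\mathfrak t=q_S$ on $\dom(S)$, yet the operator associated with $\mathfrak t$ is the Neumann Laplacian, which does not extend $S$. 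The obstruction is exactly that $\dom(S)$ need not be a core for the larger form -- and in your setting it genuinely is not one (by \eqref{SKform1}--\eqref{SKform2}, $\dom(S)$ is not dense in $\dom\big((S_K)^{1/2}\big)$ for the form norm of $S_K$ whenever $\ker(S^*)\neq\{0\}$ and $S\ge\varepsilon I_{\cH}$). The gap is repairable in one line from your own construction: for $u\in\dom(S)$ the Riesz representative of $\ell_u$ in $\cH_S$ is the class $[u]$ of $u$ itself, so $\mathfrak q_B(z,u)=\langle w_z,[u]\rangle_{\cH_S}=\ell_z(u)=(z,Su)_{\cH}$ for \emph{every} $z\in\cD$, and the first representation theorem then yields $u\in\dom(B)$ with $Bu=Su$. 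You must state and use this stronger identity rather than the equality of the quadratic forms on $\dom(S)$ alone; with that insertion the proof is complete.
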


A word of explanation is in order here: Given $S\geq 0$ as in the statement of
Proposition \ref{Pr-an}, the Cauchy-Schwarz-type inequality
\begin{equation}\label{CS-I.1}
|(u,Sv)_{\cH}|^2\leq (u,Su)_{\cH} (v,Sv)_{\cH}, \quad u,v\in\dom(S), 
\end{equation}
shows (due to the fact that $\dom(S)\hookrightarrow\cH$ densely) that
\begin{equation}\label{CS-I.2}
u\in\dom(S) \,\mbox{ and } \, (u,S u)_{\cH} =0 \,
\text{ imply }\,Su=0.
\end{equation}
Thus, whenever the denominator of the fractions appearing
in \eqref{an-T1}, \eqref{an-T2} vanishes, so does the numerator, and
one interprets $0/0$ as being zero in \eqref{an-T1}, \eqref{an-T2}.

We continue by recording an abstract result regarding the
parametrization of all nonnegative self-adjoint extensions of a given
strictly positive, densely defined, symmetric operator. The following results were 
developed from Krein \cite{Kr47}, Vi{\u s}ik \cite{Vi63}, and Birman \cite{Bi56}, by 
Grubb \cite{Gr68}, \cite{Gr70}. Subsequent expositions are due to Faris 
\cite[Sect.\ 15]{Fa75}, Alonso and Simon \cite{AS80} (in the present form, the next theorem appears in \cite{GM11}), and Derkach and Malamud \cite{DM91}, \cite{Ma92}. We start by collecting our basic assumptions:

\begin{hypothesis}  \lb{h2.6}
Suppose that $S$ is a densely defined, symmetric, closed operator 
with nonzero deficiency indices in $\cH$ that satisfies 
\begin{equation}
S\geq \varepsilon I_{\cH} \, \text{ for some $\varepsilon >0$.}    \lb{11.1}
\end{equation}
\end{hypothesis}

\begin{theorem}\label{AS-th}
Suppose Hypothesis \ref{h2.6}. Then there exists a one-to-one correspondence between nonnegative self-adjoint operators 
$0 \leq B:\dom(B)\subseteq \cW\to \cW$, $\ol{\dom(B)}=\cW$, where $\cW$
is a closed subspace of $\cN_0 :=\ker(S^*)$, and nonnegative self-adjoint
extensions $S_{B,\cW}\geq 0$ of $S$. More specifically, $S_F$ is invertible, 
$S_F\geq \varepsilon I_{\cH}$, 
and one has
\begin{align} 
& \dom(S_{B,\cW})  
=\big\{f + (S_F)^{-1}(Bw + \eta)+ w \,\big|\,
f\in\dom(S),\, w\in\dom(B),\, \eta\in \cN_0 \cap \cW^{\bot}\big\},  \no \\
& S_{B,\cW} = S^*|_{\dom(S_{B,\cW})},       \label{AS-2}  
\end{align} 
where $\cW^{\bot}$ denotes the orthogonal complement of $\cW$ in $\cN_0$. In 
addition,
\begin{align}
&\dom\big((S_{B,\cW})^{1/2}\big) = \dom\big((S_F)^{1/2}\big) \dotplus 
\dom\big(B^{1/2}\big),   \\
&\big\|(S_{B,\cW})^{1/2}(u+g)\big\|_{\cH}^2 =\big\|(S_F)^{1/2} u\big\|_{\cH}^2 
+ \big\|B^{1/2} g\big\|_{\cH}^2, \\ 
& \hspace*{2.3cm} u \in \dom\big((S_F)^{1/2}\big), \; g \in \dom\big(B^{1/2}\big),  \no
\end{align}
implying,
\begin{equation}\label{K-ee}
\ker(S_{B,\cW})=\ker(B). 
\end{equation} 
Moreover, 
\begin{equation}
B \leq \wti B \, \text{ implies } \, S_{B,\cW} \leq S_{\wti B,\wti \cW},
\end{equation}
where 
\begin{align}
\begin{split}
& B\colon \dom(B) \subseteq \cW \to \cW, \quad 
 \wti B\colon \dom\big(\wti B\big) \subseteq \wti \cW \to \wti \cW,   \\
& \ol{\dom\big(\wti B\big)} = \wti\cW \subseteq \cW = \ol{\dom(B)}. 
\end{split}
\end{align}

In the above scheme, the Krein--von Neumann extension $S_K$
of $S$ corresponds to the choice $\cW=\cN_0$ and $B=0$ $($with 
$\dom(B)=\dom\big(B^{1/2}\big)=\cN_0=\ker (S^*)$$)$.
In particular, one thus recovers \eqref{SK}, and \eqref{Fr-4Tf}, and also obtains
\begin{align}
&\dom\big((S_{K})^{1/2}\big) = \dom\big((S_F)^{1/2}\big) \dotplus \ker (S^*),  
\lb{SKform1}  \\
&\big\|(S_{K})^{1/2}(u+g)\big\|_{\cH}^2 =\big\|(S_F)^{1/2} u\big\|_{\cH}^2, 
\quad u \in \dom\big((S_F)^{1/2}\big), \; g \in \ker (S^*).   \lb{SKform2}
\end{align}
Finally, the Friedrichs extension $S_F$ corresponds to the choice $\dom(B)=\{0\}$ $($i.e., formally, 
$B\equiv\infty$$)$, in which case one recovers \eqref{SF}. 
\end{theorem}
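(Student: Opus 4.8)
The plan is to follow the Krein--Vi{\u s}ik--Birman method: realize each nonnegative self-adjoint extension of $S$ through a closed nonnegative quadratic form equal to the Friedrichs form $Q_{S_F}$ plus an extra piece supported on $\cN_0 := \ker(S^*)$, and then extract the operator domain via the first representation theorem. As preliminaries one records, from Theorem \ref{T-kkrr}, that $S_F\geq\varepsilon I_{\cH}$ (so $(S_F)^{-1}\in\cB(\cH)$), that the decompositions \eqref{SF}--\eqref{Fr-4Tf} hold, and that $\ran(S)\perp\cN_0$ since $\cN_0=\ran(S)^{\bot}$. The one genuine lemma needed here is $\dom\big((S_F)^{1/2}\big)\cap\cN_0=\{0\}$: approximating $g\in\dom\big((S_F)^{1/2}\big)\cap\ker(S^*)$ by $v_j\in\dom(S)$ in the form norm of $Q_{S_F}$ and computing $Q_{S_F}(v_j,g)=(Sv_j,g)_{\cH}=(v_j,S^*g)_{\cH}=0$ yields $\big\|(S_F)^{1/2}g\big\|_{\cH}^2=0$, hence $g=0$. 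Running the same computation on $v+g$ with $v\in\dom(S)$, $g\in\cN_0$, and closing up gives at once the $S_K$-formulas \eqref{SKform1}, \eqref{SKform2}. Consequently $\dom\big((S_F)^{1/2}\big)\dotplus\dom\big(B^{1/2}\big)$ is a genuine direct sum for every nonnegative self-adjoint $B$ acting in a closed subspace $\cW\subseteq\cN_0$.

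For the forward implication, given such a pair $(B,\cW)$ I introduce $\widetilde q(u+g,u'+g'):=Q_{S_F}(u,u')+b(g,g')$ on $\dom(\widetilde q):=\dom\big((S_F)^{1/2}\big)\dotplus\dom\big(B^{1/2}\big)$, where $b$ is the form of $B$. It is nonnegative, and closedness follows by decoupling the two summands: from $S_F\geq\varepsilon I_{\cH}$ one has $\|u\|_{\cH}^2\leq\varepsilon^{-1}Q_{S_F}(u)$, so $\|u\|_{\cH}$, and then $\|g\|_{\cH}=\|(u+g)-u\|_{\cH}$, are dominated by the $\widetilde q$-form norm, whence a $\widetilde q$-Cauchy sequence splits into a $Q_{S_F}$-Cauchy and a $b$-Cauchy sequence that converge separately. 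Let $S_{B,\cW}$ be the associated nonnegative self-adjoint operator; then $\dom\big((S_{B,\cW})^{1/2}\big)=\dom(\widetilde q)$ and $\big\|(S_{B,\cW})^{1/2}(u+g)\big\|_{\cH}^2=\big\|(S_F)^{1/2}u\big\|_{\cH}^2+\big\|B^{1/2}g\big\|_{\cH}^2$ by construction, giving $\ker(S_{B,\cW})=\ker\big(B^{1/2}\big)=\ker(B)$ since $S_F\geq\varepsilon I_{\cH}$.

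To pass to the operator domain \eqref{AS-2}, feed $u+g\in\dom(S_{B,\cW})$ into the first representation theorem: testing against $g'=0$ forces $u\in\dom(S_F)$ with $S_Fu=S_{B,\cW}(u+g)$, and then testing against arbitrary $g'$ forces $g\in\dom(B)$ with $P_{\cW}(S_Fu)=Bg$ (here $P_{\cM}$ is the orthogonal projection onto $\cM$). Writing $u=f+(S_F)^{-1}\zeta$ with $f\in\dom(S)$, $\zeta\in\cN_0$ (by \eqref{SF}) and using $\ran(S)\perp\cW$ to reduce $P_{\cW}(S_Fu)$ to $P_{\cW}\zeta$, the condition becomes $\zeta=Bg+\eta$ with $\eta:=(I_{\cH}-P_{\cW})\zeta\in\cN_0\cap\cW^{\bot}$, which is exactly \eqref{AS-2}. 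That $S_{B,\cW}=S^*|_{\dom(S_{B,\cW})}$ and $S_{B,\cW}\supseteq S$ then follow by inspection from $S^*(S_F)^{-1}=I_{\cH}$ and $S^*|_{\cN_0}=0$. Monotonicity $B\leq\widetilde B\Rightarrow S_{B,\cW}\leq S_{\widetilde B,\widetilde\cW}$ is immediate from the form identity, and substituting $(\cW,B)=(\cN_0,0)$ recovers \eqref{SK}, \eqref{Fr-4Tf}, \eqref{SKform1}, \eqref{SKform2}, while $\dom(B)=\{0\}$ recovers \eqref{SF}.

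The converse, which I expect to be the main obstacle, is the reverse passage. Given any nonnegative self-adjoint extension $\widetilde S$ of $S$, Theorem \ref{T-kkrr} gives $S_K\leq\widetilde S\leq S_F$, hence $\dom\big((S_F)^{1/2}\big)\subseteq\dom\big((\widetilde S)^{1/2}\big)\subseteq\dom\big((S_K)^{1/2}\big)=\dom\big((S_F)^{1/2}\big)\dotplus\cN_0$, so $\dom\big((\widetilde S)^{1/2}\big)=\dom\big((S_F)^{1/2}\big)\dotplus\cD$ for a subspace $\cD\subseteq\cN_0$, unique because $\dom\big((S_F)^{1/2}\big)\cap\cN_0=\{0\}$. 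The crux is that $Q_{\widetilde S}$ splits along this decomposition: for $u\in\dom\big((S_F)^{1/2}\big)$, $g\in\cD$, and real $t$, the quadratic $q(t):=Q_{\widetilde S}(u+tg)$ satisfies $q(0)=Q_{\widetilde S}(u)\leq Q_{S_F}(u)=Q_{S_K}(u+tg)\leq q(t)$ for all $t$ (using $\widetilde S\leq S_F$, then \eqref{SKform2}, then $S_K\leq\widetilde S$), so $q$ has an interior minimum at $t=0$; comparing coefficients forces $\Re\,Q_{\widetilde S}(u,g)=0$ and $Q_{\widetilde S}(u)=Q_{S_F}(u)$, and repeating with $itg$ gives $Q_{\widetilde S}(u,g)=0$. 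Hence $Q_{\widetilde S}(u+g,u'+g')=Q_{S_F}(u,u')+b(g,g')$ with $b:=Q_{\widetilde S}|_{\cD\times\cD}\geq 0$, and $b$ is closed on $\cW:=\overline{\cD}$ because a $b$-Cauchy sequence in $\cD$ is $Q_{\widetilde S}$-Cauchy, its $\cH$-limit lies in $\cN_0$ hence, by uniqueness of the decomposition, in $\cD$, where $Q_{\widetilde S}$-convergence restricts to $b$-convergence. Letting $B$ be the nonnegative self-adjoint operator in $\cW$ associated with $b$, the forms of $\widetilde S$ and of $S_{B,\cW}$ coincide, so $\widetilde S=S_{B,\cW}$. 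Finally, injectivity: $S_{B,\cW}=S_{B',\cW'}$ forces equality of form domains, which by uniqueness of the decomposition $\dom\big((S_F)^{1/2}\big)\dotplus\cD$ yields $\dom\big(B^{1/2}\big)=\dom\big((B')^{1/2}\big)$, hence $\cW=\cW'$, and then the form identity gives $b=b'$, i.e., $B=B'$. I anticipate the delicate points to be precisely this cross-term pinching and the bookkeeping that converts the abstract form domain into the concrete description \eqref{AS-2}; everything else is routine once Theorem \ref{T-kkrr} and the lemma $\dom\big((S_F)^{1/2}\big)\cap\cN_0=\{0\}$ are available.
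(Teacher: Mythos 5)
The paper itself offers no proof of Theorem \ref{AS-th}: it is quoted from the Krein--Vi{\u s}ik--Birman circle of results (via Grubb, Faris, and Alonso--Simon, in the form appearing in \cite{GM11}), so there is no internal argument to compare against. Measured against the standard form-theoretic proof in those references, your argument is correct and is essentially that proof: the key lemma $\dom\big((S_F)^{1/2}\big)\cap\cN_0=\{0\}$, the decoupled closed form $Q_{S_F}\oplus b$ on $\dom\big((S_F)^{1/2}\big)\dotplus\dom\big(B^{1/2}\big)$, the extraction of \eqref{AS-2} via the first representation theorem together with \eqref{SF} and $\ran(S)\perp\cN_0$, and, for surjectivity, the pinching $Q_{\widetilde S}(u)\leq Q_{S_F}(u)=Q_{S_K}(u+tg)\leq Q_{\widetilde S}(u+tg)$ that kills the cross terms --- this last step is exactly the Alonso--Simon device. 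Two spots are asserted rather than argued and deserve the one-line computations: that $S_{B,\cW}$ actually extends $S$ follows from $\widetilde q(u+g,f)=Q_{S_F}(u,f)=(u,Sf)_{\cH}=(u+g,Sf)_{\cH}$ for $f\in\dom(S)$, using $g\in\cW\subseteq\cN_0=\ran(S)^{\bot}$; and your ``closing up'' derivation of \eqref{SKform1}--\eqref{SKform2} is really the $B=0$, $\cW=\cN_0$ instance of your general construction identified with $S_K$ through \eqref{SK} (or, alternatively, through Proposition \ref{Pr-an}), so it should be placed after, not before, that identification to avoid the appearance of circularity. Neither point is a gap.
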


The relation $B \leq \wti B$ in the case where $\wti \cW \subsetneqq \cW$ requires an explanation: In analogy to \eqref{AleqB} we mean 
\begin{equation}
\big(|B|^{1/2}u, U_B |B|^{1/2}u\big)_{\cW} \leq \big(|\wti B|^{1/2}u, U_{\wti B} |\wti B|^{1/2}u\big)_{\cW}, 
\quad u \in \dom\big(|\wti B|^{1/2}\big) 
\end{equation}
and (following \cite{AS80}) we put
\begin{equation}
\big(|\wti B|^{1/2}u, U_{\wti B} |\wti B|^{1/2}u)_{\cW} = \infty \, \text{ for } \, 
u \in \cW \backslash \dom\big(|\wti B|^{1/2}\big).
\end{equation}

For subsequent purposes we also note that under the assumptions on $S$ in 
Hypothesis \ref{h2.6}, one has 
\begin{equation}
\dim(\ker (S^*-z I_{\cH})) = \dim(\ker(S^*)) = \dim (\cN_0) = {\rm def} (S), 
\quad z\in \bbC\backslash [\varepsilon,\infty).   \lb{dim}
\end{equation}

The following result is a simple consequence of \eqref{SK}, \eqref{SF}, and 
\eqref{Fr-2X}, but since it seems not to have been explicitly stated in \cite{Kr47}, we provide the short proof for completeness (see also \cite[Remark\ 3]{Ma92}). First we recall that two self-adjoint extensions $S_1$ and $S_2$ of $S$ are called 
{\it relatively prime} if $\dom (S_1) \cap \dom (S_2) = \dom (S)$.

\begin{lemma}  \lb{lKF}
Suppose Hypothesis \ref{h2.6}. Then $S_F$ and $S_K$ are 
relatively prime, that is,
\begin{equation}
\dom (S_F) \cap \dom (S_K) = \dom (S).    \lb{RP}
\end{equation}
\end{lemma}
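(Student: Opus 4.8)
The plan is to use the explicit direct-sum descriptions of $\dom(S_F)$ and $\dom(S_K)$ furnished by \eqref{SF} and \eqref{SK} in Theorem \ref{T-kkrr}, combined with the fact that the sum $\dom(S) \dotplus (S_F)^{-1}\ker(S^*) \dotplus \ker(S^*)$ appearing in \eqref{S*} is \emph{direct}. Recall that under Hypothesis \ref{h2.6} we have $S \geq \varepsilon I_{\cH}$, so $S_F$ is boundedly invertible and the formulas \eqref{SF}--\eqref{S*} are available.

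First I would take an arbitrary element $g \in \dom(S_F) \cap \dom(S_K)$. Using \eqref{SK}, write $g = f_1 + w_1$ with $f_1 \in \dom(S)$ and $w_1 \in \ker(S^*)$. Using \eqref{SF}, write $g = f_2 + (S_F)^{-1} w_2$ with $f_2 \in \dom(S)$ and $w_2 \in \ker(S^*)$. Subtracting the two representations gives
\begin{equation}
f_1 - f_2 + w_1 - (S_F)^{-1} w_2 = 0, \lb{RPeq}
\end{equation}
which exhibits $0$ as an element of $\dom(S) \dotplus (S_F)^{-1}\ker(S^*) \dotplus \ker(S^*)$ with components $f_1 - f_2 \in \dom(S)$, $-(S_F)^{-1} w_2 \in (S_F)^{-1}\ker(S^*)$, and $w_1 \in \ker(S^*)$. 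Since the sum in \eqref{S*} is direct, each of these three components must vanish: in particular $w_1 = 0$, hence $g = f_1 \in \dom(S)$. The reverse inclusion $\dom(S) \subseteq \dom(S_F) \cap \dom(S_K)$ is immediate since both $S_F$ and $S_K$ extend $S$. This establishes \eqref{RP}.

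The only genuinely delicate point is making sure that the three-fold directness asserted in \eqref{S*} really does let us conclude that the $\ker(S^*)$-component of $0$ is zero --- that is, that $(S_F)^{-1}\ker(S^*) \cap [\dom(S) + \ker(S^*)] = \{0\}$ and, symmetrically, that $\ker(S^*) \cap [\dom(S) + (S_F)^{-1}\ker(S^*)] = \{0\}$. This is exactly the content of the statement in \eqref{S*} that $\dom(S^*)$ is the \emph{direct} sum of the three subspaces, so no extra work is needed; one just has to invoke it in the right form. An alternative, slightly more hands-on route avoids quoting the triple directness: from \eqref{RPeq} one gets $f_1 - f_2 + w_1 = (S_F)^{-1} w_2 \in \dom(S_F)$, apply $S_F$ (using $S_F|_{\dom(S)} = S$ and $S^* w_1 = 0$, $S_F \subseteq S^*$) to obtain $S f_1 - S f_2 = w_2$; then $w_2 = S(f_1 - f_2) \in \ran(S)$, while also $w_2 \in \ker(S^*) = \ran(S)^\perp$ by \eqref{Fr-4Tf}, forcing $w_2 = 0$, whence $(S_F)^{-1} w_2 = 0$ and \eqref{RPeq} collapses to $f_1 - f_2 = -w_1 \in \dom(S) \cap \ker(S^*)$; but $\dom(S) \cap \ker(S^*) = \{0\}$ because $S \geq \varepsilon I_{\cH}$ implies $S$ is injective on $\dom(S)$ while $\ker(S^*) \subseteq \ker(S - \varepsilon I_{\cH})^\perp$-type reasoning (more directly: if $u \in \dom(S)$ and $S^* u = 0$ then $0 = (u, S^* u)_{\cH} = (u, S u)_{\cH} \geq \varepsilon \|u\|_{\cH}^2$, so $u = 0$). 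Either way one concludes $w_1 = 0$ and $g \in \dom(S)$. I would present the first route for brevity, noting the second as the self-contained justification of the directness.
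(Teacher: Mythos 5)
Your proof is correct and follows essentially the same route as the paper: the paper likewise reduces \eqref{RP} via \eqref{SF} and \eqref{SK} to the vanishing of $\ker(S^*)\cap (S_F)^{-1}\ker(S^*)$, dispatching it by noting that any $f_0$ in that intersection satisfies $S_F f_0 = S^* f_0 = 0$ (since $S_F = S^*|_{\dom(S_F)}$), whence $f_0 = (S_F)^{-1}0 = 0$. Your second, ``hands-on'' route replaces that one-line computation with the equivalent observations $\ker(S^*)=\ran(S)^{\bot}$ and $\dom(S)\cap\ker(S^*)=\{0\}$, but the decompositions invoked and the overall logic coincide with the paper's.
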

\begin{proof}
By \eqref{SF} and \eqref{SK} it suffices to prove that 
$\ker (S^*) \cap (S_F)^{-1}\ker (S^*) = \{0\}$. Let 
$f_0 \in \ker (S^*) \cap (S_F)^{-1}\ker (S^*)$. Then $S^* f_0 =0$ and 
$f_0=(S_F)^{-1}g_0$ for some $g_0 \in \ker (S^*)$. Thus one concludes that 
$f_0 \in\dom (S_F)$ and $S_F f_0 =g_0$. But $S_F = S^*|_{\dom (S_F)}$ and hence 
$g_0 =S_F f_0 = S^* f_0 = 0$. Since $g_0 =0$ one finally obtains $f_0 =0$. 
\end{proof}

Next, we consider a self-adjoint operator
\begin{equation} \label{Barr-1}
T:\dom(T)\subseteq \cH\to\cH,\quad T=T^*,
\end{equation} 
which is bounded from below, that is, there exists $\alpha\in\bbR$ such that
\begin{equation} \label{Barr-2}
T\geq \alpha I_{\cH}.
\end{equation} 
We denote by $\{E_T(\lambda)\}_{\lambda\in\bbR}$ the family of strongly right-continuous spectral projections of $T$, and introduce, as usual, $E_T((a,b))=E_T(b_-) - E_T(a)$, 
$E_T(b_-) = \slim_{\varepsilon\downarrow 0}E_T(b-\varepsilon)$, $-\infty \leq a < b$. In addition, we set 
\begin{equation} \label{Barr-3}
\mu_{T,j}:=\inf\,\bigl\{\lambda\in\bbR\,|\,
\dim (\ran (E_T((-\infty,\lambda)))) \geq j\bigr\},\quad j\in\bbN.
\end{equation} 
Then, for fixed $k\in\bbN$, either: \\
$(i)$ $\mu_{T,k}$ is the $k$th eigenvalue of $T$ counting multiplicity below the bottom of the essential spectrum, $\sigma_{\rm ess}(T)$, of $T$, \\
or, \\
$(ii)$ $\mu_{T,k}$ is the bottom of the essential spectrum of $T$, 
\begin{equation}
\mu_{T,k} = \inf \{\lambda \in \bbR \,|\, \lambda \in \sigma_{\rm ess}(T)\}, 
\end{equation}
and in that case $\mu_{T,k+\ell} = \mu_{T,k}$, $\ell\in\bbN$, and there are at most $k-1$ eigenvalues (counting multiplicity) of $T$ below $\mu_{T,k}$. 

We now record a basic result of M.\ Krein \cite{Kr47} with an important extension due 
to Alonso and Simon \cite{AS80} and some additional results recently derived in 
\cite{AGMST10}. For this purpose we introduce the {\it reduced  
Krein--von Neumann operator} $\hatt S_K$ in the Hilbert space (cf.\ \eqref{Fr-4Tf})
\begin{equation}
\hatt \cH = [\ker (S^*)]^{\bot} = \big[I_{\cH} - P_{\ker(S^*)}\big] \cH 
= \big[I_{\cH} - P_{\ker(S_K)}\big] \cH = [\ker (S_K)]^{\bot},    \lb{hattH}
\end{equation}
by 
\begin{align}
\hatt S_K:&=S_K|_{[\ker(S_K)]^{\bot}}    \label{Barr-4} \\
\begin{split}
& = S_K[I_{\cH} - P_{\ker(S_K)}]   \lb{SKP} \, \text{ in $[I_{\cH} - P_{\ker(S_K)}]\cH$} \\
&= [I_{\cH} - P_{\ker(S_K)}]S_K[I_{\cH} - P_{\ker(S_K)}] 
\, \text{ in $[I_{\cH} - P_{\ker(S_K)}]\cH$},  
\end{split}
\end{align} 
where $P_{\ker(S_K)}$ denotes the orthogonal projection onto $\ker(S_K)$ and we are alluding to the orthogonal direct sum decomposition of $\cH$ into 
\begin{equation}
\cH = P_{\ker(S_K)}\cH \oplus [I_{\cH} - P_{\ker(S_K)}]\cH.
\end{equation}
We continue with the following elementary observation: 

\begin{lemma}  \lb{l3.1a}
Assume Hypothesis \ref{h2.6} and let $v\in\dom(S_K)$. Then the decomposition, 
$\dom(S_K)= \dom(S) \dotplus \ker(S^*)$ $($cf.\ \eqref{SK}$)$, leads to the following decomposition of $v$, 
\begin{equation}
v= (S_F)^{-1} S_K v + w, \, \text{ where $(S_F)^{-1} S_K v \in \dom(S)$ and 
$w \in \ker(S^*)$.}     \lb{3.1aa}
\end{equation}
As a consequence,
\begin{equation}
\big(\hatt S_K\big)^{-1} = [I_{\cH} - P_{\ker(S_K)}] (S_F)^{-1} [I_{\cH} - P_{\ker(S_K)}].   
\lb{SKinv}
\end{equation}
\end{lemma}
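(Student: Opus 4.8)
The plan is to verify the decomposition \eqref{3.1aa} directly from \eqref{SK}, and then to use it to read off the formula \eqref{SKinv} for the inverse of the reduced Krein--von Neumann operator.

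\textbf{Step 1: The decomposition of $v$.} Let $v \in \dom(S_K)$. By \eqref{SK}, we may write $v = f + w$ with $f \in \dom(S)$ and $w \in \ker(S^*)$, and this decomposition is unique since the sum is direct. Applying $S_K = S^*|_{\dom(S_K)}$ and using $S^* w = 0$, we obtain $S_K v = S^* f = S f$ (since $f \in \dom(S)$). Now $S_F \supseteq S$, so $S_F f = S f = S_K v$; moreover $S_F \geq \varepsilon I_{\cH}$ is invertible by Theorem \ref{T-kkrr}, whence $f = (S_F)^{-1} S_K v$. In particular $(S_F)^{-1} S_K v = f \in \dom(S)$, and consequently $w = v - (S_F)^{-1} S_K v \in \ker(S^*)$, which is \eqref{3.1aa}.

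\textbf{Step 2: Passing to the inverse of $\hatt S_K$.} Recall from \eqref{Fr-4Tf} that $\ker(S_K) = \ker(S^*)$, so $w \in \ker(S_K)$ and $P_{\ker(S_K)} w = w$. Apply $[I_{\cH} - P_{\ker(S_K)}]$ to \eqref{3.1aa}: since $w$ is annihilated, this gives $[I_{\cH} - P_{\ker(S_K)}] v = [I_{\cH} - P_{\ker(S_K)}] (S_F)^{-1} S_K v$. Now take any $g \in \hatt \cH = [\ker(S_K)]^\bot$ in the range of $\hatt S_K$, and write $g = \hatt S_K v = S_K v$ for a suitable $v \in \dom(S_K) \cap \hatt\cH$; note $S_K v \in \hatt\cH$ automatically because $\ran(S_K) \perp \ker(S_K)$ by self-adjointness of $S_K$. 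For such $v$ one has $[I_{\cH} - P_{\ker(S_K)}] v = v$, so the previous identity reads $v = [I_{\cH} - P_{\ker(S_K)}] (S_F)^{-1} g = [I_{\cH} - P_{\ker(S_K)}] (S_F)^{-1} [I_{\cH} - P_{\ker(S_K)}] g$, the last equality because $g \in \hatt\cH$. Since $\big(\hatt S_K\big)^{-1} g = v$ by definition of the reduced operator, and since $\hatt S_K$ maps $\dom(S_K) \cap \hatt\cH$ onto a dense subspace of $\hatt\cH$ (indeed $\hatt S_K$ is self-adjoint in $\hatt\cH$ with trivial kernel, hence has dense range), formula \eqref{SKinv} follows on that range; boundedness considerations or a density argument then extend it to all of $\hatt\cH$ if $\big(\hatt S_K\big)^{-1}$ is bounded, and otherwise \eqref{SKinv} is simply the statement that the two operators agree on $\dom\big(\big(\hatt S_K\big)^{-1}\big) = \ran(\hatt S_K)$.

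\textbf{Main obstacle.} The only subtle point is bookkeeping with the projections and domains: one must check that $S_K v \in \hatt\cH$ (so that inserting the second factor $[I_{\cH} - P_{\ker(S_K)}]$ on the right of \eqref{SKinv} is harmless), and that the identity obtained on $\ran(\hatt S_K)$ is exactly the asserted operator identity. Both follow cleanly from \eqref{Fr-4Tf}, from the self-adjointness of $S_K$ (giving $\ran(S_K) \perp \ker(S_K)$), and from the invertibility of $S_F$; there is no analytic difficulty, only the need to track which vectors lie in $\ker(S^*)$, in $\dom(S)$, and in $\hatt\cH$. I do not expect the use of any machinery beyond Theorem \ref{T-kkrr} and the definitions \eqref{hattH}--\eqref{SKP}.
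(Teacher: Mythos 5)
Your proof is correct; the paper states this lemma as an elementary observation without supplying a proof (merely attributing \eqref{SKinv} to Krein), and your argument --- the unique splitting $v=f+w$ from \eqref{SK}, the identity $S_F f=Sf=S^*v=S_K v$ together with the bounded invertibility of $S_F\geq\varepsilon I_{\cH}$ to get \eqref{3.1aa}, and then projection onto $\hatt\cH$ --- is exactly the intended one. The only hedge in your last step is unnecessary: since $\big(\hatt S_K\big)^{-1}$ is closed (being the inverse of an injective self-adjoint operator in $\hatt\cH$) and agrees on the dense subspace $\ran\big(\hatt S_K\big)$ with the bounded operator $[I_{\cH}-P_{\ker(S_K)}](S_F)^{-1}[I_{\cH}-P_{\ker(S_K)}]$, it is automatically bounded and everywhere defined on $\hatt\cH$, so \eqref{SKinv} holds as a genuine identity in $\cB\big(\hatt\cH\big)$.
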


We note that equation \eqref{SKinv} was proved by Krein in his seminal paper 
\cite{Kr47} (cf.\ the proof of Theorem\ 26 in \cite{Kr47}). For a different proof of Krein's formula 
\eqref{SKinv} and its generalization to the case of non-negative operators, see also 
\cite[Corollary 5]{Ma92}.

\begin{theorem}  \lb{AS-thK}
Suppose Hypothesis \ref{h2.6}. Then, 
\begin{equation}\label{Barr-5}
\varepsilon \leq \mu_{S_F,j} \leq \mu_{\hatt S_K,j}, \quad j\in\bbN.
\end{equation} 
In particular, if the Friedrichs extension $S_F$ of $S$ has purely discrete
spectrum, then, except possibly for $\lambda=0$, the Krein--von Neumann extension
$S_K$ of $S$ also has purely discrete spectrum in $(0,\infty)$, that is, 
\begin{equation}
\sigma_{\rm ess}(S_F) = \emptyset \, \text{ implies } \, 
\sigma_{\rm ess}(S_K) \backslash\{0\} = \emptyset.      \lb{ESSK}
\end{equation}
In addition, let $p\in (0,\infty)\cup\{\infty\}$, then
\begin{align}
\begin{split}
& (S_F - z_0 I_{\cH})^{-1} \in \cB_p(\cH) 
\, \text{ for some $z_0\in \bbC\backslash [\varepsilon,\infty)$}   \\ 
& \quad \text{implies } \, 
(S_K - zI_{\cH})^{-1}[I_{\cH} - P_{\ker(S_K)}] \in \cB_p(\cH) 
 \, \text{ for all $z\in \bbC\backslash [\varepsilon,\infty)$}. 
\lb{CPK}
\end{split} 
\end{align}
In fact, the $\ell^p(\bbN)$-based trace ideals $\cB_p(\cH)$ of $\cB(\cH)$ can be replaced by any two-sided symmetrically normed ideals of $\cB(\cH)$.
\end{theorem}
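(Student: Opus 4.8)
The plan is to reduce everything to the single operator identity \eqref{SKinv}, namely $\big(\hatt S_K\big)^{-1} = [I_{\cH} - P_{\ker(S_K)}] (S_F)^{-1} [I_{\cH} - P_{\ker(S_K)}]$, which is available to us from Lemma \ref{l3.1a}. The three conclusions \eqref{Barr-5}, \eqref{ESSK}, and \eqref{CPK} are then statements comparing $(S_F)^{-1}$ with the compression of $(S_F)^{-1}$ to the subspace $\hatt\cH = [\ker(S_K)]^{\bot} = [\ker(S^*)]^{\bot}$.

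First I would record that, since $S_F \geq \varepsilon I_{\cH}$ with $\varepsilon > 0$, the operator $S_F$ is boundedly invertible and $(S_F)^{-1} \in \cB(\cH)$ is nonnegative and self-adjoint with $\|(S_F)^{-1}\| \leq \varepsilon^{-1}$. Write $P = P_{\ker(S_K)}$ and $Q = I_{\cH} - P$. By \eqref{SKinv}, $\big(\hatt S_K\big)^{-1}$ is exactly the compressed operator $Q (S_F)^{-1} Q$ acting on $\hatt\cH = Q\cH$. For the eigenvalue inequality \eqref{Barr-5}: the min-max characterization of the $\mu$-numbers (Courant–Fischer, valid for bounded-below self-adjoint operators as set up in \eqref{Barr-3}) applied to the bounded self-adjoint operators $(S_F)^{-1}$ and $Q(S_F)^{-1}Q|_{\hatt\cH}$ gives $\mu_{Q(S_F)^{-1}Q,\,j} \leq \mu_{(S_F)^{-1},\,j}$ for all $j$, simply because one is maximizing the Rayleigh quotient of the same quadratic form over a smaller ambient space (subspaces of $\hatt\cH$ are in particular subspaces of $\cH$). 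Translating back through the spectral mapping $t \mapsto t^{-1}$ — which reverses order and sends the top eigenvalues of the inverse to the bottom eigenvalues of the original operator — yields $\varepsilon \leq \mu_{S_F,\,j} \leq \mu_{\hatt S_K,\,j}$; the lower bound $\varepsilon \leq \mu_{S_F,\,j}$ is immediate from $S_F \geq \varepsilon I_{\cH}$. One must be a little careful about the case distinction $(i)$/$(ii)$ in the definition of $\mu_{T,j}$ (eigenvalue vs.\ bottom of essential spectrum), but the inequality between the $\mu$-numbers holds in all cases because \eqref{Barr-3} is itself a min-max expression.

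For \eqref{ESSK}: if $\sigma_{\rm ess}(S_F) = \emptyset$ then $(S_F)^{-1}$ is compact (its spectrum consists of $0$ together with a sequence of eigenvalues of finite multiplicity accumulating only at $0$). Hence its compression $Q(S_F)^{-1}Q \in \cB_\infty(\cH)$ as well, since $\cB_\infty(\cH)$ is an ideal; restricted to $\hatt\cH$ this is $\big(\hatt S_K\big)^{-1}$, so $\big(\hatt S_K\big)^{-1}$ is compact, forcing $\sigma_{\rm ess}(\hatt S_K) = \emptyset$, i.e.\ $\sigma_{\rm ess}(S_K)\backslash\{0\} = \emptyset$. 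For \eqref{CPK}: if $(S_F - z_0 I_{\cH})^{-1} \in \cB_p(\cH)$ for one $z_0 \in \bbC\backslash[\varepsilon,\infty)$, then by the first resolvent identity $(S_F - z I_{\cH})^{-1} \in \cB_p(\cH)$ for all such $z$, and in particular $(S_F)^{-1} = (S_F - 0\cdot I_{\cH})^{-1} \in \cB_p(\cH)$. Again by the ideal property, $Q(S_F)^{-1}Q \in \cB_p(\cH)$, hence $\big(\hatt S_K\big)^{-1}\oplus 0 = Q(S_F)^{-1}Q \in \cB_p(\cH)$ as an operator on all of $\cH$. Finally one needs $(S_K - zI_{\cH})^{-1}Q \in \cB_p(\cH)$ for arbitrary $z \in \bbC\backslash[\varepsilon,\infty)$: on $\hatt\cH$ one has, from $\hatt S_K \geq \mu_{\hatt S_K,1} \geq \varepsilon I_{\hatt\cH}$ by \eqref{Barr-5}, the resolvent identity $(\hatt S_K - zI)^{-1} = \big(\hatt S_K\big)^{-1} + z(\hatt S_K - zI)^{-1}\big(\hatt S_K\big)^{-1}$, and since $(\hatt S_K - zI)^{-1} \in \cB(\hatt\cH)$ while $\big(\hatt S_K\big)^{-1} \in \cB_p$, the product lands in $\cB_p$; combining with $(S_K - zI)^{-1}Q = (\hatt S_K - zI)^{-1}Q$ (as $Q$ projects onto the reducing subspace $\hatt\cH$) gives the claim. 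The same argument works verbatim with $\cB_p(\cH)$ replaced by any two-sided symmetrically normed ideal, since all we used is that such ideals absorb left and right multiplication by bounded operators.

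The main obstacle, such as it is, is bookkeeping around the compression/restriction: one must be scrupulous that $\hatt\cH$ is a reducing subspace for nothing in particular but that $P_{\ker(S_K)}$ and $(S_F)^{-1}$ do not commute, so $Q(S_F)^{-1}Q$ is genuinely a compression rather than a restriction — which is exactly why \eqref{SKinv} has the sandwiched form it does, and why the min-max (rather than a direct unitary equivalence to a piece of $(S_F)^{-1}$) is the right tool for \eqref{Barr-5}. Everything else is a routine application of the ideal property of $\cB_p(\cH)$ (resp.\ of symmetrically normed ideals) and the first resolvent identity. I would also remark, as the theorem statement hints, that \eqref{Barr-5} for $j=1$ recovers Krein's classical observation $\mu_{\hatt S_K,1} \geq \varepsilon$, consistent with $S_K \geq 0$ having no spectrum in $(0,\varepsilon)$.
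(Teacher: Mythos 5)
Your argument is correct and is exactly the route the paper indicates (it gives no detailed proof, remarking only that \eqref{Barr-5} ``immediately follows from the minimax principle and Krein's formula \eqref{SKinv}'' and referring to \cite{AGMST10} for \eqref{CPK}): you reduce everything to $\big(\hatt S_K\big)^{-1}=[I_{\cH}-P_{\ker(S_K)}](S_F)^{-1}[I_{\cH}-P_{\ker(S_K)}]$, obtain \eqref{Barr-5} by max--min for the compressed inverse followed by the order-reversing spectral map $t\mapsto t^{-1}$, and obtain \eqref{ESSK} and \eqref{CPK} from the two-sided ideal property together with the resolvent identity. The bookkeeping points you flag (compression versus restriction, the $z=0$ versus $z\neq 0$ cases, and the eigenvalue/essential-spectrum dichotomy in \eqref{Barr-3}) are handled correctly, so no gaps remain.
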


We note that \eqref{ESSK} is a classical result of Krein \cite{Kr47}, the more general fact \eqref{Barr-5} has not been mentioned explicitly in Krein's paper \cite{Kr47}, although it immediately follows from the minimax principle and Krein's formula \eqref{SKinv}. On the other hand, in the special case ${\rm def}(S)<\infty$, 
Krein states an extension of 
\eqref{Barr-5} in his Remark 8.1 in the sense that he also considers self-adjoint extensions different from the Krein extension. Apparently, \eqref{Barr-5} in the context of infinite deficiency indices has first been proven by Alonso and Simon \cite{AS80} by a somewhat different method. Relation \eqref{CPK} was 
recently proved in \cite{AGMST10} for $p \in (0, \infty)$.

Concluding this section, we point out that a great variety of additional results 
for the Krein--von Neumann extension can be found, for instance, in 
\cite[Sect.\ 109]{AG81a}, \cite{AS80}, \cite{AN70}--\cite{Ar00}, \cite[Chs.\ 9, 10]{ABT11}, 
\cite{AHSD01}--\cite{AGMST10}, \cite{BC05}, 
\cite{DM91}, \cite{DM95}, \cite[Part III]{Fa75}, \cite{FN09}, \cite{FN10}, 
\cite[Sect.\ 3.3]{FOT11}, \cite{GM11}, \cite{Gr83}, \cite{Gr12}, \cite{Ha57}, 
\cite{HK09}--\cite{HSS09}, \cite{Ko88}, \cite[Ch.\ 3]{Ko99}, \cite{KO77}, \cite{KO78}, \cite{Ne83}, 
\cite{PS96}, \cite{SS03}--\cite{Sk79}, \cite{St96}, \cite{Ts80}, \cite{Ts81}, 
\cite{Ts92}, and the references therein. We also mention the references 
\cite{EM05}--\cite{EMMP07} (these authors, apparently unaware of the work of von 
Neumann, Krein, Vi{\u s}hik, Birman, Grubb, {\u S}trauss, etc., in this context, 
introduced the Krein Laplacian and called it the harmonic operator, see also 
\cite{Gr06}).

\section{The Abstract Krein--von Neumann Extension and its Connection to an Abstract Buckling Problem}
\label{s2a}

In this section we describe some results on the Krein--von Neumann extension which exhibit the latter as a natural object in elasticity theory by relating it to an abstract buckling problem. The results of this section appeared in \cite{AGMST10}. 

As the principal result of this section we will describe the unitary equivalence of the inverse of 
the Krein--von Neumann extension (on the orthogonal complement of its kernel) of a densely defined, closed, operator $S$ satisfying $S\geq \varepsilon I_{\cH}$ for some 
$\varepsilon >0$, in a complex separable Hilbert space $\cH$, to an abstract buckling problem operator.  

We start by introducing an abstract version of Proposition\ 1 in Grubb's paper  
\cite{Gr83} devoted to Krein--von Neumann extensions of even order elliptic differential operators on bounded domains:

\begin{lemma}  \lb{l11.3}
Assume Hypothesis \ref{h2.6} and let $\lambda \neq 0$. Then there exists 
$0 \neq v \in \dom(S_K)$ with
\begin{equation}
S_K v = \lambda v   \lb{11.1b}
\end{equation}
if and only if there exists $0 \neq u \in \dom(S^* S)$ such that
\begin{equation}
S^* S u = \lambda S u.   \lb{11.1c}
\end{equation}
In particular, the solutions $v$ of \eqref{11.1b} are in one-to-one correspondence with the solutions $u$ of \eqref{11.1c} given by the formulas
\begin{align}
u & = (S_F)^{-1} S_K v,    \lb{11.1d}  \\
v & = \lambda^{-1} S u.   \lb{11.1e}
\end{align}
Of course, since $S_K \geq 0$, any $\lambda \neq 0$ in \eqref{11.1b} and \eqref{11.1c}  necessarily satisfies $\lambda > 0$.
\end{lemma}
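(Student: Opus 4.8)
\textbf{Proof proposal for Lemma \ref{l11.3}.}

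The plan is to verify the two implications separately, constructing the correspondence $v \leftrightarrow u$ explicitly via \eqref{11.1d}, \eqref{11.1e} and checking that these formulas really do land in the asserted domains. First suppose $0 \neq v \in \dom(S_K)$ satisfies $S_K v = \lambda v$ with $\lambda \neq 0$. Since $S_K \geq 0$ and $v \neq 0$, pairing with $v$ forces $\lambda > 0$; in particular $v \notin \ker(S_K) = \ker(S^*)$. Using the decomposition \eqref{SK}, $\dom(S_K) = \dom(S) \dotplus \ker(S^*)$, together with Lemma \ref{l3.1a}, write $v = (S_F)^{-1}S_K v + w$ with $u := (S_F)^{-1} S_K v \in \dom(S)$ and $w \in \ker(S^*)$. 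Then $u = \lambda (S_F)^{-1} v$, and since $S_F$ is invertible with $S_F \geq \varepsilon I_{\cH}$ and $v \neq 0$, we get $u \neq 0$. Now apply $S$: because $u \in \dom(S)$ and $S_F u = S_K v = \lambda v$ (as $S_F = S^*|_{\dom(S_F)}$ and $u \in \dom(S_F)$), we have $S^* u = \lambda v$, so $u \in \dom(S^* S)$ precisely because $S^* u = \lambda v \in \dom(S^*)$... wait — here is the one point needing care: I must show $Su \in \dom(S^*)$. In fact $Su = S_F u$ acting through... let me instead argue $S^* S u = S^*(S u)$ and $S u = S_F u$? That is not literally true since $S \subseteq S_F$. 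The cleaner route: from $u = \lambda(S_F)^{-1}v$ we get $S_F u = \lambda v \in \dom(S_F) \subseteq \dom(S^*)$, and since $u \in \dom(S)$, $S u = S_F u = \lambda v$; hence $S u \in \dom(S^*)$, so $u \in \dom(S^* S)$ and $S^* S u = \lambda S^* v$? No — $S^* S u = S^*(\lambda v) = \lambda S^* v$, and $S^* v = S_K v = \lambda v$, giving $S^* S u = \lambda^2 v = \lambda (S u)$. This establishes \eqref{11.1c}.

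Conversely, suppose $0 \neq u \in \dom(S^* S)$ satisfies $S^* S u = \lambda S u$ with $\lambda \neq 0$; again $S^* S \geq 0$ forces $\lambda > 0$, and $u \notin \ker(S)$ (else $S u = 0$ and the argument below degenerates). Set $v := \lambda^{-1} S u$. Then $v \in \dom(S^*)$ since $S u \in \dom(S^*)$, and $S^* v = \lambda^{-1} S^* S u = S u$. To see $v \in \dom(S_K)$, I use the characterization \eqref{SK}: I must split $v$ as an element of $\dom(S)$ plus an element of $\ker(S^*)$. The candidate is $v = u' + w$ where $u' := (S_F)^{-1} S u = (S_F)^{-1}(\lambda v)$; note $S u \in \dom(S^*)$ and $S_F$ invertible give $u' \in \dom(S_F)$, and in fact $S^* u' = S_F u' = S u = S^* v$, so $w := v - u' \in \ker(S^*)$. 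It remains to check $u' \in \dom(S)$, not merely $\dom(S_F)$; this is where I expect the main obstacle, and the resolution is to observe $S_F u' = S u$ with $u \in \dom(S^* S) \subseteq \dom(S)$, so one shows $u' = u$ by verifying $S_F(u' - u)$ lies appropriately — indeed $S_F u = S u$ as well (since $u \in \dom(S) \subseteq \dom(S_F)$), hence $S_F u' = S_F u$, and injectivity of $S_F$ gives $u' = u \in \dom(S)$. Therefore $v = u + w$ with $u \in \dom(S)$, $w \in \ker(S^*)$, so $v \in \dom(S_K)$ by \eqref{SK}, and $S_K v = S^* v = S u = \lambda v$, which is \eqref{11.1b}.

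Finally, the one-to-one correspondence: the two constructions are mutually inverse. Starting from $v$ and forming $u = (S_F)^{-1} S_K v$, then $\lambda^{-1} S u = \lambda^{-1} S_F u = \lambda^{-1} S_K v \cdot \lambda \cdot \lambda^{-1}$... more directly $S u = S_F u = S_F (S_F)^{-1} S_K v = S_K v = \lambda v$, so $\lambda^{-1} S u = v$. Going the other way, starting from $u$ and forming $v = \lambda^{-1} S u$, the argument in the previous paragraph showed $(S_F)^{-1} S_K v = u$. Injectivity in each direction is then immediate, and since $v \neq 0 \iff u \neq 0$ was checked above (using invertibility of $S_F$ in one direction and $\lambda \neq 0$ together with $u \notin \ker(S)$ in the other), the map $v \mapsto u$ is a bijection between the nonzero solutions of \eqref{11.1b} and those of \eqref{11.1c}, with inverse $u \mapsto v$ as in \eqref{11.1e}. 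The only genuinely delicate steps are the domain bookkeeping—specifically verifying membership in $\dom(S)$ (as opposed to the a priori weaker $\dom(S_F)$ or $\dom(S^*)$) at the two places flagged above—and this is handled each time by exploiting the injectivity of the Friedrichs extension $S_F \geq \varepsilon I_{\cH}$ granted by Hypothesis \ref{h2.6}.
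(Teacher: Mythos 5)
Your proof is correct and follows essentially the same route as the paper: decompose $v$ via \eqref{SK} and Lemma \ref{l3.1a} in the forward direction, and in the converse identify $w = v - u = \lambda^{-1}(S-\lambda I_{\cH})u \in \ker(S^*)$ (your detour through $(S_F)^{-1}Su$ and the injectivity of $S_F$ merely re-derives this same splitting). One slip to repair in the forward direction: the containment you invoke, $\lambda v = S_F u \in \dom(S_F)$, is false in general — by Lemma \ref{lKF} it would force $v \in \dom(S)$ — but the fact you actually need, namely $Su = \lambda v \in \dom(S^*)$, holds for the correct reason that $v \in \dom(S_K) \subseteq \dom(S^*)$, which is precisely the observation the paper uses.
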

\begin{proof}
Let $S_K v = \lambda v$, $v\in\dom(S_K)$, $\lambda \neq 0$, and $v = u + w$, with 
$u \in \dom(S)$ and $w \in \ker(S^*)$. Then, 
\begin{equation}
S_K v = \lambda v 
\Longleftrightarrow v = \lambda^{-1} S_K v = \lambda^{-1} S_K u = \lambda^{-1} S u.
\end{equation}
Moreover, $u =0$ implies $v = 0$ and clearly $v=0$ implies $u=w=0$, hence $v \neq 0$ if 
and only if $u \neq 0$. In addition, $u = (S_F)^{-1} S_K v$ by \eqref{3.1aa}. Finally, using 
$\lambda v = Su \in \dom(S_K) \subset \dom(S^*)$, one concludes that 
\begin{align}
\begin{split}
& \lambda w = S u - \lambda u \in \ker(S^*) \, \text{ implies}    \\ 
& 0 = \lambda S^* w = S^*(S u - \lambda u) = S^* S u - \lambda S^* u 
= S^* S u - \lambda S u. 
\end{split}
\end{align}
Conversely, suppose $u \in \dom(S^* S)$ and $S^* S u = \lambda S u$, 
$\lambda \neq 0$. Introducing $v = \lambda^{-1} S u$, then $v \in \dom(S^*)$ and 
\begin{equation}
S^* v = \lambda^{-1} S^* S u = S u = \lambda v.
\end{equation}
Noticing that
\begin{equation}
S^* S u = \lambda S u = \lambda S^* u \, \text{ implies } \, S^*(S-\lambda I_{\cH}) u =0, 
\end{equation}
and hence $(S - \lambda I_{\cH}) u \in \ker(S^*)$, rewriting $v$ as
\begin{equation}
v = u + \lambda^{-1} (S - \lambda I_{\cH}) u
\end{equation}
then proves that also $v \in \dom(S_K)$, using \eqref{SK} again.
\end{proof}

Due to Example \ref{e3.6} and Remark \ref{r11.7} at the end of this section, we will call the linear pencil eigenvalue problem $S^* Su = \lambda S u$ in \eqref{11.1c} the {\it abstract buckling problem} associated with the Krein--von Neumann extension $S_K$ of $S$.

Next, we turn to a variational formulation of the correspondence between the inverse of the reduced Krein extension $\hatt S_K$ and the abstract buckling problem in terms of appropriate sesquilinear forms by following the treatment of Kozlov 
\cite{Ko79}--\cite{Ko84} in the context of elliptic partial differential operators. This will then lead to an even stronger connection between the Krein--von Neumann extension $S_K$ of $S$ and the associated abstract buckling eigenvalue problem \eqref{11.1c}, culminating in a unitary equivalence result in Theorem \ref{t11.3}. 

Given the operator $S$, we introduce the following sesquilinear forms in $\cH$,
\begin{align}
a(u,v) & = (Su,Sv)_{\cH}, \quad u, v \in \dom(a) = \dom(S),    \lb{11.2} \\
b(u,v) & = (u,Sv)_{\cH}, \quad u, v \in  \dom(b) = \dom(S).    \lb{11.3} 
\end{align}
Then $S$ being densely defined and closed implies that the sesquilinear form $a$ shares these properties and  \eqref{11.1} implies its boundedness from below,
\begin{equation}
a(u,u) \geq \varepsilon^2 \|u\|_{\cH}^2, \quad u \in \dom(S).    \lb{11.4}
\end{equation} 
Thus, one can introduce the Hilbert space 
$\cW=(\dom(S), (\cdot,\cdot)_{\cW})$ with associated scalar product 
\begin{equation}
(u,v)_{\cW}=a(u,v) = (Su,Sv)_{\cH}, \quad u, v \in \dom(S).   \lb{11.5}
\end{equation}
In addition, we denote by $\iota_{\cW}$ the continuous embedding operator of $\cW$ 
into $\cH$,
\begin{equation}
\iota_{\cW} : \cW \hookrightarrow \cH.    \lb{11.6}
\end{equation}
Hence we will use the notation
\begin{equation}
(w_1,w_2)_{\cW} =a(\iota_{\cW} w_1,\iota_{\cW} w_2) 
= (S\iota_{\cW} w_1, S\iota_{\cW} w_2)_{\cH}, \quad w_1, w_2 \in \cW,   \lb{11.7}
\end{equation}
in the following. 

Given the sesquilinear forms $a$ and $b$ and the Hilbert space $\cW$, we next define the operator $T$ in $\cW$ by
\begin{align}
\begin{split}
(w_1,T w_2)_{\cW} & = a(\iota_{\cW} w_1,\iota_{\cW} T w_2) 
= (S \iota_{\cW} w_1,S\iota_{\cW} T w_2)_{\cH}   \\
& = b(\iota_{\cW} w_1,\iota_{\cW} w_2) = (\iota_{\cW} w_1,S \iota_{\cW} w_2)_{\cH},
\quad w_1, w_2 \in \cW.    \lb{11.8}
\end{split}
\end{align}
(In contrast to the informality of our introduction, we now explicitly write the embedding operator $\iota_{\cW}$.) One verifies that $T$ is well-defined and that 
\begin{equation}
|(w_1,T w_2)_{\cW}| \leq \|\iota_{\cW} w_1\|_{\cH} \|S \iota_{\cW} w_2\|_{\cH} 
\leq \varepsilon^{-1} \|w_1\|_{\cW} \|w_2\|_{\cW}, \quad w_1, w_2 \in \cW,   \lb{11.9}
\end{equation}
and hence that
\begin{equation}
0 \leq T = T^* \in \cB(\cW), \quad \|T\|_{\cB(\cW)} \leq \varepsilon^{-1}.  \lb{11.10}
\end{equation}
For reasons to become clear at the end of this section, we will call $T$ the {\it abstract buckling problem operator} associated with the Krein--von Neumann extension $S_K$ of $S$.  

Next, recalling the notation 
$\hatt \cH = [\ker (S^*)]^{\bot} = \big[I_{\cH} - P_{\ker(S^*)}\big] \cH$ (cf.\ \eqref{hattH}), 
we introduce the operator
\begin{equation}
\hatt S: \begin{cases} \cW \to \hatt \cH,  \\
w \mapsto S \iota_{\cW} w,  \end{cases}      \lb{11.12}
\end{equation}
and note that 
\begin{equation} 
\ran\big(\hatt S\big) = \ran (S) = \hatt \cH,    \lb{11.12aa}
\end{equation}
since $S\geq \varepsilon I_{\cH}$ for some $\varepsilon > 0$ and $S$ is closed in $\cH$ 
(see, e.g., \cite[Theorem\ 5.32]{We80}). In fact, one has the following result:

\begin{lemma} \lb{l11.1} 
Assume Hypothesis \ref{h2.6}. Then 
$\hatt S\in\cB(\cW,\hatt \cH)$ maps $\cW$ unitarily onto $\hatt \cH$. 
\end{lemma}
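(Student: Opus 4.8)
The plan is to show that $\hatt S$ is a well-defined bounded operator, that it is isometric, and that it is surjective; then a surjective isometry is unitary. The key observation is that the inner product on $\cW$ was \emph{defined} precisely as $(w_1,w_2)_{\cW} = (S\iota_{\cW}w_1, S\iota_{\cW}w_2)_{\cH}$ in \eqref{11.5}, \eqref{11.7}, so isometry is essentially built into the construction. Surjectivity and the range identification $\ran(S) = \hatt\cH$ are the places where the hypothesis $S \geq \varepsilon I_{\cH}$ and closedness of $S$ genuinely enter.

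First I would record that $\hatt S \in \cB(\cW,\hatt\cH)$: for $w \in \cW$ we have $\|\hatt S w\|_{\hatt\cH}^2 = \|S\iota_{\cW}w\|_{\cH}^2 = (w,w)_{\cW} = \|w\|_{\cW}^2$ by \eqref{11.5}, which simultaneously gives boundedness (with norm $1$) and that $\hatt S$ is isometric. I should also note $\ran(\hatt S) \subseteq \hatt\cH$ is legitimate: $\ran(S) \subseteq \ol{\ran(S)} = [\ker(S^*)]^{\bot} = \hatt\cH$, and since $S \geq \varepsilon I_{\cH}$ with $S$ closed, $\ran(S)$ is in fact closed, so $\ran(S) = \hatt\cH$ exactly (this is \eqref{11.12aa}, citing \cite[Theorem\ 5.32]{We80}). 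Thus $\hatt S$ maps $\cW$ isometrically into $\hatt\cH$, and in particular $\ran(\hatt S)$ is a closed subspace of $\hatt\cH$.

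For surjectivity, given any $h \in \hatt\cH = \ran(S)$, there exists $u \in \dom(S)$ with $Su = h$; since $\iota_{\cW}$ is just the identification of $\dom(S)$ as a set with $\cW$, the element $w := u$ viewed in $\cW$ satisfies $\hatt S w = S\iota_{\cW}w = Su = h$. (One only needs that $u$, which lies in $\dom(S)$, indeed represents an element of $\cW$, which it does by definition of $\cW$ in \eqref{11.4}--\eqref{11.5}; finiteness of $\|u\|_{\cW} = \|Su\|_{\cH} = \|h\|_{\cH}$ is automatic.) Hence $\ran(\hatt S) = \hatt\cH$. Combining: $\hatt S$ is an isometric bijection of $\cW$ onto $\hatt\cH$, hence unitary, and its inverse is bounded with $\|(\hatt S)^{-1}\| = 1$.

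The main (and only mild) obstacle is making sure the bookkeeping with the embedding $\iota_{\cW}$ is airtight: one must be careful that $\iota_{\cW}$ is injective (so that ``$w := u$'' makes sense as a genuine inverse image) — but this is immediate because $\|\iota_{\cW}w\|_{\cH} \le \varepsilon^{-1}\|Sw\|_{\cH} = \varepsilon^{-1}\|w\|_{\cW}$ shows $\iota_{\cW}$ is bounded, and $\iota_{\cW}w = 0$ forces $Sw = 0$ hence $w = 0$ since $S \geq \varepsilon I_{\cH}$. Everything else is a direct unwinding of the definitions \eqref{11.2}--\eqref{11.12}, together with the already-cited fact \eqref{11.12aa} that $\ran(S) = \hatt\cH$. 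No deep input is required beyond strict positivity, closedness, and the standard fact that a closed operator bounded below has closed range.
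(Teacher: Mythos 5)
Your proof is correct and follows the same route as the paper: isometry is immediate from the definition \eqref{11.5} of the $\cW$-inner product, and surjectivity follows from $\ran\big(\hatt S\big)=\ran(S)=\hatt\cH$ in \eqref{11.12aa} (closed range of a closed operator bounded below). The extra checks you include (injectivity of $\iota_{\cW}$, closedness of the range) are fine but not needed beyond what \eqref{11.12aa} already provides.
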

\begin{proof}
Clearly $\hatt S$ is an isometry since 
\begin{equation}
\big\|\hatt S w\big\|_{\hatt\cH} = \|S \iota_{\cW} w\big\|_{\cH} = \|w\|_{\cW}, \quad w \in \cW.   \lb{11.13}
\end{equation}
Since $\ran\big(\hatt S\big) = \hatt \cH$ by \eqref{11.12aa}, $\hatt S$ is unitary. 
\end{proof}

Next we recall the definition of the reduced Krein--von Neumann operator $\hatt S_K$ in $\hatt \cH$ defined in \eqref{SKP}, the fact that $\ker(S^*) = \ker(S_K)$ by \eqref{Fr-4Tf}, and state the following auxiliary result:

\begin{lemma} \lb{l11.2}
Assume Hypothesis \ref{h2.6}. Then the map
\begin{equation}
\big[I_{\cH} - P_{\ker(S^*)}\big] : \dom(S) \to \dom \big(\hatt S_K\big)    \lb{11.15}
\end{equation}
is a bijection. In addition, we note that 
\begin{align}
\begin{split}
& \big[I_{\cH} - P_{\ker(S^*)}\big] S_K u = S_K \big[I_{\cH} - P_{\ker(S^*)}\big] u 
= \hatt S_K \big[I_{\cH} - P_{\ker(S^*)}\big] u  \\
& \quad = \big[I_{\cH} - P_{\ker(S^*)}\big] S u = Su \in \hatt \cH,    
\quad  u \in \dom(S).     \lb{11.16} 
\end{split}
\end{align}
\end{lemma}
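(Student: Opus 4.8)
The plan is to verify directly the two claims in Lemma \ref{l11.2}: first, that the map $P := I_{\cH} - P_{\ker(S^*)}$ restricted to $\dom(S)$ is a bijection onto $\dom(\hatt S_K)$, and second, the string of operator identities in \eqref{11.16}. Both claims will be straightforward consequences of the direct sum decomposition $\dom(S_K) = \dom(S) \dotplus \ker(S^*)$ from \eqref{SK}, the fact $\ker(S_K) = \ker(S^*)$ from \eqref{Fr-4Tf}, and the elementary decomposition of $v \in \dom(S_K)$ recorded in Lemma \ref{l3.1a}.

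First I would establish injectivity. Suppose $u_1, u_2 \in \dom(S)$ with $P u_1 = P u_2$, i.e. $u_1 - u_2 \in \ker(S^*)$. Since $S$ is nonnegative and densely defined, $\dom(S) \cap \ker(S^*)$ is governed by the direct sum in \eqref{SK}: indeed $u_1 - u_2 \in \dom(S) \cap \ker(S^*)$, and because $\dom(S_K) = \dom(S) \dotplus \ker(S^*)$ is a genuine direct sum (the two subspaces intersect only in $\{0\}$), one gets $u_1 - u_2 = 0$. (Equivalently, $S$ being strictly positive means $0 \notin \sigma(S)$ in the appropriate sense, so no nonzero element of $\dom(S)$ can lie in $\ker(S^*)$; but invoking the direct sum property of \eqref{SK} is cleaner.) For surjectivity, take any $v \in \dom(\hatt S_K) \subset \dom(S_K)$. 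By \eqref{3.1aa} in Lemma \ref{l3.1a}, write $v = (S_F)^{-1} S_K v + w$ with $u := (S_F)^{-1} S_K v \in \dom(S)$ and $w \in \ker(S^*)$. Then $P v = P u + P w = P u$ since $w \in \ker(S^*)$, and $P v = v$ since $v \in \dom(\hatt S_K) = [\ker(S_K)]^\bot = [\ker(S^*)]^\bot$. Hence $v = P u$ with $u \in \dom(S)$, proving surjectivity. One should also check that $P u \in \dom(\hatt S_K)$ for every $u \in \dom(S)$: writing $u = P u + P_{\ker(S^*)} u$ exhibits $P u$ as $u$ minus an element of $\ker(S^*) = \ker(S_K) \subset \dom(S_K)$, so $P u \in \dom(S_K)$; and $P u \perp \ker(S_K)$ by construction, so $P u \in \dom(\hatt S_K)$ by the definition \eqref{SKP}--\eqref{Barr-4} of $\hatt S_K$ as the part of $S_K$ in $[\ker(S_K)]^\bot$.

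Next I would verify the chain \eqref{11.16}. Fix $u \in \dom(S)$. Since $\dom(S) \subset \dom(S_K)$ and $S_K$ agrees with $S$ on $\dom(S)$, we have $S_K u = S u$. Because $S \geq \varepsilon I_{\cH}$ we have $S u \in \ran(S) = \hatt \cH$ (by \eqref{11.12aa}), so $P (S u) = S u$, which gives $[I_{\cH} - P_{\ker(S^*)}] S_K u = S u$. For the middle equalities: $P_{\ker(S^*)}$ commutes with $S_K$ on the relevant domain because $\ker(S^*) = \ker(S_K)$ is a reducing subspace for $S_K$ (this is the content of the orthogonal decomposition $\cH = P_{\ker(S_K)}\cH \oplus [I_{\cH}-P_{\ker(S_K)}]\cH$ reducing $S_K$, stated just before Lemma \ref{l3.1a}); hence $P S_K u = S_K P u$. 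On the subspace $[\ker(S_K)]^\bot$ the operator $S_K$ restricts to $\hatt S_K$ by definition, and $P u \in \dom(\hatt S_K) \subset [\ker(S_K)]^\bot$ by the surjectivity argument above, so $S_K P u = \hatt S_K P u$. Finally $S_K P u = S_K(u - P_{\ker(S^*)}u) = S_K u - S_K P_{\ker(S^*)}u = S_K u - 0 = S u$ since $P_{\ker(S^*)}u \in \ker(S^*) = \ker(S_K)$. Stringing these equalities together yields \eqref{11.16}.

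I do not anticipate a serious obstacle here; the lemma is essentially bookkeeping with the decomposition \eqref{SK} and the reducing property of $\ker(S_K)$. The one point requiring slight care is making sure that $P u$ genuinely lands in $\dom(\hatt S_K)$ rather than merely in $\dom(S_K) \cap [\ker(S_K)]^\bot$ — but by the very definition \eqref{SKP} of the reduced operator these coincide, so this is immediate. A secondary subtlety is that surjectivity of $P\big|_{\dom(S)}$ onto $\dom(\hatt S_K)$ really does use Lemma \ref{l3.1a} (equivalently \eqref{SK}) and is not just a soft projection statement, since an arbitrary preimage under $P$ of $v \in \dom(\hatt S_K)$ need not lie in $\dom(S)$; the decomposition $v = (S_F)^{-1}S_K v + w$ is precisely what produces a preimage inside $\dom(S)$.
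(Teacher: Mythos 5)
Your proof is correct and follows essentially the same route as the paper: injectivity from the trivial intersection $\dom(S)\cap\ker(S^*)=\{0\}$ implicit in the direct sum \eqref{SK}, surjectivity by decomposing $v\in\dom\big(\hatt S_K\big)$ via \eqref{SK} (your use of Lemma \ref{l3.1a} is just the explicit form of that decomposition) and observing $\big[I_{\cH}-P_{\ker(S^*)}\big]v=v$, and \eqref{11.16} from the fact that $\ker(S_K)=\ker(S^*)$ reduces $S_K$. The paper merely states \eqref{11.16} as "clear" from the commutation relation, whereas you spell out the individual equalities; no substantive difference.
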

\begin{proof}
Let $u\in\dom(S)$, then $\ker(S^*) = \ker(S_K)$ implies that 
$\big[I_{\cH} - P_{\ker(S^*)}\big] u \in \dom(S_K)$ and of course 
$\big[I_{\cH} - P_{\ker(S^*)}\big] u \in \dom\big(\hatt S_K\big)$. To prove injectivity of the map 
\eqref{11.15} it suffices to assume $v \in \dom(S)$ and 
$\big[I_{\cH} - P_{\ker(S^*)}\big] v =0$. Then 
$\dom(S) \ni v = P_{\ker(S^*)} v \in \ker(S^*)$ yields $v=0$ as 
$\dom(S) \cap \ker(S^*) = \{0\}$. To prove surjectivity of the map \eqref{11.15} we suppose $u \in \dom\big(\hatt S_K)$. The decomposition, $u = f +g$ with $f \in \dom(S)$ and $g \in \ker(S^*)$, then yields 
\begin{equation}
u = \big[I_{\cH} - P_{\ker(S^*)}\big] u = \big[I_{\cH} - P_{\ker(S^*)}\big] f 
\in \big[I_{\cH} - P_{\ker(S^*)}\big] \dom(S)   \lb{11.18}
\end{equation}
and hence proves surjectivity of \eqref{11.15}. 

Equation \eqref{11.16} is clear from 
\begin{equation}
S_K \big[I_{\cH} - P_{\ker(S^*)}\big] = \big[I_{\cH} - P_{\ker(S^*)}\big] S_K 
= \big[I_{\cH} - P_{\ker(S^*)}\big] S_K \big[I_{\cH} - P_{\ker(S^*)}\big].   \lb{11.19}
\end{equation} 
\end{proof}

Continuing, we briefly recall the polar decomposition of $S$, 
\begin{equation}
S = U_S |S|,   \lb{11.19a}
\end{equation} 
with
\begin{equation}
|S| = (S^* S)^{1/2} \geq \varepsilon I_{\cH}, \; \varepsilon > 0, \quad 
U_S \in \cB\big(\cH,\hatt \cH\big) \, \text{ is unitary.}    \lb{11.19b}
\end{equation}

At this point we are in position to state our principal unitary equivalence result:

\begin{theorem} \lb{t11.3}
Assume Hypothesis \ref{h2.6}. Then the inverse of the reduced Krein--von Neumann extension $\hatt S_K$ in $\hatt \cH = \big[I_{\cH} - P_{\ker(S^*)}\big] \cH$ and the abstract buckling problem operator $T$ in $\cW$ are unitarily equivalent, in particular,
\begin{equation}
\big(\hatt S_K\big)^{-1} = \hatt S T (\hatt S)^{-1}.    \lb{11.20}
\end{equation}
Moreover, one has
\begin{equation}
\big(\hatt S_K\big)^{-1} = U_S \big[|S|^{-1} S |S|^{-1}\big] (U_S)^{-1},    \lb{11.20a}
\end{equation}
where $U_S\in \cB\big(\cH,\hatt \cH\big)$ is the unitary operator in the polar decomposition \eqref{11.19a} of $S$ and the operator $|S|^{-1} S |S|^{-1}\in\cB(\cH)$ is self-adjoint in $\cH$. 
\end{theorem}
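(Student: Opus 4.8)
The plan is to establish \eqref{11.20} first, by directly computing $\hatt S T (\hatt S)^{-1}$ on $\hatt\cH$ and showing it coincides with $\big(\hatt S_K\big)^{-1}$ as given by Krein's formula \eqref{SKinv}. Since $\hatt S \in \cB(\cW,\hatt\cH)$ is unitary by Lemma \ref{l11.1} and $T \in \cB(\cW)$ is bounded self-adjoint and nonnegative by \eqref{11.10}, the operator $\hatt S T (\hatt S)^{-1}$ is automatically bounded, self-adjoint, and nonnegative on $\hatt\cH$; the only real content is identifying it. First I would take an arbitrary $\hatt h \in \hatt\cH$ and write $\hatt h = \hatt S w = S\iota_{\cW} w$ for the unique $w \in \cW$ (Lemma \ref{l11.1}). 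Then $\hatt S T (\hatt S)^{-1} \hatt h = \hatt S T w = S\iota_{\cW}(Tw)$, so I need to understand the element $\iota_{\cW}(Tw) \in \dom(S)$. From the defining relation \eqref{11.8}, for all $w_1 \in \cW$,
\[
(S\iota_{\cW} w_1, S\iota_{\cW}(Tw))_{\cH} = (\iota_{\cW} w_1, S\iota_{\cW} w)_{\cH} = (\iota_{\cW} w_1, \hatt h)_{\cH}.
\]
Since $\ran(\hatt S) = \hatt\cH = \ran(S)$ and $S\iota_{\cW} w_1$ ranges over all of $\hatt\cH$ as $w_1$ ranges over $\cW$, this says precisely that $S\iota_{\cW}(Tw)$ is the element of $\hatt\cH$ whose pairing against $S\iota_{\cW} w_1$ reproduces $(\iota_{\cW} w_1, \hatt h)_{\cH}$.

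The next step is to recognize this as the action of $\big(\hatt S_K\big)^{-1}$. By Lemma \ref{l3.1a}, for $v \in \dom(S_K)$ with $v = (S_F)^{-1} S_K v + w_0$, $w_0 \in \ker(S^*)$, we have $\big(\hatt S_K\big)^{-1}$ acting on $\hatt\cH$ via \eqref{SKinv}. Concretely, given $\hatt h = S\iota_{\cW} w \in \hatt\cH$, I claim $\iota_{\cW}(Tw) = (S_F)^{-1} \hatt h$ modulo $\ker(S^*)$; more precisely, since $\iota_{\cW}(Tw) \in \dom(S) \subset \dom(S_F)$, I would verify $S_F \iota_{\cW}(Tw) = S\iota_{\cW}(Tw)$ (trivial, as $S_F$ extends $S$) and that $\hatt S T (\hatt S)^{-1} \hatt h = S_K$-image relation matches. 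Actually the cleanest route: show $\big(\hatt S_K\big) \hatt S T (\hatt S)^{-1} = I_{\hatt\cH}$. Take $\hatt h = S\iota_{\cW} w$; then $\hatt S T(\hatt S)^{-1}\hatt h = S\iota_{\cW}(Tw)$. Now $\iota_{\cW}(Tw) \in \dom(S)$, so by Lemma \ref{l11.2}, $\big[I_{\cH} - P_{\ker(S^*)}\big]\iota_{\cW}(Tw) \in \dom\big(\hatt S_K\big)$ and $\hatt S_K$ applied to it equals $S\iota_{\cW}(Tw)$ — wait, I must be careful: $\hatt S T (\hatt S)^{-1}\hatt h = S\iota_{\cW}(Tw)$ lies in $\hatt\cH$, and I want to feed it to $\hatt S_K$. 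Instead the correct assertion is $\hatt S T(\hatt S)^{-1}\hatt h = \big[I_{\cH} - P_{\ker(S^*)}\big]\iota_{\cW}(Tw)$-type object only after reconciling that $\hatt S w' = Sw'$ for $w' \in \cW \subset \dom(S)$. So in fact $\hatt S T(\hatt S)^{-1}\hatt h$ is not $\iota_{\cW}(Tw)$ itself but $S\iota_{\cW}(Tw)$; applying $\hatt S_K$ to $\big[I_{\cH}-P_{\ker(S^*)}\big]$ of some preimage requires going the other way. The honest computation is: $\hatt S_K\big(\hatt S T(\hatt S)^{-1}\hatt h\big)$ makes no sense directly since $\hatt S T(\hatt S)^{-1}\hatt h \in \hatt\cH$ need not be in $\dom(\hatt S_K)$. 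Rather, one shows $\big(\hatt S_K\big)^{-1}\hatt h = \hatt S T(\hatt S)^{-1}\hatt h$ by checking, for $\hatt h = S\iota_{\cW} w$, that $\big(\hatt S_K\big)^{-1}\hatt h = S\iota_{\cW}(Tw)$; since by Lemma \ref{l3.1a} $\big(\hatt S_K\big)^{-1} = [I-P_{\ker(S^*)}](S_F)^{-1}[I-P_{\ker(S^*)}]$ and $\hatt h \in \hatt\cH = \ran(S)$ with $[I-P_{\ker(S^*)}]\hatt h = \hatt h$, one needs $[I-P_{\ker(S^*)}](S_F)^{-1}\hatt h = S\iota_{\cW}(Tw)$. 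Testing against $Sw_1$, $w_1 \in \cW$: $(Sw_1, S\iota_{\cW}(Tw))_{\cH} = (w_1, Sw)_{\cH}$ from \eqref{11.8}, while $(Sw_1, [I-P_{\ker(S^*)}](S_F)^{-1}\hatt h)_{\cH} = (Sw_1, (S_F)^{-1}\hatt h)_{\cH}$ (as $Sw_1 \in \hatt\cH$) $= ((S_F)^{-1/2}Sw_1, (S_F)^{-1/2}\hatt h)_{\cH}$; using $w_1 \in \dom(S) \subset \dom(S_F)$ and $Sw_1 = S_F w_1$, this equals $(w_1, \hatt h)_{\cH} = (w_1, Sw)_{\cH}$. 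The two agree, proving \eqref{11.20}.

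For the second identity \eqref{11.20a}, the plan is to use the polar decomposition $S = U_S|S|$ with $|S| = (S^*S)^{1/2} \geq \varepsilon I_{\cH}$ and $U_S \in \cB(\cH,\hatt\cH)$ unitary (since $S \geq \varepsilon I_{\cH}$ is closed with $\ran(S) = \hatt\cH$). The map $\hatt S: \cW \to \hatt\cH$ sends $w \mapsto Sw = U_S|S|w$, and $\cW$ has inner product $(Sw_1,Sw_2)_{\cH} = (|S|w_1, |S|w_2)_{\cH}$, i.e., $\cW = (\dom(S), (|S|\cdot, |S|\cdot)_{\cH})$ and $|S|: \cW \to \cH$ is unitary. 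So $\hatt S = U_S \circ |S|$ as a composition of unitaries $\cW \xrightarrow{|S|} \cH \xrightarrow{U_S} \hatt\cH$. Then from \eqref{11.20}, $\big(\hatt S_K\big)^{-1} = U_S |S| T |S|^{-1} U_S^{-1}$, and it remains to identify $|S| T |S|^{-1} \in \cB(\cH)$ with $|S|^{-1} S |S|^{-1}$. Unwinding the definition \eqref{11.8} of $T$ in terms of the $|S|$-transported inner product: for $x, y \in \cH$, writing $x = |S|w_1$, $y = |S|w_2$, the relation $(w_1, Tw_2)_{\cW} = (\iota_{\cW}w_1, S\iota_{\cW}w_2)_{\cH}$ becomes $(|S|w_1, |S|Tw_2)_{\cH} = (w_1, Sw_2)_{\cH} = (|S|^{-1}x, S|S|^{-1}y)_{\cH}$ — hmm, need $(w_1, Sw_2) = (|S|^{-1}x, S|S|^{-1}y)$; since $Sw_2 = U_S|S|w_2 = U_S y$, this is $(|S|^{-1}x, U_S y)_{\cH}$... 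I'd instead directly set $\widetilde T = |S| T |S|^{-1}$ and compute $(x, \widetilde T y)_{\cH} = (|S|^{-1}x, T|S|^{-1}y)_{\cW} = (|S|\cdot|S|^{-1}x, |S|T|S|^{-1}y)_{\cH}$; no — use instead $(|S|w_1, |S|w_2)_{\cH}$-orthonormality to get $(x, \widetilde T y)_{\cH} = (w_1, Tw_2)_{\cW} = (w_1, Sw_2)_{\cH} = (|S|^{-1}x, S|S|^{-1}y)_{\cH} = (x, |S|^{-1}S|S|^{-1}y)_{\cH}$, using self-adjointness of $|S|^{-1}$. This gives $\widetilde T = |S|^{-1}S|S|^{-1}$, hence \eqref{11.20a}. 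The self-adjointness of $|S|^{-1}S|S|^{-1}$ in $\cH$ follows either from this identity (it is unitarily equivalent to the self-adjoint $\big(\hatt S_K\big)^{-1}$) or directly: $(|S|^{-1}S|S|^{-1})^* = |S|^{-1}S^*|S|^{-1}$, and $S^*$ and $S$ agree on the relevant dense set after conjugation by $|S|^{-1}$ — more cleanly, $|S|^{-1}S|S|^{-1} = |S|^{-1}U_S|S||S|^{-1} = |S|^{-1}U_S$, whose adjoint is $U_S^*|S|^{-1}$; so self-adjointness amounts to $|S|^{-1}U_S = U_S^*|S|^{-1}$, i.e., $U_S|S|^{-1} = |S|^{-1}U_S^*$... this is exactly the polar-decomposition symmetry and I would instead just note that the operator equals the self-adjoint $U_S^{-1}\big(\hatt S_K\big)^{-1}U_S$ by \eqref{11.20a}. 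I expect the main obstacle to be bookkeeping the embedding $\iota_{\cW}$ and the distinction between $\hatt\cH$ and $\cH$ carefully enough that the testing-against-$Sw_1$ arguments are rigorous — in particular justifying that $Sw_1$ ranges over a dense (in fact all) of $\hatt\cH$, which is \eqref{11.12aa}, and that $w_1 \in \dom(S) \subset \dom(S_F)$ with $S_Fw_1 = Sw_1$ so that $(w_1, (S_F)^{-1}\hatt h)_{\cH} = (w_1, \hatt h)_{\cH}$ computations go through.
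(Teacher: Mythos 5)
Your proposal is correct and, at its core, follows the same strategy as the paper: both arguments reduce \eqref{11.20} to the sesquilinear-form identity $\big(\hatt S w_1,\big(\hatt S_K\big)^{-1}\hatt S w_2\big)_{\hatt\cH}=(\iota_{\cW}w_1,S\iota_{\cW}w_2)_{\cH}$ for $w_1,w_2\in\cW$, and then obtain \eqref{11.20a} by writing $\hatt S=U_S|S|$ (equivalently, inserting $|S|^{-1}|S|$ and using $|S|=(U_S)^*S$). The one substantive difference lies in which prior result you use to evaluate $\big(\hatt S_K\big)^{-1}$ on $\ran(S)$: you invoke Krein's formula \eqref{SKinv} from Lemma \ref{l3.1a} and compute $\big(Sw_1,(S_F)^{-1}\hatt h\big)_{\cH}=(w_1,\hatt h)_{\cH}$, whereas the paper uses the commutation identity \eqref{11.16} of Lemma \ref{l11.2}, writing $S\iota_{\cW}w_1=\hatt S_K\big[I_{\cH}-P_{\ker(S^*)}\big]\iota_{\cW}w_1$ and cancelling against $\big(\hatt S_K\big)^{-1}$. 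Both routes are legitimate within the paper's framework and close the argument the same way, via your observation that $\{Sw_1\,|\,w_1\in\cW\}$ exhausts $\hatt\cH$ by \eqref{11.12aa}. Finally, your hesitation about the self-adjointness of $|S|^{-1}S|S|^{-1}$ can be resolved without the polar-decomposition detour: for $x,y\in\cH$ one has $(x,|S|^{-1}S|S|^{-1}y)_{\cH}=(|S|^{-1}x,S|S|^{-1}y)_{\cH}=(S|S|^{-1}x,|S|^{-1}y)_{\cH}=(|S|^{-1}S|S|^{-1}x,y)_{\cH}$ by symmetry of $S$ on $\dom(S)\ni|S|^{-1}x,\,|S|^{-1}y$, and a bounded, everywhere defined, symmetric operator is self-adjoint; your fallback via unitary equivalence to $\big(\hatt S_K\big)^{-1}$ is equally acceptable.
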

\begin{proof}
Let $w_1, w_2 \in \cW$. Then,
\begin{align}
& \big(w_1,\big(\hatt S\big)^{-1} \big(\hatt S_K\big)^{-1} \hatt S w_2\big)_{\cW} 
= \big(\hatt S w_1, \big(\hatt S_K\big)^{-1} \hatt S w_2\big)_{\hatt \cH}  \no \\
& \quad = \big( \big(\hatt S_K\big)^{-1} \hatt S w_1, \hatt S w_2\big)_{\hatt \cH} 
= \big( \big(\hatt S_K\big)^{-1} S \iota_{\cW} w_1, \hatt S w_2\big)_{\hatt \cH}  \no \\
& \quad = \big( \big(\hatt S_K\big)^{-1} \big[I_{\cH} - P_{\ker(S^*)}\big] S \iota_{\cW} w_1, \hatt S w_2\big)_{\hatt \cH}   \quad 
 \text{by \eqref{11.16}}   \no \\
& \quad = \big( \big(\hatt S_K\big)^{-1} \hatt S_K \big[I_{\cH} - P_{\ker(S^*)}\big] 
\iota_{\cW} w_1, \hatt S w_2\big)_{\hatt \cH}  \quad 
 \text{again by \eqref{11.16}}  \no \\
&  \quad = \big(\big[I_{\cH} - P_{\ker(S^*)}\big] 
\iota_{\cW} w_1, \hatt S w_2\big)_{\hatt \cH}    \no \\
&  \quad = \big(\iota_{\cW} w_1, S \iota_{\cW} w_2\big)_{\cH}    \no \\
&  \quad = \big(w_1, T w_2\big)_{\cW}  \quad 
\text{by definition of $T$ in \eqref{11.8},}    \lb{11.22}
\end{align}
yields \eqref{11.20}. In addition one verifies that
\begin{align}
\big(\hatt S w_1, \big(\hatt S_K\big)^{-1} \hatt S w_2\big)_{\hatt \cH} & = 
\big(w_1, T w_2\big)_{\cW}   \no \\
& = \big(\iota_{\cW} w_1, S \iota_{\cW} w_2\big)_{\cH}    
\no \\ 
& = \big(|S|^{-1} |S| \iota_{\cW} w_1, S |S|^{-1} |S| \iota_{\cW} w_2\big)_{\cH}    \no \\
& = \big(|S| \iota_{\cW} w_1, \big[|S|^{-1} S |S|^{-1}\big] |S| \iota_{\cW} w_2\big)_{\cH}    \no \\
& = \big((U_S)^* S \iota_{\cW} w_1, \big[|S|^{-1} S |S|^{-1}\big] (U_S)^* S \iota_{\cW} w_2\big)_{\cH}    \no \\
& = \big(S \iota_{\cW} w_1, U_S \big[|S|^{-1} S |S|^{-1}\big] (U_S)^* S \iota_{\cW} w_2\big)_{\cH}    \no \\
& = \big(\hatt S w_1, U_S 
\big[|S|^{-1} S |S|^{-1}\big] (U_S)^* \hatt S w_2\big)_{\hatt \cH} \, ,    \lb{11.37}
\end{align}
where we used $|S|=(U_S)^* S$.
\end{proof}

Equation \eqref{11.20a} is of course motivated by rewriting the abstract linear pencil buckling eigenvalue problem \eqref{11.1c}, $S^* S u = \lambda S u$, $\lambda \neq 0$, in the form
\begin{equation}
\lambda^{-1} S^* S u = \lambda^{-1} (S^* S)^{1/2} \big[(S^* S)^{1/2} u\big]  
= S (S^* S)^{-1/2} \big[(S^* S)^{1/2} u\big]    \lb{11.38}
\end{equation}
and hence in the form of a standard eigenvalue problem
\begin{equation}
|S|^{-1} S |S|^{-1} w = \lambda^{-1} w, \quad \lambda \neq 0, \quad w = |S| u.  \lb{11.39}
\end{equation}

We conclude this section with a concrete example discussed explicitly in 
Grubb \cite{Gr83} (see also \cite{Gr68}--\cite{Gr71} for necessary background) and make the explicit connection with the buckling problem. It was this example which greatly motivated the abstract results in this note:

\begin{example} \lb{e3.6} $($\cite{Gr83}.$)$
Let $\cH = L^2(\Om; d^n x)$, with $\Om \subset\bbR^n$, $n \geq 2$, open, bounded, and smooth 
$($i.e., with a smooth boundary $\partial\Om$$)$, and consider the minimal operator realization 
$S$ of the differential expression $\mathscr{S}$ in $L^{2}(\Om; d^n x)$, defined by 
\begin{align}
& S u = \mathscr{S} u,  \lb{11.40} \\
& u \in \dom(S) = H_0^{2m}(\Om) = \big\{v \in H^{2m}(\Om) \, \big| \, \gamma_{k} v =0, \, 
0 \leq k \leq 2m-1\big\},  \quad m \in\bbN,   \no
\end{align}
where 
\begin{align}
& \mathscr{S} = \sum_{0 \leq |\alpha| \leq 2m} a_{\alpha}(\cdot) D^{\alpha},   \lb{11.41} \\
& D^{\alpha} = (-i \partial/\partial x_1)^{\alpha_1} \cdots  
(-i\partial/\partial x_n)^{\alpha_n}, 
\quad \alpha =(\alpha_1,\dots,\alpha_n) \in \bbN_0^n,    \lb{11.42} \\
& a_{\alpha} (\cdot) \in C^\infty(\ol \Om),  \quad 
C^\infty(\ol \Om) = \bigcap_{k\in\bbN_0} C^k(\ol \Om),   \lb{11.43}
\end{align}
and the coefficients $a_\alpha$ are chosen such that $S$ is symmetric in 
$L^2(\bbR^n; d^n x)$, that is, the differential expression $\mathscr{S}$ is formally self-adjoint, 
\begin{equation}
(\mathscr{S} u, v)_{L^2(\bbR^n; d^n x)} = (u, \mathscr{S} v)_{L^2(\bbR^n; d^n x)}, \quad 
u, v \in C_0^\infty(\Om),   \lb{11.44}
\end{equation}
and $\mathscr{S}$ is strongly elliptic, that is, for some $c>0$, 
\begin{equation}
\Re\bigg(\sum_{|\alpha|=2m} a_{\alpha} (x) \xi^{\alpha}\bigg) \geq c |\xi|^{2m}, \quad 
x \in \ol \Om, \; \xi \in\bbR^n.   \lb{11.45}
\end{equation}
In addition, we assume that $S\geq \varepsilon I_{L^{2}(\Om; d^n x)}$ for some 
$\varepsilon >0$.\ The trace operators $\gamma_k$ are defined as follows: Consider 
\begin{equation}
\mathring{\gamma}_k : \begin{cases} C^\infty(\ol \Om) \to C^\infty(\partial \Om) \\
 u \mapsto (\partial^k_n u)|_{\partial\Om}, \end{cases}     \lb{11.46}
\end{equation}
with $\partial_n$ denoting the interior normal derivative. The map 
$\mathring{\gamma}$ then extends by continuity to a bounded operator 
\begin{equation}
\gamma_k : H^s(\Om) \to H^{s-k - (1/2)}(\partial\Om), \quad s > k + (1/2),  \lb{11.47}
\end{equation}
in addition, the map 
\begin{equation}
\gamma^{(r)} = (\gamma_0,\dots,\gamma_r) : H^s(\Om) \to 
\prod_{k=0}^r H^{s-k-(1/2)}(\partial\Om), \quad s > r +(1/2),    \lb{11.48}
\end{equation}
satisfies
\begin{equation}
\ker\big(\gamma^{(r)}\big) = H_0^s(\Om), \quad \ran\big(\gamma^{(r)}\big) 
= \prod_{k=0}^r H^{s-k-(1/2)}(\partial\Om).     \lb{11.49}
\end{equation}
Then $S^*$, the maximal operator realization of $\mathscr{S}$ in $L^{2}(\Om; d^n x)$, is given by
\begin{equation}
S^* u = \mathscr{S} u, \quad u \in \dom(S^*) 
= \big\{v\in L^{2}(\Om; d^n x) \,\big|\, \mathscr{S} v\in L^{2}(\Om; d^n x)\big\},  
\lb{11.50}
\end{equation}
and $S_F$ is characterized by
\begin{equation}
S_F u = \mathscr{S} u, \quad u \in \dom(S_F) 
= \big\{v \in H^{2m}(\Om) \, \big| \, \gamma_k v =0, \, 0 \leq k \leq m-1\big\}.  \lb{11.51}
\end{equation}
The Krein--von Neumann extension $S_K$ of $S$ then has the domain
\begin{equation}
\dom(S_K) = H_0^{2m}(\Om) \dotplus \ker(S^*), \quad \dim(\ker(S^*)) = \infty,   \lb{11.52}
\end{equation}
and elements $u \in \dom(S_K)$ satisfy the nonlocal boundary condition
\begin{align}
& \gamma_N u - P_{\gamma_D,\gamma_N} \gamma_D u =0,     \lb{11.53} \\
& \gamma_D u = (\gamma_0 u,\dots,\gamma_{m-1} u), \quad 
\gamma_N u = (\gamma_m u,\dots,\gamma_{2m-1} u), \quad u \in \dom(S_K),     
\lb{11.54}
\end{align}
where 
\begin{align}
& P_{\gamma_D, \gamma_N} = \gamma_N \gamma_Z^{-1} : 
\prod_{k=0}^{m-1} H^{s-k-(1/2)} (\partial\Om) \to 
\prod_{j=m}^{2m-1} H^{s-j-(1/2)} (\partial\Om)    
\, \text{ continuously for all $s\in\bbR$,}
\end{align}
and $\gamma_Z^{-1}$ denotes the inverse of the isomorphism $\gamma_Z$ given by
\begin{align}
&\gamma_D : Z_{\mathscr{S}}^s \to \prod_{k=0}^{m-1} H^{s-k-(1/2)} (\partial\Om),  \\
& Z_{\mathscr{S}}^s = \big\{u\in H^s(\Om) \, \big| \, \mathscr{S} u =0 \, \text{in  
$\Om$ in the sense of distributions in $\cD^\prime(\Om)$}\big\}, \quad s\in\bbR. 
\end{align}

Moreover one has
\begin{equation}
\big(\hatt S\big)^{-1} = \iota_{\cW} [I_{\cH} - P_{\gamma_D, \gamma_N} \gamma_D] 
\big(\hatt S_K\big)^{-1}, 
\end{equation}
since $[I_{\cH} - P_{\gamma_D, \gamma_N} \gamma_D] \dom(S_K) \subseteq \dom(S)$ 
and $S [I_{\cH} - P_{\gamma_D, \gamma_N} \gamma_D] v = \lambda v$, 
$v\in\dom(S_K)$.

As discussed in detail in Grubb \cite{Gr83},
\begin{equation}
\sigma_{\rm ess} (S_K) = \{0\}, \quad \sigma(S_K) \cap (0,\infty) = \sigma_{\rm d} (S_K) 
\lb{11.55}
\end{equation}
and the nonzero $($and hence discrete$)$ eigenvalues of $S_K$ satisfy a Weyl-type asymptotics. The connection to a higher-order buckling eigenvalue problem established by Grubb then reads
\begin{align}
& \text{There exists $0 \neq v \in \dom(S_K)$ satisfying } \, \mathscr{S} v = \lambda v 
\, \text{ in } \, \Om, \quad 
\lambda \neq 0   \lb{11.57}  \\
& \text{if and only if}  \no \\
& \text{there exists $0 \neq u \in C^\infty (\ol \Om)$ such that } \, 
\begin{cases} \mathscr{S}^2 u = \lambda \, \mathscr{S} u \, \text{ in } \, \Om, 
\quad \lambda \neq 0,  \\  
\gamma_k u = 0, \; 0 \leq k \leq 2m-1,  \end{cases}  \lb{11.58}
\end{align}
where the solutions $v$ of \eqref{11.57} are in one-to-one correspondence with the solutions $u$ of \eqref{11.58} via
\begin{equation}
u = S_F^{-1} \mathscr{S} v, \quad v = \lambda^{-1} \mathscr{S} u.   \lb{11.59}  
\end{equation}
\end{example}

Since $S_F$ has purely discrete spectrum in Example \ref{e3.6}, we note that Theorem \ref{AS-thK} applies in this case. 

\begin{remark} \lb{r11.7}
In the particular case $m=1$ and $\mathscr{S} = -\Delta$, the linear pencil eigenvalue problem \eqref{11.58} (i.e., the concrete analog of the abstract buckling eigenvalue problem $S^* S u = \lambda S u$, $\lambda \neq 0$, in \eqref{11.1c}), then yields the {\it buckling of a clamped plate problem},
\begin{equation}
(-\Delta)^2u=\lambda (-\Delta) u \,\text{ in } \,\Omega, \quad \lambda \neq 0, \; 
u\in H^2_0(\Omega),    \lb{11.60}
\end{equation}
as distributions in $H^{-2}(\Om)$. Here we used the fact that for any nonempty bounded open set $\Om\subset\bbR^n$, $n\in\bbN$, $n\geq 2$, 
$(-\Delta)^m\in \cB\big(H^k(\Om), H^{k-2m}(\Om)\big)$, $k\in\bbZ$, $m\in\bbN$. In addition, if $\Om$ is a Lipschitz domain, then one has that 
$-\Delta\colon H^1_0(\Om) \to H^{-1}(\Om)$ is an isomorphism and similarly, 
$(-\Delta)^2\colon H^2_0(\Om) \to H^{-2}(\Om)$ is an isomorphism. (For the natural norms on $H^k(\Om)$, $k\in\bbZ$, see, e.g., \cite[p.\ 73--75]{Mc00}.)
 We refer, for instance, to \cite[Sect.\ 4.3B]{Be77} for a derivation of \eqref{11.60} from the fourth-order system of quasilinear von K{\'a}rm{\'a}n partial differential equations. To be precise, \eqref{11.60} should also be considered in the special case $n=2$.
\end{remark}

\begin{remark} \lb{r11.8}
We emphasize that the smoothness hypotheses on $\partial\Om$ can be relaxed in the special case 
of the second-order Schr\"odinger operator associated with the differential expression $-\Delta + V$, 
where $V\in L^\infty(\Om; d^nx)$ is real-valued: Following the treatment of self-adjoint extensions of 
$S = \ol{(-\Delta + V)|_{C_0^\infty(\Om)}}$ on quasi-convex domains $\Om$ introduced in \cite{GM11}, 
and recalled in Section \ref{s8}, the case of the 
Krein--von Neumann extension $S_K$ of $S$ on such quasi-convex domains (which are close to 
minimally smooth) is treated in great detail in \cite{AGMT10} and in the remainder of this survey 
(cf.\ Section \ref{s10}). In particular, 
a Weyl-type asymptotics of the associated (nonzero) eigenvalues of $S_K$, to be discussed in 
Section \ref{s12}, has been proven in \cite{AGMT10}. In the higher-order smooth case described in Example 
\ref{e3.6}, a Weyl-type asymptotics for the nonzero eigenvalues of $S_K$ has been proven by Grubb \cite{Gr83} in 1983.
\end{remark}

\section{Trace Theory in Lipschitz Domains}
\label{s3}

In this section we shall review material pertaining to analysis
in Lipschitz domains, starting with Dirichlet and Neumann boundary traces
in Subsection \ref{s3X}, and then continuing with a brief survey of
perturbed Dirichlet and Neumann Laplacians in Subsection \ref{s4X}.

\subsection{Dirichlet and Neumann Traces in Lipschitz Domains}
\label{s3X}

The goal of this subsection is to introduce the relevant material pertaining
to Sobolev spaces $H^s(\Omega)$ and $H^r(\partial\Omega)$ corresponding to subdomains $\Om$ of $\bbR^n$, $n\in\bbN$, and discuss various trace results. 

Before we focus primarily on bounded Lipschitz domains (we recall our use of ``domain'' as an open 
subset of $\bbR^n$, without any connectivity hypotheses), we briefly recall some basic facts in 
connection with Sobolev spaces corresponding to open sets 
$\Om\subseteq\bbR^n$, $n\in\bbN$: For an arbitrary $m\in\bbN\cup\{0\}$, we follow the customary way of defining $L^2$-Sobolev spaces of order $\pm m$ in $\Om$ as
\begin{align}\label{hGi-1}
H^m(\Om) &:=\big\{u\in L^2(\Om;d^nx)\,\big|\,\partial^\alpha u\in L^2(\Om;d^nx), 
\, 0 \leq |\alpha|\leq m\big\}, \\
H^{-m}(\Om) &:=\biggl\{u\in\cD^{\prime}(\Om)\,\bigg|\,u=\sum_{0 \leq |\alpha|\leq m}
\partial^\alpha u_{\alpha}, \mbox{ with }u_\alpha\in L^2(\Om;d^nx), 
\,  0 \leq |\alpha|\leq m\biggr\},
\label{hGi-2}
\end{align}
equipped with natural norms (cf., e.g., \cite[Ch.\ 3]{AF03}, \cite[Ch.\ 1]{Ma85}). 
Here $\cD^\prime(\Om)$ denotes the usual set of 
distributions on $\Omega\subseteq \bbR^n$. Then we set
\begin{equation}\label{hGi-3}
H^m_0(\Om):=\,\mbox{the closure of $C^\infty_0(\Om)$ in $H^m(\Om)$}, 
\quad m \in \bbN\cup\{0\}.
\end{equation}
As is well-known, all three spaces above are Banach, reflexive and,
in addition,
\begin{equation} \label{hGi-4}
\bigl(H^m_0(\Om)\bigr)^*=H^{-m}(\Om).
\end{equation} 
Again, see, for instance, \cite[Ch.\ 3]{AF03}, \cite[Ch.\ 1]{Ma85}.

We recall that an open, nonempty set $\Omega\subseteq\bbR^n$ is
called a {\it Lipschitz domain} if the following property holds: 
There exists an open covering $\{{\mathcal O}_j\}_{1\leq j\leq N}$
of the boundary $\partial\Omega$ of $\Om$ such that for every
$j\in\{1,...,N\}$, ${\mathcal O}_j\cap\Omega$ coincides with the portion
of ${\mathcal O}_j$ lying in the over-graph of a Lipschitz function
$\varphi_j:\bbR^{n-1}\to\bbR$ $($considered in a new system of coordinates
obtained from the original one via a rigid motion$)$. The number
$\max\,\{\|\nabla\varphi_j\|_{L^\infty (\bbR^{n-1};d^{n-1}x')^{n-1}}\,|\,1\leq j\leq N\}$
is said to represent the {\it Lipschitz character} of $\Omega$.

The classical theorem of Rademacher on almost everywhere differentiability of Lipschitz
functions ensures that for any  Lipschitz domain $\Omega$, the
surface measure $d^{n-1} \omega$ is well-defined on  $\partial\Omega$ and
that there exists an outward  pointing normal vector $\nu$ at
almost every point of $\partial\Omega$. 

As regards $L^2$-based Sobolev spaces of fractional order $s\in\bbR$,
on arbitrary Lipschitz domains $\Om\subseteq\bbR^n$, we introduce
\begin{align}\label{HH-h1}
H^{s}(\bbR^n) &:=\bigg\{U\in \cS^\prime(\bbR^n)\,\bigg|\,
\norm{U}_{H^{s}(\bbR^n)}^2 = \int_{\bbR^n}d^n\xi\,
\big|\hatt U(\xi)\big|^2\big(1+\abs{\xi}^{2s}\big)<\infty \bigg\},
\\
H^{s}(\Om) &:=\big\{u\in \cD^\prime(\Om)\,\big|\,u=U|_\Om\text{ for some }
U\in H^{s}(\bbR^n)\big\} = R_{\Om} \, H^s(\bbR^n),
\label{HH-h2}
\end{align} 
where $R_{\Om}$ denotes the restriction operator (i.e., $R_{\Om} \, U=U|_{\Om}$, 
$U\in H^{s}(\bbR^n)$),   
$\cS^\prime(\bbR^n)$ is the space of tempered distributions on $\bbR^n$,
and $\hatt U$ denotes the Fourier transform of $U\in\cS^\prime(\bbR^n)$.
These definitions are consistent with \eqref{hGi-1}, \eqref{hGi-2}. 
Next, retaining that $\Om\subseteq \bbR^n$ is an arbitrary Lipschitz domain, we introduce
\begin{equation}\label{incl-xxx}
H^{s}_0(\Omega):=\big\{u\in H^{s}(\bbR^n)\,\big|\, {\rm supp} (u)\subseteq\ol{\Omega}\big\}, 
\quad s\in\bbR,
\end{equation} 
equipped with the natural norm induced by $H^{s}(\bbR^n)$. The space 
$H^{s}_0(\Omega)$ is reflexive, being a closed subspace of $H^{s}(\bbR^n)$. 
Finally, we introduce for all $s\in\bbR$,
\begin{align} 
\mathring{H}^{s} (\Omega) &= \mbox{the closure of $C^\infty_0(\Omega)$ in $H^s(\Omega)$},   \\
H^{s}_{z} (\Om) &= R_{\Om} \, H^{s}_0(\Omega).
\end{align}

Assuming from now on that $\Om\subset\bbR^n$ is a Lipschitz domain with a compact boundary, we recall the existence of a universal linear extension operator 
$E_{\Om}:\cD^\prime (\Om) \to \cS^\prime (\bbR^n)$ such that 
$E_{\Om}: H^s(\Om) \to H^s(\bbR^n)$ is bounded for all $s\in\bbR$, and 
$R_{\Om}  E_{\Om}=I_{H^s(\Om)}$ (cf.\ \cite{Ry99}). If $\widetilde{C_0^\infty(\Om)}$ denotes the set of $C_0^\infty(\Om)$-functions extended to all of $\bbR^n$ by setting functions zero outside of $\Omega$, then for all $s\in\bbR$, 
$\widetilde{C_0^\infty(\Om)} \hookrightarrow H^s_0(\Om)$ densely.  

Moreover, one has
\begin{equation}\label{incl-Ya}
\big(H^{s}_0(\Omega)\big)^*=H^{-s}(\Omega),  \quad s\in\bbR.
\end{equation}
(cf., e.g., \cite{JK95}) consistent with \eqref{hGi-3}, and also, 
\begin{equation}
\big(H^s(\Om)\big)^* = H^{-s}_0(\Om),  \quad s\in\bbR, 
\end{equation}
in particular, $H^s(\Om)$ is a reflexive Banach space. We shall also use the fact that  
for a Lipschitz domain $\Om\subset\bbR^n$ with compact boundary, the space 
$\mathring{H}^{s} (\Omega)$ 
satisfies 
\begin{equation}\label{incl-Yb}
\mathring{H}^{s} (\Omega) = H^s_{z}(\Om)
\, \mbox{ if } \, s > -1/2,\,\,s\notin{\textstyle\big\{{\frac12}}+\bbN_0\big\}. 
\end{equation}
For a Lipschitz domain $\Omega\subseteq\bbR^n$ with compact boundary it is also known that
\begin{equation}\label{dual-xxx}
\bigl(H^{s}(\Omega)\bigr)^*=H^{-s}(\Omega), \quad - 1/2 <s< 1/2.
\end{equation}
See \cite{Tr02} for this and other related properties. Throughout this survey,
we agree to use the {\it adjoint} (rather than the dual) space $X^*$ of a Banach space 
$X$.

From this point on we will always make the following assumption (unless explicitly stated otherwise):

\begin{hypothesis}\label{h2.1}
Let $n\in\bbN$, $n\geq 2$, and assume that $\emptyset \neq \Om\subset{\bbR}^n$ is
a bounded Lipschitz domain.
\end{hypothesis}

To discuss Sobolev spaces on the boundary of a Lipschitz domain, consider
first the case where $\Omega\subset\bbR^n$ is the domain lying above the graph
of a Lipschitz function $\varphi\colon\bbR^{n-1}\to\bbR$. In this setting,
we define the Sobolev space $H^s(\partial\Omega)$ for $0\leq s\leq 1$,
as the space of functions $f\in L^2(\partial\Omega;d^{n-1}\omega)$ with the
property that $f(x',\varphi(x'))$, as a function of $x'\in\bbR^{n-1}$,
belongs to $H^s(\bbR^{n-1})$. This definition is easily adapted to the case
when $\Omega$ is a Lipschitz domain whose boundary is compact,
by using a smooth partition of unity. Finally, for $-1\leq s\leq 0$, we set
\begin{equation}\label{A.6}
H^s(\dOm) = \big(H^{-s}(\dOm)\big)^*, \quad -1 \le s \le 0.
\end{equation}
 From the above characterization of $H^s(\partial\Omega)$ it follows that
any property of Sobolev spaces (of order $s\in[-1,1]$) defined in Euclidean
domains, which are invariant under multiplication by smooth, compactly
supported functions as well as composition by bi-Lipschitz diffeomorphisms,
readily extends to the setting of $H^s(\partial\Omega)$ (via localization and
pullback). For additional background
information in this context we refer, for instance, to
\cite[Chs.\ V, VI]{EE89}, \cite[Ch.\ 1]{Gr85}.

Assuming Hypothesis \ref{h2.1}, we introduce the boundary trace
operator $\ga_D^0$ (the Dirichlet trace) by
\begin{equation}
\ga_D^0\colon C(\ol{\Om})\to C(\dOm), \quad \ga_D^0 u = u|_\dOm.   \label{2.5}
\end{equation}
Then there exists a bounded, linear operator $\gamma_D$
\begin{align}
\begin{split}
& \ga_D\colon H^{s}(\Om)\to H^{s-(1/2)}(\dOm) \hookrightarrow \LdOm,
\quad 1/2<s<3/2, \label{2.6}  \\
& \ga_D\colon H^{3/2}(\Om)\to H^{1-\varepsilon}(\dOm) \hookrightarrow \LdOm,
\quad \varepsilon \in (0,1) 
\end{split}
\end{align}
(cf., e.g., \cite[Theorem 3.38]{Mc00}), whose action is compatible with that of 
$\ga_D^0$. That is, the two Dirichlet trace operators coincide on the intersection 
of their domains. Moreover, we recall that
\begin{equation}\label{2.6a}
\ga_D\colon H^{s}(\Om)\to H^{s-(1/2)}(\dOm) \, \text{ is onto for $1/2<s<3/2$}.
\end{equation}

Next, retaining Hypothesis \ref{h2.1}, we introduce the operator
$\ga_N$ (the strong Neumann trace) by
\begin{equation} \label{2.7}
\ga_N = \nu\cdot\ga_D\nabla \colon H^{s+1}(\Om)\to \LdOm, \quad 1/2<s<3/2,
\end{equation} 
where $\nu$ denotes the outward pointing normal unit vector to
$\partial\Om$. It follows from \eqref{2.6} that $\ga_N$ is also a
bounded operator. We seek to extend the action of the Neumann trace
operator \eqref{2.7} to other (related) settings. To set the stage,
assume Hypothesis \ref{h2.1} and observe that the inclusion
\begin{equation}\label{inc-1}
\iota:H^{s_0}(\Omega)\hookrightarrow \bigl(H^r(\Omega)\bigr)^*, \quad
s_0>-1/2,\; r>1/2,
\end{equation}
is well-defined and bounded. We then introduce the weak Neumann trace
operator
\begin{equation}\label{2.8}
\wti\ga_N\colon\big\{u\in H^{s+1/2}(\Om)\,\big|\,\Delta u\in H^{s_0}(\Om)\big\}
\to H^{s-1}(\dOm),\quad s\in(0,1),\; s_0>-1/2,
\end{equation}
as follows: Given $u\in H^{s+1/2}(\Om)$ with $\Delta u \in H^{s_0}(\Om)$
for some $s\in(0,1)$ and $s_0>-1/2$, we set (with $\iota$ as in
\eqref{inc-1} for $r:=3/2-s>1/2$)
\begin{equation} \label{2.9}
\langle\phi,\wti\ga_N u \rangle_{1-s}
={}_{H^{1/2-s}(\Om)}\langle\nabla\Phi,\nabla u\rangle_{(H^{1/2-s}(\Om))^*}
+ {}_{H^{3/2-s}(\Om)}\langle\Phi,\iota(\Delta u)\rangle_{(H^{3/2-s}(\Om))^*},
\end{equation} 
for all $\phi\in H^{1-s}(\dOm)$ and $\Phi\in H^{3/2-s}(\Om)$ such that
$\ga_D\Phi=\phi$. We note that the first pairing in the right-hand side
above is meaningful since
\begin{equation} \label{2.9JJ}
\bigl(H^{1/2-s}(\Om)\bigr)^*=H^{s-1/2}(\Om),\quad s\in (0,1),
\end{equation}  
that the definition \eqref{2.9} is independent of the particular
extension $\Phi$ of $\phi$, and that $\wti\ga_N$ is a bounded extension
of the Neumann trace operator $\ga_N$ defined in \eqref{2.7}.

For further reference, let us also point out here that if $\Omega\subset\bbR^n$
is a bounded Lipschitz domain then for any $j,k\in\{1,...,n\}$ the
(tangential first-order differential) operator
\begin{equation}\label{Pf-2}
\partial/\partial\tau_{j,k}:=\nu_j\partial_k-\nu_k\partial_j:
H^s(\partial\Omega)\to H^{s-1}(\partial\Omega),\quad 0\leq s\leq 1,
\end{equation}
is well-defined, linear and bounded. Assuming Hypothesis \ref{h2.1},
we can then define the tangential gradient operator
\begin{equation} \label{Tan-C1}
\nabla_{tan}: \begin{cases}H^1(\partial\Omega)\to 
\big(L^2(\partial\Omega;d^{n-1}\omega)\big)^n   \\
\hspace*{1cm} f \mapsto 
\nabla_{tan}f:=\Big(\sum_{k=1}^n\nu_k\frac{\partial f}{\partial\tau_{kj}}
\Big)_{1\leq j\leq n} \end{cases} 
,\quad f\in H^1(\partial\Omega).
\end{equation} 
The following result has been proved in \cite{MMS05}.

\begin{theorem}\label{T-MMS}
Assume Hypothesis \ref{h2.1} and denote by $\nu$ the outward unit normal
to $\partial\Omega$. Then the operator
\begin{equation}\label{Tan-C2} 
\gamma_2: \begin{cases} H^2(\Omega)\to \bigl\{(g_0,g_1)\in H^1(\partial\Omega) \times L^2(\partial\Omega;d^{n-1}\omega)\,\big|\,   \nabla_{tan}g_0  
+g_1\nu\in \bigl(H^{1/2}(\partial\Omega)\bigr)^n\bigl\}  \\
\hspace*{8mm} 
u \mapsto \gamma_2 u=(\gamma_D u\,,\,\gamma_N u), 
\end{cases}
\end{equation}
is well-defined, linear, bounded, onto, and has a linear, bounded
right-inverse. The space $\bigl\{(g_0,g_1)\in H^1(\partial\Omega)
\times L^2(\partial\Omega;d^{n-1}\omega)\,\big|\,
\nabla_{tan}g_0+g_1\nu\in \bigl(H^{1/2}(\partial\Omega)\bigr)^n\bigl\}$ in \eqref{Tan-C2} 
is equipped with the natural norm
\begin{equation} \label{NoRw-1}
(g_0,g_1)\mapsto \|g_0\|_{H^1(\partial\Omega)}
+\|g_1\|_{L^2(\partial\Omega;d^{n-1}\omega)}
+\|\nabla_{tan}g_0+g_1\nu\|_{(H^{1/2}(\partial\Omega))^n}.
\end{equation} 
Furthermore, the null space of the operator \eqref{Tan-C2}
is given by
\begin{equation} \label{Tan-C3}
\ker(\gamma_2):= \big\{u\in H^2(\Omega)\,\big|\,\gamma_D u =\gamma_N u=0\big\}
=H^2_0(\Omega),
\end{equation}
with the latter space denoting the closure of $C^\infty_0(\Omega)$
in $H^2(\Omega)$.
\end{theorem}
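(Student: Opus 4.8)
The plan is to establish the four assertions about $\gamma_2$—well-definedness/boundedness, surjectivity onto the indicated space, existence of a bounded right-inverse, and the identification of the null space—by reduction to a flat boundary via a Lipschitz change of variables and a partition of unity, after which the matter becomes an assertion about half-space Sobolev spaces. First I would localize: using a finite open cover $\{\cO_j\}$ of $\partial\Omega$ as in Hypothesis \ref{h2.1} and a subordinate smooth partition of unity, it suffices to prove the statement for $\Omega$ the over-graph of a single Lipschitz function $\varphi\colon\bbR^{n-1}\to\bbR$, since all the spaces involved ($H^2(\Omega)$, $H^1(\partial\Omega)$, $H^{1/2}(\partial\Omega)$, $L^2(\partial\Omega;d^{n-1}\omega)$) behave well under multiplication by smooth compactly supported functions and under bi-Lipschitz pullback, as recalled in the discussion preceding \eqref{2.5}. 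The bi-Lipschitz flattening map $(x',x_n)\mapsto(x',x_n-\varphi(x'))$ carries $H^2(\Omega)$ boundedly into $H^2(\bbR^n_+)$ only after one checks that second derivatives transform correctly; this is where the Lipschitz (rather than $C^{1,1}$) character of $\varphi$ first bites, because the Jacobian is only $L^\infty$ and one must track how $\nabla_{tan}$ and $\gamma_N$ transform under such a map.

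Next I would verify well-definedness and boundedness of $\gamma_2$ on the half-space model. Boundedness of $\gamma_D\colon H^2(\bbR^n_+)\to H^{3/2}(\bbR^{n-1})\hookrightarrow H^1(\bbR^{n-1})$ and of $\gamma_N\colon H^2(\bbR^n_+)\to H^{1/2}(\bbR^{n-1})\hookrightarrow L^2(\bbR^{n-1})$ are standard trace theorems; the subtle point is that the \emph{constrained} combination $\nabla_{tan}g_0+g_1\nu$ must land in $(H^{1/2})^n$, which on the flat model with $\nu=-e_n$ amounts to requiring $\nabla_{x'}(\gamma_D u)\in(H^{1/2}(\bbR^{n-1}))^{n-1}$ and $\gamma_N u\in H^{1/2}(\bbR^{n-1})$, both of which hold because $\gamma_D u\in H^{3/2}$ and $\gamma_N u\in H^{1/2}$. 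After pulling back to $\Omega$, the tangential derivative no longer decouples so cleanly, and one uses the identity relating $\nabla_{tan}(\gamma_D u)$ and $\gamma_N u$ to the full gradient $\gamma_D(\nabla u)\in(H^{1/2}(\partial\Omega))^n$, namely $\nabla_{tan}(\gamma_D u)+(\gamma_N u)\nu$ equals the tangential-plus-normal decomposition of $\gamma_D(\nabla u)$, which manifestly lies in $(H^{1/2}(\partial\Omega))^n$ since each component of $\nabla u$ is in $H^1(\Omega)$.

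For surjectivity and the bounded right-inverse, I would, working again on $\bbR^n_+$, construct an explicit extension operator: given $(g_0,g_1)$ in the target space, first solve the prescribed-trace problem by an appropriate Poisson-type lifting of $g_0$ to an $H^2$ function (using $g_0\in H^1$ together with the extra $H^{1/2}$-regularity of $\nabla_{tan}g_0$ to gain the full $H^2$), then correct the Neumann trace by adding a function vanishing to first order at the boundary whose normal derivative equals the required $L^2$-datum—this second lifting is bounded from $L^2(\bbR^{n-1})$ into the subspace of $H^2$-functions with vanishing Dirichlet trace precisely because the compatibility condition $\nabla_{tan}g_0+g_1\nu\in(H^{1/2})^n$ forces $g_1$ to inherit the missing half-derivative of smoothness relative to $g_0$. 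Transporting this back through the partition of unity and the Lipschitz charts yields a bounded right-inverse on $\Omega$. Finally, the null-space identity \eqref{Tan-C3} follows from the fact that $H^2_0(\Omega)$, defined as the closure of $C_0^\infty(\Omega)$ in $H^2(\Omega)$, is exactly $\{u\in H^2(\Omega)\,|\,\gamma_D u=\gamma_N u=0\}$ on Lipschitz domains: the inclusion $\subseteq$ is clear from continuity of $\gamma_2$ and its vanishing on $C_0^\infty(\Omega)$, while $\supseteq$ is proved by the standard truncation-and-mollification argument (pushing $u$ slightly into the interior along the transversal direction supplied by the Lipschitz graph structure, then mollifying), again reduced to the half-space case.

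The main obstacle I anticipate is the constrained nature of the target space—proving surjectivity onto exactly $\{(g_0,g_1)\,|\,\nabla_{tan}g_0+g_1\nu\in(H^{1/2}(\partial\Omega))^n\}$ rather than a larger or smaller space—because on a genuinely Lipschitz boundary the tangential gradient $\nabla_{tan}$ and the normal $\nu$ do not separate, so the compatibility condition couples $g_0$ and $g_1$ in a way that must be respected simultaneously by the lifting construction; getting the two successive liftings (Dirichlet-datum lift, then Neumann-correction lift) to interact correctly, and to remain bounded in the norm \eqref{NoRw-1}, is the technical heart of the argument, and is exactly the point at which one must invoke the results of \cite{MMS05} in full strength rather than classical half-space trace theory.
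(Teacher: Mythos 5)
The paper does not prove this result; it is quoted verbatim from \cite{MMS05}, so there is no internal argument to compare against. Judged on its own terms, your proposal contains a genuine gap at its very foundation: the localization-and-flattening reduction to $\bbR^n_+$ does not work at the $H^2$ level on a Lipschitz domain. If $\Phi^{-1}(y',y_n)=(y',y_n+\varphi(y'))$ with $\varphi$ merely Lipschitz, then for $v=u\circ\Phi^{-1}$ the second derivatives $\partial_{y_k}\partial_{y_j}v$ contain the term $(\partial_{x_n}u)\,\partial_k\partial_j\varphi$, and $\partial_k\partial_j\varphi$ is only a distribution in $W^{-1,\infty}$, not a function; hence $v\notin H^2(\bbR^n_+)$ in general. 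You flag this issue yourself (``this is where the Lipschitz character of $\varphi$ first bites'') but then proceed as if the reduction were available. It is not, and this is precisely why the theorem is hard and why $H^{3/2}(\partial\Omega)$ is replaced by the coupled condition $\nabla_{tan}g_0+g_1\nu\in(H^{1/2}(\partial\Omega))^n$: Sobolev spaces of order $>1$ on a Lipschitz boundary cannot be defined by pullback, so no statement on the flat model transports back. Consequently your construction of the right-inverse --- the two successive liftings on $\bbR^n_+$ --- cannot be transported to $\Omega$, and this is exactly the part of the theorem that carries all the content.

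Two smaller remarks. First, the well-definedness and boundedness of $\gamma_2$ can be rescued without flattening: your identity $\gamma_D(\nabla u)=\nabla_{tan}(\gamma_D u)+(\gamma_N u)\nu$ together with the boundedness of $\gamma_D\colon H^1(\Omega)\to H^{1/2}(\partial\Omega)$ applied componentwise to $\nabla u\in (H^1(\Omega))^n$ gives the membership $\nabla_{tan}(\gamma_D u)+(\gamma_N u)\nu\in(H^{1/2}(\partial\Omega))^n$ directly on $\Omega$; that portion of your argument is sound once detached from the half-space reduction. Second, for the surjectivity one genuinely needs the machinery of \cite{MMS05}, where the target space is treated as a Whitney--Sobolev array and the extension operator is built intrinsically on the Lipschitz domain (not by correcting a Poisson lift on a flattened model); invoking that reference ``in full strength'' is not a technical refinement of your construction but a replacement for it.
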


Continuing to assume Hypothesis \ref{h2.1}, we now introduce
\begin{equation} \label{Tan-C4} 
N^{1/2}(\partial\Omega):=\big\{g\in L^2(\partial\Omega;d^{n-1}\omega)\,\big|\,
g\nu_j\in H^{1/2}(\partial\Omega),\,\,1\leq j\leq n\big\},
\end{equation} 
where the $\nu_j$'s are the components of $\nu$.  We equip this space with 
the natural norm
\begin{equation} \label{Tan-C4B}
\|g\|_{N^{1/2}(\partial\Omega)}
:=\sum_{j=1}^n\|g\nu_j\|_{H^{1/2}(\partial\Omega)}.
\end{equation} 

Then $N^{1/2}(\partial\Omega)$ is a reflexive Banach space which embeds
continuously into $L^2(\partial\Omega;d^{n-1}\omega)$. Furthermore,
\begin{equation} \label{Tan-C5}
N^{1/2}(\partial\Omega)=H^{1/2}(\partial\Omega)\, 
\mbox{ whenever $\Omega$ is a bounded $C^{1,r}$ domain with $r>1/2$}.
\end{equation} 

It should be mentioned that the spaces
$H^{1/2}(\partial\Omega)$ and $N^{1/2}(\partial\Omega)$ can be quite
different for an arbitrary Lipschitz domain $\Omega$.
Our interest in the latter space stems from the fact that this
arises naturally when considering the Neumann trace operator acting on 
\begin{equation} \label{Tan-C6}
\big\{u\in H^2(\Omega)\,\big|\,\gamma_D u =0\big\}=H^2(\Omega)\cap H^1_0(\Omega),
\end{equation} 
considered as a closed subspace of $H^2(\Omega)$ (hence, a Banach space
when equipped with the $H^2$-norm). More specifically, we have
(cf.\ \cite{GM11} for a proof):

\begin{lemma}\label{Lo-Tx}
Assume Hypothesis \ref{h2.1}. Then the Neumann trace operator $\gamma_N$
considered in the context
\begin{equation} \label{Tan-C7}
\gamma_N:H^2(\Omega)\cap H^1_0(\Omega)\to N^{1/2}(\partial\Omega)
\end{equation} 
is well-defined, linear, bounded, onto and with a linear, bounded 
right-inverse. In addition, the null space of $\gamma_N$ in \eqref{Tan-C7} is
precisely $H^2_0(\Omega)$, the closure of $C^\infty_0(\Omega)$ in
$H^2(\Omega)$.
\end{lemma}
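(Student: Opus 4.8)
The plan is to obtain Lemma \ref{Lo-Tx} as a corollary of Theorem \ref{T-MMS}, by restricting the full boundary trace operator $\gamma_2=(\gamma_D,\gamma_N)$ to the closed subspace on which the Dirichlet component vanishes. By \eqref{Tan-C6} we have $H^2(\Omega)\cap H^1_0(\Omega)=\{u\in H^2(\Omega)\,|\,\gamma_D u=0\}$, a closed subspace of $H^2(\Omega)$ (since $\gamma_D$ is bounded on $H^2(\Omega)$, being the first component of $\gamma_2$), hence a Banach space in the $H^2$-norm; and on this subspace $\gamma_2 u=(0,\gamma_N u)$.

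Next I would identify the relevant target subspace. Let $X$ denote the Banach space appearing as the codomain of $\gamma_2$ in Theorem \ref{T-MMS}, equipped with the norm \eqref{NoRw-1}, and set $X_0:=\{(0,g)\in X\}$. By the very definition of $X$, one has $(0,g)\in X_0$ if and only if $g\in L^2(\partial\Omega;d^{n-1}\omega)$ with $g\nu_j\in H^{1/2}(\partial\Omega)$ for $1\le j\le n$, that is, if and only if $g\in N^{1/2}(\partial\Omega)$; so $\Phi\colon N^{1/2}(\partial\Omega)\to X_0$, $g\mapsto(0,g)$, is a linear bijection. Moreover $\Phi$ and $\Phi^{-1}$ are bounded: the norm \eqref{NoRw-1} restricted to $X_0$ equals $\|g\|_{L^2(\partial\Omega;d^{n-1}\omega)}+\sum_{j=1}^n\|g\nu_j\|_{H^{1/2}(\partial\Omega)}$, which is equivalent to the $N^{1/2}$-norm \eqref{Tan-C4B} because of the continuous embedding $N^{1/2}(\partial\Omega)\hookrightarrow L^2(\partial\Omega;d^{n-1}\omega)$ recorded right after \eqref{Tan-C4B}. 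Under this identification, on $H^2(\Omega)\cap H^1_0(\Omega)$ the operator $\gamma_N$ into $N^{1/2}(\partial\Omega)$ coincides with $\Phi^{-1}\circ\gamma_2$.

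With this in hand, all four assertions transfer directly from Theorem \ref{T-MMS}. Well-definedness, linearity, and boundedness of $\gamma_N\colon H^2(\Omega)\cap H^1_0(\Omega)\to N^{1/2}(\partial\Omega)$ follow from those of $\gamma_2$ together with the boundedness of $\Phi^{-1}$. For surjectivity, given $g\in N^{1/2}(\partial\Omega)$, the pair $\Phi g=(0,g)$ lies in $X_0\subset X$, so by surjectivity of $\gamma_2$ there is $u\in H^2(\Omega)$ with $\gamma_2 u=(0,g)$; then $\gamma_D u=0$ forces $u\in H^2(\Omega)\cap H^1_0(\Omega)$, while $\gamma_N u=g$. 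For the right-inverse, let $R$ be the bounded right-inverse of $\gamma_2$ furnished by Theorem \ref{T-MMS}; the operator $g\mapsto R(\Phi g)=R(0,g)$ is bounded from $N^{1/2}(\partial\Omega)$ into $H^2(\Omega)$, has range inside $H^2(\Omega)\cap H^1_0(\Omega)$ (since $\gamma_D R(0,g)=0$), and satisfies $\gamma_N R(0,g)=g$, so it is the desired bounded right-inverse. Finally, $\ker(\gamma_N)$ in this context is $\{u\in H^2(\Omega)\,|\,\gamma_D u=\gamma_N u=0\}=\ker(\gamma_2)=H^2_0(\Omega)$ by \eqref{Tan-C3}.

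There is no genuine difficulty here beyond bookkeeping: the content of the lemma is already packaged in Theorem \ref{T-MMS}. The only step needing a little care is verifying that $\Phi$ is a topological isomorphism, i.e., the equivalence on $X_0$ of the graph-type norm \eqref{NoRw-1} and the $N^{1/2}$-norm \eqref{Tan-C4B}, which reduces to the continuous inclusion $N^{1/2}(\partial\Omega)\hookrightarrow L^2(\partial\Omega;d^{n-1}\omega)$; one should also take care to consistently regard $H^2(\Omega)\cap H^1_0(\Omega)$ as the $\gamma_2$-preimage of $X_0$ when transferring surjectivity and the right-inverse.
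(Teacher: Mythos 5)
Your proposal is correct, and it follows the route the paper intends: the survey states Lemma \ref{Lo-Tx} without proof (deferring to \cite{GM11}), but the lemma is precisely the restriction of Theorem \ref{T-MMS} to the closed subspace $\{u\in H^2(\Omega)\,|\,\gamma_D u=0\}$, with the codomain subspace $\{(0,g)\}$ identified with $N^{1/2}(\partial\Omega)$ exactly as you describe. Your verification of the norm equivalence via the continuous embedding $N^{1/2}(\partial\Omega)\hookrightarrow L^2(\partial\Omega;d^{n-1}\omega)$, and the transfer of surjectivity, the right-inverse, and the kernel identification \eqref{Tan-C3}, are all sound.
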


Most importantly for us here is the fact that one can use the above Neumann
trace result in order to extend the action of the Dirichlet trace operator
\eqref{2.6} to $\dom(-\Delta_{max,\Om})$, the domain of the maximal
Laplacian, that is, $\{u\in L^2(\Omega;d^nx)\,|\,\Delta u\in L^2(\Omega;d^nx)\}$,
which we consider equipped with the graph norm
$u\mapsto \|u\|_{L^2(\Omega;d^nx)}+\|\Delta u\|_{L^2(\Omega;d^nx)}$.
Specifically, with $\bigl(N^{1/2}(\partial\Omega)\bigr)^*$ denoting the
conjugate dual space of $N^{1/2}(\partial\Omega)$, we have the
following result from \cite{GM11}:

\begin{theorem}\label{New-T-tr}
Assume Hypothesis \ref{h2.1}. Then there exists a unique linear, bounded
operator
\begin{equation} \label{Tan-C10}
\widehat{\gamma}_D:\big\{u\in L^2(\Omega;d^nx)\,\big|\,\Delta u\in L^2(\Omega;d^nx)\big\}
\to \bigl(N^{1/2}(\partial\Omega)\bigr)^*
\end{equation} 
which is compatible with the Dirichlet trace introduced in \eqref{2.6},
in the sense that, for each $s>1/2$, one has
\begin{equation} \label{Tan-C11}
\widehat{\gamma}_D u =\gamma_D u \, \mbox{ for every $u\in H^s(\Omega)$
with $\Delta u\in L^2(\Omega;d^nx)$}.
\end{equation} 
Furthermore, this extension of the Dirichlet trace operator in \eqref{2.6}
allows for the following generalized integration by parts formula
\begin{equation} \label{Tan-C12}
{}_{N^{1/2}(\partial\Omega)}\langle\gamma_N w,\widehat{\gamma}_D u 
\rangle_{(N^{1/2}(\partial\Omega))^*}
=(\Delta w,u)_{L^2(\Om;d^nx)}
- (w,\Delta u)_{L^2(\Om;d^nx)},
\end{equation} 
valid for every $u\in L^2(\Omega;d^nx)$ with $\Delta u\in L^2(\Omega;d^nx)$
and every $w\in H^2(\Omega)\cap H^1_0(\Omega)$.
\end{theorem}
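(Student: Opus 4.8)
The plan is to construct $\widehat{\gamma}_D$ by duality, with Lemma \ref{Lo-Tx} as the key input. Since the Neumann trace $\gamma_N$ maps $H^2(\Omega)\cap H^1_0(\Omega)$ onto $N^{1/2}(\partial\Omega)$ with a bounded linear right-inverse, the natural target for the extended Dirichlet trace is $\big(N^{1/2}(\partial\Omega)\big)^*$, and the generalized Green formula \eqref{Tan-C12} can be promoted to a definition. Concretely, for $u\in\dom(-\Delta_{max,\Om})=\big\{u\in L^2(\Om;d^nx)\,\big|\,\Delta u\in L^2(\Om;d^nx)\big\}$, endowed with its graph norm, and $g\in N^{1/2}(\partial\Omega)$, I would set
\begin{equation*}
{}_{N^{1/2}(\partial\Omega)}\big\langle g,\widehat{\gamma}_D u\big\rangle_{(N^{1/2}(\partial\Omega))^*}
:=(\Delta w,u)_{L^2(\Om;d^nx)}-(w,\Delta u)_{L^2(\Om;d^nx)},
\end{equation*}
where $w\in H^2(\Omega)\cap H^1_0(\Omega)$ is any element with $\gamma_N w=g$; the right-hand side is meaningful since $\Delta w$, $u$, and $\Delta u$ all lie in $L^2(\Om;d^nx)$.

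The first point to settle is independence of the choice of $w$, that is, $(\Delta w,u)_{L^2(\Om;d^nx)}-(w,\Delta u)_{L^2(\Om;d^nx)}=0$ whenever $w\in H^2(\Omega)\cap H^1_0(\Omega)$ and $\gamma_N w=0$. By the last assertion of Lemma \ref{Lo-Tx} such $w$ belong to $H^2_0(\Omega)$; for $w\in C_0^\infty(\Omega)$ the vanishing is simply the definition of $\Delta u$ as a distribution (this is where $u,\Delta u\in L^2(\Om;d^nx)$ enters), and continuity in $w$ on $H^2(\Omega)$ propagates it to all of $H^2_0(\Omega)$. Boundedness is then immediate: realizing $g$ as $\gamma_N w$ with $\|w\|_{H^2(\Om)}\leq C\|g\|_{N^{1/2}(\partial\Om)}$ through the bounded right-inverse gives
\begin{equation*}
\big|{}_{N^{1/2}(\partial\Om)}\big\langle g,\widehat{\gamma}_D u\big\rangle_{(N^{1/2}(\partial\Om))^*}\big|
\leq C\,\|g\|_{N^{1/2}(\partial\Om)}\,\big(\|u\|_{L^2(\Om;d^nx)}+\|\Delta u\|_{L^2(\Om;d^nx)}\big),
\end{equation*}
so that $\widehat{\gamma}_D u\in\big(N^{1/2}(\partial\Omega)\big)^*$ and $u\mapsto\widehat{\gamma}_D u$ is linear and bounded in the graph norm. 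Formula \eqref{Tan-C12} is then exactly the defining relation, read with $g=\gamma_N w$ for an arbitrary $w\in H^2(\Omega)\cap H^1_0(\Omega)$.

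Next I would prove compatibility with the classical Dirichlet trace. Fix $u\in H^s(\Omega)$ with $s>1/2$ and $\Delta u\in L^2(\Om;d^nx)$. Then $\gamma_N w\in N^{1/2}(\partial\Om)\hookrightarrow L^2(\partial\Om;d^{n-1}\omega)$ and, by \eqref{2.6}, $\gamma_D u\in L^2(\partial\Om;d^{n-1}\omega)$, so $\gamma_D u$, viewed in $\big(N^{1/2}(\partial\Om)\big)^*$ via the $L^2(\partial\Om;d^{n-1}\omega)$-pairing against $N^{1/2}(\partial\Om)$, is a legitimate candidate, and the claim $\widehat{\gamma}_D u=\gamma_D u$ amounts to
\begin{equation*}
(\Delta w,u)_{L^2(\Om;d^nx)}-(w,\Delta u)_{L^2(\Om;d^nx)}=\int_{\partial\Om}\overline{\gamma_N w}\;\gamma_D u\;d^{n-1}\omega,\qquad w\in H^2(\Omega)\cap H^1_0(\Omega).
\end{equation*}
I would derive this by subtracting two Green-type identities. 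The first,
\begin{equation*}
(\Delta w,u)_{L^2(\Om;d^nx)}+{}_{H^{1-s}(\Om)}\big\langle\nabla w,\nabla u\big\rangle_{(H^{1-s}(\Om))^*}=\int_{\partial\Om}\overline{\gamma_N w}\;\gamma_D u\;d^{n-1}\omega,
\end{equation*}
holds for $w\in H^2(\Omega)$ and $u\in H^s(\Omega)$: it is the classical Green formula for $u\in C^\infty(\overline{\Om})$, and one passes to the limit using the density of $C^\infty(\overline{\Om})$ in $H^s(\Omega)$ and the continuity of $\gamma_D$ on $H^s(\Omega)$ (the gradient pairing being the $H^{1-s}(\Om)$--$H^{s-1}(\Om)$ duality recorded in \eqref{2.9JJ}). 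The second,
\begin{equation*}
(w,\Delta u)_{L^2(\Om;d^nx)}+{}_{H^{1-s}(\Om)}\big\langle\nabla w,\nabla u\big\rangle_{(H^{1-s}(\Om))^*}=0,
\end{equation*}
holds for $w\in H^1_0(\Omega)$: for $1/2<s<1$ it is precisely the special case $\phi=\gamma_D w=0$ of the weak Neumann trace identity \eqref{2.9} (with the parameter $s-1/2$ there), while for $s\geq 1$ one has $u\in H^1(\Omega)$ and it degenerates to the elementary relation $(w,\Delta u)_{L^2(\Om;d^nx)}+(\nabla w,\nabla u)_{(L^2(\Om;d^nx))^n}=0$. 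Subtracting yields the desired identity. Uniqueness of $\widehat{\gamma}_D$ follows because $C^\infty(\overline{\Om})\subset\big\{u\in H^1(\Omega)\,\big|\,\Delta u\in L^2(\Om;d^nx)\big\}$ is dense in $\dom(-\Delta_{max,\Om})$ in the graph norm (cf.\ \cite{GM11}), compatibility forces $\widehat{\gamma}_D=\gamma_D$ on this subspace, and a bounded operator is determined by its restriction to a dense subspace.

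The step I expect to be the main obstacle is the low-regularity range $1/2<s<1$ in the compatibility argument: there $u\notin H^1(\Omega)$ in general, so $\big\langle\nabla w,\nabla u\big\rangle$ must be understood in the negative-order duality $H^{1-s}(\Om)$--$H^{s-1}(\Om)$ and both Green-type identities must be justified in that weak sense rather than by the elementary integration by parts available for $H^1$ functions. This is precisely the scenario for which the weak Neumann trace machinery \eqref{2.8}--\eqref{2.9} (together with the duality identifications around \eqref{2.9JJ}) was constructed, and it is the one non-routine ingredient; the construction of $\widehat{\gamma}_D$, its well-definedness and boundedness, and formula \eqref{Tan-C12} are direct consequences of Lemma \ref{Lo-Tx}.
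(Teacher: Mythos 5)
Your construction---defining $\widehat{\gamma}_D u$ by duality through the generalized Green pairing, with well-definedness coming from $\ker(\gamma_N)=H^2_0(\Omega)$ and boundedness from the bounded right-inverse in Lemma \ref{Lo-Tx}, then verifying compatibility by subtracting the two Green-type identities (the weak Neumann trace identity \eqref{2.9} handling the range $1/2<s<1$)---is correct and is essentially the argument of \cite{GM11}, to which the paper defers for this theorem. The one external input beyond Lemma \ref{Lo-Tx} is the graph-norm density of sufficiently regular functions in $\dom(-\Delta_{max,\Om})$ needed for uniqueness, which you correctly identify and appropriately cite from \cite{GM11}.
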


We next review the case of the Neumann trace, whose action
is extended to $\dom(-\Delta_{max,\Om})$. To this end, we need
to address a number of preliminary matters. First, assuming
Hypothesis \ref{h2.1}, we make the following definition
(compare with \eqref{Tan-C4}):
\begin{equation} \label{3an-C4} 
N^{3/2}(\partial\Omega):=\bigl\{g\in H^1(\partial\Omega)\,\big|\,
\nabla_{tan}g\in \bigl(H^{1/2}(\partial\Omega)\bigr)^n\bigl\},
\end{equation} 
equipped with the natural norm
\begin{equation} \label{3an-C4B}
\|g\|_{N^{3/2}(\partial\Omega)}
:=\|g\|_{L^2(\partial\Omega;d^{n-1}\omega)}+
\|\nabla_{tan}g\|_{(H^{1/2}(\partial\Omega))^n}.
\end{equation} 
Assuming Hypothesis \ref{h2.1}, $N^{3/2}(\partial\Omega)$ is a
reflexive Banach space which embeds continuously into the space 
$H^1(\partial\Omega;d^{n-1}\omega)$. In addition, this turns out
to be a natural substitute for the more familiar space
$H^{3/2}(\partial\Omega)$ in the case where $\Omega$
is sufficiently smooth. Concretely, one has 
\begin{equation} \label{3an-C5}
N^{3/2}(\partial\Omega)=H^{3/2}(\partial\Omega),
\end{equation} 
(as vector spaces with equivalent norms),
whenever $\Omega$ is a bounded $C^{1,r}$ domain with $r>1/2$.
The primary reason we are interested in $N^{3/2}(\partial\Omega)$ is that
this space arises naturally when considering the Dirichlet trace operator
acting on 
\begin{equation} \label{3an-C6N}
\big\{u\in H^2(\Omega)\,\big|\,\gamma_N u =0\big\},
\end{equation} 
considered as a closed subspace of $H^2(\Omega)$ (thus, a Banach space
when equipped with the norm inherited from $H^2(\Omega)$).
Concretely, the following result has been established in \cite{GM11}.

\begin{lemma}\label{3o-TxD}
Assume Hypothesis \ref{h2.1}. Then the Dirichlet trace operator $\gamma_D$
considered in the context
\begin{equation} \label{3an-C7D}
\gamma_D:\big\{u\in H^2(\Omega)\,\big|\,\gamma_N u =0\big\}
\to N^{3/2}(\partial\Omega)
\end{equation} 
is well-defined, linear, bounded, onto and with a linear, bounded 
right-inverse. In addition, the null space of $\gamma_D$ in \eqref{3an-C7D} is
precisely $H^2_0(\Omega)$, the closure of $C^\infty_0(\Omega)$ in
$H^2(\Omega)$.
\end{lemma}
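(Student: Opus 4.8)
The plan is to derive Lemma \ref{3o-TxD} from Theorem \ref{T-MMS}, by exhibiting the Dirichlet trace in the context \eqref{3an-C7D} as the restriction of the combined boundary trace $\gamma_2=(\gamma_D,\gamma_N)$ to the ``slice'' of its range on which the Neumann component vanishes; this parallels the way Lemma \ref{Lo-Tx} isolates the space $N^{1/2}(\partial\Omega)$ from the same range characterization.

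First I would check that \eqref{3an-C7D} is well-defined and bounded. If $u\in H^2(\Omega)$ satisfies $\gamma_N u=0$, then $\gamma_2 u=(\gamma_D u,0)$ lies in the codomain of $\gamma_2$ exhibited in \eqref{Tan-C2}, so that $\nabla_{tan}(\gamma_D u)+0\cdot\nu=\nabla_{tan}(\gamma_D u)\in\big(H^{1/2}(\partial\Omega)\big)^n$; together with $\gamma_D u\in H^1(\partial\Omega)$ this says exactly $\gamma_D u\in N^{3/2}(\partial\Omega)$ by \eqref{3an-C4}. Boundedness then follows from the boundedness of $\gamma_2$ (with the norm \eqref{NoRw-1} on its target) and the definition \eqref{3an-C4B} of the $N^{3/2}$-norm: the summand $\|\gamma_D u\|_{L^2(\partial\Omega;d^{n-1}\omega)}$ is controlled by $\|\gamma_D u\|_{H^1(\partial\Omega)}$, while the summand $\|\nabla_{tan}\gamma_D u\|_{(H^{1/2}(\partial\Omega))^n}=\|\nabla_{tan}\gamma_D u+\gamma_N u\cdot\nu\|_{(H^{1/2}(\partial\Omega))^n}$ is precisely one of the terms in \eqref{NoRw-1}; both are $\le C\|u\|_{H^2(\Omega)}$. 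For surjectivity together with a bounded right-inverse, I would observe that for any $g\in N^{3/2}(\partial\Omega)$ the pair $(g,0)$ belongs to the range space in \eqref{Tan-C2}, since the constraint $\nabla_{tan}g+0\cdot\nu\in(H^{1/2}(\partial\Omega))^n$ is exactly the defining property of $N^{3/2}(\partial\Omega)$, and $g\mapsto(g,0)$ is bounded from $N^{3/2}(\partial\Omega)$ into that space equipped with \eqref{NoRw-1}. Composing this embedding with the bounded linear right-inverse of $\gamma_2$ furnished by Theorem \ref{T-MMS} produces a bounded linear operator $E\colon N^{3/2}(\partial\Omega)\to H^2(\Omega)$ with $\gamma_2(Eg)=(g,0)$, i.e., $\gamma_D(Eg)=g$ and $\gamma_N(Eg)=0$; hence $Eg$ lies in the closed subspace $\{u\in H^2(\Omega)\,|\,\gamma_N u=0\}$, so $E$ is a bounded linear right-inverse for \eqref{3an-C7D}, which is therefore onto. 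Finally, the null space of \eqref{3an-C7D} is $\{u\in H^2(\Omega)\,|\,\gamma_D u=\gamma_N u=0\}=\ker(\gamma_2)=H^2_0(\Omega)$ by \eqref{Tan-C3}.

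I expect no essential obstacle: the substantive analytic content — surjectivity of the full trace $\gamma_2$ with a bounded right-inverse, and $\ker(\gamma_2)=H^2_0(\Omega)$ on a general bounded Lipschitz domain — is already packaged in Theorem \ref{T-MMS} (itself established in \cite{MMS05}). The only points requiring a little care are the two norm comparisons above and the verification that freezing the Neumann component at $0$ in the range-space constraint of $\gamma_2$ reproduces \emph{precisely} the defining condition of $N^{3/2}(\partial\Omega)$ rather than a weaker one; this is immediate from \eqref{3an-C4}, but it is exactly where the choice of $N^{3/2}(\partial\Omega)$ in place of $H^{3/2}(\partial\Omega)$ (cf.\ \eqref{3an-C5}) becomes essential in the Lipschitz setting.
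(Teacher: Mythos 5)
Your argument is correct. The paper itself gives no proof of this lemma (it defers to \cite{GM11}, just as for Lemma \ref{Lo-Tx}), but your derivation from Theorem \ref{T-MMS} --- reading off well-definedness, boundedness, surjectivity with bounded right-inverse, and the kernel identification by freezing the Neumann component of $\gamma_2$ at zero --- is exactly the route the definition of $N^{3/2}(\partial\Omega)$ in \eqref{3an-C4} is engineered for, and the one norm comparison you flag ($\|g\|_{H^1(\partial\Omega)}\leq C\|g\|_{N^{3/2}(\partial\Omega)}$, via $H^{1/2}(\partial\Omega)\hookrightarrow L^2(\partial\Omega;d^{n-1}\omega)$) is precisely the continuous embedding $N^{3/2}(\partial\Omega)\hookrightarrow H^1(\partial\Omega)$ already recorded in the text.
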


It is then possible to use the Neumann trace result from Lemma \ref{3o-TxD}
in order to extend the action of the Neumann trace operator \eqref{2.7}
to $\dom(-\Delta_{max,\Om})=\big\{u\in L^2(\Omega;d^nx)\,\big|\,
\Delta u\in L^2(\Omega;d^nx)\big\}$.
As before, this space is equipped with the natural graph norm.
Let $\bigl(N^{3/2}(\partial\Omega)\bigr)^*$ denote the
conjugate dual space of $N^{3/2}(\partial\Omega)$. The following result holds:

\begin{theorem}\label{3ew-T-tr}
Assume Hypothesis \ref{h2.1}. Then there exists a unique linear, bounded
operator
\begin{equation} \label{3an-C10}
\widehat{\gamma}_N:\big\{u\in L^2(\Omega;d^nx)\,\big|\,\Delta u\in L^2(\Omega;d^nx)\big\}
\to \bigl(N^{3/2}(\partial\Omega)\bigr)^*
\end{equation} 
which is compatible with the Neumann trace introduced in \eqref{2.7},
in the sense that, for each $s>3/2$, one has
\begin{equation} \label{3an-C11}
\widehat{\gamma}_N u =\gamma_N u \, \mbox{ for every $u\in H^s(\Omega)$
with $\Delta u\in L^2(\Omega;d^nx)$}.
\end{equation} 
Furthermore, this extension of the Neumann trace operator from \eqref{2.7}
allows for the following generalized integration by parts formula
\begin{equation} \label{3an-C12}
{}_{N^{3/2}(\partial\Omega)}\langle\gamma_D w,\widehat\gamma_N u 
\rangle_{(N^{3/2}(\partial\Omega))^*}
= ( w,\Delta u)_{L^2(\Om;d^nx)}
- (\Delta w,u)_{L^2(\Om;d^nx)},
\end{equation} 
valid for every $u\in L^2(\Omega;d^nx)$ with $\Delta u\in L^2(\Omega;d^nx)$
and every $w\in H^2(\Omega)$ with $\gamma_N w =0$.
\end{theorem}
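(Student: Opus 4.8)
The plan is to transcribe the proof of Theorem~\ref{New-T-tr}, interchanging the roles of the Dirichlet and Neumann data and invoking Lemma~\ref{3o-TxD} in place of Lemma~\ref{Lo-Tx} (the overall sign of the boundary pairing in \eqref{3an-C12} is flipped relative to \eqref{Tan-C12} accordingly). Fix $u\in L^2(\Om;d^nx)$ with $\Delta u\in L^2(\Om;d^nx)$. By Lemma~\ref{3o-TxD} the Dirichlet trace $\gamma_D\colon\{w\in H^2(\Om)\,|\,\gamma_N w=0\}\to N^{3/2}(\partial\Om)$ is bounded and onto, and it admits a bounded linear right-inverse $R$. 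I would then define $\widehat{\gamma}_N u\in(N^{3/2}(\partial\Om))^*$ by
\begin{equation*}
{}_{N^{3/2}(\partial\Om)}\langle \phi,\widehat{\gamma}_N u\rangle_{(N^{3/2}(\partial\Om))^*}
:=(w,\Delta u)_{L^2(\Om;d^nx)}-(\Delta w,u)_{L^2(\Om;d^nx)},\qquad w:=R\phi ,
\end{equation*}
for $\phi\in N^{3/2}(\partial\Om)$. The first step is to verify that the right-hand side is independent of the choice of $w\in H^2(\Om)$ with $\gamma_N w=0$ and $\gamma_D w=\phi$: the difference of two admissible choices lies in $\ker(\gamma_D)=H^2_0(\Om)$ (again by Lemma~\ref{3o-TxD}), the functional $w\mapsto(w,\Delta u)_{L^2(\Om;d^nx)}-(\Delta w,u)_{L^2(\Om;d^nx)}$ is continuous on $H^2(\Om)$ (as $\Delta\colon H^2(\Om)\to L^2(\Om;d^nx)$ is bounded), and it vanishes on $C^\infty_0(\Om)$ because $(w,\Delta u)_{L^2(\Om;d^nx)}=(\Delta w,u)_{L^2(\Om;d^nx)}$ there by the definition of the distributional Laplacian; hence it vanishes on the $H^2$-closure $H^2_0(\Om)$ of $C^\infty_0(\Om)$.

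Next I would establish boundedness. The defining identity gives
\begin{equation*}
\big|{}_{N^{3/2}(\partial\Om)}\langle \phi,\widehat{\gamma}_N u\rangle_{(N^{3/2}(\partial\Om))^*}\big|
\le\|w\|_{L^2(\Om;d^nx)}\|\Delta u\|_{L^2(\Om;d^nx)}+\|\Delta w\|_{L^2(\Om;d^nx)}\|u\|_{L^2(\Om;d^nx)},
\end{equation*}
and, taking $w=R\phi$ with $R$ bounded, the right-hand side is dominated by $C\|\phi\|_{N^{3/2}(\partial\Om)}\big(\|u\|_{L^2(\Om;d^nx)}+\|\Delta u\|_{L^2(\Om;d^nx)}\big)$. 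Thus $\widehat{\gamma}_N u\in(N^{3/2}(\partial\Om))^*$ with norm controlled by the graph norm of $u$, and since $u\mapsto\widehat{\gamma}_N u$ is manifestly linear, $\widehat{\gamma}_N$ is a bounded operator as in \eqref{3an-C10}. Moreover, by the independence of $w$ just established, the defining identity in fact holds for \emph{every} $w\in H^2(\Om)$ with $\gamma_N w=0$, which is precisely the generalized integration by parts formula \eqref{3an-C12}.

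It remains to check compatibility and uniqueness. For $s>3/2$ and $u\in H^s(\Om)$ with $\Delta u\in L^2(\Om;d^nx)$, the trace $\gamma_N u=\nu\cdot\gamma_D(\nabla u)$ is defined classically via \eqref{2.7} (note $\nabla u\in H^{s-1}(\Om)$ with $s-1>1/2$) and lies in $L^2(\partial\Om;d^{n-1}\omega)$; applying Green's first identity twice to a pair $(w,u)$ with $w\in H^2(\Om)$, $\gamma_N w=0$, yields $(w,\Delta u)_{L^2(\Om;d^nx)}-(\Delta w,u)_{L^2(\Om;d^nx)}=\int_{\partial\Om}\overline{\gamma_D w}\,\gamma_N u\,d^{n-1}\omega$, and the right-hand side is exactly the value of $\gamma_N u$ — regarded in $(N^{3/2}(\partial\Om))^*$ through $N^{3/2}(\partial\Om)\hookrightarrow L^2(\partial\Om;d^{n-1}\omega)\hookrightarrow(N^{3/2}(\partial\Om))^*$ — paired against $\gamma_D w$; since $\gamma_D w$ exhausts $N^{3/2}(\partial\Om)$ as $w$ ranges over $\{w\in H^2(\Om)\,|\,\gamma_N w=0\}$, this forces $\widehat{\gamma}_N u=\gamma_N u$. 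Finally, uniqueness follows from the density of $H^2(\Om)$ in $\dom(-\Delta_{max,\Om})$ with respect to the graph norm (a standard approximation fact for bounded Lipschitz domains): any bounded operator on $\dom(-\Delta_{max,\Om})$ that agrees with $\gamma_N$ on this dense subspace must coincide with $\widehat{\gamma}_N$.

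The main obstacle is essentially imported rather than internal: it is Lemma~\ref{3o-TxD} itself — the surjectivity of $\gamma_D$ from $\{w\in H^2(\Om)\,|\,\gamma_N w=0\}$ onto the nonstandard boundary space $N^{3/2}(\partial\Om)$ together with a bounded right-inverse, and the identification $\ker(\gamma_D)=H^2_0(\Om)$ — which carries the genuine analytic content and which I am taking as given. Once that is in hand, the only points that need real care are (a) the validity of the Green-type identity in the above low-regularity regime with the boundary pairing taken in $(N^{3/2}(\partial\Om))^*$ rather than in $H^{-3/2}(\partial\Om)$, and (b) the density of $H^2(\Om)$ in the graph-normed maximal Laplacian domain used for uniqueness; both are routine but not entirely trivial on a general bounded Lipschitz domain, where $N^{3/2}(\partial\Om)\neq H^{3/2}(\partial\Om)$ in general.
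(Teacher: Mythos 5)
Your construction is the standard duality argument and, as far as one can tell, the intended one: the survey itself does not reproduce a proof of Theorem \ref{3ew-T-tr} (it defers to \cite{GM11}), but Lemma \ref{3o-TxD} is stated immediately beforehand precisely so that one can define $\widehat\gamma_N u$ by pairing against $\gamma_D w$ for $w$ in $\{w\in H^2(\Omega)\,|\,\gamma_N w=0\}$, exactly mirroring the Dirichlet case of Theorem \ref{New-T-tr}. Your verification of well-definedness (the difference of two admissible $w$'s lies in $\ker(\gamma_D)=H^2_0(\Omega)$, where the functional $w\mapsto(w,\Delta u)-(\Delta w,u)$ vanishes by continuity from $C^\infty_0(\Omega)$), the boundedness via the right inverse $R$, and the derivation of \eqref{3an-C12} are all correct, including the signs and the conjugate-linearity conventions.

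The one place where I would not let you off with ``routine but not entirely trivial'' is the uniqueness step. Uniqueness of a bounded operator satisfying \eqref{3an-C11} is exactly equivalent to the density of $\bigcup_{s>3/2}\{u\in H^s(\Omega)\,|\,\Delta u\in L^2(\Omega;d^nx)\}$ in $\dom(-\Delta_{max,\Om})$ with respect to the graph norm, and for a general bounded Lipschitz domain this is a genuine theorem, not a standard approximation fact one can wave at: the natural route is to split $u=u_D+h$ with $u_D\in\dom(-\Delta_{D,\Om})$ and $h\in L^2(\Om;d^nx)$ harmonic, and then one must (a) approximate the $L^2$-harmonic part in $L^2$ by functions harmonic near $\ol\Om$ (a Runge-type result), and (b) deal with the fact that $\dom(-\Delta_{D,\Om})$ only embeds into $H^{3/2}(\Omega)$ on a Lipschitz domain, which sits exactly at, not above, the threshold $s=3/2$ in \eqref{3an-C11}. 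Without an argument here the word ``unique'' in the statement is unproven. The other flagged point — the Green identity for $u\in H^s(\Omega)$ with $3/2<s<2$ and $\Delta u\in L^2(\Omega;d^nx)$, with the boundary term interpreted through $L^2(\partial\Omega;d^{n-1}\omega)\hookrightarrow(N^{3/2}(\partial\Omega))^*$ — is a smaller gap but also needs a sentence (e.g., via the divergence theorem applied to $\ol{w}\,\nabla u-u\,\ol{\nabla w}$, or an approximation in the $H^s$-graph norm), since it is what makes \eqref{3an-C11} true rather than merely plausible.
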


A proof of Theorem \ref{3ew-T-tr} can be found in \cite{GM11}.

\subsection{Perturbed Dirichlet and Neumann Laplacians}
\label{s4X}

Here we shall discuss operators of the form $-\Delta+V$ equipped with
Dirichlet and Neumann boundary conditions. Temporarily, we will employ
the following assumptions:

\begin{hypothesis}\label{h.V}
Let $n\in\bbN$, $n\geq 2$, assume that $\Om\subset{\bbR}^n$ is
an open, bounded, nonempty set, and suppose that
\begin{equation} \label{VV-W}
V\in L^\infty(\Om;d^nx)
\, \mbox{ and }\, V \,\mbox{is real-valued a.e.\ on } \,\Omega.
\end{equation} 
\end{hypothesis}

We start by reviewing the perturbed Dirichlet and Neumann Laplacians $H_{D,\Om}$ 
and $H_{N,\Om}$ associated with an open set $\Om$ in $\bbR^n$ and a potential $V$ satisfying Hypothesis \ref{h.V}: Consider the sesquilinear forms in $L^2(\Om;d^n x)$, 
\begin{equation}
Q_{D,\Om} (u,v) = (\nabla u, \nabla v) + (u,Vv), \quad u,v \in \dom (Q_{D,\Om}) 
= H^1_0(\Om),   \lb{3.QD}
\end{equation}
and 
\begin{equation}
Q_{N,\Om} (u,v) = (\nabla u, \nabla v) + (u,Vv), \quad u,v \in \dom (Q_{N,\Om}) 
= H^1(\Om).    \lb{3.QN}
\end{equation}
Then both forms in \eqref{3.QD} and \eqref{3.QN} are densely, defined, closed, and bounded from below in $L^2(\Om;d^n x)$. Thus, by the first and second representation theorems for forms (cf., e.g., \cite[Sect.\ VI.2]{Ka80}), one concludes that there exist unique self-adjoint operators $H_{D,\Om}$ and $H_{N,\Om}$ in $L^2(\Om;d^n x)$, both bounded from below, associated with the forms $Q_{D,\Om}$ and $Q_{N,\Om}$, respectively, which satisfy
\begin{align}
& Q_{D,\Om} (u,v) = (u,H_{D,\Om} v), \quad u \in \dom (Q_{D,\Om}), \, 
v \in \dom(H_{D,\Om}),  \lb{3.QHD} \\
& \dom(H_{D,\Om}) \subset \dom\big(|H_{D,\Om}|^{1/2}\big) = \dom (Q_{D,\Om}) 
= H^1_0(\Om)   \lb{3.HD}
\end{align}
and 
\begin{align}
& Q_{N,\Om} (u,v) = (u,H_{N,\Om} v), \quad u \in \dom (Q_{N,\Om}), \, 
v \in \dom(H_{N,\Om}),  \lb{3.QHN} \\
& \dom(H_{N,\Om}) \subset \dom\big(|H_{N,\Om}|^{1/2}\big) = \dom (Q_{N,\Om}) 
= H^1(\Om).   \lb{3.HN}
\end{align}
In the case of the perturbed Dirichlet Laplacian, $H_{D,\Om}$, one actually can say a bit more: Indeed, $H_{D,\Om}$ coincides with the Friedrichs extension of the operator 
\begin{equation}
H_{c,\Om} u = (-\Delta + V) u, 
\quad u \in \dom(H_{c,\Om}):=C^\infty_0(\Omega)
\end{equation}
in $L^2(\Om;d^nx)$, 
\begin{equation}
(H_{c,\Om})_F = H_{D,\Om},   \lb{3.cFD}
\end{equation}
and one obtains as an immediate consequence of \eqref{Fr-Q} and \eqref{3.QD}
\begin{equation}
H_{D,\Om}u= (-\Delta+V)u,  \quad 
u\in \dom(H_{D,\Om}) 
= \big\{v\in H_0^1(\Om)\,\big|\,\Delta v\in L^2(\Om;d^n x)\big\}.     \lb{3.HDF}
\end{equation}
We also refer to \cite[Sect.\ IV.2, Theorem VII.1.4]{EE89}). In addition, $H_{D,\Om}$ is known to have a compact resolvent and hence purely discrete spectrum bounded from below.

In the case of the perturbed Neumann Laplacian, $H_{N,\Om}$, it is not possible to be more specific under this general hypothesis on $\Om$ just being open. However, under the additional assumptions on the domain $\Om$ in Hypothesis \ref{h2.1} one can be more explicit about the domain of $H_{N,\Om}$ and also characterize its spectrum as follows. In addition, we also record an improvement of \eqref{3.HDF} under the additional Lipschitz hypothesis on $\Om$:

\begin{theorem} \label{t2.5} \hspace*{-1mm} 
Assume Hypotheses \ref{h2.1} and \ref{h.V}.\ 
Then the perturbed Dirichlet Laplacian, $H_{D,\Om}$, given by
\begin{align} 
& H_{D,\Om}u= (-\Delta+V)u,\no \\
& u\in \dom(H_{D,\Om}) =
\big\{v\in H^1(\Om)\,\big|\,\Delta v\in L^2(\Om;d^n x),\, 
\gamma_D v=0\text{ in $H^{1/2}(\dOm)$}\big\}   \label{2.39} \\
& \hspace*{2.46cm} =\big\{v\in H_0^1(\Om)\,\big|\,\Delta v\in L^2(\Om;d^n x)\big\}, \no 
\end{align}
is self-adjoint and bounded from below in $L^2(\Om;d^nx)$. Moreover,
\begin{equation}\label{2.40}
\dom\big(|H_{D,\Om}|^{1/2}\big)=H^1_0(\Om), 
\end{equation}
and the spectrum of $H_{D,\Om}$, is purely discrete $($i.e., it consists of eigenvalues of finite multiplicity$)$, 
\begin{equation}
\sigma_{\rm ess}(H_{D,\Om}) = \emptyset. 
\end{equation}
If, in addition, $V\geq 0$ a.e.\ in $\Omega$, then $H_{D,\Om}$ is strictly positive in 
$L^2(\Om;d^nx)$. 
\end{theorem}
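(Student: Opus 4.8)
The plan is to read off the assertions from the closed, lower-semibounded form $Q_{D,\Om}$ of \eqref{3.QD} on $H_0^1(\Om)$, the trace theory of Section \ref{s3}, and standard facts for bounded domains. Self-adjointness and boundedness from below of $H_{D,\Om}$, its identification with the Friedrichs extension $(H_{c,\Om})_F$, the equality $\dom\big(|H_{D,\Om}|^{1/2}\big)=\dom(Q_{D,\Om})=H_0^1(\Om)$ in \eqref{2.40}, the description $\dom(H_{D,\Om})=\{v\in H_0^1(\Om)\,|\,\Delta v\in L^2(\Om;d^nx)\}$ together with $H_{D,\Om}v=(-\Delta+V)v$, and the compactness of the resolvent (hence $\sigma_{\rm ess}(H_{D,\Om})=\emptyset$ and purely discrete spectrum bounded from below) were in effect already recorded in the paragraphs preceding the theorem (see \eqref{3.cFD}, \eqref{3.HD}, \eqref{3.HDF}, \eqref{Fr-Q}), and hold for any bounded open $\Om$ with real-valued $V\in L^\infty(\Om;d^nx)$. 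It therefore remains to establish the first description of $\dom(H_{D,\Om})$ in \eqref{2.39} --- which is the one place the Lipschitz hypothesis is used --- and the strict positivity when $V\geq 0$.

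For the first task I would prove the set equality $\{v\in H^1(\Om)\,|\,\Delta v\in L^2(\Om;d^nx),\,\gamma_D v=0 \text{ in } H^{1/2}(\dOm)\}=\{v\in H_0^1(\Om)\,|\,\Delta v\in L^2(\Om;d^nx)\}$. Since $\Om$ is a bounded Lipschitz domain, the Dirichlet trace $\gamma_D\colon H^1(\Om)\to H^{1/2}(\dOm)$ of \eqref{2.6} is available, and the standard characterization of $H_0^1$ as the kernel of the boundary trace on a Lipschitz domain gives $H_0^1(\Om)=\ker\big(\gamma_D|_{H^1(\Om)}\big)$ (cf.\ \eqref{2.6}, \eqref{2.6a}, and, e.g., \cite{Mc00}); intersecting both sides with the condition $\Delta v\in L^2(\Om;d^nx)$ yields the desired set equality, which combined with the already-established second description completes \eqref{2.39}. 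For completeness, the second description itself is recovered by the form argument: $v\in\dom(H_{D,\Om})$ if and only if $v\in H_0^1(\Om)$ and $Q_{D,\Om}(\varphi,v)=(\varphi,f)_{L^2(\Om;d^nx)}$ for all $\varphi\in H_0^1(\Om)$ with some $f\in L^2(\Om;d^nx)$; testing against $\varphi\in C_0^\infty(\Om)$ forces $-\Delta v+Vv=f\in L^2(\Om;d^nx)$, and conversely the identity $(\nabla\varphi,\nabla v)_{L^2(\Om;d^nx)}=-(\varphi,\Delta v)_{L^2(\Om;d^nx)}$, valid for $\varphi\in C_0^\infty(\Om)$ by the distributional definition of $\Delta v$, extends by density to all $\varphi\in H_0^1(\Om)$ since both sides are $H^1(\Om)$-continuous in $\varphi$.

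For strict positivity, assume $V\geq 0$ a.e.\ in $\Om$. For $u\in H_0^1(\Om)=\dom\big(|H_{D,\Om}|^{1/2}\big)$ one has $Q_{D,\Om}(u,u)=\|\nabla u\|^2_{(L^2(\Om;d^nx))^n}+\|V^{1/2}u\|^2_{L^2(\Om;d^nx)}\geq\|\nabla u\|^2_{(L^2(\Om;d^nx))^n}$. Since $\Om$ is bounded, the Poincar\'e--Friedrichs inequality provides a constant $c>0$ (depending only on the diameter of $\Om$) with $\|\nabla u\|^2_{(L^2(\Om;d^nx))^n}\geq c\,\|u\|^2_{L^2(\Om;d^nx)}$ for all $u\in H_0^1(\Om)$. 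Hence $Q_{D,\Om}(u,u)\geq c\,\|u\|^2_{L^2(\Om;d^nx)}$ for $u\in\dom\big(|H_{D,\Om}|^{1/2}\big)$, which by the second representation theorem is exactly the statement $H_{D,\Om}\geq c\,I_{L^2(\Om;d^nx)}$; in particular $H_{D,\Om}$ is strictly positive.

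I expect no serious obstacle here: the only content beyond the preceding discussion is the identity $\ker\big(\gamma_D|_{H^1(\Om)}\big)=H_0^1(\Om)$ on a Lipschitz domain --- the single place where mere openness of $\Om$ would not suffice --- together with the elementary Poincar\'e estimate. The only mildly technical point is ensuring that the $H_0^1$-as-trace-kernel statement is quoted in the precise form needed (it follows, e.g., from the surjectivity in \eqref{2.6a} combined with the standard extension and density arguments for Lipschitz domains in \cite{Mc00}).
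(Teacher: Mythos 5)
Your proposal is correct and follows exactly the route the paper sets up: the form-theoretic construction of $H_{D,\Om}$ from $Q_{D,\Om}$ on $H^1_0(\Om)$ recorded in \eqref{3.QD}--\eqref{3.HDF} (the paper itself only cites \cite[App.\ A]{GLMZ05}, \cite{GMZ07} for the remaining details), supplemented by the standard identification $H^1_0(\Om)=\ker\big(\gamma_D|_{H^1(\Om)}\big)$ on Lipschitz domains for the first description in \eqref{2.39} and the Poincar\'e inequality for strict positivity when $V\geq 0$. No gaps.
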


The corresponding result for the perturbed Neumann Laplacian $H_{N,\Om}$
reads as follows:

\begin{theorem}\label{t2.3}
Assume Hypotheses \ref{h2.1} and \ref{h.V}.
Then the perturbed Neumann Laplacian, $H_{N,\Om}$, given by
\begin{align}\label{2.20}
& H_{N,\Om}u = (-\Delta+V)u,  \\
& u \in \dom(H_{N,\Om}) =
\big\{v\in H^1(\Om)\,\big|\,\Delta v\in L^2(\Om;d^nx),\, 
\wti\gamma_N v =0\text{ in }H^{-1/2}(\dOm)\big\},  \no 
\end{align}
is self-adjoint and bounded from below in $L^2(\Om;d^nx)$. Moreover,
\begin{equation}\label{2.40a}
\dom\big(|H_{N,\Om}|^{1/2}\big)=H^1(\Om), 
\end{equation}
and the spectrum of $H_{N,\Om}$, is purely discrete $($i.e., it consists of eigenvalues of finite multiplicity$)$, 
\begin{equation}
\sigma_{\rm ess}(H_{N,\Om}) = \emptyset. 
\end{equation}
If, in addition, $V\geq 0$ a.e.\ in $\Omega$, then $H_{N,\Om}$ is nonnegative in 
$L^2(\Om;d^nx)$. 
\end{theorem}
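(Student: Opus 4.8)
The plan is to reduce everything to the form-theoretic facts recorded just before the statement. The form $Q_{N,\Om}$ of \eqref{3.QN}, with form domain $H^1(\Om)$, is densely defined, symmetric (since $V$ is real-valued), closed, and bounded from below (by $-\|V\|_{L^\infty(\Om;d^nx)}$, since $Q_{N,\Om}(u,u)\geq\|\nabla u\|^2_{(L^2(\Om;d^nx))^n}-\|V\|_{L^\infty(\Om;d^nx)}\|u\|^2_{L^2(\Om;d^nx)}$). Hence the first and second representation theorems (cf.\ \cite[Sect.\ VI.2]{Ka80}) yield a unique self-adjoint, bounded-from-below operator $H_{N,\Om}$ in $L^2(\Om;d^nx)$ representing $Q_{N,\Om}$, with $\dom\big(|H_{N,\Om}|^{1/2}\big)=\dom(Q_{N,\Om})=H^1(\Om)$. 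This disposes of self-adjointness, the lower bound, and \eqref{2.40a} at once; and when $V\geq 0$ a.e.\ in $\Om$ one has $Q_{N,\Om}(u,u)=\|\nabla u\|^2_{(L^2(\Om;d^nx))^n}+\|V^{1/2}u\|^2_{L^2(\Om;d^nx)}\geq 0$, so $H_{N,\Om}\geq 0$.

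The substantive step is to identify $\dom(H_{N,\Om})$ with the explicit set in \eqref{2.20}. By the representation theorem, $u\in\dom(H_{N,\Om})$ with $H_{N,\Om}u=f$ iff $u\in H^1(\Om)$ and $Q_{N,\Om}(v,u)=(v,f)_{L^2(\Om;d^nx)}$ for every $v\in H^1(\Om)$. Testing first against $v\in C_0^\infty(\Om)$ shows $(-\Delta+V)u=f$ in $\cD^\prime(\Om)$, hence $\Delta u=Vu-f\in L^2(\Om;d^nx)$, so the weak Neumann trace $\wti\gamma_N u\in H^{-1/2}(\dOm)$ is well-defined by \eqref{2.8}--\eqref{2.9} (with the parameter choice $s=1/2$, $s_0=0$, so that $r=3/2-s=1$ in \eqref{inc-1}). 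Feeding a general $v\in H^1(\Om)$ into the definition \eqref{2.9} of $\wti\gamma_N u$ (taking $\Phi:=v$ and $\phi:=\gamma_D v\in H^{1/2}(\dOm)$ there, and noting $H^{1/2-s}(\Om)=L^2(\Om)$ is self-dual and $\Delta u\in L^2(\Om;d^nx)$) produces the Green-type identity $(\nabla v,\nabla u)_{(L^2(\Om;d^nx))^n}=-(v,\Delta u)_{L^2(\Om;d^nx)}+{}_{H^{1/2}(\dOm)}\langle\gamma_D v,\wti\gamma_N u\rangle_{(H^{1/2}(\dOm))^*}$. Combining this with $Q_{N,\Om}(v,u)=(v,f)_{L^2(\Om;d^nx)}$ and $f=-\Delta u+Vu$ forces ${}_{H^{1/2}(\dOm)}\langle\gamma_D v,\wti\gamma_N u\rangle_{(H^{1/2}(\dOm))^*}=0$ for all $v\in H^1(\Om)$; since $\gamma_D\colon H^1(\Om)\to H^{1/2}(\dOm)$ is onto by \eqref{2.6a}, this is exactly $\wti\gamma_N u=0$ in $H^{-1/2}(\dOm)$. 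The reverse inclusion is obtained by running the same computation backwards: if $u\in H^1(\Om)$ with $\Delta u\in L^2(\Om;d^nx)$ and $\wti\gamma_N u=0$, the Green identity gives $Q_{N,\Om}(v,u)=(v,(-\Delta+V)u)_{L^2(\Om;d^nx)}$ for every $v\in H^1(\Om)$, so $u\in\dom(H_{N,\Om})$ and $H_{N,\Om}u=(-\Delta+V)u$.

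For the spectral statement, the plan is to show $H_{N,\Om}$ has compact resolvent, which yields $\sigma_{\rm ess}(H_{N,\Om})=\emptyset$ and purely discrete spectrum of finite multiplicities. The key input is compactness of the embedding $\dom\big(|H_{N,\Om}|^{1/2}\big)=H^1(\Om)\hookrightarrow L^2(\Om;d^nx)$, which holds because $\Om$ is a bounded Lipschitz domain --- e.g.\ compose the universal extension operator $E_\Om\colon H^1(\Om)\to H^1(\bbR^n)$ from Subsection \ref{s3X} with restriction to a large ball containing $\ol\Om$ and invoke the classical Rellich--Kondrachov theorem there. Then the abstract fact that a semibounded self-adjoint operator whose form domain embeds compactly into the ambient Hilbert space has compact resolvent (apply the compactness of $H^1(\Om)\hookrightarrow L^2(\Om;d^nx)$ to $(H_{N,\Om}-\gamma I)^{-1/2}$ for $\gamma$ strictly below the spectrum of $H_{N,\Om}$) completes this part.

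The principal obstacle is the domain identification in the middle paragraph: one must keep the index bookkeeping in \eqref{inc-1}--\eqref{2.9} consistent (so that $\Delta u\in L^2(\Om;d^nx)$ really is enough regularity to run the weak Neumann trace machinery), verify that the pairing appearing in the definition of $\wti\gamma_N$ reproduces exactly the form $Q_{N,\Om}$ once the interior term has been subtracted off, and invoke surjectivity of $\gamma_D$ on $H^1(\Om)$ correctly to pass from the vanishing of the boundary pairing to the vanishing of $\wti\gamma_N u$ itself. Everything else is routine.
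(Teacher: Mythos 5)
Your proof is correct and follows the standard route: the paper itself does not prove Theorem \ref{t2.3} but defers to \cite[App.\ A]{GLMZ05} and \cite{GMZ07}, and the argument there is exactly the form-theoretic one you give (representation theorems for the closed, semibounded form $Q_{N,\Om}$, identification of $\dom(H_{N,\Om})$ via the weak Neumann trace \eqref{2.8}--\eqref{2.9} with $s=1/2$ together with surjectivity of $\gamma_D$ in \eqref{2.6a}, and compactness of $H^1(\Om)\hookrightarrow L^2(\Om;d^nx)$ via the extension operator plus Rellich--Kondrachov). No gaps.
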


In the sequel, corresponding to the case where $V\equiv 0$, we shall abbreviate
\begin{equation} \label{V-Df1}
-\Delta_{D,\Om}\, \mbox{ and }\, -\Delta_{N,\Om},
\end{equation} 
for $H_{D,\Om}$ and $H_{N,\Om}$, respectively, and simply refer to these operators as, the Dirichlet and Neumann Laplacians. The above results have been proved in 
\cite[App.\ A]{GLMZ05}, \cite{GMZ07} for considerably more general potentials than assumed in Hypothesis \ref{h.V}.

Next, we shall now consider the minimal and maximal perturbed Laplacians.
Concretely, given an open set $\Omega\subset{\mathbb{R}}^n$
and a potential $0\leq V\in L^\infty(\Om;d^nx)$,
we introduce the maximal perturbed Laplacian in $L^2(\Omega;d^nx)$
\begin{align}\label{Yan-1}
\begin{split}
& H_{max,\Om} u:=(-\Delta+V)u,  \\
& u\in \dom(H_{max,\Om} ):=\big\{v\in L^2(\Omega;d^nx)\,\big|\,
\Delta v \in L^2(\Omega;d^nx)\big\}.
\end{split}
\end{align}

We pause for a moment to dwell on the notation used in connection with the symbol 
$\Delta$:

\begin{remark}
Throughout this manuscript the symbol $\Delta$ 
alone indicates that the Laplacian acts in the sense of distributions, 
\begin{equation}
\Delta\colon \cD^\prime(\Om) \to \cD^\prime(\Om).    \lb{DELTA} 
\end{equation}
In some cases, when it is necessary to interpret $\Delta$ as a bounded operator 
acting between Sobolev spaces, we write 
$\Delta \in \cB\big(H^s(\Om),H^{s-2}(\Om)\big)$ for various ranges of $s\in\bbR$ 
(which is of course compatible with \eqref{DELTA}). 
In addition, as a consequence of standard interior elliptic regularity (cf.\ Weyl's classical lemma) it is not difficult to see that if $\Om\subseteq\bbR$ is open, $u \in \cD'(\Om)$ and 
$\Delta u \in L^2_{\rm loc}(\Om; d^nx)$ then actually $u \in H^2_{\rm loc}(\Om)$. In particular, this comment applies to $u\in \dom(H_{max,\Om} )$ in \eqref{Yan-1}.
\end{remark}

In the remainder of this subsection we shall collect a number of
results, originally proved in \cite{GM11} when $V\equiv 0$,
but which are easily seen to hold in the more general setting considered here.

\begin{lemma}\label{Max-M1}
Assume Hypotheses \ref{h2.1} and \ref{h.V}.
Then the maximal perturbed Laplacian associated with
$\Omega$ and the potential $V$ is a closed, densely defined operator for which
\begin{equation} \label{Yan-2}
H^2_0(\Omega)\subseteq \dom((H_{max,\Om})^*)
\subseteq \big\{u\in L^2(\Omega;d^nx)\,\big|\,
\Delta u\in L^2(\Omega;d^nx), \, 
\widehat{\gamma}_D u=\widehat{\gamma}_N u =0\big\}.
\end{equation} 
\end{lemma}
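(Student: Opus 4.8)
The plan is to establish the two inclusions in \eqref{Yan-2} separately, starting from the definition of $H_{max,\Om}$ in \eqref{Yan-1} and its adjoint. First I would verify that $H_{max,\Om}$ is densely defined and closed: density is clear since $C_0^\infty(\Om) \subseteq \dom(H_{max,\Om})$, and closedness follows because if $u_j \to u$ and $(-\Delta+V)u_j \to g$ in $L^2(\Om;d^nx)$, then $\Delta u_j \to g - Vu$ in $\cD^\prime(\Om)$ (using $V \in L^\infty$), while also $\Delta u_j \to \Delta u$ in $\cD^\prime(\Om)$ by continuity of $\Delta$ on distributions; hence $\Delta u = g - Vu \in L^2(\Om;d^nx)$, so $u \in \dom(H_{max,\Om})$ and $H_{max,\Om}u = g$. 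Consequently $(H_{max,\Om})^*$ is a well-defined closed operator.

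For the first inclusion $H^2_0(\Om) \subseteq \dom((H_{max,\Om})^*)$, I would take $w \in H^2_0(\Om)$ and show that the functional $u \mapsto (w, (-\Delta+V)u)_{L^2(\Om;d^nx)}$ is bounded on $\dom(H_{max,\Om})$ with respect to the $L^2$-norm. Approximating $w$ by a sequence $\{w_k\} \subset C_0^\infty(\Om)$ in the $H^2(\Om)$-norm, one has for $u \in \dom(H_{max,\Om})$ that $(w_k, (-\Delta+V)u)_{L^2} = ((-\Delta+V)w_k, u)_{L^2}$ by integration by parts (legitimate since $w_k$ has compact support in $\Om$ and $u, \Delta u \in L^2_{\loc}$, in fact $u \in H^2_{\loc}(\Om)$ by the Remark following \eqref{Yan-1}). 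Passing to the limit, using $w_k \to w$ in $H^2(\Om)$ hence $(-\Delta+V)w_k \to (-\Delta+V)w$ in $L^2(\Om;d^nx)$, gives $(w, (-\Delta+V)u)_{L^2} = ((-\Delta+V)w, u)_{L^2}$, which is manifestly $L^2$-bounded in $u$. Thus $w \in \dom((H_{max,\Om})^*)$ and $(H_{max,\Om})^* w = (-\Delta+V)w$.

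For the second inclusion, I would take $u \in \dom((H_{max,\Om})^*)$. Testing against $C_0^\infty(\Om)$ first shows that $\Delta u \in L^2(\Om;d^nx)$ (so $u$ lies in the domain of the maximal Laplacian and the extended traces $\widehat\gamma_D u$, $\widehat\gamma_N u$ make sense via Theorems \ref{New-T-tr} and \ref{3ew-T-tr}) and that $(H_{max,\Om})^* u = (-\Delta+V)u$ as an $L^2$-function. Then, for arbitrary $v \in \dom(H_{max,\Om}) = \{v \in L^2 \mid \Delta v \in L^2\}$, the adjoint relation reads $((-\Delta+V)u, v)_{L^2} = (u, (-\Delta+V)v)_{L^2}$, which upon cancelling the symmetric term $(Vu,v)_{L^2} = (u,Vv)_{L^2}$ becomes $(\Delta u, v)_{L^2} = (u, \Delta v)_{L^2}$ for all such $v$. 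Now I would invoke the generalized Green formulas \eqref{Tan-C12} and \eqref{3an-C12}: choosing $v$ ranging over $H^2(\Om) \cap H^1_0(\Om)$ and using \eqref{Tan-C12} forces ${}_{N^{1/2}(\dOm)}\langle \gamma_N v, \widehat\gamma_D u \rangle_{(N^{1/2}(\dOm))^*} = 0$; since $\gamma_N$ maps $H^2(\Om) \cap H^1_0(\Om)$ onto $N^{1/2}(\dOm)$ (Lemma \ref{Lo-Tx}), this yields $\widehat\gamma_D u = 0$. Symmetrically, choosing $v$ ranging over $\{v \in H^2(\Om) \mid \gamma_N v = 0\}$ and using \eqref{3an-C12} together with the surjectivity of $\gamma_D$ in Lemma \ref{3o-TxD} yields $\widehat\gamma_N u = 0$. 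This places $u$ in the right-hand set of \eqref{Yan-2}.

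The main obstacle I anticipate is the careful justification of the integration-by-parts manipulations at the level of the maximal domain — in particular, ensuring that the pairings in \eqref{Tan-C12} and \eqref{3an-C12} are applicable (which requires $v$ to have the stated Sobolev regularity, not merely $v \in \dom(H_{max,\Om})$) and that the conclusions $\widehat\gamma_D u = 0$, $\widehat\gamma_N u = 0$ genuinely follow from testing only against those more regular $v$'s. This is legitimate because $H^2(\Om) \cap H^1_0(\Om) \subseteq \dom(H_{max,\Om})$ and $\{v \in H^2(\Om) \mid \gamma_N v = 0\} \subseteq \dom(H_{max,\Om})$, so the adjoint identity applies to them, and the surjectivity statements in Lemmas \ref{Lo-Tx} and \ref{3o-TxD} then let us conclude the vanishing of the extended traces. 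Everything else is routine once the trace machinery of Section \ref{s3} is in place.
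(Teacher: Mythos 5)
Your argument is correct and follows exactly the route the paper relies on (the survey defers the proof to \cite{GM11}, where the $V\equiv 0$ case is handled in just this way): closedness of $H_{max,\Om}$ via distributional convergence of $\Delta u_j$, the first inclusion by approximating $w\in H^2_0(\Omega)$ in $H^2(\Om)$ by test functions and passing to the limit in the integration-by-parts identity, and the second inclusion by first identifying $(H_{max,\Om})^*u=(-\Delta+V)u$ through testing against $C^\infty_0(\Om)$ and then invoking the generalized Green formulas \eqref{Tan-C12} and \eqref{3an-C12} together with the surjectivity of $\gamma_N$ and $\gamma_D$ from Lemmas \ref{Lo-Tx} and \ref{3o-TxD}. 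The only blemish is a harmless sign slip in the closedness step: from $(-\Delta+V)u_j\to g$ and $Vu_j\to Vu$ one obtains $\Delta u_j\to Vu-g$ (not $g-Vu$), which still lies in $L^2(\Omega;d^nx)$ and still yields $u\in\dom(H_{max,\Om})$ with $H_{max,\Om}u=g$.
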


For an open set $\Omega\subset{\mathbb{R}}^n$ and a potential
$0\leq V\in L^\infty(\Om;d^nx)$, we also bring in the minimal
perturbed Laplacian in $L^2(\Omega;d^nx)$, that is, 
\begin{equation}\label{Yan-6} 
H_{min,\Om} u:=(-\Delta+V)u,\quad u\in \dom(H_{min,\Om}):=H^2_0(\Omega).
\end{equation}

\begin{corollary}\label{Max-M2}
Assume Hypotheses \ref{h2.1} and \ref{h.V}.
Then $H_{min,\Om} $ is a densely defined, symmetric operator which satisfies
\begin{equation} \label{Yan-7}
H_{min,\Om} \subseteq (H_{max,\Om})^*\, \mbox{ and }\, 
H_{max,\Om} \subseteq (H_{min,\Om})^*.
\end{equation}
Equality occurs in one $($and hence, both$)$ inclusions in \eqref{Yan-7} if and only if
\begin{equation} \label{Yan-8}
H^2_0(\Omega) \, \text{ equals } \, \big\{u\in L^2(\Omega;d^nx)\,\big|\,\Delta u\in L^2(\Omega;d^nx), \, 
\widehat{\gamma}_D u =\widehat{\gamma}_N u =0\big\}.
\end{equation} 
\end{corollary}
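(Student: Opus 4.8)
The plan is to settle the routine assertions first, then the two inclusions in \eqref{Yan-7} together with the fact that equality in one forces equality in both, and finally the equivalence with \eqref{Yan-8}, which carries the only real difficulty. As for the routine assertions: $H_{min,\Om}$ is densely defined since $C_0^\infty(\Om)\subseteq H^2_0(\Om)=\dom(H_{min,\Om})$ is dense in $\LOm$; it is symmetric because, for $u,v\in H^2_0(\Om)$, two integrations by parts with no boundary contributions (as $\gamma_D$ and $\gamma_N$ annihilate $H^2_0(\Om)$, cf.\ \eqref{Tan-C3}) give $(u,H_{min,\Om}v)_{\LOm}=(\nabla u,\nabla v)_{(\LOm)^n}+(u,Vv)_{\LOm}=(H_{min,\Om}u,v)_{\LOm}$, using that $V$ is real-valued; and $H_{min,\Om}$ is closed, since extension by zero is an isometry of $H^2_0(\Om)$ into $H^2(\bbR^n)$ intertwining the distributional Laplacian on $\Om$ with that on $\bbR^n$, while on $H^2(\bbR^n)$ the $H^2$-norm is equivalent to the graph norm of $\Delta$ (Fourier transform), so the graph norm of $H_{min,\Om}$ is equivalent to $\|\cdot\|_{H^2(\Om)}$ on the complete space $H^2_0(\Om)$. ($H_{max,\Om}$ is closed by Lemma \ref{Max-M1}.)

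The inclusion $H_{min,\Om}\subseteq(H_{max,\Om})^*$ is contained in Lemma \ref{Max-M1}; taking adjoints and using $(H_{max,\Om})^{**}=H_{max,\Om}$ yields the second inclusion $H_{max,\Om}\subseteq(H_{min,\Om})^*$. If $(H_{max,\Om})^*=H_{min,\Om}$, then adjoining gives $(H_{min,\Om})^*=(H_{max,\Om})^{**}=H_{max,\Om}$; conversely, if $(H_{min,\Om})^*=H_{max,\Om}$, then adjoining gives $(H_{max,\Om})^*=(H_{min,\Om})^{**}=H_{min,\Om}$, where closedness of $H_{min,\Om}$ is used. Hence equality in one inclusion of \eqref{Yan-7} entails equality in both.

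Let $\cM$ denote the set on the right-hand side of \eqref{Yan-8}. Since $\dom(H_{min,\Om})=H^2_0(\Om)$ and both $H_{min,\Om}$ and $(H_{max,\Om})^*$ act as $-\Delta+V$ on their domains (for the latter, Lemma \ref{Max-M1} gives $\dom((H_{max,\Om})^*)\subseteq\cM\subseteq\dom(H_{max,\Om})$), the equality $(H_{max,\Om})^*=H_{min,\Om}$ amounts to $\dom((H_{max,\Om})^*)=H^2_0(\Om)$. Lemma \ref{Max-M1} always supplies the squeeze $H^2_0(\Om)\subseteq\dom((H_{max,\Om})^*)\subseteq\cM$, so if \eqref{Yan-8} holds, i.e.\ $\cM=H^2_0(\Om)$, then $\dom((H_{max,\Om})^*)=H^2_0(\Om)$ and equality follows. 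For the converse, suppose $\dom((H_{max,\Om})^*)=H^2_0(\Om)$; since $H^2_0(\Om)\subseteq\cM$ trivially, it remains to prove $\cM\subseteq\dom((H_{max,\Om})^*)$. Fix $u\in\cM$, so $\Delta u\in\LOm$ and $\widehat\gamma_D u=\widehat\gamma_N u=0$; then $u\in\dom((H_{max,\Om})^*)$ as soon as the Green identity $(u,\Delta v)_{\LOm}=(\Delta u,v)_{\LOm}$ holds for every $v\in\dom(H_{max,\Om})$, because then $v\mapsto(u,H_{max,\Om}v)_{\LOm}=((-\Delta+V)u,v)_{\LOm}$ is $\LOm$-bounded. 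Feeding $\widehat\gamma_D u=0$ into \eqref{Tan-C12}, resp.\ $\widehat\gamma_N u=0$ into \eqref{3an-C12}, yields this identity at once for all $v\in H^2(\Om)\cap H^1_0(\Om)$, resp.\ all $v\in H^2(\Om)$ with $\gamma_N v=0$.

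\emph{The main obstacle} is to push the Green identity from these two subclasses to all $v$ with $\Delta v\in\LOm$ --- a class that on a merely Lipschitz $\Om$ is strictly larger than $H^2(\Om)$, since it contains the $L^2$-harmonic functions. I would remove it using the structure of $\dom(-\Delta_{max,\Om})$ developed in \cite{GM11}: the decomposition $\dom(-\Delta_{max,\Om})=\dom(-\Delta_{D,\Om})\dotplus\ker(-\Delta_{max,\Om})$ together with the injectivity of $\widehat\gamma_D$ on the harmonic summand show that $\widehat\gamma_D u=0$ already forces $u\in\dom(-\Delta_{D,\Om})$ (in particular $u\in H^1_0(\Om)$), and the companion fact that the null space of $\widehat\gamma_N$ on $\dom(-\Delta_{D,\Om})$ equals $H^2_0(\Om)$ then promotes $\widehat\gamma_N u=0$ to $u\in H^2_0(\Om)$. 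Equivalently, this sharpens the inclusion $\dom((H_{max,\Om})^*)\subseteq\cM$ of Lemma \ref{Max-M1} to an equality, after which $H^2_0(\Om)=\dom((H_{max,\Om})^*)=\cM$ finishes the proof.
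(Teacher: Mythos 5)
Your preliminary steps are correct: dense definedness and symmetry of $H_{min,\Om}$, its closedness via extension by zero and Plancherel (so that the graph norm is equivalent to the $H^2(\Om)$-norm on $H^2_0(\Om)$), the two inclusions in \eqref{Yan-7} obtained from Lemma \ref{Max-M1} by taking adjoints, and the observation that equality in one inclusion forces equality in the other. The easy implication is also fine: writing $\cM$ for the right-hand side of \eqref{Yan-8} as you do, the sandwich $H^2_0(\Om)\subseteq\dom((H_{max,\Om})^*)\subseteq\cM$ of Lemma \ref{Max-M1} collapses when $\cM=H^2_0(\Om)$. You have also correctly isolated the one substantive point, namely proving $\cM\subseteq\dom((H_{max,\Om})^*)$, i.e., upgrading the Green identities \eqref{Tan-C12} and \eqref{3an-C12}, which are only available against $H^2$-based test classes, to all of $\dom(H_{max,\Om})$. (The survey itself offers no proof of this corollary and defers to \cite{GM11}, so there is no in-paper argument to compare against.)

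The proposed resolution of that obstacle, however, does not work under the stated hypotheses. Both ingredients you import --- the decomposition $\dom(-\Delta_{max,\Om})=\dom(-\Delta_{D,\Om})\dotplus\ker(-\Delta_{max,\Om})$ with $\widehat{\gamma}_D$ injective on the harmonic summand (Corollary \ref{New-CV22}), and the assertion that the null space of $\widehat{\gamma}_N$ on $\dom(-\Delta_{D,\Om})$ is $H^2_0(\Om)$ --- are established in this paper only under Hypothesis \ref{h.Conv} (quasi-convexity), whereas Corollary \ref{Max-M2} assumes only Hypothesis \ref{h2.1}. The second ingredient is the fatal one: Lemma \ref{Lo-Tx} identifies $\ker(\gamma_N)$ with $H^2_0(\Om)$ only within $H^2(\Om)\cap H^1_0(\Om)$, and passing from $\dom(-\Delta_{D,\Om})$ to $H^2(\Om)\cap H^1_0(\Om)$ is precisely the elliptic regularity statement \eqref{Yan-9}, which fails for Lipschitz domains with re-entrant corners. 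A decisive symptom that something is wrong: if your argument were valid it would show $\cM=H^2_0(\Om)$ for \emph{every} bounded Lipschitz domain, i.e., that \eqref{Yan-8} always holds --- contradicting the explicit statement at the beginning of Section \ref{s5} that the two spaces in \eqref{Yan-8} need not coincide in the Lipschitz class, and rendering Theorem \ref{T-DD1} (and the entire quasi-convexity apparatus) pointless. So the final step proves too much and must be discarded; the inclusion $\cM\subseteq\dom((H_{max,\Om})^*)$ has to be obtained by a mechanism that does not pass through $H^2$-regularity of $\dom(-\Delta_{D,\Om})$.
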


\section{Boundary Value Problems in Quasi-Convex Domains}
\label{s5}

This section is divided into three parts. In Subsection \ref{s5X} we introduce
a distinguished category of the family of Lipschitz domains in $\bbR^n$,
called quasi-convex domains, which is particularly well-suited for the
kind of analysis we have in mind. In Subsection \ref{s6X} and Subsection \ref{s7X},
we then proceed to review, respectively, trace operators and boundary problems,
and Dirichlet-to-Neumann operators in quasi-convex domains.

\subsection{The Class of Quasi-Convex Domains}
\label{s5X}

In the class of Lipschitz domains, the two spaces appearing in \eqref{Yan-8}
are not necessarily equal (although, obviously, the left-to-right inclusion
always holds). The question now arises: What extra properties of the
Lipschitz domain will guarantee equality in \eqref{Yan-8}?
This issue has been addressed in \cite{GM11}, where a class of domains
(which is in the nature of best possible) has been identified.

To describe this class, we need some preparations. Given $n\geq 1$, denote by
$MH^{1/2}(\bbR^n)$ the class of pointwise multipliers of the Sobolev
space $H^{1/2}(\bbR^n)$. That is,
\begin{equation} \label{MaS-1}
MH^{1/2}(\bbR^n):=\bigl\{f\in L^1_{\loc}(\bbR^n)\,\big|\,
M_f\in\cB\bigl(H^{1/2}(\bbR^n)\bigr)\bigr\},
\end{equation} 
where $M_f$ is the operator of pointwise multiplication by $f$. This
space is equipped with the natural norm, that is, 
\begin{equation} \label{MaS-2}
\|f\|_{MH^{1/2}(\bbR^n)}:=\|M_f\|_{\cB(H^{1/2}(\bbR^n))}.
\end{equation} 
For a comprehensive and systematic treatment of spaces of multipliers,
the reader is referred to the 1985 monograph of Maz'ya and Shaposhnikova 
\cite{MS85}. Following \cite{MS85}, \cite{MS05}, we now
introduce a special class of domains, whose boundary regularity properties
are expressed in terms of spaces of multipliers.

\begin{definition}\label{Def-MS}
Given $\delta>0$, call a bounded, Lipschitz domain $\Omega\subset\bbR^n$
to be of class $MH^{1/2}_\delta$, and write
\begin{equation} \label{MaS-3}
\dOm\in MH^{1/2}_\delta,
\end{equation} 
provided the following holds: There exists a finite open covering
$\{{\mathcal O}_j\}_{1\leq j\leq N}$ of the boundary $\partial\Omega$ of
$\Om$ such that for every $j\in\{1,...,N\}$, ${\mathcal O}_j\cap\Omega$
coincides with the portion of ${\mathcal O}_j$ lying in the over-graph of
a Lipschitz function $\varphi_j:\bbR^{n-1}\to\bbR$ $($considered in a new
system of coordinates obtained from the original one via a rigid motion$)$
which, additionally, has the property that
\begin{equation} \label{MaS-4}
\nabla\varphi_j\in \big(MH^{1/2}(\bbR^{n-1})\big)^n\, \mbox{ and }\, 
\|\varphi_j\|_{(MH^{1/2}(\bbR^{n-1}))^n}\leq\delta.
\end{equation} 
\end{definition}

Going further, we consider the classes of domains
\begin{equation} \label{MaS-5}
MH^{1/2}_\infty:=\bigcup_{\delta>0}MH^{1/2}_\delta,\quad
MH^{1/2}_0:=\bigcap_{\delta>0}MH^{1/2}_\delta,
\end{equation} 
and also introduce the following definition: 

\begin{definition}\label{Def-MS2}
We call a bounded Lipschitz domain $\Omega\subset\bbR^n$ to be
{\it square-Dini}, and write
\begin{equation} \label{MaS-6}
\dOm\in {\rm SD},
\end{equation} 
provided the following holds: There exists a finite open covering
$\{{\mathcal O}_j\}_{1\leq j\leq N}$ of the boundary $\partial\Omega$ of
$\Om$ such that for every $j\in\{1,...,N\}$, ${\mathcal O}_j\cap\Omega$
coincides with the portion of ${\mathcal O}_j$ lying in the over-graph of
a Lipschitz function $\varphi_j:\bbR^{n-1}\to\bbR$ $($considered in a new
system of coordinates obtained from the original one via a rigid motion$)$
which, additionally, has the property that the following square-Dini
condition holds,
\begin{equation} \label{MaS-7}
\int_0^1 \frac{dt}{t} \bigg(\frac{\omega(\nabla\varphi_j;t)}{t^{1/2}}\bigg)^2 
<\infty.
\end{equation} 
Here, given a $($possibly vector-valued\,$)$ function $f$ in $\bbR^{n-1}$,
\begin{equation} \label{MaS-8}
\omega(f;t):=\sup\,\{|f(x)-f(y)|\,|\,x,y\in\bbR^{n-1},\,\,|x-y|\leq t\},
\quad t\in(0,1),
\end{equation} 
is the modulus of continuity of $f$, at scale $t$.
\end{definition}

 From the work of Maz'ya and Shaposhnikova \cite{MS85} \cite{MS05},
it is known that if $r>1/2$, then
\begin{equation} \label{MaS-9}
\Om\in C^{1,r}\Longrightarrow
\Om\in{\rm SD}\Longrightarrow
\Om\in MH^{1/2}_0\Longrightarrow
\Om\in MH^{1/2}_\infty.
\end{equation} 
As pointed out in \cite{MS05}, domains of class $MH^{1/2}_\infty$ can have
certain types of vertices and edges when $n\geq 3$. Thus, the domains
in this class can be nonsmooth.

Next, we recall that a domain is said to satisfy a uniform exterior
ball condition (UEBC) provided there exists a number $r>0$ with the property that
\begin{align} \label{UEBC}
\begin{split}
& \mbox{for every $x\in\dOm$, there exists $y\in\bbR^n$, such that 
$B(y,r)\cap\Om=\emptyset$} \\
& \quad \mbox{and $x\in\partial B(y,r)\cap\dOm$}.
\end{split} 
\end{align} 
Heuristically, \eqref{UEBC} should be interpreted as a lower bound on
the curvature of $\partial\Omega$. Next, we review the class of almost-convex 
domains introduced in \cite{MTV}. 

\begin{definition}\label{Def-AC}
A bounded Lipschitz domain $\Omega\subset{\mathbb{R}}^n$ is called 
an almost-convex domain provided there exists a family 
$\{\Omega_\ell\}_{\ell\in{\mathbb{N}}}$
of open sets in ${\mathbb{R}}^n$ with the following properties:  
\begin{enumerate}
\item[$(i)$] $\partial\Omega_\ell\in C^2$ and 
$\overline{\Omega_{\ell}}\subset\Omega$ for every $\ell\in{\mathbb{N}}$. 
\item[$(ii)$] $\Omega_\ell\nearrow\Omega$ as $\ell\to\infty$, in the sense
that $\overline{\Omega_{\ell}}\subset\Omega_{\ell+1}$ for each 
$\ell\in{\mathbb{N}}$ and $\bigcup_{\ell\in{\mathbb{N}}}\Omega_{\ell}=\Omega$. 
\item[$(iii)$] There exists a neighborhood $U$ of $\partial\Omega$ and, 
for each $\ell\in{\mathbb{N}}$, a $C^2$ real-valued function $\rho_{\ell}$ 
defined in $U$ with the property that $\rho_{\ell}<0$ on $U\cap\Omega_{\ell}$, 
$\rho_{\ell}>0$ in $U \backslash \overline{\Omega_{\ell}}$, and $\rho_{\ell}$  
vanishes on $\partial\Omega_\ell$. In addition, it is assumed that 
there exists some constant $C_1\in (1,\infty)$ such that 
\begin{eqnarray}\label{MTV3.1}
C_1^{-1}\leq |\nabla\rho_\ell(x)|\leq C_1,
\quad x\in \partial\Omega_\ell,\; \ell\in{\mathbb{N}}. 
\end{eqnarray} 
\item[$(iv)$] There exists $C_2\geq 0$ such that for every number
$\ell\in{\mathbb{N}}$, every point $x\in\partial\Omega_{\ell}$, 
and every vector $\xi\in{\mathbb{R}}^n$ which is tangent to 
$\partial\Omega_{\ell}$ at $x$, there holds 
\begin{eqnarray}\label{MTV3.2}
\big\langle{\rm Hess}\,(\rho_\ell)\xi\,,\,\xi\big\rangle\geq -C_2|\xi|^2, 
\end{eqnarray}
\noindent where $\langle\dott,\dott\rangle$ is the standard Euclidean inner 
product in ${\mathbb{R}}^n$ and  
\begin{eqnarray}\label{MTV3.3}
{\rm Hess}\,(\rho_\ell):=\left(\frac{\partial^2\rho_\ell}
{\partial x_j\partial x_k}\right)_{1\leq j,k \leq n},
\end{eqnarray} 
\noindent is the Hessian of $\rho_{\ell}$. 
\end{enumerate}
\end{definition}

\noindent A few remarks are in order: First, it is not difficult to see
that \eqref{MTV3.1} ensures that each domain $\Omega_\ell$ is Lipschitz, 
with Lipschitz constant bounded uniformly in $\ell$. Second, \eqref{MTV3.2}
simply says that, as quadratic forms on the tangent bundle 
$T\partial\Omega_\ell$ to $\partial\Omega_{\ell}$, one has  
\begin{eqnarray}\label{MT-SR}
{\rm Hess}\,(\rho_\ell)\geq -C_2\,I_n,
\end{eqnarray}
\noindent where $I_n$ is the $n\times n$ identity matrix. Hence, another equivalent 
formulation of \eqref{MTV3.2} is the following requirement: 
\begin{eqnarray}\label{MTV3.4}
\sum\limits_{j,k=1}^n\frac{\partial^2\rho_\ell}{\partial x_j \partial x_k}
\xi_j \xi_k \geq -C_2 \sum\limits_{j=1}^n\xi_j^2, \, \mbox{ whenever }\,
\rho_\ell=0\,\mbox{ and }\,\sum\limits_{j=1}^n
\frac{\partial\rho_\ell}{\partial x_j}\xi_j =0.
\end{eqnarray}
\noindent We note that, since the second fundamental form $II_{\ell}$ on 
$\partial\Omega_{\ell}$ is 
$II_\ell={{\rm Hess}\,\rho_\ell}/{|\nabla\rho_\ell|}$,
almost-convexity is, in view of \eqref{MTV3.1}, equivalent to 
requiring that $II_\ell$ be bounded from below, uniformly in $\ell$.

We now discuss some important special classes of almost-convex
domains. 

\begin{definition}\label{eu-RF}
A bounded Lipschitz domain $\Omega\subset{\mathbb{R}}^n$ satisfies 
a local exterior ball condition, henceforth referred to as LEBC,
if every boundary point $x_0\in\partial\Omega$ has an open 
neighborhood ${\mathcal{O}}$ which satisfies the following two conditions: 
\begin{enumerate}
\item[$(i)$] There exists a Lipschitz function
$\varphi:{{\mathbb{R}}}^{n-1}\to{{\mathbb{R}}}$ with
$\varphi(0)=0$ such that if $D$ is the domain above the graph of $\varphi$, 
then $D$ satisfies a UEBC.  
\item[$(ii)$] There exists a $C^{1,1}$ diffeomorphism $\Upsilon$ mapping 
${\mathcal{O}}$ onto the unit ball $B(0,1)$ in ${{\mathbb{R}}}^n$ such 
that $\Upsilon(x_0)=0$, $\Upsilon({\mathcal{O}}\cap\Omega)=B(0,1)\cap D$,
$\Upsilon({\mathcal{O}} \backslash {\ol\Omega})=B(0,1) \backslash \overline{D}$.
\end{enumerate}
\end{definition}

\noindent It is clear from Definition \ref{eu-RF} that the class of 
bounded domains satisfying a LEBC is invariant under $C^{1,1}$ diffeomorphisms.
This makes this class of domains amenable to working on manifolds. 
This is the point of view adopted in \cite{MTV}, where the following 
result is also proved:  
\begin{lemma}\label{MTVp3.1}
If the bounded Lipschitz domain $\Omega\subset{\mathbb{R}}^n$ satisfies 
a LEBC then it is almost-convex.
\end{lemma}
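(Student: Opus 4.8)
The plan is to show that a bounded Lipschitz domain $\Omega$ satisfying a LEBC admits an exhausting family $\{\Omega_\ell\}_{\ell\in\bbN}$ with the four properties in Definition \ref{Def-AC}, working locally near $\partial\Omega$ and patching with a partition of unity. The starting point is that, by Definition \ref{eu-RF}, each boundary point has a neighborhood $\cO$ on which, after a $C^{1,1}$-diffeomorphism $\Upsilon$, the domain looks like the region $D$ above the graph of a Lipschitz $\varphi$ satisfying a UEBC. Since a uniform exterior ball condition is a genuine quantitative convexity-type constraint, the first key step is to construct, for the model region $D$, a family of $C^2$ subdomains exhausting $D$ whose defining functions have Hessians bounded below uniformly. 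Concretely, one can take $\varphi_\delta := \varphi * \psi_\delta$ a mollification of $\varphi$ at scale $\delta$, or better, use the signed distance function to $\partial D$: the UEBC guarantees that $\mathrm{dist}(\cdot,\partial D)$ is $C^{1,1}$ in a one-sided collar of $\partial D$ with a one-sided bound on its second derivatives (this is the standard regularity of the distance function in the presence of an exterior ball), and its level sets $\{\mathrm{dist}(\cdot,\partial D) = \delta\}$ are $C^{1,1}$, hence can be slightly smoothed to $C^2$ hypersurfaces whose second fundamental form is bounded below uniformly in $\delta$.

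Next I would transport this construction back through $\Upsilon^{-1}$. Because $\Upsilon$ is a $C^{1,1}$-diffeomorphism, it preserves the Lipschitz character, maps $C^2$ hypersurfaces to $C^2$ hypersurfaces, and distorts the second fundamental form in a controlled way: the Hessian of $\rho_\ell \circ \Upsilon$ differs from $(\Upsilon^{-1})$-pullback of $\mathrm{Hess}\,\rho_\ell$ by terms involving $D\Upsilon$, $D^2\Upsilon$ (bounded, since $\Upsilon \in C^{1,1}$) and $\nabla\rho_\ell$ (bounded above and below by \eqref{MTV3.1}), so a lower bound $\mathrm{Hess}\,\rho_\ell \geq -C\,I_n$ is preserved, with a possibly larger constant, after composition with $\Upsilon^{-1}$. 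Covering $\partial\Omega$ by finitely many such neighborhoods $\cO_1,\dots,\cO_N$ (compactness of $\partial\Omega$), one obtains on each $\cO_i \cap \Omega$ a family of exhausting $C^2$ pieces. The remaining step is to glue: choose a partition of unity $\{\chi_i\}_{i=0}^N$ subordinate to $\{\cO_i\}$ together with an interior piece, set $\rho_\ell := \sum_i \chi_i \rho_\ell^{(i)}$ in a neighborhood $U$ of $\partial\Omega$ (after arranging the local $\rho_\ell^{(i)}$ to agree in sign, negative inside, positive outside), and define $\Omega_\ell$ as the sublevel set $\{\rho_\ell < 0\}$ suitably unioned with a fixed relatively compact interior open set exhausting the bulk of $\Omega$. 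One then checks $(i)$–$(iv)$: the $C^2$ regularity of $\partial\Omega_\ell$ and \eqref{MTV3.1} follow from the local statements and the fact that near $\partial\Omega$ only finitely many $\chi_i$ are nonzero with gradients of $\rho_\ell^{(i)}$ uniformly transverse; the monotone exhaustion $(ii)$ is arranged by the construction (take $\rho_\ell$ to be, e.g., $\delta_\ell$-shifts with $\delta_\ell \downarrow 0$); and the Hessian lower bound $(iv)$ follows because $\mathrm{Hess}(\chi_i \rho_\ell^{(i)}) = \chi_i\,\mathrm{Hess}\,\rho_\ell^{(i)} + (\nabla\chi_i \otimes \nabla\rho_\ell^{(i)} + \nabla\rho_\ell^{(i)} \otimes \nabla\chi_i) + \rho_\ell^{(i)}\,\mathrm{Hess}\,\chi_i$, where the first term is bounded below by the local estimate and the remaining terms are uniformly bounded (the $\chi_i$ are fixed and $C^\infty$, the $\rho_\ell^{(i)}$ and $\nabla\rho_\ell^{(i)}$ are uniformly bounded on the compact collar).

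The main obstacle is the first step: extracting from the UEBC the right one-sided $C^{1,1}$ regularity and the uniform-in-$\delta$ lower bound on the second fundamental form of the level sets. This is where the geometry genuinely enters — one must show that the exterior ball of fixed radius $r$ forces the distance function to $\partial D$ to be semiconcave (or its negative semiconvex) on the exterior side and, crucially, that the inner parallel hypersurfaces $\{\mathrm{dist} = \delta\}$ inherit a lower curvature bound of the form $-C/(r-\delta)$ or similar, uniformly for $\delta$ in a fixed small interval; the smoothing from $C^{1,1}$ to $C^2$ must be done without destroying this one-sided bound (a standard mollification argument, since semiconcavity is stable under convolution with a nonnegative kernel). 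Everything after that is bookkeeping with diffeomorphisms and partitions of unity. I would also remark that the converse direction (almost-convex domains need not satisfy a LEBC, and in particular genuine outward Lipschitz corners are allowed) is what makes the almost-convex class strictly larger, consistent with the discussion preceding the lemma.
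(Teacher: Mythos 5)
First, a bookkeeping remark: the paper does not prove this lemma at all; it is quoted from \cite{MTV}, so there is no in-paper proof to compare against. Judged on its own merits, your overall strategy is the natural one, and the part you single out as the main obstacle is handled correctly in outline: the uniform exterior ball condition exhibits the distance function $d=\mathrm{dist}(\cdot,\partial D)$ locally as an infimum of the smooth functions $x\mapsto |x-y|-r$ (over exterior ball centers $y$), whose Hessians are bounded above by $r^{-1}I_n$ in a collar, so $-d$ is semiconvex there; and semiconvexity (a distributional lower Hessian bound) is indeed stable under mollification.

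There is, however, one genuinely incorrect step as written: a $C^{1,1}$ diffeomorphism does \emph{not} map $C^2$ hypersurfaces to $C^2$ hypersurfaces, and $\rho_\ell\circ\Upsilon$ is not $C^2$ when $\Upsilon$ is only $C^{1,1}$ (the chain rule produces the term $(\nabla\rho_\ell\circ\Upsilon)\,D^2\Upsilon$ with $D^2\Upsilon$ merely in $L^\infty$). So if you smooth in the model coordinates and then transport back through $\Upsilon^{-1}$, as your plan does, the resulting subdomains and defining functions are only $C^{1,1}$, which fails the requirement $\partial\Omega_\ell\in C^2$, $\rho_\ell\in C^2$ of Definition \ref{Def-AC}. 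The repair is to reverse the order of operations: pull back the Lipschitz, semiconvex defining function $-d$ first (semiconvexity with a controlled constant \emph{does} survive composition with a $C^{1,1}$ diffeomorphism, precisely because the offending correction term is bounded), patch the local pieces, and mollify once at the very end in the original coordinates. A secondary point you should make explicit rather than gesture at: for the patched function $\sum_i\chi_i\rho_\ell^{(i)}$ to have nonvanishing gradient on its zero set (so that \eqref{MTV3.1} holds and the level set is a hypersurface at all), you need the gradients $\nabla\rho_\ell^{(i)}$ on overlaps to lie in a common open half-space (a cone determined by the Lipschitz character of $\partial\Omega$, available only if the covering neighborhoods are chosen small enough); without this, a convex combination of the gradients could degenerate. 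With these two corrections the argument goes through.
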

\noindent Hence, in the class of bounded Lipschitz domains 
in ${\mathbb{R}}^n$, we have
\begin{eqnarray}\label{ewT-1}
\mbox{convex}\,\Longrightarrow\,
\mbox{UEBC}\,\Longrightarrow\,
\mbox{LEBC}\,\Longrightarrow\,
\mbox{almost-convex}.
\end{eqnarray}
We are now in a position to specify the class of domains in which most
of our subsequent analysis will be carried out. 

\begin{definition}\lb{d.Conv}
Let $n\in\bbN$, $n\geq 2$, and assume that $\Omega\subset{\bbR}^n$ is
a bounded Lipschitz domain. Then $\Om$ is called a quasi-convex domain if 
there exists $\delta>0$ 
sufficiently small $($relative to $n$ and the Lipschitz character of $\Om$$)$, 
with the following property that for every $x\in\dOm$ there exists an open 
subset $\Om_x$ of $\Omega$ such that $\dOm\cap\dOm_x$ is an open neighborhood 
of $x$ in $\dOm$, and for which one of the following two conditions holds: \\
$(i)$ \, $\Omega_x$ is of class $MH^{1/2}_\delta$ if $n\geq 3$, and 
of class $C^{1,r}$ for some $1/2<r<1$ if $n=2$.   \\
$(ii)$ $\Omega_x$ is an almost-convex domain. 
\end{definition}

Given Definition \ref{d.Conv}, we thus introduce the following basic assumption:

\begin{hypothesis}\lb{h.Conv}
Let $n\in\bbN$, $n\geq 2$, and assume that $\Omega\subset{\bbR}^n$ is
a quasi-convex domain. 
\end{hypothesis}

Informally speaking, the above definition ensures that the boundary
singularities are directed outwardly. A typical example of such a domain
is shown in Fig.\ \ref{Pic} below.  
\begin{figure}[th]
\centering
\begin{picture}(4,4)
\put(2,1.8){$\Omega$}
\curve(0.1,2, 1.1,2.2, 1.3,3.1, 0.9,3.6)
\curve(0.9,3.6, 1.4,3.5, 1.8,3.1)
\curve(1.8,3.1, 2.3,2.9, 2.7,3.1)
\curve(2.7,3.1, 3.2,3.45, 3.9,3.5)
\curve(3.9,3.5, 3,3.1, 3.05,2.9, 3.2,2.85, 3.3,2.6, 3.4,2.6, 3.6,2.7, 4.1,2.8)
\curve(4.1,2.8, 3.5,1.8, 4.2,0.9)
\curve(4.2,0.9, 3,0.8, 2.15,0.65, 1.6,0.1)
\curve(1.6,0.1, 1.7,0.65, 1.5,0.9, 1.25,1.15, 1.1,1.35, 0.8,1.7, 0.1,2)
\end{picture}
\caption{A quasi-convex domain.}\label{Pic}
\end{figure}
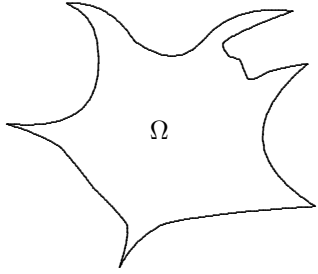

Being quasi-convex is a certain type of regularity condition of the boundary
of a Lipschitz domain. The only way we are going to utilize this
property is via the following elliptic regularity result proved in
\cite{GM11}.

\begin{proposition}\label{Bjk}
Assume Hypotheses \ref{h.V} and \ref{h.Conv}. Then
\begin{equation}\label{Yan-9}
\dom\big(H_{D,\Om}\big)\subset H^{2}(\Omega), \quad
\dom\big(H_{N,\Om}\big)\subset H^{2}(\Omega).
\end{equation}
\end{proposition}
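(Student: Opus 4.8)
\textbf{Proof proposal for Proposition \ref{Bjk}.}

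The plan is to reduce the statement to the corresponding $H^2$-regularity results for the \emph{unperturbed} Dirichlet and Neumann Laplacians, $-\Delta_{D,\Om}$ and $-\Delta_{N,\Om}$, which hold on quasi-convex domains by the work in \cite{GM11} (this is, after all, the defining feature of the class, cf.\ \eqref{Df-H1} and the discussion preceding Hypothesis \ref{h.Conv}). The point is that the potential $V$, being merely bounded and measurable, is a relatively form-bounded perturbation with form-bound zero, and, more importantly for $H^2$-regularity, it does not alter the principal part of the operator nor the boundary conditions; it only contributes a lower-order term that lands in $L^2(\Om;d^nx)$.

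First I would treat the Dirichlet case. Let $u\in\dom(H_{D,\Om})$. By \eqref{3.HDF} (equivalently \eqref{2.39}) we have $u\in H^1_0(\Om)$ with $\Delta u\in L^2(\Om;d^nx)$, and $(-\Delta+V)u = H_{D,\Om}u =: f\in L^2(\Om;d^nx)$. Since $V\in L^\infty(\Om;d^nx)$ and $u\in L^2(\Om;d^nx)$, the product $Vu\in L^2(\Om;d^nx)$, hence
\begin{equation}
-\Delta u = f - Vu \in L^2(\Om;d^nx).
\end{equation}
Therefore $u$ belongs to $\big\{v\in H^1_0(\Om)\,\big|\,\Delta v\in L^2(\Om;d^nx)\big\} = \dom(-\Delta_{D,\Om})$, and the quasi-convexity hypothesis, via the regularity result of \cite{GM11} recorded in \eqref{Df-H1}, gives $u\in H^2(\Om)$. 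This proves the first inclusion in \eqref{Yan-9}. The Neumann case is entirely parallel: if $u\in\dom(H_{N,\Om})$, then by \eqref{2.20} one has $u\in H^1(\Om)$ with $\Delta u\in L^2(\Om;d^nx)$ and $\wti\gamma_N u=0$ in $H^{-1/2}(\dOm)$, and $(-\Delta+V)u\in L^2(\Om;d^nx)$; subtracting $Vu\in L^2(\Om;d^nx)$ shows $-\Delta u\in L^2(\Om;d^nx)$, so $u\in\dom(-\Delta_{N,\Om})$, and again \eqref{Df-H1} yields $u\in H^2(\Om)$. One should note that the Neumann boundary condition for $H_{N,\Om}$ and for $-\Delta_{N,\Om}$ is the same weak condition $\wti\gamma_N u=0$, since both operators arise from forms with identical principal part $(\nabla u,\nabla v)$; the term $(u,Vv)$ does not enter the definition \eqref{2.8}--\eqref{2.9} of $\wti\gamma_N$.

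The main (and essentially only) obstacle is the quasi-convex $H^2$-regularity statement \eqref{Df-H1} for the bare Laplacians itself, but this is precisely the content of \cite{GM11} that we are entitled to invoke here, and it is the reason the class of quasi-convex domains was singled out. The remaining reduction is the routine observation above that a bounded measurable potential cannot destroy interior-plus-boundary $H^2$-regularity because $Vu\in L^2(\Om;d^nx)$ whenever $u\in L^2(\Om;d^nx)$. No use of elliptic regularity theory beyond the unperturbed case is needed, and in particular the delicate boundary analysis (novel trace spaces $N^{1/2}(\dOm)$, $N^{3/2}(\dOm)$, etc.) is already encapsulated in the cited result.
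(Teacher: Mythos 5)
Your argument is correct and coincides with what the paper intends: Proposition \ref{Bjk} is simply quoted from \cite{GM11}, where the unperturbed case \eqref{Df-H1} is established, and the perturbed case follows because a bounded measurable $V$ changes neither the operator domains nor the (weak) boundary trace conditions. In fact the reduction is even shorter than you make it, since the explicit descriptions \eqref{2.39} and \eqref{2.20} already show that $\dom(H_{D,\Om})=\dom(-\Delta_{D,\Om})$ and $\dom(H_{N,\Om})=\dom(-\Delta_{N,\Om})$ as sets, so the step of subtracting $Vu\in L^2(\Om;d^nx)$ is not even needed.
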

%
\noindent In fact, all of our results in this survey hold in the class of Lipschitz
domains for which the two inclusions in \eqref{Yan-9} hold.

The following theorem addresses the issue raised at
the beginning of this subsection. Its proof is similar to the
special case $V\equiv 0$, treated in \cite{GM11}.

\begin{theorem}\label{T-DD1}
Assume Hypotheses \ref{h.V} and \ref{h.Conv}.
Then \eqref{Yan-8} holds. In particular,
\begin{align}
\dom(H_{min,\Om}) & = H^2_0(\Omega)   \no \\
 & =\big\{u\in L^2(\Omega;d^nx)\,\big|\,\Delta u\in L^2(\Omega;d^nx), \, 
\widehat{\gamma}_D u =\widehat{\gamma}_N u =0\big\},     \lb{dmin} \\
\dom(H_{max,\Om} ) & = \big\{u\in L^2(\Omega;d^nx)\,\big|\,
\Delta u \in L^2(\Omega;d^nx)\big\},    \lb{dmax}
\end{align}
and
\begin{equation} \label{Yan-10}
H_{min,\Om} 
= (H_{max,\Om})^*\, \mbox{ and }\, H_{max,\Om} = (H_{min,\Om})^*.
\end{equation} 
\end{theorem}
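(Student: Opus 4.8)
The plan is to derive Theorem \ref{T-DD1} from the quasi-convexity regularity in Proposition \ref{Bjk} together with the trace theory of Section \ref{s3}, following the strategy of \cite{GM11} for the unperturbed case. The key reduction is that \eqref{Yan-10} follows from \eqref{Yan-8} by Corollary \ref{Max-M2}, so the whole theorem amounts to proving \eqref{Yan-8}, that is, the right-to-left inclusion
\[
\big\{u\in L^2(\Omega;d^nx)\,\big|\,\Delta u\in L^2(\Omega;d^nx), \,
\widehat{\gamma}_D u =\widehat{\gamma}_N u =0\big\}\subseteq H^2_0(\Omega),
\]
the reverse inclusion being automatic (elements of $H^2_0(\Omega)$ have vanishing Dirichlet and Neumann traces, and the extensions $\widehat\gamma_D$, $\widehat\gamma_N$ are compatible with $\gamma_D$, $\gamma_N$ by Theorems \ref{New-T-tr} and \ref{3ew-T-tr}). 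Note also that the potential term plays essentially no role here: if $u\in L^2$ and $\Delta u\in L^2$, then $(-\Delta+V)u\in L^2$ since $V\in L^\infty$, and conversely; so the ``perturbed'' maximal/minimal operators have the same domains as the unperturbed ones, and it suffices to treat $V\equiv 0$, exactly as asserted in the sentence preceding the statement.

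First I would take $u$ in the set on the left, so $u\in L^2(\Omega;d^nx)$, $f:=\Delta u\in L^2(\Omega;d^nx)$, and $\widehat\gamma_D u=0$ in $(N^{1/2}(\partial\Omega))^*$, $\widehat\gamma_N u=0$ in $(N^{3/2}(\partial\Omega))^*$. The idea is to solve an auxiliary Dirichlet problem: let $w\in\dom(-\Delta_{D,\Omega})$ be the unique solution of $-\Delta w=-f$ with $\gamma_D w=0$; since $\Omega$ is quasi-convex, Proposition \ref{Bjk} gives $w\in H^2(\Omega)$, and in fact $w\in H^2(\Omega)\cap H^1_0(\Omega)$. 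Then $\phi:=u-w$ satisfies $\phi\in L^2(\Omega;d^nx)$, $\Delta\phi=0$ in $\Omega$, and $\widehat\gamma_D\phi=\widehat\gamma_D u-\gamma_D w=0$ (using compatibility of $\widehat\gamma_D$ with $\gamma_D$ on $H^2(\Omega)$ from Theorem \ref{New-T-tr}). So $\phi$ is a harmonic function in $L^2(\Omega;d^nx)$ with vanishing extended Dirichlet trace; I would argue that such $\phi$ must vanish identically. This is where the generalized integration by parts formula \eqref{Tan-C12} enters: for every $v\in H^2(\Omega)\cap H^1_0(\Omega)$,
\[
{}_{N^{1/2}(\partial\Omega)}\langle\gamma_N v,\widehat{\gamma}_D\phi
\rangle_{(N^{1/2}(\partial\Omega))^*}
=(\Delta v,\phi)_{L^2(\Om;d^nx)}
- (v,\Delta\phi)_{L^2(\Om;d^nx)},
\]
and since $\widehat\gamma_D\phi=0$ and $\Delta\phi=0$ this forces $(\Delta v,\phi)_{L^2(\Omega;d^nx)}=0$ for all such $v$; because $-\Delta_{D,\Omega}:\dom(-\Delta_{D,\Omega})\to L^2(\Omega;d^nx)$ is onto (it is boundedly invertible) and $\dom(-\Delta_{D,\Omega})\subset H^2(\Omega)\cap H^1_0(\Omega)$, the functions $\Delta v$ range over all of $L^2(\Omega;d^nx)$, whence $\phi=0$. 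Thus $u=w\in H^2(\Omega)\cap H^1_0(\Omega)$.

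It remains to upgrade $u\in H^2(\Omega)\cap H^1_0(\Omega)$ with $\gamma_D u=0$ and $\widehat\gamma_N u=0$ (the latter now reading $\gamma_N u=0$ since $u\in H^2(\Omega)$, by the compatibility in Theorem \ref{3ew-T-tr}) to membership in $H^2_0(\Omega)$. This is precisely the content of the trace characterization in Theorem \ref{T-MMS}: the null space of $\gamma_2=(\gamma_D,\gamma_N):H^2(\Omega)\to\cdots$ is $H^2_0(\Omega)$ by \eqref{Tan-C3}; since $u\in H^2(\Omega)$ with $\gamma_D u=\gamma_N u=0$, we get $u\in\ker(\gamma_2)=H^2_0(\Omega)$, completing the proof of \eqref{Yan-8}. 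The remaining assertions \eqref{dmin}, \eqref{dmax}, \eqref{Yan-10} then follow: \eqref{dmax} is the definition of $H_{max,\Om}$, \eqref{dmin} combines the definition $\dom(H_{min,\Om})=H^2_0(\Omega)$ with the just-proved \eqref{Yan-8} (and Lemma \ref{Max-M1}), and \eqref{Yan-10} is Corollary \ref{Max-M2} applied in the case that equality holds in \eqref{Yan-8}. The main obstacle, and the only genuinely nontrivial input, is the elliptic regularity $\dom(-\Delta_{D,\Om})\subset H^2(\Omega)$ from Proposition \ref{Bjk}, which is exactly where quasi-convexity of $\Omega$ is used and which fails, e.g., in the presence of a re-entrant corner; everything else is a matter of carefully bookkeeping the compatibility of the extended traces $\widehat\gamma_D$, $\widehat\gamma_N$ with their classical counterparts and invoking the integration-by-parts formula \eqref{Tan-C12}.
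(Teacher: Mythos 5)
Your proof is correct and follows the same route the paper takes (it defers to the $V\equiv 0$ argument of \cite{GM11}): reduce to the unperturbed case, use the $H^2$-regularity of Proposition \ref{Bjk} to solve the auxiliary Dirichlet problem, kill the harmonic difference via the generalized Green formula \eqref{Tan-C12} together with the surjectivity of $-\Delta_{D,\Om}$ (equivalently, the injectivity in Corollary \ref{New-CV22}), and conclude with the trace characterization $\ker(\gamma_2)=H^2_0(\Omega)$ from Theorem \ref{T-MMS}. The only point to keep explicit is the identification of the vanishing of $\widehat{\gamma}_N u$ in $\bigl(N^{3/2}(\partial\Omega)\bigr)^*$ with $\gamma_N u=0$ in $N^{1/2}(\partial\Omega)$ for $u\in H^2(\Omega)\cap H^1_0(\Omega)$, which is exactly what the compatibility statement \eqref{3an-C11} is designed to provide.
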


We conclude this subsection with the following result which is essentially
contained in \cite{GM11}.

\begin{proposition}\label{L-Fri1}
Assume Hypotheses \ref{h2.1} and \ref{h.V}.
Then the Friedrichs extension of $(-\Delta+V)|_{C^\infty_0(\Om)}$ in 
$L^2(\Om;d^nx)$ is precisely the perturbed Dirichlet Laplacian $H_{D,\Om}$.
Consequently, if Hypothesis \ref{h.Conv} is assumed in place
of Hypothesis \ref{h2.1}, then the Friedrichs extension
of $H_{min,\Om} $ in \eqref{Yan-6} is the perturbed Dirichlet Laplacian
$H_{D,\Om}$.
\end{proposition}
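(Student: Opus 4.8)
\textbf{Proof plan for Proposition \ref{L-Fri1}.}
The plan is to verify the abstract characterization of the Friedrichs extension recalled in \eqref{Fr-Q}: namely, to show that the closure of the sesquilinear form
\[
q(f,g) = ((-\Delta+V)f, g)_{L^2(\Om;d^nx)}, \quad f,g \in C_0^\infty(\Om),
\]
coincides with the form $Q_{D,\Om}$ in \eqref{3.QD}, whose associated self-adjoint operator is $H_{D,\Om}$ by construction. First I would use that $V \in L^\infty(\Om;d^nx)$ is bounded and real-valued, so that an integration by parts gives $q(f,g) = (\nabla f, \nabla g)_{(L^2)^n} + (f, Vg)_{L^2}$ for $f,g \in C_0^\infty(\Om)$; this is precisely $Q_{D,\Om}(f,g)$ restricted to $C_0^\infty(\Om)$. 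Since $Q_{D,\Om}$ is densely defined, closed, and bounded from below in $L^2(\Om;d^nx)$ (as recorded in the text following \eqref{3.QN}), and since $C_0^\infty(\Om)$ is dense in $H_0^1(\Om) = \dom(Q_{D,\Om})$ in the form norm $u \mapsto (\|\nabla u\|_{(L^2)^n}^2 + \|u\|_{L^2}^2)^{1/2}$ (boundedness of $V$ guarantees the form norm is equivalent to the $H^1$-norm, up to a shift), the closure of $q$ is exactly $Q_{D,\Om}$.

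Having identified the closed form associated with the Friedrichs extension of $(-\Delta+V)|_{C_0^\infty(\Om)}$ with $Q_{D,\Om}$, the first and second representation theorems for forms (\cite[Sect.\ 6.2]{Ka80}) then force the operators to agree: $\big((-\Delta+V)|_{C_0^\infty(\Om)}\big)_F = H_{D,\Om}$. This already proves the first assertion under Hypothesis \ref{h2.1} and \ref{h.V}. I should take a moment to note that the form $q$ is indeed closable and nonnegative-plus-bounded-below so that the Friedrichs construction \eqref{Fr-2} applies; this is immediate since $V$ is bounded below by $-\|V\|_{L^\infty}$, so $(-\Delta+V)|_{C_0^\infty(\Om)} \geq -\|V\|_{L^\infty} I$, and a constant shift commutes with taking the Friedrichs extension by \eqref{Fr-c}.

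For the second assertion, assume Hypothesis \ref{h.Conv} in place of Hypothesis \ref{h2.1}. The point is that $H_{min,\Om} = (-\Delta+V)|_{H^2_0(\Om)}$ and $(-\Delta+V)|_{C_0^\infty(\Om)}$ have the same closure in $L^2(\Om;d^nx)$: indeed $C_0^\infty(\Om)$ is dense in $H^2_0(\Om)$ by definition of the latter space, and $-\Delta+V$ maps $H^2(\Om)$ continuously into $L^2(\Om;d^nx)$ (since $V$ is bounded), so the graph-norm closure of $(-\Delta+V)|_{C_0^\infty(\Om)}$ contains $(-\Delta+V)|_{H^2_0(\Om)} = H_{min,\Om}$, and the reverse inclusion is trivial. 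Since an operator and its closure have the same self-adjoint extensions, and in particular the same Friedrichs extension, the first assertion applied to $(-\Delta+V)|_{C_0^\infty(\Om)}$ yields $(H_{min,\Om})_F = H_{D,\Om}$.

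The only genuinely nontrivial point — and the one I would expect to require invoking the deeper results of the survey — is the implicit claim that under the quasi-convexity Hypothesis \ref{h.Conv} the object $H_{min,\Om}$ in \eqref{Yan-6} is the \emph{right} minimal operator, i.e., that it is densely defined, symmetric, and that its Friedrichs extension is well-defined; this uses Corollary \ref{Max-M2} together with Theorem \ref{T-DD1}, which identify $\dom(H_{min,\Om}) = H^2_0(\Om)$ and establish $H_{min,\Om} = (H_{max,\Om})^*$ under Hypothesis \ref{h.Conv}. Everything else is a routine application of the representation theorems, so the main obstacle is really bookkeeping: making sure that the density of $C_0^\infty(\Om)$ in $H^2_0(\Om)$ and the equivalence of form norms under the bounded perturbation $V$ are invoked correctly, rather than any hard analysis.
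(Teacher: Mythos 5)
Your proof is correct, and for the first assertion it is precisely the route the paper takes (implicitly, around \eqref{3.cFD}): the closure of the form generated by $(-\Delta+V)|_{C^\infty_0(\Om)}$ is $Q_{D,\Om}$, and the representation theorems identify the associated operator simultaneously as the Friedrichs extension (via \eqref{Fr-Q}) and as $H_{D,\Om}$ (via \eqref{3.QHD}); your observation that one must shift by $\|V\|_{L^\infty(\Om;d^nx)}$ and invoke \eqref{Fr-c} to reduce to the nonnegative setting of \eqref{Fr-2} is a detail the paper glosses over but which is genuinely needed under Hypothesis \ref{h.V}. For the second assertion your route is a bit more economical than the paper's: the paper passes through the full closure identity $\ol{(-\Delta+V)|_{C^\infty_0(\Om)}}=H_{min,\Om}$ (Lemma \ref{C-Da}, formula \eqref{Pos-3}), whose nontrivial inclusion uses the quasi-convexity hypothesis through Theorem \ref{T-DD1} (vanishing of both traces $\widehat{\gamma}_D u=\widehat{\gamma}_N u=0$ characterizes $H^2_0(\Om)$ only for such domains), whereas you only establish the sandwich $(-\Delta+V)|_{C^\infty_0(\Om)}\subseteq H_{min,\Om}\subseteq \ol{(-\Delta+V)|_{C^\infty_0(\Om)}}$, which already forces the closures — hence the Friedrichs extensions — to coincide and uses nothing beyond the density of $C^\infty_0(\Om)$ in $H^2_0(\Om)$ and the boundedness of $-\Delta+V\colon H^2(\Om)\to L^2(\Om;d^nx)$. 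Your parenthetical claim that the ``reverse inclusion is trivial'' is the only soft spot (it would require knowing $H_{min,\Om}$ is closed), but it is also dispensable, since the sandwich alone suffices. Consequently your closing paragraph over-attributes the burden to Corollary \ref{Max-M2} and Theorem \ref{T-DD1}: density, symmetry, and semiboundedness of $H_{min,\Om}$ follow from integration by parts on $H^2_0(\Om)$ without quasi-convexity, so your argument in fact establishes the second assertion already under Hypotheses \ref{h2.1} and \ref{h.V}.
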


\subsection{Trace Operators and Boundary Problems on Quasi-Convex Domains}
\label{s6X}

Here we revisit the issue of traces, originally taken up in Section \ref{s2},
and extend the scope of this theory. The goal is to extend our earlier
results to a context that is well-suited for the treatment of the
perturbed Krein Laplacian in quasi-convex domains, later on.
All results in this subsection are direct generalizations of similar results
proved in the case where $V\equiv 0$ in \cite{GM11}.

\begin{theorem}\label{tH.A}
Assume Hypotheses \ref{h.V} and \ref{h.Conv},
and suppose that $z\in\bbC\backslash\si(H_{D,\Om})$. Then for any
functions $f\in L^2(\Om;d^nx)$ and $g\in (N^{1/2}(\partial\Omega))^*$
the following inhomogeneous Dirichlet boundary value problem
\begin{equation}\label{Yan-14}
\begin{cases}
(-\Delta+V-z)u=f\text{ in }\,\Om,
\\
u\in L^2(\Om;d^nx),
\\
\widehat\ga_D u =g\text{ on }\,\dOm,
\end{cases}
\end{equation}
has a unique solution $u=u_D$. This solution satisfies
\begin{equation}\label{Hh.3X}
\|u_D\|_{L^2(\Om;d^nx)}
+\|\widehat\ga_N u_D\|_{(N^{3/2}(\partial\Omega))^*}\leq C_D
(\|f\|_{L^2(\Om;d^nx)}+\|g\|_{(N^{1/2}(\partial\Omega))^*})
\end{equation}
for some constant $C_D=C_D(\Omega,V,z)>0$, and the following regularity
results hold:
\begin{align} \label{3.3Y}
& g\in H^1(\partial\Omega) \,\text{ implies }\, u_D\in H^{3/2}(\Omega),
\\
& g\in\gamma_D\bigl(H^2(\Omega)\bigr)
\, \text{ implies }\, u_D\in H^2(\Omega).
\label{3.3Ys}
\end{align} 
In particular,
\begin{equation} \label{3.3Ybis}
g=0\, \text{ implies } \, u_D\in H^2(\Omega)\cap H^1_0(\Omega).
\end{equation} 
Natural estimates are valid in each case.

Moreover, the solution operator for \eqref{Yan-14} with $f=0$
$($i.e., $P_{D,\Om,V,z}:g\mapsto u_D$$)$ satisfies
\begin{equation}\label{3.34Y}
P_{D,\Om,V,z}=\big[\ga_N(H_{D,\Om}-{\ol z}I_\Om)^{-1}\big]^*
\in\cB\big((N^{1/2}(\partial\Omega))^*,L^2(\Om;d^nx)\big),
\end{equation}
and the solution of \eqref{Yan-14} is given by the formula
\begin{equation}\label{3.35Y}
u_D=(H_{D,\Om}-zI_\Om)^{-1}f
-\big[\ga_N(H_{D,\Om}-\ol{z}I_\Om)^{-1}\big]^*g.
\end{equation}
\end{theorem}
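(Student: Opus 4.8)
The plan is to follow the standard recipe for inhomogeneous boundary value problems, using the nonhomogeneous Dirichlet data space $(N^{1/2}(\partial\Omega))^*$ which, by Theorem \ref{New-T-tr}, is exactly the range of the extended Dirichlet trace $\widehat\gamma_D$ on $\dom(-\Delta_{max,\Om})$. First I would reduce to the case $f=0$: since $z\notin\sigma(H_{D,\Om})$, the function $(H_{D,\Om}-zI_\Om)^{-1}f$ solves $(-\Delta+V-z)u=f$ with vanishing Dirichlet trace (using \eqref{3.3Ybis} applied to the homogeneous problem, or directly that $\dom(H_{D,\Om})\subset H^1_0(\Omega)$ so $\widehat\gamma_D$ restricted there agrees with $\gamma_D$ and vanishes), so subtracting it leaves the problem with $f=0$ and the same boundary datum $g$.

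Next I would construct the Poisson-type solution operator $P_{D,\Om,V,z}$ by duality, exactly as formula \eqref{3.34Y} suggests. The key identity is the generalized Green formula \eqref{Tan-C12}: for $u\in\dom(-\Delta_{max,\Om})$ with $(-\Delta+V-z)u=0$ and for $w\in H^2(\Omega)\cap H^1_0(\Omega)$, one computes
\begin{align}
{}_{N^{1/2}(\partial\Omega)}\langle\gamma_N w,\widehat\gamma_D u\rangle_{(N^{1/2}(\partial\Omega))^*}
&=(\Delta w,u)_{L^2(\Om;d^nx)}-(w,\Delta u)_{L^2(\Om;d^nx)} \no \\
&=\big((-\Delta+V-\ol z)w\,,\,u\big)_{L^2(\Om;d^nx)} \, , \no
\end{align}
the last step using that $\Delta u=(V-z)u$ and rearranging the real, bounded $V$ and the scalar $z$ across the inner product. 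If $w=(H_{D,\Om}-\ol z I_\Om)^{-1}h$ for $h\in L^2(\Om;d^nx)$, then $(-\Delta+V-\ol z)w=h$ (note $w\in H^2(\Omega)\cap H^1_0(\Omega)$ by Proposition \ref{Bjk} together with Theorem \ref{t2.5}), so the formula reads $\langle\gamma_N w,g\rangle=(h,u)_{L^2}$ where $g=\widehat\gamma_D u$. Since $\gamma_N:\,H^2(\Omega)\cap H^1_0(\Omega)\to N^{1/2}(\partial\Omega)$ is bounded, onto, with bounded right inverse (Lemma \ref{Lo-Tx}), the map $h\mapsto\gamma_N(H_{D,\Om}-\ol z I_\Om)^{-1}h$ is bounded from $L^2(\Om;d^nx)$ to $N^{1/2}(\partial\Omega)$, and its adjoint $P_{D,\Om,V,z}:=\big[\gamma_N(H_{D,\Om}-\ol z I_\Om)^{-1}\big]^*$ is bounded from $(N^{1/2}(\partial\Omega))^*$ to $L^2(\Om;d^nx)$. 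One then verifies that $u_D:=P_{D,\Om,V,z}g$ actually solves \eqref{Yan-14} with $f=0$: that $(-\Delta+V-z)u_D=0$ follows by testing against $C_0^\infty(\Omega)$ (for which the $\gamma_N w$ term vanishes, leaving $(h,u_D)=0$ whenever $(-\Delta+V-\ol z)w=h$ has $w\in C_0^\infty$, hence $u_D\in\dom(-\Delta_{max,\Om})$ and $(-\Delta+V-z)u_D\perp C_0^\infty(\Omega)$), and that $\widehat\gamma_D u_D=g$ follows by feeding the Green identity back in and using surjectivity of $\gamma_N$ onto $N^{1/2}(\partial\Omega)$. Uniqueness is immediate: a solution of \eqref{Yan-14} with $f=0$, $g=0$ lies in $\dom(H_{D,\Om})$ by \eqref{3.3Ybis}/\eqref{2.39} and is a null vector of $H_{D,\Om}-zI_\Om$, hence zero since $z\in\rho(H_{D,\Om})$. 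Adding back $(H_{D,\Om}-zI_\Om)^{-1}f$ gives \eqref{3.35Y} and, by boundedness of $\widehat\gamma_N$ (Theorem \ref{3ew-T-tr}) together with the above bounds, the estimate \eqref{Hh.3X}.

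Finally, the regularity statements \eqref{3.3Y}--\eqref{3.3Ys} come from the corresponding elliptic regularity developed in \cite{GM11} under Hypothesis \ref{h.Conv}: improving the smoothness of the Dirichlet datum $g$ (from $(N^{1/2}(\partial\Omega))^*$ to $H^1(\partial\Omega)$, or to $\gamma_D(H^2(\Omega))=N^{3/2}(\partial\Omega)$ by Lemma \ref{3o-TxD}) propagates, via the quasi-convexity regularity estimate behind Proposition \ref{Bjk}, to $u_D\in H^{3/2}(\Omega)$ respectively $u_D\in H^2(\Omega)$, with the natural estimates; the case $g=0$ is \eqref{3.3Ybis} and was already used above. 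The main obstacle is the careful bookkeeping of the (conjugate-)dual pairings between $N^{1/2}(\partial\Omega)$ and its dual — in particular making sure the adjoint in \eqref{3.34Y} is taken with respect to the right pairings so that $\widehat\gamma_D u_D=g$ comes out exactly, rather than up to an identification — and checking that the generalized Green formula \eqref{Tan-C12} genuinely applies to $u_D$, which requires first knowing $u_D\in\dom(-\Delta_{max,\Om})$; this is why one establishes the PDE $(-\Delta+V-z)u_D=0$ before invoking the trace identity to pin down the boundary value.
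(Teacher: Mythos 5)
The survey itself gives no proof of Theorem \ref{tH.A}: the whole subsection is declared to be a direct generalization of results established for $V\equiv 0$ in \cite{GM11}. Measured against that source, your strategy is the intended one, and it is exactly the one encoded in \eqref{3.34Y}--\eqref{3.35Y}: split off $(H_{D,\Om}-zI_\Om)^{-1}f$, define the Poisson operator as the adjoint of the bounded map $h\mapsto\gamma_N(H_{D,\Om}-\ol{z}I_\Om)^{-1}h$ (which rests on Proposition \ref{Bjk} and Lemma \ref{Lo-Tx}), verify the PDE by testing against $C^\infty_0(\Om)$ \emph{before} invoking \eqref{Tan-C12} to recover the boundary trace, and defer the $H^{3/2}$ and $H^2$ regularity statements to \cite{GM11}. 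Your closing remarks on the order of operations and on the conjugate-dual pairing conventions identify the genuine pressure points correctly.

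Three corrections are in order. First, a sign: with $\Delta u=(V-z)u$ and $V$ real, one gets $(\Delta w,u)_{L^2(\Om;d^nx)}-(w,\Delta u)_{L^2(\Om;d^nx)}=-\big((-\Delta+V-\ol{z})w,u\big)_{L^2(\Om;d^nx)}$, not $+$; carried through the duality argument this yields $u_D=-\big[\gamma_N(H_{D,\Om}-\ol{z}I_\Om)^{-1}\big]^*g$ for the $f=0$ problem, which is precisely what produces the minus sign in \eqref{3.35Y}. (Note that \eqref{3.34Y} and \eqref{3.35Y} as printed are themselves mutually inconsistent on this sign, so your version reproduces one of the two; the Green formula \eqref{Tan-C12} supports \eqref{3.35Y}.) Second, uniqueness is not ``immediate'' in the form you give it: \eqref{3.3Ybis} is itself one of the assertions being proved, and the statement that $\widehat\gamma_D u=0$ forces $u\in H^1_0(\Om)$ for $u\in\dom(-\Delta_{max,\Om})$ is a nontrivial regularity fact on quasi-convex domains. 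You can bypass it entirely: if $(-\Delta+V-z)u=0$ and $\widehat\gamma_D u=0$, then \eqref{Tan-C12} gives $\big((-\Delta+V-\ol{z})w,u\big)_{L^2(\Om;d^nx)}=0$ for every $w\in\dom(H_{D,\Om})$, and surjectivity of $H_{D,\Om}-\ol{z}I_\Om$ forces $u=0$. Third, $\gamma_D\big(H^2(\Om)\big)$ is not equal to $N^{3/2}(\partial\Om)$: Lemma \ref{3o-TxD} only identifies the image under $\gamma_D$ of $\{u\in H^2(\Om)\,|\,\gamma_N u=0\}$, while by Theorem \ref{T-MMS} the image of all of $H^2(\Om)$ is the strictly larger set of first components $g_0$ of admissible pairs $(g_0,g_1)$. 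None of these points affects the architecture of your argument, which is sound once the sign is repaired.
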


\begin{corollary}\label{New-CV22}
Assume Hypotheses \ref{h.V} and \ref{h.Conv}.
Then for every $z\in\bbC\backslash \si(H_{D,\Om})$ the map  
\begin{equation} \label{Tan-Bq2}
\widehat{\gamma}_D: \big\{u\in L^2(\Omega;d^nx)\,\big|\,(-\Delta+V-z)u=0
\,\mbox{in}\, \Omega\}\to \bigl(N^{1/2}(\partial\Omega)\bigr)^*
\end{equation} 
is an isomorphism $($i.e., bijective and bicontinuous\,$)$. 
\end{corollary}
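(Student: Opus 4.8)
The plan is to deduce Corollary \ref{New-CV22} directly from Theorem \ref{tH.A}, which already does all the analytic heavy lifting. First I would observe that the map $\widehat{\gamma}_D$ in \eqref{Tan-Bq2} is well-defined and bounded: its domain, the space $\{u\in L^2(\Om;d^nx)\,|\,(-\Delta+V-z)u=0\text{ in }\Om\}$, is a closed subspace of $\dom(H_{max,\Om})$ (equipped with the graph norm, on which $\widehat{\gamma}_D$ is bounded by Theorem \ref{New-T-tr}), and on this subspace the graph norm reduces to $\|u\|_{L^2(\Om;d^nx)}$ since $\Delta u = (V-z)u$ forces $\|\Delta u\|_{L^2(\Om;d^nx)}\le (\|V\|_{L^\infty}+|z|)\|u\|_{L^2(\Om;d^nx)}$. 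So $\widehat{\gamma}_D$ restricted to this space is a bounded operator into $(N^{1/2}(\partial\Omega))^*$.

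Next I would establish bijectivity. For surjectivity: given $g\in (N^{1/2}(\partial\Omega))^*$, apply Theorem \ref{tH.A} with $f=0$; the resulting $u_D = -[\gamma_N(H_{D,\Om}-\ol{z}I_\Om)^{-1}]^*g$ lies in the domain of the map \eqref{Tan-Bq2} (it solves $(-\Delta+V-z)u_D=0$ and is in $L^2$) and satisfies $\widehat{\gamma}_D u_D = g$, so the map is onto. For injectivity: if $u$ is in the domain with $\widehat{\gamma}_D u = 0$, then $u$ solves the boundary value problem \eqref{Yan-14} with $f=0$ and $g=0$; by the uniqueness assertion of Theorem \ref{tH.A}, $u=0$. (Alternatively, by \eqref{3.3Ybis}, $\widehat{\gamma}_D u=0$ together with $u$ solving the homogeneous equation forces $u\in H^2(\Om)\cap H^1_0(\Om)=\dom(H_{D,\Om})\cap\{(-\Delta+V-z)u=0\}=\{0\}$ since $z\notin\sigma(H_{D,\Om})$.) Bicontinuity then follows automatically from the open mapping theorem once bijectivity and boundedness are in hand, or more explicitly from the estimate \eqref{Hh.3X} applied with $f=0$, which gives $\|u_D\|_{L^2(\Om;d^nx)}\le C_D\|g\|_{(N^{1/2}(\partial\Omega))^*}$, i.e., a bounded inverse.

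I do not anticipate a genuine obstacle here, since Corollary \ref{New-CV22} is essentially a repackaging of the $f=0$ case of Theorem \ref{tH.A}; the only point requiring a small amount of care is verifying that the domain of $\widehat{\gamma}_D$ in \eqref{Tan-Bq2}, as a closed subspace of $\dom(H_{max,\Om})$, carries a norm equivalent to the $L^2(\Om;d^nx)$-norm (so that ``bicontinuous'' is interpreted in the correct topology), which is the elementary observation above that $\Delta u$ is controlled by $u$ in $L^2$ on the null-space of $(-\Delta+V-z)$. Once that normalization is pinned down, the rest is a formal consequence of existence, uniqueness, and the a priori estimate in Theorem \ref{tH.A}, together with the open mapping theorem.
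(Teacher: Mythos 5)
Your proof is correct and follows exactly the route the paper intends: Corollary \ref{New-CV22} is stated as an immediate consequence of the well-posedness and a priori estimate in Theorem \ref{tH.A} (existence with $f=0$ gives surjectivity, uniqueness gives injectivity, and \eqref{Hh.3X} gives the bounded inverse), and your additional observation that the graph norm collapses to the $L^2$-norm on $\ker(H_{max,\Om}-zI_\Om)$ is precisely the small normalization point needed to make "bicontinuous" meaningful.
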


\begin{theorem}\label{tH.G2}
Assume Hypotheses \ref{h.V} and \ref{h.Conv}   
and suppose that $z\in\bbC\backslash\si(H_{N,\Om})$. Then for any functions
$f\in L^2(\Om;d^nx)$ and $g\in (N^{3/2}(\partial\Omega))^*$ the
following inhomogeneous Neumann boundary value problem
\begin{equation}\label{n-1H}
\begin{cases}
(-\Delta+V-z)u=f\text{ in }\,\Om,
\\[4pt]
u\in L^2(\Om;d^nx),
\\[4pt]
\widehat\ga_N u=g\text{ on }\,\dOm,
\end{cases}
\end{equation}
has a unique solution $u=u_N$. This solution satisfies
\begin{equation}\label{Hh.3f}
\|u_N\|_{L^2(\Om;d^nx)}
+\|\widehat\ga_D u_N\|_{(N^{1/2}(\partial\Omega))^*}\leq C_N
(\|f\|_{L^2(\Om;d^nx)}+\|g\|_{(N^{3/2}(\partial\Omega))^*})
\end{equation}
for some constant $C_N=C_N(\Omega,V,z)>0$, and the following regularity
results hold:
\begin{align} \label{3.3f}
& g\in L^2(\partial\Omega;d^{n-1}\omega)\, \text{ implies }\, u_N\in H^{3/2}(\Omega),
\\
& g\in\gamma_N\bigl(H^2(\Om)\bigr)\, \text{ implies }\, u_N\in H^2(\Omega).
\label{3.3fbis}
\end{align} 
Natural estimates are valid in each case.

Moreover, the solution operator for \eqref{n-1H} with $f=0$
$($i.e., $P_{N,\Om,V,z}:g\mapsto u_N$$)$ satisfies
\begin{equation}\label{3.34f}
P_{N,\Om,V,z}=\big[\ga_D(H_{N,\Om}-{\ol z}I_\Om)^{-1}\big]^*
\in\cB\big((N^{3/2}(\partial\Omega))^*,L^2(\Om;d^nx)\big),
\end{equation}
and the solution of \eqref{n-1H} is given by the formula
\begin{equation}\label{3.a5Y}
u_N=(H_{N,\Om}-zI_\Om)^{-1}f
+\big[\ga_D(H_{N,\Om}-\ol{z}I_\Om)^{-1}\big]^*g.
\end{equation}
\end{theorem}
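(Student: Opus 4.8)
The plan is to mirror the argument for the Dirichlet problem in Theorem \ref{tH.A}, replacing the role of $H_{D,\Om}$ by $H_{N,\Om}$ and the trace $\widehat\ga_D$ by $\widehat\ga_N$, but keeping careful track of which boundary Sobolev space ($N^{1/2}$ versus $N^{3/2}$) is attached to each trace. The first step is to reduce to the case $f=0$: since $z\in\bbC\backslash\si(H_{N,\Om})$, the function $(H_{N,\Om}-zI_\Om)^{-1}f$ solves $(-\Delta+V-z)u=f$ with vanishing Neumann trace (here one uses Theorem \ref{t2.3} and Proposition \ref{Bjk}, which give $\dom(H_{N,\Om})\subset H^2(\Om)$ and $\wti\ga_N v=0$ there, compatible with $\widehat\ga_N$ by Theorem \ref{3ew-T-tr}), so it suffices to solve the homogeneous equation with prescribed Neumann datum $g\in(N^{3/2}(\partial\Omega))^*$.

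For that, I would identify the candidate solution operator. By Lemma \ref{3o-TxD}, $\ga_D$ maps $\{u\in H^2(\Om)\,|\,\ga_N u=0\}$ boundedly onto $N^{3/2}(\partial\Omega)$; since $\dom(H_{N,\Om})\subset H^2(\Om)$ is contained in that space, $\ga_D(H_{N,\Om}-\ol z I_\Om)^{-1}\in\cB(L^2(\Om;d^nx),N^{3/2}(\partial\Omega))$, and hence its adjoint $[\ga_D(H_{N,\Om}-\ol z I_\Om)^{-1}]^*$ lies in $\cB((N^{3/2}(\partial\Omega))^*,L^2(\Om;d^nx))$. I then claim $u_N:=(H_{N,\Om}-zI_\Om)^{-1}f+[\ga_D(H_{N,\Om}-\ol z I_\Om)^{-1}]^*g$ is the desired solution. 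To verify $(-\Delta+V-z)u_N=f$, test against $w\in C_0^\infty(\Om)$: one has $(w,[\ga_D(H_{N,\Om}-\ol z I_\Om)^{-1}]^*g)= {}_{N^{3/2}}\langle\ga_D(H_{N,\Om}-\ol z I_\Om)^{-1}w,g\rangle_{(N^{3/2})^*}$, and since $w$ has compact support $(H_{N,\Om}-\ol z I_\Om)^{-1}w\in H^2(\Om)\cap H^1_0(\Om)$ by elliptic regularity, so $\ga_D$ of it vanishes; thus the correction term is $(-\Delta+V-\ol z)$-harmonic in the distributional sense, giving the equation. To verify $\widehat\ga_N u_N=g$, apply the generalized integration-by-parts formula \eqref{3an-C12} (Theorem \ref{3ew-T-tr}) with test functions $w\in H^2(\Om)$, $\ga_N w=0$: the left side produces ${}_{N^{3/2}}\langle\ga_D w,\widehat\ga_N u_N\rangle_{(N^{3/2})^*}$ and, unravelling the adjoint and using $(-\Delta+V-z)u_N=f$ together with self-adjointness of $H_{N,\Om}$, the right side collapses to ${}_{N^{3/2}}\langle\ga_D w,g\rangle_{(N^{3/2})^*}$; since $\ga_D$ is onto $N^{3/2}(\partial\Omega)$ on this class, $\widehat\ga_N u_N=g$. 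Uniqueness follows because a solution of the homogeneous problem with zero Neumann datum lies in $\dom(H_{N,\Om})$ (by Corollary \ref{New-CV22}'s Neumann analogue, or directly: $\Delta u\in L^2$, $\widehat\ga_N u=0$ forces $u\in\dom(H_{N,\Om})$) and then $(H_{N,\Om}-zI_\Om)u=0$ with $z\notin\si(H_{N,\Om})$ gives $u=0$. The estimate \eqref{Hh.3f} is then just boundedness of the resolvent, of the adjoint solution operator, and of $\widehat\ga_D\in\cB(\dom(-\Delta_{max,\Om}),(N^{1/2}(\partial\Omega))^*)$ from Theorem \ref{New-T-tr}.

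For the regularity statements \eqref{3.3f}, \eqref{3.3fbis}, I would invoke the corresponding regularity for the Neumann resolvent and the lifting in Lemma \ref{3o-TxD}: if $g\in L^2(\partial\Omega;d^{n-1}\omega)$ one first subtracts an $H^{3/2}(\Om)$-extension whose Neumann trace (in the $H^{1/2}$-sense) matches $g$ — available by the trace theory in Section \ref{s3} on Lipschitz domains — reducing to data in a better space and then using $\dom(H_{N,\Om})\subset H^2(\Om)$ together with interpolation to land in $H^{3/2}(\Om)$; if $g\in\ga_N(H^2(\Om))$ one subtracts a genuine $H^2(\Om)$-function realizing $g$ and is left with $f\in L^2$ and zero Neumann datum, whose solution is in $\dom(H_{N,\Om})\subset H^2(\Om)$ by Proposition \ref{Bjk}. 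I expect the main obstacle to be the same as in the smooth case only worse: making precise the duality pairings and the passage from the weak identity \eqref{3an-C12} to the pointwise trace identity $\widehat\ga_N u_N=g$, i.e.\ checking that the range of $\ga_D$ on $\{u\in H^2(\Om)\,|\,\ga_N u=0\}$ is dense (in fact all of $N^{3/2}(\partial\Omega)$, by Lemma \ref{3o-TxD}) so that the pairing identity determines $\widehat\ga_N u_N$ uniquely as an element of $(N^{3/2}(\partial\Omega))^*$; this is exactly where quasi-convexity enters, through $\dom(H_{N,\Om})\subset H^2(\Om)$, and it is the one place where the Lipschitz-only argument would break down.
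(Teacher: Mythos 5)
Your overall strategy---reduce to $f=0$, take $[\ga_D(H_{N,\Om}-\ol z I_\Om)^{-1}]^*g$ as the candidate solution, verify the equation by duality, recover $\widehat\ga_N u_N=g$ from the generalized Green formula \eqref{3an-C12} combined with the surjectivity of $\ga_D$ in Lemma \ref{3o-TxD}, and obtain regularity by subtracting liftings---is exactly the route the paper (via \cite{GM11}, in parallel with Theorem \ref{tH.A}) takes. However, your verification of the PDE rests on a false claim: for $w\in C_0^\infty(\Om)$ the function $(H_{N,\Om}-\ol z I_\Om)^{-1}w$ is \emph{not} in $H^1_0(\Om)$; the Neumann resolvent is nonlocal and its output does not vanish on $\partial\Om$, so $\ga_D(H_{N,\Om}-\ol z I_\Om)^{-1}w\neq 0$ in general. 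Moreover, pairing $w$ itself (rather than $(-\Delta+V-\ol z)w$) against $u_N$ does not test the distributional equation. The correct computation is
\begin{equation*}
\big((-\Delta+V-\ol z)w,\,u_N\big)_{L^2(\Om;d^nx)}
={}_{N^{3/2}(\partial\Omega)}\big\langle \ga_D(H_{N,\Om}-\ol z I_\Om)^{-1}(H_{N,\Om}-\ol z I_\Om)w,\,g\big\rangle_{(N^{3/2}(\partial\Omega))^*}
={}_{N^{3/2}(\partial\Omega)}\langle \ga_D w,\,g\rangle_{(N^{3/2}(\partial\Omega))^*}=0,
\end{equation*}
which uses that $C_0^\infty(\Om)\subset\dom(H_{N,\Om})$ (compactly supported smooth functions trivially have vanishing weak Neumann trace), so the resolvent applied to $(H_{N,\Om}-\ol z I_\Om)w$ returns $w$, whose Dirichlet trace vanishes. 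Your subsequent identification of $\widehat\ga_N u_N$ is fine as sketched.

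Two further soft spots. In the uniqueness step, the assertion that $u\in L^2(\Om;d^nx)$ with $\Delta u\in L^2(\Om;d^nx)$ and $\widehat\ga_N u=0$ ``forces $u\in\dom(H_{N,\Om})$'' is not immediate: $\dom(H_{N,\Om})$ requires $u\in H^1(\Om)$ (indeed $u\in H^2(\Om)$ under quasi-convexity), and establishing such regularity for an arbitrary $L^2$ solution with vanishing weak Neumann trace is essentially equivalent to the well-posedness being proved. Argue instead by duality: \eqref{3an-C12} together with $(-\Delta+V-z)u=0$ yields $((H_{N,\Om}-\ol z I_\Om)w,u)_{L^2(\Om;d^nx)}=0$ for all $w\in\dom(H_{N,\Om})$, and since $\ol z\notin\si(H_{N,\Om})$ the range of $H_{N,\Om}-\ol z I_\Om$ is all of $L^2(\Om;d^nx)$, whence $u=0$. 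Similarly, for \eqref{3.3f} your proposed lifting presupposes that every $g\in L^2(\partial\Omega;d^{n-1}\omega)$ is the Neumann trace of an $H^{3/2}(\Om)$ function with $L^2$ Laplacian; this is precisely the solvability of the $L^2$-Neumann problem on Lipschitz domains (a layer-potential result imported from \cite{GM11}), not something obtainable by interpolation from $\dom(H_{N,\Om})\subset H^2(\Om)$.
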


\begin{corollary}\label{New-CV33}
Assume Hypotheses \ref{h.V} and \ref{h.Conv}.
Then, for every $z\in\bbC\backslash \sigma(H_{N,\Om})$, the map 
\begin{equation} \label{Tan-FFF}
\widehat{\gamma}_N:\big\{u\in L^2(\Omega;d^nx)\,\big|\,(-\Delta+V-z)u=0
\,\mbox{ in }\Omega\big\}\to \bigl(N^{3/2}(\partial\Omega)\bigr)^*
\end{equation} 
is an isomorphism $($i.e., bijective and bicontinuous\,$)$.
\end{corollary}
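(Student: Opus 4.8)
The plan is to deduce this exactly as Corollary \ref{New-CV22} was deduced from Theorem \ref{tH.A}: the corollary is a formal consequence of the unique solvability of the inhomogeneous Neumann problem already recorded in Theorem \ref{tH.G2}. Write
\[
\mathcal{N}_{z}:=\big\{u\in L^2(\Om;d^nx)\,\big|\,(-\Delta+V-z)u=0\text{ in }\Om\big\}.
\]
First I would observe that $\mathcal{N}_z$ is a closed subspace of $L^2(\Om;d^nx)$, since distributional convergence preserves the identity $(-\Delta+V-z)u=0$; hence $\mathcal{N}_z$ is a Banach space. Moreover every $u\in\mathcal{N}_z$ satisfies $\Delta u=(V-z)u\in L^2(\Om;d^nx)$, so $\mathcal{N}_z\subseteq\dom(-\Delta_{max,\Om})$ and, on $\mathcal{N}_z$, the graph norm $\|u\|_{L^2(\Om;d^nx)}+\|\Delta u\|_{L^2(\Om;d^nx)}$ is equivalent to the $L^2$-norm, with constant $1+\|V\|_{L^\infty(\Om;d^nx)}+|z|$. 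Consequently the extended weak Neumann trace $\widehat\gamma_N$ of Theorem \ref{3ew-T-tr}, restricted to $\mathcal{N}_z$ with its $L^2$-topology, is a well-defined bounded linear map into $\bigl(N^{3/2}(\partial\Omega)\bigr)^*$, so that the operator in the statement makes sense.

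Next I would check bijectivity. For surjectivity, given $g\in\bigl(N^{3/2}(\partial\Omega)\bigr)^*$, apply Theorem \ref{tH.G2} with $f=0$; the resulting solution $u_N=P_{N,\Om,V,z}\,g$ of \eqref{n-1H} lies in $\mathcal{N}_z$ and satisfies $\widehat\gamma_N u_N=g$. For injectivity, if $u\in\mathcal{N}_z$ and $\widehat\gamma_N u=0$, then $u$ solves \eqref{n-1H} with $f=0$ and $g=0$, and the uniqueness assertion in Theorem \ref{tH.G2} forces $u=0$. Thus $\widehat\gamma_N|_{\mathcal{N}_z}$ is a linear bijection onto $\bigl(N^{3/2}(\partial\Omega)\bigr)^*$.

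Finally, for bicontinuity I would note the two identities $\widehat\gamma_N\circ P_{N,\Om,V,z}=I$ on $\bigl(N^{3/2}(\partial\Omega)\bigr)^*$ and $P_{N,\Om,V,z}\circ\widehat\gamma_N|_{\mathcal{N}_z}=I$ on $\mathcal{N}_z$ (the latter again via the uniqueness in Theorem \ref{tH.G2}), which identify the inverse of $\widehat\gamma_N|_{\mathcal{N}_z}$ with the solution operator $P_{N,\Om,V,z}$; the latter is bounded into $L^2(\Om;d^nx)$ by \eqref{3.34f}. Alternatively, since both $\mathcal{N}_z$ and $\bigl(N^{3/2}(\partial\Omega)\bigr)^*$ are Banach spaces, bicontinuity follows at once from the open mapping theorem. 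I do not anticipate any genuine obstacle at this level: all of the hard analysis — the construction of $\widehat\gamma_N$ on $\dom(-\Delta_{max,\Om})$ in Theorem \ref{3ew-T-tr}, and above all the unique solvability of the Neumann problem with boundary data in the nonstandard conjugate dual space $\bigl(N^{3/2}(\partial\Omega)\bigr)^*$ in Theorem \ref{tH.G2}, which itself relies on the quasi-convexity-based elliptic regularity of Proposition \ref{Bjk} together with the trace machinery of Section \ref{s3} — has already been done upstream. The only point meriting a line of care is the norm-equivalence on $\mathcal{N}_z$ that lets one pass freely between the graph topology, in which $\widehat\gamma_N$ is naturally continuous, and the ambient $L^2$-topology in which the statement is phrased.
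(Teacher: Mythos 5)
Your proposal is correct and is precisely the intended derivation: the corollary is a formal consequence of the unique solvability statement in Theorem \ref{tH.G2} (take $f=0$ for surjectivity, use uniqueness for injectivity, and identify the inverse with the bounded solution operator $P_{N,\Om,V,z}$ from \eqref{3.34f}), exactly mirroring the passage from Theorem \ref{tH.A} to Corollary \ref{New-CV22}. Your extra remark on the equivalence of the graph norm and the $L^2$-norm on the solution space is the right point to make explicit, and nothing further is needed.
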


\subsection{Dirichlet-to-Neumann Operators on Quasi-Convex Domains}
\label{s7X}

In this subsection we review spectral parameter dependent Dirichlet-to-Neumann maps, also known in the literature as Weyl--Titchmarsh and Poincar\'e--Steklov operators. Assuming Hypotheses \ref{h.V} and \ref{h.Conv}, introduce the 
Dirichlet-to-Neumann map
$M_{D,N,\Om,V}(z)$ associated with $-\Delta+V-z$ on $\Om$, as follows:
\begin{equation} \label{3.44v}
M_{D,N,\Om,V}(z) \colon
\begin{cases}
\bigl(N^{1/2}(\dOm)\bigr)^*\to \bigl(N^{3/2}(\dOm)\bigr)^*,  \\
\hspace*{1.8cm}
f\mapsto -\widehat\ga_N u_D,
\end{cases}
\; z\in\bbC\backslash\si(H_{D,\Om}),
\end{equation} 
where $u_D$ is the unique solution of
\begin{equation} \label{3.45v}
(-\Delta+V-z)u=0\,\text{ in }\Om,\quad u\in L^2(\Om;d^nx),
\;\; \widehat\ga_D u=f\,\text{ on }\dOm.
\end{equation} 
Retaining Hypotheses \ref{h.V} and \ref{h.Conv},
we next introduce the Neumann-to-Dirichlet map $M_{N,D,\Om,V}(z)$
associated with $-\Delta+V-z$ on $\Om$, as follows:
\begin{equation} \label{3.48v}
M_{N,D,\Om,V}(z)\colon
\begin{cases}
\bigl(N^{3/2}(\dOm)\bigr)^*\to \bigl(N^{1/2}(\dOm)\bigr)^*,
\\
\hspace*{1.8cm}
g\mapsto\widehat\ga_D u_N,
\end{cases}
\; z\in\bbC\backslash\si(H_{N,\Om}),
\end{equation} 
where $u_{N}$ is the unique solution of
\begin{equation} \label{3.49v}
(-\Delta+V-z)u=0\,\text{ in }\Om,\quad u\in L^2(\Om;d^nx),
\;\; \widehat\ga_Nu=g\,\text{ on }\dOm.
\end{equation} 
As in \cite{GM11}, where the case $V\equiv 0$ has been treated,
we then have the following result:

\begin{theorem}\label{t3.5v}
Assume Hypotheses \ref{h.V} and \ref{h.Conv}.
Then, with the above notation,
\begin{equation} \label{3.46v}
M_{D,N,\Om,V}(z)\in\cB\big((N^{1/2}(\dOm))^*\,,\,(N^{3/2}(\dOm))^*\big),
\quad z\in\bbC\backslash\si(H_{D,\Om}),
\end{equation}
and
\begin{equation}\label{3.47v}
M_{D,N,\Om,V}(z)=\widehat\gamma_N
\big[\gamma_N(H_{D,\Om}-\ol{z}I_\Om)^{-1}\big]^*,
\quad z\in\bbC\backslash\si(H_{D,\Om}).
\end{equation}
Similarly,
\begin{equation}\label{3.50v}
M_{N,D,\Om,V}(z)\in\cB\big((N^{3/2}(\dOm))^*\,,\,(N^{1/2}(\dOm))^*\big),
\quad z\in\bbC\backslash\si(H_{N,\Om}),
\end{equation}
and
\begin{equation}\label{3.52v}
M_{N,D,\Om,V}(z)
= \widehat \gamma_D\big[\gamma_D(H_{N,\Om}-\ol{z}I_\Om)^{-1}\big]^*,
\quad z\in\bbC\backslash\si(H_{N,\Om}).
\end{equation}
Moreover,
\begin{equation}\label{3.53v}
M_{N,D,\Om,V}(z)=-M_{D,N,\Om,V}(z)^{-1},\quad
z\in\bbC\backslash(\si(H_{D,\Om})\cup\si(H_{N,\Om})),
\end{equation}
and
\begin{equation}\label{NaLa}
\big[M_{D,N,\Om,V}(z)\big]^*=M_{D,N,\Om,V}(\ol{z}),\quad
\big[M_{N,D,\Om,V}(z)\big]^*=M_{N,D,\Om,V}(\ol{z}).
\end{equation}
As a consequence, one also has
\begin{align} \label{3.TTa}
& M_{D,N,\Om,V}(z)\in\cB\big(N^{3/2}(\dOm)\,,\,N^{1/2}(\dOm)\big),
\quad z\in\bbC\backslash\si(H_{D,\Om}),
\\
& M_{N,D,\Om,V}(z)\in\cB\big(N^{1/2}(\dOm)\,,\,N^{3/2}(\dOm)\big),
\quad z\in\bbC\backslash\si(H_{N,\Om}).
\label{3.TTb}
\end{align} 
\end{theorem}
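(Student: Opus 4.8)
The plan is to establish Theorem~\ref{t3.5v} by assembling pieces that are already available from the boundary value problem theory of Subsection~\ref{s6X}, together with the resolvent identities for $H_{D,\Om}$ and $H_{N,\Om}$. First I would record the mapping properties \eqref{3.46v} and \eqref{3.50v}: by Theorem~\ref{tH.A}, given $f\in(N^{1/2}(\dOm))^*$ the unique solution $u_D$ of \eqref{3.45v} satisfies $\|\widehat\ga_N u_D\|_{(N^{3/2}(\dOm))^*}\le C_D\|f\|_{(N^{1/2}(\dOm))^*}$ (the special case $f$-source zero in \eqref{Hh.3X}), so $M_{D,N,\Om,V}(z)=-\widehat\ga_N u_D$ is bounded from $(N^{1/2}(\dOm))^*$ into $(N^{3/2}(\dOm))^*$; the Neumann-to-Dirichlet bound follows symmetrically from Theorem~\ref{tH.G2} via \eqref{Hh.3f}.

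Next I would derive the operator-theoretic representations \eqref{3.47v} and \eqref{3.52v}. By Theorem~\ref{tH.A}, for $f=0$-source and boundary datum $g$, the solution is $u_D=-P_{D,\Om,V,z}g=-\big[\ga_N(H_{D,\Om}-\ol z I_\Om)^{-1}\big]^* g$ (cf.\ \eqref{3.34Y}, \eqref{3.35Y}). Applying $\widehat\ga_N$ and using the definition \eqref{3.44v} of $M_{D,N,\Om,V}(z)$ as $-\widehat\ga_N u_D$ gives $M_{D,N,\Om,V}(z)=\widehat\ga_N\big[\ga_N(H_{D,\Om}-\ol z I_\Om)^{-1}\big]^*$; the analogous computation with Theorem~\ref{tH.G2} and \eqref{3.34f}, \eqref{3.a5Y} yields \eqref{3.52v}. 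The inverse relation \eqref{3.53v} is then immediate from the uniqueness statements in Theorems~\ref{tH.A} and \ref{tH.G2}: for $z\notin\si(H_{D,\Om})\cup\si(H_{N,\Om})$, given $f\in(N^{1/2}(\dOm))^*$ let $u$ solve \eqref{3.45v} with $\widehat\ga_D u=f$; then $g:=-M_{D,N,\Om,V}(z)f=\widehat\ga_N u$, and $u$ is also the unique solution of the Neumann problem \eqref{3.49v} with that datum $g$, whence $M_{N,D,\Om,V}(z)g=\widehat\ga_D u=f$, i.e.\ $M_{N,D,\Om,V}(z)=-M_{D,N,\Om,V}(z)^{-1}$.

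For the symmetry/reflection identities \eqref{NaLa}, I would use the generalized Green formulas \eqref{Tan-C12} and \eqref{3an-C12} together with \eqref{3.47v}. For $f_1,f_2\in(N^{1/2}(\dOm))^*$, pick solutions $u_j$ of $(-\Delta+V-z_j)u_j=0$ with $\widehat\ga_D u_j=f_j$ for $z_1=z$, $z_2=\ol z$; pairing and using the two integration-by-parts formulas, the bulk terms involving $(V-z)$ versus $(V-\ol z)$ combine so that the boundary pairings $\langle\gamma_N u_2,\widehat\ga_D u_1\rangle$ and its conjugate-dual counterpart coincide, which is precisely $\big[M_{D,N,\Om,V}(z)\big]^*=M_{D,N,\Om,V}(\ol z)$; alternatively this falls out of \eqref{3.47v} by the self-adjointness of $H_{D,\Om}$ and $\big[(H_{D,\Om}-zI_\Om)^{-1}\big]^*=(H_{D,\Om}-\ol z I_\Om)^{-1}$, and similarly for the Neumann-to-Dirichlet map using \eqref{3.52v}. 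Finally, the refined mapping properties \eqref{3.TTa}, \eqref{3.TTb} follow by duality: \eqref{3.53v} combined with \eqref{3.50v} shows $M_{D,N,\Om,V}(z)^{-1}$ is bounded $(N^{3/2})^*\to(N^{1/2})^*$, and taking adjoints (using \eqref{NaLa} and the reflexivity of the $N^{s/2}(\dOm)$ spaces recorded after \eqref{Tan-C4} and \eqref{3an-C4}) transfers the boundedness to $M_{D,N,\Om,V}(z)\in\cB(N^{3/2}(\dOm),N^{1/2}(\dOm))$, and symmetrically for $M_{N,D,\Om,V}(z)$.

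The main obstacle I anticipate is bookkeeping the adjoints and the conjugate-dual pairings carefully — because the paper uses the \emph{conjugate} dual convention $X^*$ throughout, one must be attentive to which space each trace operator lands in and which representation \eqref{3.34Y} versus \eqref{3.34f} to invoke, and to the fact that the relation $M_{N,D,\Om,V}(z)=-M_{D,N,\Om,V}(z)^{-1}$ is only valid off the union of the two spectra. The rest is essentially a systematic unwinding of Theorems~\ref{tH.A} and \ref{tH.G2} and the generalized Green formulas, with no new analytic input needed beyond what has already been established for $V\equiv 0$ in \cite{GM11}.
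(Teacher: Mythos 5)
The survey itself offers no proof of Theorem \ref{t3.5v}; it simply asserts that the $V\equiv 0$ argument of \cite{GM11} carries over, so your proposal is being measured against the standard route rather than against an explicit argument in the text. Your outline is that standard route and is essentially correct: \eqref{3.46v} and \eqref{3.50v} do follow from the a priori bounds \eqref{Hh.3X} and \eqref{Hh.3f} with vanishing source; \eqref{3.47v} and \eqref{3.52v} are exactly the composition of $\widehat\gamma_N$ (resp.\ $\widehat\gamma_D$) with the solution operators \eqref{3.34Y}, \eqref{3.34f}; the inverse relation \eqref{3.53v} follows from the uniqueness and solvability statements (via Corollaries \ref{New-CV22} and \ref{New-CV33}, which give that both traces are isomorphisms on the null space, so $M_{D,N,\Om,V}(z)=-\widehat\gamma_N\circ(\widehat\gamma_D)^{-1}$ is invertible — you should invoke surjectivity here, not only uniqueness); and \eqref{3.TTa}, \eqref{3.TTb} are indeed just the adjoints of \eqref{3.46v}, \eqref{3.50v} at $\ol z$ once \eqref{NaLa} is in hand, using reflexivity of $N^{1/2}(\dOm)$ and $N^{3/2}(\dOm)$.

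The one step that does not work as literally written is the Green's formula argument for \eqref{NaLa}. You propose pairing two $L^2$ solutions $u_1,u_2$ of the homogeneous equations and applying \eqref{Tan-C12} and \eqref{3an-C12}; but both of these formulas require one of the two factors to lie in $H^2(\Omega)$ (with $\gamma_D w=0$, respectively $\gamma_N w=0$), whereas a generic element of $\ker(H_{max,\Om}-zI_\Om)$ is only in $L^2$ with $\Delta u\in L^2$, and its traces $\widehat\gamma_D u\in(N^{1/2}(\dOm))^*$ and $\widehat\gamma_N u\in(N^{3/2}(\dOm))^*$ cannot be paired against each other. (The genuinely two-rough-function Green formula available here is \eqref{T-Green}, but for null solutions $\tau_{N,V,z}u=0$ by \eqref{Sim-Gr}, so it degenerates to $0=0$.) The repair is standard and within reach of what you cite: since $[M_{D,N,\Om,V}(z)]^*$ acts on $N^{3/2}(\dOm)$, it suffices to test against data $h\in\gamma_D(H^2(\Om))$ (dense in the relevant topology), for which the regularity statement \eqref{3.3Ys} puts the corresponding solution $v$ at spectral parameter $\ol z$ into $H^2(\Omega)$; with one factor in $H^2$ the formulas \eqref{Tan-C12}, \eqref{3an-C12} apply and yield the symmetry identity, which then extends by continuity. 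Your alternative route "take adjoints in \eqref{3.47v}" is not a one-liner either, since it produces $\gamma_N(H_{D,\Om}-\ol z I_\Om)^{-1}[\widehat\gamma_N]^*$ and one still needs the Green identity to recognize $[\widehat\gamma_N]^*$ as the Neumann-datum solution operator. With that single step repaired, the proof is complete and coincides with the intended argument.
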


For closely related recent work on Weyl--Titchmarsh operators associated with nonsmooth domains we refer to \cite{GM08}, \cite{GM09a}, \cite{GM09b},  \cite{GM11}, and \cite{GMZ07}. 
For an extensive list of references on $z$-dependent Dirichlet-to-Neumann maps we also refer, for instance, to \cite{Ag03}, \cite{ABMN05}, \cite{AP04}, \cite{BL07}, \cite{BMN02}, \cite{BGW09}, \cite{BHMNW09}, \cite{BMNW08}, \cite{BGP08}, \cite{DM91}, \cite{DM95}, \cite{GLMZ05}--\cite{GMZ07}, 
\cite{Gr08}, \cite{Gr11}, \cite{Po08}, \cite{Ry07}, \cite{Ry09}, \cite{Ry10}.

\section{Regularized Neumann Traces and Perturbed Krein Laplacians}
\label{s8}

This section is structured into two parts dealing, respectively, with
the regularized Neumann trace operator (Subsection \ref{s8X}), and the
perturbed Krein Laplacian in quasi-convex domains (Subsection \ref{s9X}). 

\subsection{The Regularized Neumann Trace Operator on Quasi-Convex Domains}
\label{s8X}

Following earlier work in \cite{GM11}, we now consider a version of the
Neumann trace operator which is suitably normalized to permit
the familiar version of Green's formula (cf.\ \eqref{T-Green} below)
to work in the context in which the functions involved are only known
to belong to $\dom(-\Delta_{\max,\Om})$. The following theorem is a
slight extension of a similar result proved in \cite{GM11} when $V\equiv 0$.

\begin{theorem}\label{LL.w}
Assume Hypotheses \ref{h.V} and \ref{h.Conv}.
Then, for every $z\in\bbC\backslash\si(H_{D,\Om})$, the map 
\begin{equation} \label{3.Aw1}
\tau_{N,V,z}:\bigl\{u\in L^2(\Om;d^nx);\,\Delta u\in L^2(\Om;d^nx)\bigr\}
\to N^{1/2}(\partial\Omega)
\end{equation} 
given by
\begin{equation} \label{3.Aw2}
\tau_{N,V,z} u:=\widehat\gamma_N u 
+M_{D,N,\Om,V}(z)\bigl(\widehat\gamma_D u \bigr),
\quad u\in L^2(\Om;d^nx),\,\,\Delta u\in L^2(\Om;d^nx),
\end{equation} 
is well-defined, linear and bounded, where the space
\begin{equation} 
\big\{u\in L^2(\Om;d^nx)\,\big|\, \Delta u\in L^2(\Om;d^nx)\big\}  
\end{equation}
is endowed with the natural graph norm
$u\mapsto\|u\|_{L^2(\Om;d^nx)}+\|\Delta u\|_{L^2(\Om;d^nx)}$.
Moreover, this operator satisfies the following additional properties:

\begin{enumerate}
\item[$(i)$] The map $\tau_{N,V,z}$ in \eqref{3.Aw1}, \eqref{3.Aw2} is onto
$($i.e., $\tau_{N,V,z}(\dom(H_{max,\Om} ))=N^{1/2}(\partial\Omega)$$)$,
for each $z\in\bbC\backslash\si(H_{D,\Om})$. In fact,
\begin{equation} \label{3.ON}
\tau_{N,V,z}\bigl(H^2(\Om)\cap H^1_0(\Om)\bigr)=N^{1/2}(\partial\Omega)
\, \mbox{ for each } \, z\in\bbC\backslash\si(H_{D,\Om}).
\end{equation} 
\item[$(ii)$] One has 
\begin{equation} \label{3.Aw9}
\tau_{N,V,z}=\gamma_N(H_{D,\Om}-zI_{\Om})^{-1}(-\Delta-z),\quad
z\in\bbC\backslash \si(H_{D,\Om}).
\end{equation} 
\item[$(iii)$] For each $z\in\bbC\backslash\si(H_{D,\Om})$, the
kernel of the map $\tau_{N,V,z}$ in \eqref{3.Aw1}, \eqref{3.Aw2} is
\begin{equation} \label{3.AKe}
\ker(\tau_{N,V,z})=H^2_0(\Omega)\dot{+}\{u\in L^2(\Om;d^nx)\,|\,
(-\Delta+V-z)u=0\,\mbox{ in }\,\Omega\}.
\end{equation} 
In particular, if $z\in\bbC\backslash\si(H_{D,\Om})$, then
\begin{equation}\label{Sim-Gr}
\tau_{N,V,z} u =0\, \mbox{ for every }\, u\in\ker(H_{max,\Om} -zI_{\Om}).
\end{equation} 
\item[$(iv)$] The following Green formula holds for every
$u,v\in\dom(H_{max,\Om} )$ and every complex number
$z\in\bbC\backslash \si(H_{D,\Om})$:
\begin{align}\label{T-Green}
&  ((-\Delta+V-z)u\,,\,v)_{L^2(\Omega;d^nx)}
- (u\,,\,(-\Delta+V-\ol{z})v)_{L^2(\Omega;d^nx)}
\nonumber\\
& \quad
=-{}_{N^{1/2}(\partial\Omega)}\langle\tau_{N,V,z} u,\widehat\gamma_D v
\rangle_{(N^{1/2}(\partial\Omega))^*}  
+\,\ol{{}_{N^{1/2}(\partial\Omega)}\langle\tau_{N,V,\ol{z}} v,
\widehat{\gamma}_D u \rangle_{(N^{1/2}(\partial\Omega))^*}}.
\end{align}
\end{enumerate}
\end{theorem}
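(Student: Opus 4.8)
The plan is to establish Theorem \ref{LL.w} by leveraging the solution operators and Dirichlet-to-Neumann maps already constructed in Sections \ref{s5} and \ref{s7X}, treating the case $V\equiv 0$ from \cite{GM11} as a template and checking that the inclusion of a bounded potential $V$ changes nothing essential. First I would fix $z\in\bbC\backslash\si(H_{D,\Om})$ and verify that \eqref{3.Aw2} makes sense: for $u$ with $\Delta u\in L^2(\Om;d^nx)$, one has $\widehat\gamma_D u\in (N^{1/2}(\partial\Omega))^*$ by Theorem \ref{New-T-tr} and $\widehat\gamma_N u\in (N^{3/2}(\partial\Omega))^*$ by Theorem \ref{3ew-T-tr}, while $M_{D,N,\Om,V}(z)\in\cB((N^{1/2}(\dOm))^*,(N^{3/2}(\dOm))^*)$ by \eqref{3.46v}; a priori the sum lands only in $(N^{3/2}(\partial\Omega))^*$, so the content of the first claim is the improved mapping property into $N^{1/2}(\partial\Omega)$. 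The clean way to get this is to prove \eqref{3.Aw9} directly, since the right-hand side manifestly maps into $N^{1/2}(\partial\Omega)$: if $u\in\dom(H_{max,\Om})$ then $(H_{D,\Om}-zI_\Om)^{-1}(-\Delta-z)u\in\dom(H_{D,\Om})\subset H^2(\Om)$ by Proposition \ref{Bjk} (here I absorb the $V$-term into $-\Delta-z = (-\Delta+V-z) - V$ and use that $Vu\in L^2$), so $\gamma_N$ applied to it lands in $N^{1/2}(\partial\Omega)$ by Lemma \ref{Lo-Tx} once one also checks the argument has vanishing Dirichlet trace — which it does, being in $\dom(H_{D,\Om})\subset H^1_0(\Om)$.

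The main work is the identity \eqref{3.Aw9} itself. I would prove it by a density/uniqueness argument: both sides are bounded operators on $\{u\in L^2(\Om;d^nx)\mid \Delta u\in L^2(\Om;d^nx)\}$ with the graph norm, so it suffices to verify agreement on a convenient dense subset, or better, to decompose an arbitrary $u$ via the solution operators. Write $u = u_1 + u_2$ where $u_1 := (H_{D,\Om}-zI_\Om)^{-1}(-\Delta+V-z)u \in \dom(H_{D,\Om})$ and $u_2 := u - u_1$ satisfies $(-\Delta+V-z)u_2 = 0$ in $\Om$, hence $u_2\in\ker(H_{max,\Om}-zI_\Om)$. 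On $u_1$: $\widehat\gamma_D u_1 = 0$ and $\widehat\gamma_N u_1 = \gamma_N u_1$ (since $u_1\in H^2(\Om)$), so $\tau_{N,V,z}u_1 = \gamma_N u_1$, while the right-hand side of \eqref{3.Aw9} on $u_1$ gives $\gamma_N(H_{D,\Om}-zI_\Om)^{-1}(-\Delta-z)u_1$; reconciling these requires noting $(-\Delta-z)u_1 = (-\Delta+V-z)u_1 - Vu_1 = (H_{D,\Om}-zI_\Om)u_1 - Vu_1$, so one must check the extra term drops out — this is exactly where one uses that the full operator $H_{D,\Om}$ (not the bare Laplacian) appears on the right via $(-\Delta-z) = (H_{D,\Om}-zI_\Om)$ on its domain only up to the multiplication term. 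I anticipate this bookkeeping is the subtle point and is handled by verifying that $\tau_{N,V,z}$ as defined in \eqref{3.Aw2} and the operator in \eqref{3.Aw9} both annihilate $\ker(H_{max,\Om}-zI_\Om)$ (for \eqref{3.Aw2} this is the definition of $M_{D,N,\Om,V}(z)$: $\widehat\gamma_N u_2 = -M_{D,N,\Om,V}(z)\widehat\gamma_D u_2$; for \eqref{3.Aw9} it is immediate since $(-\Delta+V-z)u_2=0$) and agree on the range of the Dirichlet resolvent composed with $(-\Delta+V-z)$, reducing everything to the identity $\gamma_N f = \gamma_N(H_{D,\Om}-zI_\Om)^{-1}(-\Delta+V-z)f$ for $f\in\dom(H_{D,\Om})$, which is a tautology.

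Once \eqref{3.Aw9} is in hand, the remaining items follow quickly. For (i), surjectivity: given $h\in N^{1/2}(\partial\Omega)$, use Lemma \ref{Lo-Tx} to pick $w\in H^2(\Om)\cap H^1_0(\Om)$ with $\gamma_N w = h$, and solve $(H_{D,\Om}-zI_\Om)v = (-\Delta+V-z)w$, wait — more directly, set $u := $ the unique element of $H^2(\Om)\cap H^1_0(\Om)$ with $(H_{D,\Om}-zI_\Om)u = (-\Delta-z)w$... cleanest is: since $(H_{D,\Om}-zI_\Om)^{-1}$ maps $L^2(\Om;d^nx)$ onto $\dom(H_{D,\Om})$ and $\gamma_N:\dom(H_{D,\Om})\to N^{1/2}(\partial\Omega)$ is onto by Lemma \ref{Lo-Tx} (as $\dom(H_{D,\Om})\subset H^2(\Om)\cap H^1_0(\Om)$), the composition $\gamma_N(H_{D,\Om}-zI_\Om)^{-1}$ is onto, and precomposing with $(-\Delta-z):\dom(H_{max,\Om})\to L^2(\Om;d^nx)$, which is onto (solve the inhomogeneous equation using Theorem \ref{tH.A} with $g=0$), yields \eqref{3.ON}. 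For (iii), $\tau_{N,V,z}u = 0$ means, via \eqref{3.Aw9}, $(H_{D,\Om}-zI_\Om)^{-1}(-\Delta-z)u \in \ker(\gamma_N|_{\dom(H_{D,\Om})})$; chasing the kernel of $\gamma_N$ on $\dom(H_{D,\Om})\cap H^2_0$-type spaces together with the decomposition $u = u_1 + u_2$ above gives \eqref{3.AKe}, and \eqref{Sim-Gr} is the special case $u\in\ker(H_{max,\Om}-zI_\Om)$, where $u_1 = 0$. Finally, for (iv), I would start from the generalized integration by parts formulas \eqref{Tan-C12} and \eqref{3an-C12} applied with $w\in\dom(H_{max,\Om})$ and $u\in\dom(H_{max,\Om})$, split $v = v_1 + v_2$ and $u = \tilde u_1 + \tilde u_2$ along the same Dirichlet-resolvent decomposition, use that $V$ is real so the $V$-terms cancel symmetrically in $((-\Delta+V-z)u,v) - (u,(-\Delta+V-\ol z)v)$, and then recognize the boundary pairings: the $N^{3/2}/(N^{3/2})^*$ pairing of $\widehat\gamma_D w$ with $\widehat\gamma_N u$ gets converted into an $N^{1/2}/(N^{1/2})^*$ pairing involving $\tau_{N,V,z}$ precisely by inserting $M_{D,N,\Om,V}(z)$ and using its adjoint relation \eqref{NaLa}, $[M_{D,N,\Om,V}(z)]^* = M_{D,N,\Om,V}(\ol z)$. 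The hard part here will be getting the duality brackets and complex conjugations to land consistently — this is a matter of careful pairing-space bookkeeping rather than new ideas, and follows the $V\equiv 0$ proof in \cite{GM11} mutatis mutandis.
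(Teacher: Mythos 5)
The paper records Theorem \ref{LL.w} without proof, stating only that it is ``a slight extension'' of the $V\equiv 0$ result in \cite{GM11}, so there is no in-paper argument to compare against; judged on its own merits, your strategy is the right one. The decomposition $u=u_1+u_2$ with $u_1:=(H_{D,\Om}-zI_\Om)^{-1}(-\Delta+V-z)u\in\dom(H_{D,\Om})\subset H^2(\Om)\cap H^1_0(\Om)$ and $u_2\in\ker(H_{max,\Om}-zI_\Om)$, together with the observations that $\tau_{N,V,z}u_2=0$ by the definition \eqref{3.44v} of $M_{D,N,\Om,V}(z)$ and $\tau_{N,V,z}u_1=\gamma_N u_1$, does yield boundedness into $N^{1/2}(\partial\Omega)$ (via Proposition \ref{Bjk} and Lemma \ref{Lo-Tx}), surjectivity, the kernel description \eqref{3.AKe}, and, after the bookkeeping you describe with \eqref{Tan-C12}, the Green formula \eqref{T-Green}. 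For \eqref{3.ON} the direct remark that $\tau_{N,V,z}w=\gamma_N w$ for every $w\in H^2(\Om)\cap H^1_0(\Om)$ is cleaner than routing through the resolvent, but both arguments work.

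The one place your write-up does not hold together is item $(ii)$. Your decomposition proves
\[
\tau_{N,V,z}=\gamma_N(H_{D,\Om}-zI_\Om)^{-1}(-\Delta+V-z),
\]
whereas \eqref{3.Aw9} as printed has $(-\Delta-z)$ on the right; the two differ by $\gamma_N(H_{D,\Om}-zI_\Om)^{-1}V$, which does not vanish for general $V$. In particular, the operator literally written in \eqref{3.Aw9}, applied to $u_2\in\ker(H_{max,\Om}-zI_\Om)$, gives $-\gamma_N(H_{D,\Om}-zI_\Om)^{-1}(Vu_2)\neq 0$ in general, so the printed formula is incompatible with \eqref{3.AKe} and \eqref{Sim-Gr} unless $V\equiv 0$. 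You first claim the extra $Vu_1$ term ``drops out'' (it does not), and then claim the right-hand side of \eqref{3.Aw9} annihilates $\ker(H_{max,\Om}-zI_\Om)$ ``since $(-\Delta+V-z)u_2=0$'' -- but that is the wrong operator, since \eqref{3.Aw9} feeds $(-\Delta-z)u_2=-Vu_2$ into the resolvent. The correct resolution is to recognize that \eqref{3.Aw9} is the $V\equiv 0$ formula of \cite{GM11} carried over with the $V$-term omitted, and that the version consistent with the definition \eqref{3.Aw2} and with items $(iii)$, $(iv)$ is the one containing $(-\Delta+V-z)$ -- which is exactly what your argument establishes. Say this explicitly and prove the corrected identity, rather than attempting to reconcile two operators that genuinely differ.
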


\subsection{The Perturbed Krein Laplacian in Quasi-Convex Domains}
\label{s9X}

We now discuss the Krein--von Neumann extension of the Laplacian 
$-\Delta\big|_{C^\infty_0(\Omega)}$ perturbed by a nonnegative, bounded potential $V$ in 
$L^2(\Om; d^n x)$. We will conveniently call this operator the {\it perturbed Krein Laplacian} and introduce the following basic assumption:

\begin{hypothesis}\label{h.VK}
$(i)$ Let $n\in\bbN$, $n\geq 2$, and assume that $\emptyset \neq \Om\subset{\bbR}^n$ is
a bounded Lipschitz domain satisfying Hypothesis \ref{h.Conv}. \\
$(ii)$ Assume that
\begin{equation} \label{VV-WW}
V\in L^\infty(\Om;d^nx)
\, \mbox{ and }\, V \geq 0 \mbox{ a.e.\ in } \,\Omega.
\end{equation} 
\end{hypothesis}

Denoting by $\ol{T}$ the closure of a linear operator $T$ in a Hilbert space $\cH$, we have the following result: 

\begin{lemma}\label{C-Da}
Assume Hypothesis \ref{h.VK}.
Then $H_{min,\Om}$ is a densely defined, closed,
nonnegative $($in particular, symmetric$)$ operator in $L^2(\Om; d^n x)$. Moreover,
\begin{equation}\label{Pos-3}
\ol{(-\Delta+V)\big|_{C^\infty_0(\Om)}} = H_{min,\Om}.  
\end{equation}  
\end{lemma}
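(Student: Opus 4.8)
The statement asserts two things: that $H_{min,\Om}$ (i.e., $-\Delta+V$ restricted to $C^\infty_0(\Om)$ with domain $H^2_0(\Om)$ as defined in \eqref{Yan-6}) is densely defined, closed, and nonnegative, and that it coincides with the closure of the operator $(-\Delta+V)|_{C^\infty_0(\Om)}$. First I would dispose of the ``densely defined'' claim: this is immediate since $C^\infty_0(\Om)\subseteq H^2_0(\Om)$ and $C^\infty_0(\Om)$ is dense in $L^2(\Om;d^nx)$. Next, nonnegativity: for $u\in C^\infty_0(\Om)$ an integration by parts gives $((-\Delta+V)u,u)_{L^2} = \|\nabla u\|_{(L^2)^n}^2 + \|V^{1/2}u\|_{L^2}^2 \ge 0$, using $V\ge 0$ a.e.\ from Hypothesis \ref{h.VK}(ii); by density of $C^\infty_0(\Om)$ in $H^2_0(\Om)$ in the $H^2$-norm, and continuity of both sides of this identity in the $H^2$-topology, the same inequality persists on all of $\dom(H_{min,\Om})=H^2_0(\Om)$. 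Symmetry follows from the same integration by parts (the identity is symmetric in $u,v$).

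\textbf{The core step} is \eqref{Pos-3}, equivalently that $(-\Delta+V)|_{C^\infty_0(\Om)}$ is closable with closure precisely the operator whose domain is $H^2_0(\Om)$. One inclusion is easy: since $H_{min,\Om}$ acts as the distributional operator $-\Delta+V$ (with $V\in L^\infty$, so multiplication by $V$ is bounded on $L^2$) and $C^\infty_0(\Om)$ is $H^2$-dense in $H^2_0(\Om)$, while the map $u\mapsto (-\Delta+V)u$ is continuous from $H^2(\Om)$ into $L^2(\Om;d^nx)$ (by the Remark following \eqref{Yan-1}, $\Delta\in\cB(H^2(\Om),L^2(\Om))$ and $M_V\in\cB(L^2(\Om))$), one concludes that $H_{min,\Om}$ is a closed extension of $(-\Delta+V)|_{C^\infty_0(\Om)}$; hence $(-\Delta+V)|_{C^\infty_0(\Om)}$ is closable and $\ol{(-\Delta+V)|_{C^\infty_0(\Om)}}\subseteq H_{min,\Om}$. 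The reverse inclusion is the substantive point: given $u\in H^2_0(\Om)$, by definition of $H^2_0(\Om)$ as the $H^2$-closure of $C^\infty_0(\Om)$ there is a sequence $u_j\in C^\infty_0(\Om)$ with $u_j\to u$ in $H^2(\Om)$; then $u_j\to u$ in $L^2$ and $(-\Delta+V)u_j\to (-\Delta+V)u$ in $L^2$ by the continuity just cited, so $u\in\dom\big(\ol{(-\Delta+V)|_{C^\infty_0(\Om)}}\big)$ and the closure agrees with $H_{min,\Om}$ there. This already gives \eqref{Pos-3}, and simultaneously shows $H_{min,\Om}$ is closed (being a closure).

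\textbf{Where the real content sits.} The only nontrivial ingredient is that $H^2_0(\Om)$ is genuinely the $H^2$-closure of $C^\infty_0(\Om)$ — but this is literally the definition adopted in \eqref{hGi-3} and restated in Theorem \ref{T-MMS} (``the closure of $C^\infty_0(\Omega)$ in $H^2(\Omega)$''), so no extra work is needed. I would point out that one does \emph{not} need quasi-convexity (Hypothesis \ref{h.Conv}) or any elliptic regularity for this lemma: everything rests on the boundedness of $\Delta: H^2(\Om)\to L^2(\Om)$ and of $M_V$ on $L^2(\Om)$, plus the defining density of $C^\infty_0(\Om)$ in $H^2_0(\Om)$. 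Thus the proof is short: (1) density is immediate; (2) nonnegativity and symmetry by integration by parts on $C^\infty_0(\Om)$ and passage to the $H^2$-limit; (3) \eqref{Pos-3} by the two-sided approximation argument above, which en route also yields closedness of $H_{min,\Om}$. I anticipate no genuine obstacle; the main thing to be careful about is to invoke the correct continuity statement $(-\Delta+V)\in\cB(H^2(\Om),L^2(\Om;d^nx))$ and to track that convergence in $H^2$ implies convergence of both the function and its image under $-\Delta+V$ in $L^2$.
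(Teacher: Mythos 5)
Your overall architecture (the easy inclusion $\ol{(-\Delta+V)|_{C^\infty_0(\Om)}}\subseteq H_{min,\Om}$ plus the reverse inclusion via $H^2$-density of $C^\infty_0(\Om)$ in $H^2_0(\Om)$) is reasonable, and your ``reverse'' inclusion is exactly the paper's ``converse inclusion''. But the step you label ``easy'' contains a genuine gap. Writing $A=(-\Delta+V)|_{C^\infty_0(\Om)}$ and $B=H_{min,\Om}$, from $A\subseteq B\subseteq\ol{A}$ one only gets $\ol{B}=\ol{A}$; to conclude $B=\ol{A}$ you must show that $B$ is closed, and the facts you cite --- $(-\Delta+V)\in\cB\big(H^2(\Om),L^2(\Om;d^nx)\big)$ and density of $C^\infty_0(\Om)$ in $H^2_0(\Om)$ --- prove only $B\subseteq\ol{A}$, not closedness. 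Closedness of $H_{min,\Om}$ means: if $u_j\in H^2_0(\Om)$, $u_j\to u$ and $(-\Delta+V)u_j\to v$ in $L^2(\Om;d^nx)$, then $u\in H^2_0(\Om)$ and $(-\Delta+V)u=v$; convergence of $u_j$ and of $\Delta u_j$ in $L^2$ gives no control on $u_j$ in $H^2(\Om)$ unless you supply an a priori estimate. Your closing remark that closedness follows ``being a closure'' is circular, since equality with the closure is precisely what is being proved.

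The missing ingredient is the estimate $\|u\|_{H^2(\Om)}\leq C\|\Delta u\|_{L^2(\Om;d^nx)}$ for $u\in C^\infty_0(\Om)$, valid on any bounded open set via the identity $\|\Delta u\|^2_{L^2(\Om;d^nx)}=\sum_{j,k}\|\partial_j\partial_k u\|^2_{L^2(\Om;d^nx)}$ (integration by parts with no boundary terms) together with Poincar\'e's inequality; this is exactly the computation in \eqref{mam-7}. With it, a sequence in $C^\infty_0(\Om)$ that is Cauchy in the graph norm of $-\Delta+V$ is automatically Cauchy in $H^2(\Om)$, hence converges in $H^2_0(\Om)$; this yields $\dom\big(\ol{A}\big)\subseteq H^2_0(\Om)$ and, combined with your reverse inclusion, gives \eqref{Pos-3} and closedness simultaneously. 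Once this is added, your proof is complete and genuinely different from the paper's: there, the limit is passed through the boundary trace operators $\widehat{\gamma}_D,\widehat{\gamma}_N$ (continuous on the graph-normed maximal domain) to conclude $\widehat{\gamma}_D u=\widehat{\gamma}_N u=0$, and then Theorem \ref{T-DD1} --- which is where quasi-convexity enters --- identifies this trace-free class with $H^2_0(\Om)$; the first claim of the lemma is likewise read off from Theorem \ref{T-DD1} via $H_{min,\Om}=(H_{max,\Om})^*$. So your observation that quasi-convexity is dispensable for this particular lemma is correct in principle, but only after the a priori estimate is put in place.
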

\begin{proof}
The first claim in the statement is a direct consequence of
Theorem \ref{T-DD1}. As for \eqref{Pos-3}, let us temporarily denote by $H_0$
the closure of $-\Delta+V$ defined on $C^\infty_0(\Om)$. Then
\begin{equation} \label{Pos-4}
u\in\dom(H_0) \, \text{ if and only if }  
\begin{cases}
\mbox{there exist }v\in L^2(\Om;d^nx)\mbox{ and }
u_j\in C^\infty_0(\Om),\,j\in\bbN,\mbox{ such that }
\\
u_j\to u \,\mbox{ and } \,(-\Delta+V)u_j\to v \,\mbox{ in }\,L^2(\Om;d^nx)
\,\mbox{ as }\, j\to\infty.
\end{cases} 
\end{equation}
Thus, if $u\in \dom(H_0)$ and $v$, $\{u_j\}_{j\in\bbN}$ are
as in the right-hand side of \eqref{Pos-4}, then
$(-\Delta+V)u=v$ in the sense of distributions in $\Omega$, and
\begin{align} \label{Pos-5}
\begin{split} 
0&=\widehat\gamma_D u_j \to \widehat\gamma_D u 
\, \mbox{ in }\, \bigl(N^{1/2}(\dOm)\bigr)^* \, \mbox{ as }\, j\to\infty,
\\
0&=\widehat\gamma_N u_j \to \widehat\gamma_N u 
\, \mbox{ in }\, \bigl(N^{1/2}(\dOm)\bigr)^* \, \mbox{ as }\, j\to\infty,
\end{split}
\end{align}
by Theorem \ref{New-T-tr} and Theorem \ref{3ew-T-tr}.
Consequently, $u\in\dom(H_{max,\Om} )$ satisfies
$\widehat\gamma_D u =0$ and $\widehat\gamma_N u =0$.
Hence, $u\in H^2_0(\Om)=\dom(H_{min,\Om} )$ by Theorem \ref{T-DD1}
and the current assumptions on $\Omega$. This shows that
$H_0\subseteq H_{min,\Om} $. The converse inclusion readily follows
from the fact that any $u\in H^2_0(\Om)$ is the limit in
$H^2(\Om)$ of a sequence of test functions in $\Omega$.
\end{proof}

\begin{lemma}\label{C-DaW}
Assume Hypothesis \ref{h.VK}.
Then the Krein--von Neumann extension $H_{K,\Om}$ of 
$(-\Delta+V)\big|_{C^\infty_0(\Om)}$ in $L^2(\Om;d^nx)$ is the $L^2$-realization of 
$-\Delta+V$ with domain
\begin{align} \label{Kre-Frq1}
\begin{split} 
\dom(H_{K,\Om}) &= \dom(H_{min,\Om})\,\dot{+}\ker(H_{max,\Om})  \\
& =H^2_0(\Om)\,\dot{+}\,
\big\{u\in L^2(\Om;d^nx)\,\big|\,(-\Delta+V)u=0\mbox{ in }\Omega\big\}.
\end{split} 
\end{align} 
\end{lemma}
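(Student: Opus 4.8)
The plan is to invoke the abstract formula \eqref{SK} for the domain of the Krein--von Neumann extension, which in the present notation reads $\dom(S_K) = \dom(S) \dotplus \ker(S^*)$, and to identify each ingredient with its concrete counterpart. By Lemma \ref{C-Da}, $S := H_{min,\Om} = \ol{(-\Delta+V)|_{C^\infty_0(\Om)}}$ is densely defined, closed, and nonnegative; moreover, under Hypothesis \ref{h.VK}(ii) one in fact has $S \geq \varepsilon I$ for some $\varepsilon > 0$, since $H_{min,\Om}$ is a restriction of the perturbed Dirichlet Laplacian $H_{D,\Om}$, which is strictly positive when $0 \leq V \in L^\infty(\Om; d^n x)$ (cf.\ Theorem \ref{t2.5} together with Proposition \ref{L-Fri1}). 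Hence Hypothesis \ref{h2.6} is satisfied and Theorem \ref{T-kkrr} applies, giving $\dom(H_{K,\Om}) = \dom(H_{min,\Om}) \dotplus \ker\big((H_{min,\Om})^*\big)$.

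The remaining task is to rewrite $\ker\big((H_{min,\Om})^*\big)$ in the concrete form asserted. First I would use Theorem \ref{T-DD1}, which under Hypotheses \ref{h.V} and \ref{h.Conv} gives $(H_{min,\Om})^* = H_{max,\Om}$, so that $\ker\big((H_{min,\Om})^*\big) = \ker(H_{max,\Om})$. Then, by the very definition \eqref{Yan-1} of the maximal operator, $u \in \ker(H_{max,\Om})$ precisely when $u \in L^2(\Om; d^n x)$, $\Delta u \in L^2(\Om; d^n x)$, and $(-\Delta + V)u = 0$ in $\Om$; but the equation $(-\Delta+V)u = 0$ together with $V \in L^\infty$ and $u \in L^2$ forces $\Delta u = Vu \in L^2(\Om; d^n x)$ automatically, so the middle condition is redundant. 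This yields
\begin{equation*}
\ker(H_{max,\Om}) = \big\{u \in L^2(\Om; d^n x) \,\big|\, (-\Delta+V)u = 0 \mbox{ in } \Omega\big\}.
\end{equation*}
Finally, substituting $\dom(H_{min,\Om}) = H^2_0(\Om)$ (again from Theorem \ref{T-DD1}, or directly from \eqref{Yan-6}) into the direct-sum decomposition produces \eqref{Kre-Frq1}. One should also record that the decomposition is genuinely a direct sum, i.e.\ $H^2_0(\Om) \cap \ker(H_{max,\Om}) = \{0\}$; this is immediate from strict positivity of $S$, since $u \in H^2_0(\Om) = \dom(S)$ with $Su = 0$ forces $0 = (u, Su)_{L^2} \geq \varepsilon \|u\|^2_{L^2}$, hence $u = 0$ (this is exactly the content of Lemma \ref{lKF} specialized to the present $S$, or can just be checked on the spot).

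I do not anticipate a serious obstacle here: the proof is essentially a dictionary translation of Theorem \ref{T-kkrr}/formula \eqref{SK} into the concrete setting, with all the nontrivial functional-analytic input (the identification $(H_{min,\Om})^* = H_{max,\Om}$, the equality $\dom(H_{min,\Om}) = H^2_0(\Om)$, and the elliptic-regularity facts underlying the quasi-convexity hypothesis) already supplied by Theorem \ref{T-DD1} and Lemma \ref{C-Da}. The only point requiring a word of care is the verification that $S \geq \varepsilon I$ rather than merely $S \geq 0$, so that \eqref{SK} (as opposed to the weaker general description \eqref{Kre-DefX} of $\dom(S_K)$) is available; this is handled by the strict positivity of $H_{D,\Om}$ noted above, and I would state it explicitly at the start of the proof.
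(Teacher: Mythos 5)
Your proof is correct and takes essentially the same route as the paper's, which likewise combines the abstract formula \eqref{SK} with the identification $(H_{min,\Om})^*=H_{max,\Om}$ from Theorem \ref{T-DD1} (i.e., \eqref{Yan-10}) and the fact that $(-\Delta+V)|_{C^\infty_0(\Om)}$ and its closure $H_{min,\Om}$ have the same self-adjoint extensions (Lemma \ref{C-Da}); your explicit check that $H_{min,\Om}$ is strictly positive, so that \eqref{SK} rather than merely \eqref{Kre-DefX} is available, is a point the paper leaves implicit and is handled correctly. The only slip is the parenthetical attribution of the direct-sum verification to Lemma \ref{lKF}, which concerns $\dom(S_F)\cap\dom(S_K)=\dom(S)$ rather than $\dom(S)\cap\ker(S^*)=\{0\}$; your on-the-spot argument via strict positivity is the right one (and the directness of the sum is in any case already part of the statement \eqref{SK}).
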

\begin{proof}
By virtue of \eqref{SK}, \eqref{Yan-10}, and the fact that 
$(-\Delta+V)|_{C^\infty_0(\Om)}$ and its closure, $H_{min,\Om}$ (cf.\ \eqref{Pos-3}) 
have the same self-adjoint extensions, one obtains 
\begin{align} \label{Kre-Def}
\dom(H_{K,\Om}) & = \dom(H_{min,\Om})\,\dot{+}\ker((H_{min,\Om})^*)
\nonumber\\
& = \dom(H_{min,\Om} )\,\dot{+}\ker(H_{max,\Om})  \no  \\
& = H^2_0(\Om)\,\dot{+}\,
\big\{u\in L^2(\Om;d^nx)\,\big|\,(-\Delta+V)u=0\mbox{ in }\Omega\big\}, 
\end{align}
as desired.
\end{proof}

Nonetheless, we shall adopt a different point of view which better elucidates
the nature of the boundary condition associated with this
perturbed Krein Laplacian. More specifically, following the same
pattern as in \cite{GM11}, the following result can be proved.

\begin{theorem}\label{T-Kr}
Assume Hypothesis \ref{h.VK} and 
fix $z\in\bbC\backslash \si(H_{D,\Om})$.
Then $H_{K,\Om,z}$ in $L^2(\Omega;d^nx)$, given by
\begin{align} \label{A-zz.1}
\begin{split}
& H_{K,\Om,z} u:=(-\Delta+V-z)u, \\
& u\in \dom(H_{K,\Om,z}):=\{v\in\dom(H_{max,\Om} )\,|\, \tau_{N,V,z} v =0\},
\end{split} 
\end{align}
satisfies
\begin{equation} \label{A-zz.W}
(H_{K,\Om,z})^*=H_{K,\Om,\ol{z}}, 
\end{equation} 
and agrees with the self-adjoint perturbed Krein Laplacian $H_{K,\Om}=H_{K,\Om,0}$ 
when taking $z=0$. In particular, if $z\in\bbR\backslash \si(H_{D,\Om})$ then
$H_{K,\Om,z}$ is self-adjoint. Moreover, if $z \leq 0$, then $H_{K,\Om,z}$ is
nonnegative. Hence, the perturbed Krein Laplacian
$H_{K,\Om}$ is a self-adjoint operator in $L^2(\Om;d^nx)$
which admits the description given in \eqref{A-zz.1} when $z=0$, and which
satisfies
\begin{equation} \label{A-zz.b}
H_{K,\Om}\geq 0 \,\mbox{ and }\, 
H_{min,\Om} \subseteq H_{K,\Om}\subseteq H_{max,\Om} .
\end{equation} 
Furthermore, 
\begin{align}
& \ker(H_{K,\Om})=\big\{u\in L^2(\Om;d^nx)\,\big|\,(-\Delta+V)u=0\big\}, \\ 
& \dim(\ker(H_{K,\Om})) = {\rm def} (H_{min,\Om}) 
=  {\rm def} \big(\ol{(-\Delta+V)\big|_{C^\infty_0(\Om)}}\big) =\infty, \\
& \ran(H_{K,\Om})=(-\Delta+V) H^2_0(\Om),  \\
& \text{$H_{K,\Om}$ has a purely discrete spectrum in $(0,\infty)$},  
\quad \sigma_{\rm ess}(H_{K,\Om}) = \{0\},       \label{spec-1} 
\end{align}
and for any nonnegative self-adjoint extension $\wti S$ of 
$(-\Delta+V)|_{C^\infty_0(\Om)}$ one has $($cf.\ \eqref{PPa-1}$)$, 
\begin{equation}\label{Ok.1}
H_{K,\Om}\leq \wti S\leq H_{D,\Om}.
\end{equation} 
\end{theorem}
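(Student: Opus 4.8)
The plan is to realize $H_{K,\Om}$ as the abstract Krein--von Neumann extension $S_K$ of $S:=H_{min,\Om}=\ol{(-\Delta+V)|_{C_0^\infty(\Om)}}$ (Lemma \ref{C-Da}) and to read off all the assertions from the abstract theory of Sections \ref{s2}--\ref{s2a} together with the trace results of Section \ref{s8}. First I would check that $S$ satisfies Hypothesis \ref{h2.6}: it is densely defined, closed and nonnegative by Lemma \ref{C-Da}; it is in fact strictly positive, since the Poincar\'e inequality on the bounded set $\Om$ together with $V\geq 0$ gives $(u,Su)_{L^2(\Om;d^nx)}\geq c_\Om\|u\|^2_{L^2(\Om;d^nx)}$ first on $C_0^\infty(\Om)$ and then, by closure, on $\dom(S)=H^2_0(\Om)$; and it has infinite deficiency indices because, by Corollary \ref{New-CV22} with $z=0\in\bbC\backslash\si(H_{D,\Om})$ (Theorem \ref{t2.5}), $\widehat\gamma_D$ maps $\ker(S^*)=\ker(H_{max,\Om})$ isomorphically onto the infinite-dimensional space $(N^{1/2}(\dOm))^*$. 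By Theorem \ref{T-DD1} and Proposition \ref{L-Fri1} one has $S^*=H_{max,\Om}$ and $S_F=H_{D,\Om}$, and by Lemma \ref{C-DaW} the domain $\dom(S_K)=\dom(S)\dotplus\ker(S^*)$ coincides with $\dom(H_{K,\Om})$, so $S_K=H_{K,\Om}$. Since $0\notin\si(H_{D,\Om})$, the operator $\tau_{N,V,0}$ of Theorem \ref{LL.w} is defined, and \eqref{3.AKe} with $z=0$ gives $\ker(\tau_{N,V,0})=H^2_0(\Om)\dotplus\ker(H_{max,\Om})=\dom(H_{K,\Om})$; comparing with \eqref{A-zz.1} shows $H_{K,\Om,0}=H_{K,\Om}$.

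Next I would prove the adjoint identity $(H_{K,\Om,z})^*=H_{K,\Om,\ol z}$ for $z\in\bbC\backslash\si(H_{D,\Om})$, which gives self-adjointness of $H_{K,\Om,z}$ for real such $z$, and of $H_{K,\Om}$ upon taking $z=0$. For the inclusion $H_{K,\Om,\ol z}\subseteq(H_{K,\Om,z})^*$, let $u\in\dom(H_{K,\Om,z})$ and $v\in\dom(H_{K,\Om,\ol z})$; then $\tau_{N,V,z}u=0=\tau_{N,V,\ol z}v$, so the Green formula \eqref{T-Green} collapses to $((-\Delta+V-z)u,v)_{L^2(\Om;d^nx)}=(u,(-\Delta+V-\ol z)v)_{L^2(\Om;d^nx)}$. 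For the reverse inclusion, note that $H^2_0(\Om)=\dom(H_{min,\Om})\subseteq\ker(\tau_{N,V,z})$ by \eqref{3.AKe}, so $H_{min,\Om}-zI_\Om\subseteq H_{K,\Om,z}$ and hence $(H_{K,\Om,z})^*\subseteq(H_{min,\Om}-zI_\Om)^*=H_{max,\Om}-\ol z I_\Om$ by Theorem \ref{T-DD1}; thus any $v\in\dom((H_{K,\Om,z})^*)$ lies in $\dom(H_{max,\Om})$, and feeding such a $v$ and an arbitrary $u\in\dom(H_{K,\Om,z})=\ker(\tau_{N,V,z})$ into \eqref{T-Green} yields $\ol{{}_{N^{1/2}(\dOm)}\langle\tau_{N,V,\ol z}v,\widehat\gamma_D u\rangle_{(N^{1/2}(\dOm))^*}}=0$. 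Since $\{u\in L^2(\Om;d^nx)\,|\,(-\Delta+V-z)u=0\}\subseteq\ker(\tau_{N,V,z})$ and $\widehat\gamma_D$ maps this null space onto $(N^{1/2}(\dOm))^*$ (Corollary \ref{New-CV22}), and since the $N^{1/2}(\dOm)$-pairing is nondegenerate, we conclude $\tau_{N,V,\ol z}v=0$, i.e.\ $v\in\dom(H_{K,\Om,\ol z})$.

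For the nonnegativity and extremal statements I would observe that for $z\leq 0$ the operator $\ol{(-\Delta+V-z)|_{C_0^\infty(\Om)}}=H_{min,\Om}-zI_\Om$ is again strictly positive (Poincar\'e, $V\geq 0$, $-z\geq 0$), its adjoint is $H_{max,\Om}-zI_\Om$ with null space $\{u\in L^2(\Om;d^nx)\,|\,(-\Delta+V-z)u=0\}$, and therefore, comparing \eqref{A-zz.1} and \eqref{3.AKe} with the domain formula \eqref{SK}, $H_{K,\Om,z}$ is precisely the Krein--von Neumann extension of $\ol{(-\Delta+V-z)|_{C_0^\infty(\Om)}}$; hence $H_{K,\Om,z}\geq 0$ by Theorem \ref{T-kkrr}. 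In particular $H_{K,\Om}=H_{K,\Om,0}\geq 0$, and the chain $H_{min,\Om}\subseteq H_{K,\Om}\subseteq H_{max,\Om}$ is immediate from \eqref{Kre-Frq1} and the boundedness of $V$ (which ensures $\Delta u=Vu\in L^2(\Om;d^nx)$ on $\ker(H_{max,\Om})$). Finally, since any nonnegative self-adjoint extension $\wti S$ of $(-\Delta+V)|_{C_0^\infty(\Om)}$ is the same as one of its closure $S=H_{min,\Om}$, the sandwich \eqref{Ok.1}, $H_{K,\Om}\leq\wti S\leq H_{D,\Om}$, is exactly $S_K\leq\wti S\leq S_F$ from Theorem \ref{T-kkrr}.

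It remains to treat the kernel, range and spectral statements. By \eqref{Fr-4Tf}, $\ker(H_{K,\Om})=\ker(S^*)=\ker(H_{max,\Om})=\{u\in L^2(\Om;d^nx)\,|\,(-\Delta+V)u=0\}$, and by \eqref{dim}, $\dim(\ker(H_{K,\Om}))={\rm def}(S)$, which equals ${\rm def}\big(\ol{(-\Delta+V)|_{C_0^\infty(\Om)}}\big)$ by Lemma \ref{C-Da} and is infinite by the isomorphism $\widehat\gamma_D$ of Corollary \ref{New-CV22}. Applying $H_{K,\Om}$ to the two summands in \eqref{Kre-Frq1}---it annihilates $\ker(H_{max,\Om})$ and acts as $H_{min,\Om}$ on $H^2_0(\Om)$---gives $\ran(H_{K,\Om})=\ran(H_{min,\Om})=(-\Delta+V)H^2_0(\Om)$. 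And since $\si_{\rm ess}(S_F)=\si_{\rm ess}(H_{D,\Om})=\emptyset$ (Theorem \ref{t2.5}), \eqref{ESSK} gives $\si_{\rm ess}(H_{K,\Om})\backslash\{0\}=\emptyset$, while $0\in\si_{\rm ess}(H_{K,\Om})$ because it is an eigenvalue of infinite multiplicity; hence $\si_{\rm ess}(H_{K,\Om})=\{0\}$ and the spectrum in $(0,\infty)$ is purely discrete. The main obstacle I anticipate is the reverse inclusion in the adjoint computation of the second paragraph---specifically, converting ``${}_{N^{1/2}(\dOm)}\langle\tau_{N,V,\ol z}v,\widehat\gamma_D u\rangle_{(N^{1/2}(\dOm))^*}=0$ for all admissible $u$'' into ``$\tau_{N,V,\ol z}v=0$'' via the surjectivity of $\widehat\gamma_D$ on $(-\Delta+V-z)$-null solutions and the nondegeneracy of the $N^{1/2}(\dOm)$-duality; the remaining items are essentially bookkeeping built on the abstract Krein theory (Theorems \ref{T-kkrr}, \ref{AS-thK}) and the quoted trace and regularity results.
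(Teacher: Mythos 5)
Your proposal is correct and follows essentially the route the survey intends (the paper defers the details to \cite{GM11}, but the machinery it assembles in Sections \ref{s3}--\ref{s8} — the regularized trace $\tau_{N,V,z}$, the Green formula \eqref{T-Green}, Corollary \ref{New-CV22}, and Lemmas \ref{C-Da}, \ref{C-DaW} — is exactly what you use, and in the intended way). In particular, your handling of the reverse inclusion $(H_{K,\Om,z})^*\subseteq H_{K,\Om,\ol z}$ via $(H_{K,\Om,z})^*\subseteq H_{max,\Om}-\ol z I_{\Om}$, the surjectivity of $\widehat\gamma_D$ on the null space of $-\Delta+V-z$, and the nondegeneracy of the $N^{1/2}(\dOm)$ pairing is sound, as is the identification of $H_{K,\Om,z}$ for $z\le 0$ with the abstract Krein--von Neumann extension of $H_{min,\Om}-zI_{\Om}$.
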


The nonlocal character of the boundary condition for the Krein--von Neumann extension $H_{K,\Om}$ 
\begin{equation}
\tau_{N,V,0} v = \hatt \gamma_N v + M_{D,N,\Om,V} (0) v = 0, \quad 
v \in \dom(H_{K,\Om})
\end{equation} 
(cf.\ \eqref{A-zz.1} with $z=0$) was originally isolated by Grubb \cite{Gr68} (see 
also \cite{Gr70}, \cite{Gr83}) and $M_{D,N,\Om,V} (0)$ was identified as the 
operator sending 
Dirichlet data to Neumann data. The connection with Weyl--Titchmarsh theory 
and particularly, the Weyl--Titchmarsh operator 
$M_{D,N,\Om,V} (z)$ (an energy dependent Dirichlet-to-Neumann map),  
in the special one-dimensional half-line case $\Om= [a,\infty)$ has been made  
in \cite{Ts87}. In terms of abstract boundary conditions in connection with the 
theory of boundary value spaces, such a Weyl--Titchmarsh connection has also 
been made in 
\cite{DMT88} and \cite{DMT89}. However, we note that this abstract boundary value space approach, while applicable to ordinary differential operators, is not applicable to partial differential operators even in the 
case of smooth boundaries $\partial\Om$ (see, e.g., the discussion in \cite{BL07}). In 
particular, it does not apply to the nonsmooth 
domains $\Om$ studied in this survey. In fact, only very recently, appropriate modifications of 
the theory of boundary value spaces have successfully been applied to partial differential 
operators in smooth domains in \cite{BL07}, \cite{BGW09}, \cite{BHMNW09}, \cite{BMNW08}, 
\cite{Po08}, \cite{PR09}, \cite{Ry07}, \cite{Ry09}, and \cite{Ry10}. With the exception 
of the following 
short discussions: Subsection\ 4.1 in \cite{BL07} (which treat the special case where $\Om$ 
equals the unit ball in $\bbR^2$), Remark\ 3.8 in \cite{BGW09}, Section\ 2 in \cite{Ry07}, 
Subsection\ 2.4 in \cite{Ry09}, and Remark\ 5.12 in \cite{Ry10}, these investigations did not enter a detailed discussion of the Krein-von Neumann extension. In particular, none of these references 
applies to the case of nonsmooth domains $\Om$.

\section{Connections with the Problem of the Buckling of a Clamped Plate}
\label{s10}

In this section we proceed to study a fourth-order problem, which is a
perturbation of the classical problem for the buckling of a clamped plate,
and which turns out to be essentially spectrally equivalent to the
perturbed Krein Laplacian $H_{K,\Om}:=H_{K,\Om,0}$.

For now, let us assume Hypotheses \ref{h2.1} and \ref{h.V}.
Given $\lambda\in\bbC$, consider the eigenvalue problem for the
generalized buckling of a clamped plate in the domain $\Om\subset\bbR^n$
\begin{equation}\label{MM-1}
\begin{cases}
u\in\dom(-\Delta_{max,\Om}),
\\
(-\Delta+V)^2u=\lambda\,(-\Delta+V)u\,\mbox{ in }\, \Omega,
\\
\widehat\ga_D u =0 \,\mbox{ in } \,\big(N^{1/2}(\dOm)\big)^*,
\\
\widehat\ga_N u =0 \,\mbox{ in } \,\big(N^{3/2}(\dOm)\big)^*,
\end{cases}
\end{equation}
where $(-\Delta+V)^2u:=(-\Delta+V)(-\Delta u+Vu)$ in the sense of
distributions in $\Om$. Due to the trace theory developed in Sections \ref{s3} 
and \ref{s5}, this formulation is meaningful. In addition, if Hypothesis \ref{h.Conv} is assumed
in place of Hypothesis \ref{h2.1} then, by \eqref{Yan-8}, this problem
can be equivalently rephrased as
\begin{equation}\label{MM-2} 
\begin{cases}
u\in H^2_0(\Omega),
\\
(-\Delta+V)^2u=\lambda\,(-\Delta+V)u \,\mbox{ in } \,\Omega.
\end{cases}
\end{equation}

\begin{lemma}\label{L-MM-1}
Assume Hypothesis \ref{h.VK} and suppose
that $u\not=0$ solves \eqref{MM-1} for some $\lambda\in\bbC$. Then necessarily
$\lambda\in (0,\infty)$.
\end{lemma}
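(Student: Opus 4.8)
The plan is to test equation \eqref{MM-1} against $u$ itself and use the generalized integration-by-parts formulas of Section \ref{s3} to express both sides as genuine $L^2$-norms. First I would set $w:=(-\Delta+V)u\in L^2(\Om;d^nx)$; the equation says $(-\Delta+V)w=\lambda w$ in $\cD^\prime(\Om)$, so in particular $w\in\dom(H_{max,\Om})$. The right-hand side of \eqref{MM-1}, paired against $u$, is $\lambda(w,u)_{L^2(\Om;d^nx)}$, and since $u\in H^2_0(\Om)$ (using Hypothesis \ref{h.VK}, which includes Hypothesis \ref{h.Conv}, so \eqref{MM-2} applies) we can integrate by parts honestly:
\begin{equation}
(w,u)_{L^2(\Om;d^nx)}=((-\Delta+V)u,u)_{L^2(\Om;d^nx)}
=\|\nabla u\|^2_{(L^2(\Om;d^nx))^n}+\|V^{1/2}u\|^2_{L^2(\Om;d^nx)}\geq 0,
\end{equation}
with equality only if $u$ is constant, hence $u\equiv 0$ since $u\in H^2_0(\Om)$; so for $u\neq 0$ this quantity is strictly positive.

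Next I would handle the left-hand side: $((-\Delta+V)^2u,u)_{L^2(\Om;d^nx)}=((-\Delta+V)w,u)_{L^2(\Om;d^nx)}$. Here $w\in\dom(H_{max,\Om})$ need not lie in $H^2(\Om)$, so the pairing must be interpreted via the generalized Green formula \eqref{Tan-C12} (or the analogous \eqref{3an-C12}): since $u\in H^2_0(\Om)=H^2(\Om)\cap H^1_0(\Om)$ with $\widehat\gamma_D u=\widehat\gamma_N u=\gamma_D u=\gamma_N u=0$, all boundary terms drop out and
\begin{equation}
((-\Delta+V)w,u)_{L^2(\Om;d^nx)}=(w,(-\Delta+V)u)_{L^2(\Om;d^nx)}=\|w\|^2_{L^2(\Om;d^nx)}=\|(-\Delta+V)u\|^2_{L^2(\Om;d^nx)}.
\end{equation}
Combining, $\|(-\Delta+V)u\|^2_{L^2(\Om;d^nx)}=\lambda\big(\|\nabla u\|^2_{(L^2(\Om;d^nx))^n}+\|V^{1/2}u\|^2_{L^2(\Om;d^nx)}\big)$. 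The right-hand factor is strictly positive for $u\neq 0$, and the left side is $\geq 0$, forcing $\lambda\geq 0$; moreover $\lambda=0$ would give $(-\Delta+V)u=0$, whence $w=0$ and then, by the first computation, $\|\nabla u\|^2+\|V^{1/2}u\|^2=0$, so $u\equiv 0$, a contradiction. Hence $\lambda\in(0,\infty)$. (Reality of $\lambda$ also follows at this stage since both quotient quantities are real and nonnegative; alternatively one observes $\lambda$ is a Rayleigh-type quotient as in \eqref{Xmam-26}.)

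The main obstacle I anticipate is the careful justification of the integration-by-parts step for the fourth-order term: one must make sure $w=(-\Delta+V)u$ genuinely belongs to $\dom(H_{max,\Om})$ (which is immediate from \eqref{MM-1}), and that applying \eqref{Tan-C12} is legitimate with $u$ playing the role of the $H^2(\Om)\cap H^1_0(\Om)$ test function while $w$ plays the role of the maximal-domain function — i.e., keeping track of which slot each function occupies in the asymmetric Green formula and checking the boundary pairings $\langle\gamma_N u,\widehat\gamma_D w\rangle$ and $\langle\gamma_D u,\widehat\gamma_N w\rangle$ both vanish because $\gamma_D u=\gamma_N u=0$. Once the bookkeeping of traces is in order, the positivity argument is routine. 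I would also remark that Hypothesis \ref{h.VK} is used twice: once to pass from \eqref{MM-1} to membership $u\in H^2_0(\Om)$ via \eqref{Yan-8}/\eqref{MM-2}, and once through $V\geq 0$ to get $\|V^{1/2}u\|^2\geq 0$ (though $V\geq 0$ is not even strictly needed for $\lambda>0$, only $V$ real and the Poincaré inequality on $H^1_0$).
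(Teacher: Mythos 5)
Your proposal is correct and follows essentially the same route as the paper's proof: both pair the equation against $u$, use $u\in H^2_0(\Om)$ (via \eqref{MM-2}) together with the generalized Green formulas \eqref{2.9} and \eqref{Tan-C12} to move one copy of $-\Delta+V$ across the inner product, and arrive at the identity $\|(-\Delta+V)u\|^2_{L^2(\Om;d^nx)}=\lambda\,\big(\|\nabla u\|^2_{(L^2(\Om;d^nx))^n}+\|V^{1/2}u\|^2_{L^2(\Om;d^nx)}\big)$, from which $\lambda>0$ follows since a nonzero $u$ with vanishing Dirichlet trace cannot be constant. Your bookkeeping of which slot each function occupies in the asymmetric Green formula matches what the paper does implicitly, so there is nothing to add.
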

\begin{proof}
Let $u,\lambda$ be as in the statement of the lemma. Then,
as already pointed out above, $u\in H^2_0(\Omega)$. Based on this,
the fact that $\Delta u\in\dom(-\Delta_{max,\Om})$, and the integration
by parts formulas \eqref{2.9} and \eqref{Tan-C12}, we may then write
(we recall that our $L^2$ pairing is conjugate linear in the {\it first}
argument):
\begin{align}\label{MM-3}
& \lambda\bigl[\|\nabla u\|^2_{(L^2(\Om;d^nx))^n}
+\|V^{1/2}u\|^2_{(L^2(\Om;d^nx))^n}\bigr]
=\lambda\, (u,(-\Delta+V)u)_{L^2(\Om;d^nx)}
\nonumber\\
& \quad
=(u\,,\,\lambda\,(-\Delta+V)u)_{L^2(\Om;d^nx)}
= \big(u,(-\Delta+V)^2 u\big)_{L^2(\Om;d^nx)}
\nonumber\\[4pt]
& \quad
=(u,(-\Delta+V)(-\Delta u+Vu))_{L^2(\Om;d^nx)}
=((-\Delta+V)u,(-\Delta+V)u)_{L^2(\Om;d^nx)}
\nonumber\\[4pt]
& \quad
=\|(-\Delta+V)u\|^2_{L^2(\Om;d^nx)}.
\end{align}
Since, according to Theorem \ref{tH.A},
$L^2(\Om;d^nx)\ni u\not=0$ and $\widehat\ga_D u =0$ prevent $u$ from being a
constant function, \eqref{MM-3} entails
\begin{equation} \label{MM-4}
\lambda=\frac{\|(-\Delta+V)u\|^2_{L^2(\Om;d^nx)}}
{\|\nabla u\|^2_{(L^2(\Om;d^nx))^n}+\|V^{1/2}u\|^2_{(L^2(\Om;d^nx))^n}}>0,
\end{equation} 
as desired.
\end{proof}

Next, we recall the operator $P_{D,\Om,V,z}$ introduced just above \eqref{3.34Y}
and agree to simplify notation by abbreviating $P_{D,\Om,V}:=P_{D,\Om,V,0}$.
That is,
\begin{equation} \label{3.34Yz}
P_{D,\Om,V}=\big[\ga_N (H_{D,\Om})^{-1}\big]^*
\in\cB\big((N^{1/2}(\partial\Omega))^*,L^2(\Om;d^nx)\big)
\end{equation}
is such that if $u:=P_{D,\Om,V} g$ for some
$g\in\bigl(N^{1/2}(\partial\Omega)\bigr)^*$, then
\begin{equation}\label{Yan-14z}
\begin{cases}
(-\Delta+V)u=0\text{ in }\,\Om,
\\[4pt]
u\in L^2(\Om;d^nx),
\\[4pt]
\widehat\ga_D u =g\text{ on }\,\dOm.
\end{cases}
\end{equation}
Hence,
\begin{align}\label{3.Gv}
\begin{split} 
& (-\Delta+V) P_{D,\Om,V}=0,  \\ 
& \widehat\ga_N P_{D,\Om,V}=-M_{D,N,\Omega,V}(0)
\, \mbox{ and }\, 
\widehat\ga_D P_{D,\Om,V}=I_{(N^{1/2}(\dOm))^*},
\end{split} 
\end{align}
with $I_{(N^{1/2}(\dOm))^*}$ the identity operator, on $\bigl(N^{1/2}(\dOm)\bigr)^*$.

\begin{theorem}\label{T-MM-1}
Assume Hypothesis \ref{h.VK}.
If $0\not=v\in L^2(\Om;d^nx)$ is an eigenfunction of the
perturbed Krein Laplacian $H_{K,\Om}$ corresponding to
the eigenvalue $0\not=\lambda\in\bbC$ $($hence $\lambda>0$$)$, then
\begin{equation} \label{MM-5}
u:=v-P_{D,\Om,V}(\widehat\ga_D v)
\end{equation} 
is a nontrivial solution of \eqref{MM-1}. Conversely, if
$0\not=u\in L^2(\Om;d^nx)$ solves \eqref{MM-1} for some $\lambda\in\bbC$ then
$\lambda$ is a $($strictly$)$ positive eigenvalue of the perturbed Krein Laplacian
$H_{K,\Om}$, and
\begin{equation} \label{MM-6}
v:=\lambda^{-1}(-\Delta+V)u
\end{equation} 
is a nonzero eigenfunction of the
perturbed Krein Laplacian, corresponding to this eigenvalue.
\end{theorem}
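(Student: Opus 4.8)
The plan is to establish the two directions of the correspondence separately, in each case verifying membership in the appropriate domain and the relevant differential/boundary identities, and then checking that the two maps $v\mapsto u$ and $u\mapsto v$ are mutually inverse (so nontriviality is preserved).

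\textbf{Forward direction.} Suppose $H_{K,\Om}v=\lambda v$ with $v\neq 0$ and $\lambda>0$ (positivity of $\lambda$ is guaranteed since $H_{K,\Om}\geq 0$ by Theorem \ref{T-Kr}). By Lemma \ref{C-DaW}, $v=v_0+v_h$ with $v_0\in H^2_0(\Om)$ and $(-\Delta+V)v_h=0$ in $\Om$; moreover $v\in\dom(H_{max,\Om})$ and $(-\Delta+V)v=\lambda v\in L^2(\Om;d^nx)$. Set $u:=v-P_{D,\Om,V}(\widehat\ga_D v)$. First I would observe $(-\Delta+V)u=(-\Delta+V)v-0=\lambda v$ by \eqref{3.Gv}, so $(-\Delta+V)^2u=\lambda(-\Delta+V)v=\lambda(-\Delta+V)u$, which is the PDE in \eqref{MM-1}. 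Next, $\widehat\ga_D u=\widehat\ga_D v-\widehat\ga_D P_{D,\Om,V}(\widehat\ga_D v)=\widehat\ga_D v-\widehat\ga_D v=0$ by the last identity in \eqref{3.Gv}. For the Neumann trace: since $v\in\dom(H_{K,\Om})$, the boundary condition $\tau_{N,V,0}v=\widehat\ga_N v+M_{D,N,\Om,V}(0)(\widehat\ga_D v)=0$ holds (Theorem \ref{T-Kr}, \eqref{A-zz.1} with $z=0$); combining with $\widehat\ga_N P_{D,\Om,V}=-M_{D,N,\Om,V}(0)$ from \eqref{3.Gv} gives $\widehat\ga_N u=\widehat\ga_N v-\widehat\ga_N P_{D,\Om,V}(\widehat\ga_D v)=\widehat\ga_N v+M_{D,N,\Om,V}(0)(\widehat\ga_D v)=0$. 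Finally $u\in\dom(-\Delta_{max,\Om})$ since both $v$ and $P_{D,\Om,V}(\widehat\ga_D v)$ lie in that space, and $u\neq 0$ because $u=0$ would force $\lambda v=(-\Delta+V)u=0$, contradicting $v\neq 0,\lambda\neq 0$. Thus $u$ solves \eqref{MM-1}.

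\textbf{Converse direction.} Suppose $0\neq u\in L^2(\Om;d^nx)$ solves \eqref{MM-1} for some $\lambda\in\bbC$. By Lemma \ref{L-MM-1}, $\lambda\in(0,\infty)$, and (since Hypothesis \ref{h.VK} includes quasi-convexity) $u\in H^2_0(\Om)$. Put $v:=\lambda^{-1}(-\Delta+V)u$. Then $v\in L^2(\Om;d^nx)$, and applying $(-\Delta+V)$ to the PDE gives $(-\Delta+V)v=\lambda^{-1}(-\Delta+V)^2u=(-\Delta+V)u=\lambda v$, so in particular $v\in\dom(H_{max,\Om})$ and $H_{max,\Om}v=\lambda v$. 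It remains to check $v\in\dom(H_{K,\Om})$, i.e. $\tau_{N,V,0}v=0$. I would write $v=u+\lambda^{-1}\big((-\Delta+V)u-\lambda u\big)=u+\lambda^{-1}w$ where $w:=(-\Delta+V)u-\lambda u$ satisfies $(-\Delta+V)w=(-\Delta+V)^2u-\lambda(-\Delta+V)u=0$, hence $w\in\ker(H_{max,\Om})$; since $u\in H^2_0(\Om)=\dom(H_{min,\Om})$, the decomposition $v=u+\lambda^{-1}w$ exhibits $v\in\dom(H_{min,\Om})\,\dot{+}\ker(H_{max,\Om})=\dom(H_{K,\Om})$ by Lemma \ref{C-DaW}. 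Therefore $H_{K,\Om}v=H_{max,\Om}v=\lambda v$, and $v\neq 0$ since $v=0$ would give $(-\Delta+V)u=0$, whence $u\in\ker(H_{max,\Om})\cap H^2_0(\Om)$; but $\ker(H_{min,\Om})=\{0\}$ (as $H_{min,\Om}$ is strictly positive, being the closure of $(-\Delta+V)|_{C_0^\infty(\Om)}\geq H_{D,\Om}$-type; more directly $\|(-\Delta+V)u\|^2=\lambda(\|\nabla u\|^2+\|V^{1/2}u\|^2)$ from \eqref{MM-3} forces $u=0$ if the left side vanishes), contradicting $u\neq 0$.

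\textbf{Main obstacle.} The routine PDE manipulations are straightforward; the delicate point is the correct bookkeeping of the (nonlocal, low-regularity) boundary traces $\widehat\ga_D$, $\widehat\ga_N$, and especially the interplay between $\tau_{N,V,0}$, $M_{D,N,\Om,V}(0)$, and $P_{D,\Om,V}$ encoded in \eqref{3.Gv} and \eqref{A-zz.1}. One must be careful that $\widehat\ga_D v$ is a well-defined element of $\big(N^{1/2}(\dOm)\big)^*$ so that $P_{D,\Om,V}(\widehat\ga_D v)$ makes sense (this uses $v\in\dom(H_{max,\Om})$ and Theorem \ref{New-T-tr}), and that all the identities in \eqref{3.Gv} are applied on the correct spaces. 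The verification that the two maps in \eqref{MM-5} and \eqref{MM-6} are mutual inverses on the respective eigenspaces — e.g. starting from $v$, forming $u$, then $\lambda^{-1}(-\Delta+V)u=\lambda^{-1}\lambda v=v$, and conversely $v-P_{D,\Om,V}(\widehat\ga_D v)$ recovers $u$ because $\widehat\ga_D u=0$ and $u\in\ker\big(\widehat\ga_D\big|_{\ker(H_{max,\Om})}\big)$-considerations via Corollary \ref{New-CV22} — is the last piece, and amounts to noting that $v-u=P_{D,\Om,V}(\widehat\ga_D v)$ lies in $\ker(H_{max,\Om})$ with prescribed Dirichlet trace, which is unique by Corollary \ref{New-CV22}.
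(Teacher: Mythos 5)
Your forward direction is essentially the paper's argument verbatim: well-definedness of $u$ via Theorem \ref{New-T-tr} and \eqref{3.34Yz}, the computation $(-\Delta+V)^2u=\lambda(-\Delta+V)u$, the trace identities from \eqref{3.Gv} combined with $\tau_{N,V,0}v=0$, and the same nontriviality argument ($u=0$ would force $\lambda v=(-\Delta+V)P_{D,\Om,V}(\widehat\ga_D v)=0$).

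Your converse direction is correct but takes a genuinely different route to membership $v\in\dom(H_{K,\Om})$. The paper works with the boundary-condition description \eqref{A-zz.1} of the Krein domain: it sets $w:=v-u$, notes $(-\Delta+V)w=0$ with $\widehat\ga_D w=\widehat\ga_D v$ and $\widehat\ga_N w=\widehat\ga_N v$, identifies $w=P_{D,\Om,V}(\widehat\ga_D v)$ by uniqueness in the Dirichlet problem \eqref{Yan-14z}, and then reads off $\widehat\ga_N v=-M_{D,N,\Om,V}(0)(\widehat\ga_D v)$, i.e.\ $\tau_{N,V,0}v=0$. You instead write $v=u+\lambda^{-1}w$ with $w=(-\Delta+V)u-\lambda u\in\ker(H_{max,\Om})$ and $u\in H^2_0(\Om)=\dom(H_{min,\Om})$ (the latter using the quasi-convexity hypothesis via \eqref{Yan-8}), and invoke the von Neumann-type domain decomposition of Lemma \ref{C-DaW} directly. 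Your route is shorter and mirrors the abstract argument of Lemma \ref{l11.3}; the paper's route has the merit of exhibiting explicitly how the nonlocal boundary condition $\tau_{N,V,0}v=0$ is satisfied, which is the description of $\dom(H_{K,\Om})$ the authors emphasize throughout. Both are legitimate given the results already established (Lemma \ref{C-DaW} and Theorem \ref{T-Kr} are proved independently of Theorem \ref{T-MM-1}). Your closing paragraph on the two maps being mutual inverses is not needed for the statement as such, but it is consistent with Corollary \ref{New-CV22} and causes no harm.
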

\begin{proof}
In one direction, assume that $0\not=v\in L^2(\Om;d^nx)$ is an
eigenfunction of the perturbed Krein Laplacian $H_{K,\Om}$
corresponding to the eigenvalue $0\not=\lambda\in\bbC$
(since $H_{K,\Om}\geq 0$
--\,cf.\ Theorem \ref{T-Kr}-- it follows that $\lambda>0$).
Thus, $v$ satisfies
\begin{equation} \label{MM-7}
v\in\dom(H_{max,\Om} ),\quad
(-\Delta+V)v=\lambda\,v,\; 
\tau_{N,V,0} v =0.
\end{equation} 
In particular, $\widehat\ga_D v \in\bigl(N^{1/2}(\dOm)\bigr)^*$ by
Theorem \ref{New-T-tr}. Hence, by \eqref{3.34Yz}, $u$ in \eqref{MM-5} is
a well-defined function which belongs to $L^2(\Om;d^nx)$. In fact,
since also $(-\Delta+V)u=(-\Delta+V)v\in L^2(\Om;d^nx)$, it follows that
$u\in\dom(H_{max,\Om} )$. Going further, we note that
\begin{align} \label{MM-8}
\begin{split} 
(-\Delta+V)^2u &=(-\Delta+V)(-\Delta+V)u
= (-\Delta+V)(-\Delta+V)v   \\
&= \lambda\,(-\Delta+V)v=\lambda\,(-\Delta+V)u.
\end{split} 
\end{align} 
Hence, $(-\Delta+V)^2u=\lambda\,(-\Delta+V)u$ in $\Om$. In addition, by \eqref{3.Gv},
\begin{equation} \label{MM-9}
\widehat\ga_D u =\widehat\ga_D v
-\widehat\ga_D(P_{D,\Om,V}(\widehat\ga_D v )
=\widehat\ga_D v -\widehat\ga_D v =0,
\end{equation} 
whereas
\begin{equation} \label{MM-10}
\widehat\ga_N u =\widehat\ga_N v 
-\widehat\ga_N(P_{D,\Om,V}(\widehat\ga_D v )
=\widehat\ga_N v +M_{D,N,\Om,V}(0)(\widehat\ga_D v )
=\tau_{N,V,0} v=0,
\end{equation} 
by the last condition in \eqref{MM-7}.
Next, to see that $u$ cannot vanish identically,
we note that $u=0$ would imply $v=P_{D,\Om,V}(\widehat\ga_D v)$
which further entails $\lambda\,v=(-\Delta+V)v
=(-\Delta+V)P_{D,\Om,V}(\widehat\ga_D v)=0$, that is, 
$v=0$ (since $\lambda\not=0$). This contradicts the original assumption
on $v$ and shows that $u$ is a nontrivial solution of \eqref{MM-1}.
This completes the proof of the first half of the theorem.

Turning to the second half, suppose that $\lambda\in\bbC$ and
$0\not=u\in L^2(\Om;d^nx)$ is a solution of \eqref{MM-1}. Lemma \ref{L-MM-1}
then yields $\lambda>0$, so that $v:=\lambda^{-1}(-\Delta+V)u$ is a
well-defined function satisfying
\begin{equation} \label{MM-11}
v\in\dom(H_{max,\Om} )\, \mbox{ and }\, 
(-\Delta+V)v=\lambda^{-1}\,(-\Delta+V)^2u=(-\Delta+V)u=\lambda\,v.
\end{equation} 
If we now set $w:=v-u\in L^2(\Omega;d^nx)$ it follows that
\begin{equation} \label{MM-12}
(-\Delta+V)w=(-\Delta+V)v-(-\Delta+V)u=\lambda\,v-\lambda\,v=0,
\end{equation} 
and
\begin{equation} \label{MM-13}
\widehat\ga_N w =\widehat\ga_N v,\quad
\widehat\ga_D w =\widehat\ga_D v.
\end{equation} 
In particular, by the uniqueness in the Dirichlet problem \eqref{Yan-14z},
\begin{equation} \label{MM-14}
w=P_{D,\Om,V}(\widehat\ga_D v).
\end{equation} 
Consequently,
\begin{equation} \label{MM-15}
\widehat\ga_N v=\widehat\ga_N w
=\widehat\ga_N(P_{D,\Om,V}(\widehat\ga_D v)
=-M_{D,N,\Om,V}(0)(\widehat\ga_D v),
\end{equation} 
which shows that
\begin{equation} \label{MM-16}
\tau_{N,V,0} v=\widehat\ga_N v +M_{D,N,\Om,V}(0)(\widehat\ga_D v)=0.
\end{equation} 
Hence $v\in\dom(H_{K,\Om})$. We note that $v=0$ would entail that the
function $u\in H^2_0(\Omega)$ is a null solution of $-\Delta+V$, hence
identically zero which, by assumption, is not the case. Therefore, $v$ does
not vanish identically. Altogether, the above reasoning shows that $v$ is
a nonzero eigenfunction of the perturbed Krein Laplacian,
corresponding to the positive eigenvalue $\lambda >0$, completing the proof. 
\end{proof}

\begin{proposition} \lb{pHKv} 
$(i)$ Assume Hypothesis \ref{h.VK} and let $0\neq v$ be any eigenfunction of 
$H_{K,\Om}$ corresponding to the eigenvalue 
$0 \neq \lambda \in \sigma(H_{K,\Om})$. In addition suppose that the operator of multiplication 
by $V$ satisfies 
\begin{equation}\label{MUL}
M_V\in\cB\bigl(H^2(\Om),H^s(\Om)\bigr) \, \mbox{ for some } \, 1/2<s\leq 2.
\end{equation}
Then $u$ defined in \eqref{MM-5} satisfies 
\begin{equation}
u \in H^{5/2}(\Om), \, \text{ implying } \, v \in H^{1/2}(\Om).   \lb{Kv}
\end{equation}
$(ii)$ Assume the smooth case, that is, $\partial\Omega$ is $C^\infty$ and 
$V\in C^\infty(\ol\Om)$, and let $0 \neq v$ be any eigenfunction of $H_{K,\Om}$ corresponding to the eigenvalue $0 \neq \lambda \in \sigma(H_{K,\Om})$. 
Then $u$ defined in 
\eqref{MM-5} satisfies 
\begin{equation}
u\in C^\infty(\ol \Om), \, \text{ implying } \, v\in C^\infty(\ol \Om).  \lb{KvC}
\end{equation}
\end{proposition}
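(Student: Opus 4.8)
The plan is to reduce both assertions, via Theorem \ref{T-MM-1}, to regularity statements for the associated buckling eigenfunction $u$, and then to invoke elliptic regularity — in the quasi-convex (nonsmooth) case using the results of \cite{GM11}, and in the $C^\infty$ case the classical theory. To set up, let $0\neq v$ be an eigenfunction of $H_{K,\Om}$ with eigenvalue $0\neq\lambda\in\sigma(H_{K,\Om})$; by Lemma \ref{L-MM-1} (or Theorem \ref{T-Kr}) necessarily $\lambda>0$. By Theorem \ref{T-MM-1} the function $u:=v-P_{D,\Om,V}(\widehat{\gamma}_D v)$ from \eqref{MM-5} is a nontrivial solution of \eqref{MM-1}, and Theorem \ref{T-DD1} (this is where quasi-convexity enters, via \eqref{Yan-8}) then forces $u\in H^2_0(\Om)$, so that \eqref{MM-1} takes the form \eqref{MM-2}, that is, $(-\Delta+V)^2u=\lambda\,(-\Delta+V)u$ in $\Om$; moreover $(-\Delta+V)u=\lambda v$, so $v=\lambda^{-1}(-\Delta u+Vu)$.

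For part $(i)$, I would expand the fourth-order equation and solve for the top-order term, getting $\Delta^2 u=\Delta(Vu)+(V-\lambda)\Delta u+(\lambda-V)Vu$ in $\Om$. Since $u\in H^2_0(\Om)$ we have $\Delta u\in L^2(\Om)$, and since $V\in L^\infty(\Om)$ both $(V-\lambda)\Delta u$ and $(\lambda-V)Vu=\lambda Vu-V^2u$ lie in $L^2(\Om)$; by the multiplier hypothesis \eqref{MUL} one has $Vu\in H^s(\Om)$ with $1/2<s\le 2$, hence $\Delta(Vu)\in H^{s-2}(\Om)$. Because $s\le 2$, $L^2(\Om)\hookrightarrow H^{s-2}(\Om)$, so $\Delta^2 u\in H^{s-2}(\Om)$, and since $s>1/2$ this embeds into $H^{-3/2}(\Om)$. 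The next (and crucial) step is to apply the sharp elliptic regularity for the clamped-plate (biharmonic Dirichlet) problem on quasi-convex domains established in \cite{GM11}: $u\in H^2_0(\Om)$ together with $\Delta^2 u\in H^{s-2}(\Om)$ yields $u\in H^{5/2}(\Om)$. Finally, from $v=\lambda^{-1}(-\Delta u+Vu)$ and $\Delta\in\cB(H^{5/2}(\Om),H^{1/2}(\Om))$, together with $Vu\in H^s(\Om)\hookrightarrow H^{1/2}(\Om)$ (again by \eqref{MUL}, using $u\in H^{5/2}(\Om)\subset H^2(\Om)$), I get $v\in H^{1/2}(\Om)$, which is \eqref{Kv}.

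For part $(ii)$, I would work with $L:=(-\Delta+V)^2-\lambda(-\Delta+V)$: in the smooth case this is a strongly elliptic differential operator of order $4$ on $\ol\Om$ with $C^\infty(\ol\Om)$ coefficients and principal symbol $|\xi|^4$, and the boundary conditions carried by $u\in H^2_0(\Om)$, namely $\gamma_D u=\gamma_N u=0$, are the clamped-plate conditions, which satisfy the Lopatinski--Shapiro (complementing) condition relative to $L$; thus $(L;\gamma_D,\gamma_N)$ is a regular elliptic boundary value problem. Since $Lu=0\in C^\infty(\ol\Om)$ and $u\in H^2(\Om)$ satisfies the homogeneous boundary conditions, the regularity theory for such problems (see, e.g., \cite{Ag97}, \cite{Gr09}; this is precisely the step carried out by Grubb \cite{Gr83} for the system \eqref{Df-H6}) bootstraps $u$ into $H^k(\Om)$ for every $k\in\bbN$, that is, $u\in C^\infty(\ol\Om)$; then $v=\lambda^{-1}(-\Delta+V)u\in C^\infty(\ol\Om)$, which is \eqref{KvC}.

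The main obstacle will be the nonsmooth regularity step in part $(i)$: extracting the extra half-derivative ($u\in H^{5/2}$) for the buckling eigenfunction on a merely quasi-convex domain. It is exactly here that the hypothesis \eqref{MUL} is needed — it is the weakest demand on $V$ ensuring that the lower-order terms do not spoil the $H^{s-2}$ bound on $\Delta^2 u$ (equivalently, that $Vu$ is no worse than half a derivative below $H^1$), so that the clamped-plate regularity of \cite{GM11} can be applied. On a $C^\infty$ domain this step is classical and, being unobstructed by boundary roughness, produces full $C^\infty(\ol\Om)$-smoothness rather than only $H^{5/2}(\Om)$, which accounts for the dichotomy between $(i)$ and $(ii)$.
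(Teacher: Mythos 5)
Your argument is correct and follows the paper's proof essentially verbatim: in $(i)$ one first establishes $u\in H^2_0(\Om)$, uses \eqref{MUL} to place $\Delta^2 u$ in $H^{s-2}(\Om)$, and invokes sharp regularity for the biharmonic Dirichlet problem on Lipschitz domains to obtain $u\in H^{5/2}(\Om)$, while in $(ii)$ classical elliptic regularity for the regular fourth-order clamped-plate boundary value problem gives $u\in C^\infty(\ol\Om)$ and hence $v=\lambda^{-1}(-\Delta+V)u\in C^\infty(\ol\Om)$. The only correction: the $H^{5/2}$ regularity for the clamped plate is not taken from \cite{GM11} but from Pipher--Verchota \cite{PV95} (see also Adolfsson--Pipher \cite{AP98}), a result valid on arbitrary bounded Lipschitz domains, so quasi-convexity enters only through the preliminary step $u\in H^2_0(\Om)$.
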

\begin{proof}
$(i)$ We note that $u\in L^2(\Om;d^nx)$ satisfies $\widehat\gamma_D(u)=0$, 
$\widehat\gamma_N(u)=0$, and 
$(-\Delta+V)u=(-\Delta+V)v=\lambda v\in L^2(\Omega;d^nx)$. Hence, by Theorems 
\ref{T-DD1} and \ref{tH.A}, we obtain that $u\in H^2_0(\Omega)$. Next, observe that 
$(-\Delta+V)^2u=\lambda^2 v\in L^2(\Omega;d^nx)$ which therefore entails 
$\Delta^2u\in H^{s-2}(\Om)$ by \eqref{MUL}. With this at hand, the regularity results
in \cite{PV95} (cf.\ also \cite{AP98} for related results) yield that 
$u\in H^{5/2}(\Om)$. 

$(ii)$ Given the eigenfunction $0\neq v$ of $H_{K,\Om}$, \eqref{MM-5} yields that $u$ satisfies the 
generalized buckling problem \eqref{MM-1}, so that by elliptic regularity 
$u\in C^\infty(\ol \Om)$. By \eqref{MM-6} and \eqref{MM-7} one thus obtains 
\begin{equation}
\lambda v = (-\Delta +V) v = (-\Delta + V) u, \, \text{ with } \, u\in C^\infty(\ol \Om), 
\end{equation} 
proving \eqref{KvC}. 
\end{proof}

In passing, we note that the multiplier condition \eqref{MUL} 
is satisfied, for instance, if $V$ is Lipschitz. 

We next wish to prove that the perturbed Krein Laplacian has
only point spectrum (which, as the previous theorem shows, is directly
related to the eigenvalues of the generalized buckling of the clamped
plate problem). This requires some preparations, and we proceed by
first establishing the following.

\begin{lemma}\label{L-MM-2}
Assume Hypothesis \ref{h.VK}.
Then there exists a discrete subset $\Lambda_{\Om}$ of $(0,\infty)$ without
any finite accumulation points which has the following significance: 
For every $z\in\bbC\backslash \Lambda_{\Om}$ and every
$f\in H^{-2}(\Om)$, the problem
\begin{equation} \label{MM-17}
\begin{cases}
u\in H^2_0(\Omega),
\\
(-\Delta+V)(-\Delta+V-z)u=f \,\mbox{ in } \,\Omega,
\end{cases}
\end{equation}
has a unique solution. In addition, there exists $C=C(\Omega,z)>0$
such that the solution satisfies
\begin{equation} \label{MM-18}
\|u\|_{H^2(\Omega)}\leq C\|f\|_{H^{-2}(\Omega)}.
\end{equation} 

Finally, if $z\in\Lambda_{\Om}$, then there exists $u\not=0$ satisfying
\eqref{MM-2}. In fact, the space of solutions for the
problem \eqref{MM-2} is, in this case, finite-dimensional and nontrivial.
\end{lemma}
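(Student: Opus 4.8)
The plan is to recognize the operator pencil $(-\Delta+V)(-\Delta+V-z)$ acting from $H^2_0(\Om)$ to $H^{-2}(\Om)$ as a holomorphic (in fact, affine) family of Fredholm operators of index zero, and then invoke the analytic Fredholm theorem. First I would use the facts recorded in Remark~\ref{r11.8} and in Section~\ref{s2} (via \eqref{11.60}): for a bounded Lipschitz domain, $-\Delta+V-z$ and $(-\Delta+V)$ each map $H^2_0(\Om)$ boundedly into $H^{-2}(\Om)$, but more usefully, $(-\Delta+V)^2\colon H^2_0(\Om)\to H^{-2}(\Om)$ is an isomorphism (this is the $V\equiv0$ statement $(-\Delta)^2\colon H^2_0(\Om)\to H^{-2}(\Om)$ perturbed by the bounded, hence compact relative to the top order, lower-order terms; alternatively it follows from the coercivity $a(u,u)\geq\varepsilon^2\|u\|^2$ of \eqref{11.4} applied to $S=H_{min,\Om}$ together with Lax--Milgram on $\cW=H^2_0(\Om)$). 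Writing
\begin{equation}
(-\Delta+V)(-\Delta+V-z) = (-\Delta+V)^2 - z\,(-\Delta+V),
\end{equation}
and factoring out the isomorphism $(-\Delta+V)^2$, the problem \eqref{MM-17} is equivalent to
\begin{equation}
\big(I_{H^2_0(\Om)} - z\,K\big)u = \big[(-\Delta+V)^2\big]^{-1}f, \quad
K := \big[(-\Delta+V)^2\big]^{-1}(-\Delta+V) \in \cB\big(H^2_0(\Om)\big).
\end{equation}

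The key point is that $K$ is compact on $H^2_0(\Om)$. This I would get from the compactness of the embedding $H^2_0(\Om)\hookrightarrow L^2(\Om;d^nx)$ (Rellich, valid since $\Om$ is bounded) together with the mapping properties $(-\Delta+V)\colon H^2_0(\Om)\to L^2(\Om;d^nx)$ bounded and $[(-\Delta+V)^2]^{-1}\colon H^{-2}(\Om)\to H^2_0(\Om)$ bounded, inserting the compact embedding $L^2(\Om;d^nx)\hookrightarrow H^{-2}(\Om)$ in between; thus $K$ factors through a compact operator and is compact. Once $K\in\cB_\infty(H^2_0(\Om))$, the analytic Fredholm theorem (e.g., as in Reed--Simon) applied to the entire operator-valued function $z\mapsto I - zK$ gives: either $I-zK$ is invertible for no $z\in\bbC$, or it is invertible for all $z\in\bbC$ outside a discrete set $\Lambda_\Om$ with no finite accumulation point. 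The first alternative is excluded because $I - zK$ is clearly invertible for $|z|<\|K\|^{-1}$ (Neumann series). This yields unique solvability of \eqref{MM-17} for $z\notin\Lambda_\Om$; the estimate \eqref{MM-18} is just the statement that $(I-zK)^{-1}$ is bounded, composed with boundedness of $[(-\Delta+V)^2]^{-1}$, with the constant depending continuously on $z$ away from $\Lambda_\Om$.

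For the location of $\Lambda_\Om$ in $(0,\infty)$: a point $z$ lies in $\Lambda_\Om$ precisely when $\ker(I-zK)\neq\{0\}$, i.e.\ when there is $0\neq u\in H^2_0(\Om)$ with $(-\Delta+V)(-\Delta+V-z)u=0$, which is exactly \eqref{MM-2} (equivalently \eqref{MM-1}, using Theorem~\ref{T-DD1} to pass between the $H^2_0$ formulation and the $\dom(-\Delta_{max,\Om})$ formulation). By Lemma~\ref{L-MM-1}, any such $z$ must be strictly positive, so $\Lambda_\Om\subset(0,\infty)$. Finiteness of the solution space for $z\in\Lambda_\Om$ is the Fredholm alternative: $\ker(I-zK)$ is finite-dimensional since $K$ is compact; nontriviality is the definition of membership in $\Lambda_\Om$. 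The main obstacle I anticipate is the clean verification that $(-\Delta+V)^2$ is an isomorphism $H^2_0(\Om)\to H^{-2}(\Om)$ on a merely Lipschitz domain --- one must be careful that this uses only the coercivity estimate \eqref{11.4} and Lax--Milgram (which is purely Hilbert-space and needs no regularity of $\partial\Om$), rather than any elliptic regularity; everything else is a routine application of compactness and the analytic Fredholm theorem.
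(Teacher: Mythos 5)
Your overall architecture coincides with the paper's: write $A_{V,z}=(-\Delta+V)(-\Delta+V-z)\in\cB\big(H^2_0(\Om),H^{-2}(\Om)\big)$ as an invertible base operator composed with $I-(\text{affine in }z)\times(\text{compact})$, apply the analytic Fredholm theorem to get the discrete exceptional set, and then invoke Lemma \ref{L-MM-1} to place $\Lambda_\Om$ in $(0,\infty)$ and the Fredholm alternative for the finite-dimensionality of the solution space. Your compactness argument for $K$ (factoring through the compact embedding $L^2(\Om;d^nx)\hookrightarrow H^{-2}(\Om)$, dual to Rellich) is exactly the mechanism behind \eqref{Mi-4}, and the exclusion of the "never invertible" alternative via a Neumann series for small $|z|$ is a legitimate substitute for the paper's knowledge of invertibility at the base point.

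The genuine gap is in your justification that $(-\Delta+V)^2\colon H^2_0(\Om)\to H^{-2}(\Om)$ is an isomorphism. Lax--Milgram on $H^2_0(\Om)$ with the form $a(u,v)=((-\Delta+V)u,(-\Delta+V)v)_{L^2(\Om;d^nx)}$ requires coercivity with respect to the $H^2$-norm, that is, $\|(-\Delta+V)u\|_{L^2(\Om;d^nx)}\geq c\,\|u\|_{H^2(\Om)}$ for $u\in H^2_0(\Om)$; the estimate \eqref{11.4} you cite is only coercivity with respect to the $L^2$-norm and does not identify the form-Hilbert-space $\cW$ with $H^2_0(\Om)$ (nor its dual with $H^{-2}(\Om)$). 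The needed $H^2$-coercivity does not follow from Cauchy--Schwarz either: expanding $\|(-\Delta+V)u\|^2_{L^2}=\|\Delta u\|^2_{L^2}+\|Vu\|^2_{L^2}-2\Re\int_\Om d^nx\,\Delta u\,V\bar{u}$, the cross term can only be absorbed when $\|V\|_{L^\infty}$ is small relative to the Poincar\'e constant. The estimate is true --- it is \eqref{mam-6} --- but the paper proves it only later, in Lemma \ref{T-MAM-1}, via Poincar\'e's inequality, the observation that $-\Delta+V\colon H^2_0(\Om)\to L^2(\Om;d^nx)$ is bounded from below modulo compact operators and injective (since $0\notin\sigma(H_{D,\Om})$), hence has closed range, and the open mapping theorem. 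In the proof of Lemma \ref{L-MM-2} itself the paper deliberately sidesteps this by anchoring the analytic Fredholm argument at a point $z_0$ with $\Re(z_0)\leq -M$: there the extra term $-z_0\|\nabla u\|^2_{L^2}$ in $a_{V,z_0}(u,u)$ absorbs all lower-order contributions, and coercivity on $H^2_0(\Om)$ is elementary. To repair your argument, either import the injectivity/closed-range/open-mapping argument for \eqref{mam-6}, or move your base point to such a $z_0$; everything else you wrote then goes through.
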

\begin{proof}
In a first stage, fix $z\in\bbC$ with $\Re(z)\leq -M$,
where $M=M(\Om,V)>0$ is a large constant to be specified later, and
consider the bounded sesquilinear form
\begin{align}\label{MM-19} 
& a_{V,z}(\dott,\dott):H^2_0(\Om)\times H^2_0(\Om)\to \bbC,  \no
\\
& a_{V,z}(u,v):= ((-\Delta+V)u,(-\Delta+V)v)_{L^2(\Om;d^nx)}
+ \big(V^{1/2}u,V^{1/2}v\big)_{L^2(\Om;d^nx)}
\\
& \hskip 0.77in
-z\, (\nabla u,\nabla v)_{(L^2(\Om;d^nx))^n},\quad
u,v\in H^2_0(\Om).  \no 
\end{align}
Then, since $f\in H^{-2}(\Om)=\bigl(H^2_0(\Om)\bigr)^*$, the well-posedness
of \eqref{MM-17} will follow with the help of the Lax-Milgram lemma as soon as
we show that \eqref{MM-19} is coercive. To this end, observe that
via repeated integrations by parts
\begin{align}\label{MM-20}
\begin{split} 
a_{V,z}(u,u) &= \sum_{j,k=1}^n\int_{\Omega}d^nx\,\Big|
\frac{\partial^2 u}{\partial x_j\partial x_k}\Big|^2
-z \sum_{j=1}^n\int_{\Omega}d^nx\,
\Big|\frac{\partial u}{\partial x_j}\Big|^2
\\
& \quad +\int_{\Omega}d^nx\,\big|V^{1/2}u\bigr| 
+2 \Re\bigg(\int_{\Omega}d^nx\,\Delta u\,V\ol{u}\bigg), \quad u\in C^\infty_0(\Om).
\end{split} 
\end{align}
We note that the last term is of the order
\begin{equation} \label{MM-20U}
O\bigl(\|V\|_{L^\infty(\Om;d^nx)}\|\Delta u\|_{L^2(\Om;d^nx)}
\|u\|_{L^2(\Om;d^nx)}\bigr)
\end{equation} 
and hence, can be dominated by
\begin{equation} \label{MM-21U}
C\|V\|_{L^\infty(\Om;d^nx)}\big[\varepsilon\|u\|^2_{H^2(\Om)}
+(4\varepsilon)^{-1}\|u\|^2_{L^2(\Om;d^nx)}\big],
\end{equation} 
for every $\varepsilon>0$. Thus, based on this and Poincar\'e's inequality,
we eventually obtain, by taking $\varepsilon>0$ sufficiently small, and
$M$ (introduced in the beginning of the proof) sufficiently large, that
\begin{equation} \label{MM-21}
\Re (a_{V,z}(u,u)) \geq C\|u\|^2_{H^2(\Om)},\quad 
u\in C^\infty_0(\Om).
\end{equation} 
Hence,
\begin{equation} \label{MM-22}
\Re (a_{V,z}(u,u)) \geq C\|u\|^2_{H^2(\Om)},\quad 
u\in H^2_0(\Om),
\end{equation} 
by the density of $C^\infty_0(\Om)$ in $H^2_0(\Om)$. Thus, the form
\eqref{MM-20} is coercive and hence, the problem \eqref{MM-17} is well-posed
whenever $z\in\bbC$ has $\Re(z)\leq -M$.

We now wish to extend this type of conclusion to a larger set of
$z$'s. With this in mind, set
\begin{equation} \label{Mi-1}
A_{V,z}:=(-\Delta+V)(-\Delta+V-z I_{\Om})\in
\cB\bigl(H^2_0(\Om),H^{-2}(\Om)\bigr),\quad z\in\bbC.
\end{equation} 
The well-posedness of \eqref{MM-17} is equivalent to the fact that the
above operator is invertible. In this vein, we note that if
we fix $z_0 \in\bbC$ with $\Re(z_0 )\leq -M$, then, from what
we have shown so far,
\begin{equation} \label{Mi-2}
A_{V,z_0 }^{-1}\in\cB\bigl(H^{-2}(\Om),H^2_0(\Om)\bigr)
\end{equation} 
is a well-defined operator. For an arbitrary $z\in\bbC$ we then write
\begin{equation} \label{Mi-3}
A_{V,z}=A_{V,z_0 }[I_{H^2_0(\Om)}+B_{V,z}],
\end{equation} 
where $I_{H^2_0(\Om)}$ is the identity operator on $H^2_0(\Om)$ and we have set
\begin{equation} \label{Mi-4}
B_{V,z}:=A_{V,z_0 }^{-1}(A_{V,z}-A_{V,z_0 })
=(z_0 -z)A_{V,z_0 }^{-1}(-\Delta+V)
\in\cB_\infty\bigl(H^2_0(\Om)\bigr).
\end{equation} 
Since $\bbC\ni z\mapsto B_{V,z}\in\cB\bigl(H^2_0(\Om)\bigr)$
is an analytic, compact operator-valued mapping, which vanishes for
$z=z_0 $, the Analytic Fredholm Theorem yields the existence
of an exceptional, discrete set $\Lambda_{\Om}\subset\bbC$, without
any finite accumulation points such that
\begin{equation} \label{Mi-5}
(I_{H^2_0(\Om)}+B_{V,z})^{-1}\in\cB\bigl(H^2_0(\Om)\bigr),\quad 
z\in\bbC\backslash \Lambda_{\Om}.
\end{equation} 
As a consequence of this, \eqref{Mi-2}, and \eqref{Mi-3}, we therefore have
\begin{equation} \label{Mi-6}
A_{V,z}^{-1}\in\cB\bigl(H^{-2}(\Om),H^2_0(\Om)\bigr),\quad 
z\in\bbC\backslash \Lambda_{\Om}.
\end{equation} 
We now proceed to show that, in fact, $\Lambda_{\Om}\subset(0,\infty)$.
To justify this inclusion, we observe that
\begin{equation} \label{Mi-6X}
\text{$A_{V,z}$ in \eqref{Mi-1} is a Fredholm operator,
with Fredholm index zero, for every $z\in\bbC$},
\end{equation} 
due to \eqref{Mi-2}, \eqref{Mi-3}, and \eqref{Mi-4}. Thus, if for some
$z\in\bbC$ the operator $A_{V,z}$ fails to be invertible, then
there exists $0\not=u\in L^2(\Om;d^nx)$ such that $A_{V,z}u=0$.
In view of \eqref{Mi-1} and Lemma \ref{L-MM-1}, the latter condition
forces $z\in(0,\infty)$. Thus, $\Lambda_{\Om}$ consists of positive
numbers. At this stage, it remains to justify the very last claim in
the statement of the lemma. This, however, readily follows from \eqref{Mi-6X},
completing the proof.
\end{proof}

\begin{theorem}\label{T-MM-2}
Assume Hypothesis \ref{h.VK} and recall the
exceptional set $\Lambda_{\Om}\subset(0,\infty)$ from Lemma \ref{L-MM-2},
which is discrete with only accumulation point at infinity. Then
\begin{equation} \label{Mi-7}
\sigma(H_{K,\Om})=\Lambda_{\Om}\cup\{0\}.
\end{equation} 
Furthermore, for every $0\not=z \in\bbC\backslash \Lambda_{\Om}$, the
action of the resolvent $(H_{K,\Om}-z I_{\Om})^{-1}$
on an arbitrary element $f\in L^2(\Om;d^nx)$ can be described as follows: 
Let $v$ solve
\begin{equation}\label{MM-23} 
\begin{cases}
v\in H^2_0(\Omega),
\\
(-\Delta+V)(-\Delta+V-z)v=(-\Delta+V)f\in H^{-2}(\Omega),
\end{cases} 
\end{equation}
and consider
\begin{equation} \label{MM-24}
w:=z^{-1}[(- \Delta+V-z)v-f]\in L^2(\Om;d^nx).
\end{equation} 
Then
\begin{equation} \label{MM-24X}
(H_{K,\Om}-z I_{\Om})^{-1}f=v+w.
\end{equation} 

Finally, every $z \in \Lambda_{\Om}\cup\{0\}$ is actually an eigenvalue
$($of finite multiplicity, if nonzero$)$ for the perturbed Krein Laplacian,
and the essential spectrum of this operator is given by
\begin{equation} \label{Mi-7S}
\sigma_{ess}(H_{K,\Om})=\{0\}.
\end{equation} 
\end{theorem}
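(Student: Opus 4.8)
The plan is to package three facts already established: the analytic-Fredholm analysis of the fourth-order operator $A_{V,z}=(-\Delta+V)(-\Delta+V-zI_\Om)$ in Lemma \ref{L-MM-2}, which produced the discrete exceptional set $\Lambda_\Om\subset(0,\infty)$; the one-to-one correspondence in Theorem \ref{T-MM-1} between nonzero eigenfunctions of $H_{K,\Om}$ and nontrivial solutions of the generalized buckling problem \eqref{MM-1} (equivalently \eqref{MM-2}); and the structural facts $H_{K,\Om}=H_{K,\Om}^*\geq 0$, $\sigma_{\rm ess}(H_{K,\Om})=\{0\}$, $\dim\ker(H_{K,\Om})=\infty$ from Theorem \ref{T-Kr}. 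I would first identify $\sigma(H_{K,\Om})$ and then record the resolvent formula.

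For the spectrum: since $H_{K,\Om}$ is self-adjoint and nonnegative, $\sigma(H_{K,\Om})\subset[0,\infty)$, and by Theorem \ref{T-Kr} every point of $\sigma(H_{K,\Om})\backslash\{0\}$ is an isolated eigenvalue of finite multiplicity. If $0\neq z\in\sigma(H_{K,\Om})$, choose a nonzero eigenfunction $v$; Theorem \ref{T-MM-1} converts it into a nonzero solution $u$ of \eqref{MM-1}, hence---by Hypothesis \ref{h.VK} and \eqref{Yan-8}---of \eqref{MM-2} with $\lambda=z$, so $A_{V,z}u=0$ with $u\neq 0$, which by the uniqueness assertion of Lemma \ref{L-MM-2} forces $z\in\Lambda_\Om$. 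Conversely, $0$ is an eigenvalue because $\ker(H_{K,\Om})\neq\{0\}$, and for $z\in\Lambda_\Om$ Lemma \ref{L-MM-2} supplies $0\neq u\in H^2_0(\Om)$ solving \eqref{MM-2}, so the converse half of Theorem \ref{T-MM-1} makes $v:=z^{-1}(-\Delta+V)u$ a nonzero eigenfunction of $H_{K,\Om}$ with eigenvalue $z$. Thus $\sigma(H_{K,\Om})=\Lambda_\Om\cup\{0\}$. Moreover \eqref{MM-5} and \eqref{MM-6} compose to the identity in both orders, so they give a linear isomorphism between the $z$-eigenspace of $H_{K,\Om}$ and the solution space of \eqref{MM-2}, which is finite-dimensional by Lemma \ref{L-MM-2}; hence each nonzero $z\in\Lambda_\Om$ is an eigenvalue of finite multiplicity, and $\sigma_{\rm ess}(H_{K,\Om})=\{0\}$ (one may also simply invoke \eqref{spec-1}).

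For the resolvent: fix $0\neq z\in\bbC\backslash\Lambda_\Om$, so $z\in\rho(H_{K,\Om})$ by the previous step, and given $f\in L^2(\Om;d^nx)$ it suffices to produce $g\in\dom(H_{K,\Om})$ with $(H_{K,\Om}-zI_\Om)g=f$. Since $V\in L^\infty(\Om;d^nx)$, $(-\Delta+V)f\in H^{-2}(\Om)$, so Lemma \ref{L-MM-2} gives a unique $v\in H^2_0(\Om)$ solving \eqref{MM-23}; define $w$ by \eqref{MM-24}. Applying $(-\Delta+V)$ to $zw=(-\Delta+V-z)v-f$ and using \eqref{MM-23} gives $(-\Delta+V)w=0$ in $\cD'(\Om)$, so $\Delta w=Vw\in L^2(\Om;d^nx)$ and $w\in\ker(H_{max,\Om})$; since $v\in H^2_0(\Om)=\dom(H_{min,\Om})$, the decomposition $\dom(H_{K,\Om})=\dom(H_{min,\Om})\dotplus\ker(H_{max,\Om})$ of Lemma \ref{C-DaW} shows $g:=v+w\in\dom(H_{K,\Om})$, and as $H_{K,\Om}$ acts as $-\Delta+V$ a one-line computation using \eqref{MM-24} yields $(H_{K,\Om}-zI_\Om)(v+w)=(-\Delta+V-z)v-zw=f$. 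Hence $(H_{K,\Om}-zI_\Om)^{-1}f=v+w$, which is \eqref{MM-24X}.

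None of this is genuinely hard---the substance resides in Lemma \ref{L-MM-2}, Theorem \ref{T-MM-1}, and Theorem \ref{T-Kr}. The only points requiring care are bookkeeping: tracking the space in which each identity holds ($\cD'(\Om)$, $H^{-2}(\Om)$, or $L^2(\Om;d^nx)$) when differentiating \eqref{MM-24} and rearranging, and invoking the direct-sum description of $\dom(H_{K,\Om})$ to certify that $v+w$ lies in the domain; and verifying that the correspondence of Theorem \ref{T-MM-1} is a genuine bijection on eigenspaces (so that multiplicities match), which amounts to checking that \eqref{MM-5} and \eqref{MM-6} are mutually inverse maps.
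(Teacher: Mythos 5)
Your proof is correct. The resolvent verification is essentially the paper's computation: both arguments show $(-\Delta+V)w=0$ so that $w\in\ker(H_{max,\Om})$, place $v+w$ in $\dom(H_{K,\Om})$ via a direct-sum description of that domain (you use Lemma \ref{C-DaW}; the paper routes through $\ker(\tau_{N,V,0})$ via \eqref{3.AKe}), and then compute $(-\Delta+V-z)(v+w)=f$. Where you genuinely diverge is the logic of the spectral identification. The paper proves $\bbC\backslash(\Lambda_{\Om}\cup\{0\})\subseteq\rho(H_{K,\Om})$ by showing $H_{K,\Om}-zI_{\Om}$ is \emph{surjective} (the explicit $v+w$ construction, with norm control) and injective (Lemma \ref{L-MM-2} together with Theorem \ref{T-MM-1}); in particular it never assumes in advance that the nonzero spectrum is discrete, and \eqref{Mi-7S} emerges as a genuine by-product of the Fredholm analysis of the buckling pencil. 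You instead import the discreteness of $\sigma(H_{K,\Om})\cap(0,\infty)$ from Theorem \ref{T-Kr} (equivalently, from the abstract result \eqref{ESSK} applied to $S_F=H_{D,\Om}$), reduce the forward inclusion to the eigenvalue correspondence of Theorem \ref{T-MM-1}, and only then verify the resolvent formula, $z\in\rho(H_{K,\Om})$ being already known. This is a legitimate shortcut, since \eqref{spec-1} is established independently in the survey, but be aware that it is load-bearing rather than optional: for real $z\notin\Lambda_{\Om}\cup\{0\}$, injectivity of $H_{K,\Om}-zI_{\Om}$ alone would not exclude $z$ from the continuous spectrum of a self-adjoint operator, so without \eqref{spec-1} you would be forced back to the paper's surjectivity argument. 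What your route buys is economy; what the paper's buys is a self-contained derivation of \eqref{Mi-7S} and of the finite multiplicities. Your explicit check that \eqref{MM-5} and \eqref{MM-6} are mutually inverse linear maps between eigenspaces and buckling solution spaces, so that multiplicities match, is a correct refinement that the paper leaves implicit.
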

\begin{proof}
Let $0\not=z \in\bbC\backslash \Lambda_{\Om}$, fix $f\in L^2(\Om;d^nx)$,
and assume that $v,w$ are as in the statement of the theorem. That $v$
(hence also $w$) is well-defined follows from Lemma \ref{L-MM-2}. Set
\begin{align}\label{MM-26}
u :=  v+w \in & H^2_0(\Om)\dot{+}
\big\{\eta\in L^2(\Om;d^nx)\,\big|\,(-\Delta+V)\eta=0\mbox{ in }\Omega\big\}
\nonumber\\
& \quad = \ker\big(\tau_{N,V,0}\big)\hookrightarrow \dom(H_{max,\Om} ),
\end{align}
by \eqref{3.AKe}. Thus, $u\in \dom(H_{max,\Om} )$ and
$\tau_{N,V,0} u=0$ which force $u\in\dom(H_{K,\Om})$.
Furthermore,
\begin{equation} \label{Mi-8}
\|u\|_{L^2(\Om;d^nx)}+\|\Delta u\|_{L^2(\Om;d^nx)}
\leq C\|f\|_{L^2(\Om;d^nx)},
\end{equation} 
for some $C=C(\Om,V,z)>0$, and
\begin{align}\label{MM-27}
& (-\Delta+V-z)u = (-\Delta+V-z)v+(-\Delta+V-z)w
\nonumber\\
& \quad = (-\Delta+V-z)v
+z^{-1}(-\Delta+V-z)[(-\Delta+V-z)v-f]
\nonumber\\
& \quad = (-\Delta+V-z)v+z^{-1}(-\Delta+V)[(-\Delta+V-z)v-f]
-[(-\Delta+V-z)v-f]
\nonumber\\
& \quad = f+z^{-1}[(-\Delta+V)(-\Delta+V-z)v-(-\Delta+V)f]=f,
\end{align}
by \eqref{MM-23}, \eqref{MM-24}. As a consequence of this analysis,
we may conclude that the operator
\begin{equation} \label{Mi-9}
H_{K,\Om}-z I_{\Om}:\dom(H_{K,\Om})\subset L^2(\Om;d^nx)
\to L^2(\Om;d^nx)
\end{equation} 
is onto (with norm control), for every
$z \in\bbC\backslash (\Lambda_{\Om}\cup\{0\})$. When
$z \in\bbC\backslash (\Lambda_{\Om}\cup\{0\})$
the last part in Lemma \ref{L-MM-2}
together with Theorem \ref{T-MM-1} also yield that the operator
\eqref{Mi-9} is injective. Together, these considerations prove that
\begin{equation} \label{Mi-7X}
\sigma(H_{K,\Om})\subseteq\Lambda_{\Om}\cup\{0\}.
\end{equation} 
Since the converse inclusion also follows from the
last part in Lemma \ref{L-MM-2} together with Theorem \ref{T-MM-1},
equality \eqref{Mi-7} follows. Formula \eqref{MM-24X}, along with
the final conclusion in the statement of the theorem, is also implicit
in the above analysis plus the fact that $\ker(H_{K,\Om})$
is infinite-dimensional (cf.\ \eqref{dim} and \cite{MT00}).
\end{proof}

\section{Eigenvalue Estimates for the Perturbed Krein Laplacian}
\label{s11}

The aim of this section is to study in greater detail the nature of the
spectrum of the operator $H_{K,\Om}$. We split the discussion into
two separate cases, dealing with the situation when the potential $V$
is as in Hypothesis \ref{h.V} (Subsection \ref{s11X}), and when $V\equiv 0$
(Subsection \ref{s11Y}).

\subsection{The Perturbed Case}
\label{s11X}

Given a domain $\Omega$ as in Hypothesis \ref{h.Conv} and a potential $V$
as in Hypothesis \ref{h.V}, we recall the exceptional set
$\Lambda_{\Om}\subset(0,\infty)$ associated with $\Omega$ as in
Section \ref{s10}, consisting of numbers
\begin{equation} \label{mam-1}
0<\lambda_{K,\Om,1}\leq\lambda_{K,\Om,2}\leq\cdots\leq\lambda_{K,\Om,j}
\leq\lambda_{K,\Om,j+1}\leq\cdots
\end{equation} 
converging to infinity. Above, we have displayed the $\lambda$'s
according to their (geometric) multiplicity which equals 
the dimension of the kernel of the (Fredholm) operator \eqref{Mi-1}. 

\begin{lemma}\label{T-MAM-1}
Assume Hypothesis \ref{h.VK}.
Then there exists a family of functions $\{u_j\}_{j\in\bbN}$ with the
following properties:
\begin{align}\label{mam-2}
& u_j\in H^2_0(\Om)\, \mbox{ and }\, 
(-\Delta+V)^2u_j=\lambda_{K,\Om,j}(-\Delta+V)u_j,   \quad  j\in\bbN,
\\
& ((-\Delta+V)u_j,(-\Delta+V)u_k)_{L^2(\Om;d^nx)}=\delta_{j,k},  \quad j,k\in\bbN,
\label{mam-3}\\
& u=\sum_{j=1}^\infty ((-\Delta+V)u,(-\Delta+V)u_j)_{L^2(\Om;d^nx)}\,u_j,
\quad u\in H^2_0(\Om),
\label{mam-4}
\end{align}
with convergence in $H^2(\Om)$.
\end{lemma}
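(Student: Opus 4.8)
The plan is to recognize \eqref{mam-2}--\eqref{mam-4} as the assertion that the $\{u_j\}$ form an orthonormal eigenbasis for a compact, self-adjoint operator built out of the buckling sesquilinear forms $a(\cdot,\cdot)$ and $b(\cdot,\cdot)$ of \eqref{Xkko-13}--\eqref{Xkko-13F}, so the whole statement reduces to the spectral theorem. First I would equip $H^2_0(\Om)$ with the inner product $a(u,v)=((-\Delta+V)u,(-\Delta+V)v)_{L^2(\Om;d^nx)}$; by the coercivity estimate established in the proof of Lemma \ref{L-MM-2} (take $z=0$ there, or invoke Poincar\'e plus $\Delta^2$-ellipticity directly), $a$ is an inner product on $H^2_0(\Om)$ equivalent to the standard $H^2$-norm, so $\cW:=(H^2_0(\Om),a(\cdot,\cdot))$ is a Hilbert space. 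Then I would introduce the operator $T\in\cB(\cW)$ defined via $a(u,Tv)=b(u,v)$ for $u,v\in H^2_0(\Om)$; well-definedness and boundedness follow from the Riesz representation theorem together with $|b(u,v)|\le \|u\|_{H^1}\|v\|_{H^1}\lesssim \|u\|_{\cW}\|v\|_{\cW}$, exactly as in the abstract construction in \eqref{11.8}--\eqref{11.10}. Self-adjointness of $T$ on $\cW$ is immediate since $b$ is symmetric, and $T\ge 0$ since $b(u,u)=\|\nabla u\|^2+\|V^{1/2}u\|^2\ge 0$.

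Next I would prove that $T$ is compact on $\cW$. This is where the only real work lies: the map $u\mapsto b(u,\cdot)$ factors through the embedding $H^2_0(\Om)\hookrightarrow H^1_0(\Om)$, which is compact by Rellich--Kondrachov (valid since $\Om$ is a bounded Lipschitz, hence extension, domain — Hypothesis \ref{h2.1} holds under Hypothesis \ref{h.VK}). Concretely, if $u_k\rightharpoonup 0$ in $\cW$, then $u_k\rightharpoonup 0$ in $H^1_0(\Om)$ and, along a subsequence, $u_k\to 0$ strongly in $H^1_0(\Om)$; hence $\|Tu_k\|_{\cW}^2=a(Tu_k,Tu_k)=b(u_k,Tu_k)\le \|u_k\|_{H^1}\|Tu_k\|_{H^1}\lesssim\|u_k\|_{H^1}\|Tu_k\|_{\cW}$, so $\|Tu_k\|_{\cW}\to 0$. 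Thus $T\in\cB_\infty(\cW)$. Moreover $\ker(T)=\{0\}$: $Tu=0$ gives $b(u,u)=a(u,Tu)=0$, so $\nabla u=0$ in $\Om$, and since $u\in H^2_0(\Om)\subset H^1_0(\Om)$ this forces $u=0$.

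Now I would apply the Hilbert--Schmidt spectral theorem to the compact, self-adjoint, nonnegative, injective operator $T$ on the separable Hilbert space $\cW$: there is an $a$-orthonormal basis $\{u_j\}_{j\in\bbN}$ of $\cW$ with $Tu_j=\mu_j u_j$, $\mu_j>0$, $\mu_j\downarrow 0$. Setting $\lambda_{K,\Om,j}:=\mu_j^{-1}$, the eigenvalue equation $a(v,Tu_j)=b(v,u_j)$ for all $v\in H^2_0(\Om)$ becomes $\mu_j\,a(v,u_j)=b(v,u_j)$, i.e. $a(v,u_j)=\lambda_{K,\Om,j}\,b(v,u_j)$ for all $v\in H^2_0(\Om)$; by the reformulation \eqref{Xkko-14} (equivalently, unravelling the forms and integrating by parts as in \eqref{MM-3}, using $\Delta u_j\in\dom(-\Delta_{max})$ and \eqref{Tan-C12}) this is precisely $(-\Delta+V)^2u_j=\lambda_{K,\Om,j}(-\Delta+V)u_j$ in the distributional sense, which is \eqref{mam-2}. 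The normalization \eqref{mam-3} is just $a(u_j,u_k)=\delta_{j,k}$ rewritten. Finally, \eqref{mam-4} with convergence in $H^2(\Om)$ is the basis expansion $u=\sum_j a(u,u_j)u_j$ in $\cW$, once one notes $a(u,u_j)=((-\Delta+V)u,(-\Delta+V)u_j)_{L^2(\Om;d^nx)}$ and that the $\cW$-norm is equivalent to the $H^2(\Om)$-norm on $H^2_0(\Om)$. The last point to reconcile is that the $\lambda_{K,\Om,j}$ produced here coincide, with multiplicity, with the exceptional set $\Lambda_\Om$ of \eqref{mam-1}: $\lambda\in\Lambda_\Om$ iff the Fredholm operator \eqref{Mi-1} $A_{V,\lambda}=(-\Delta+V)(-\Delta+V-\lambda)$ has nontrivial kernel in $H^2_0(\Om)$, iff $T u=\lambda^{-1}u$ has a nonzero solution, and the geometric multiplicities match; I expect this bookkeeping, rather than any analytic difficulty, to be the part requiring the most care.
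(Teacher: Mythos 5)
Your proposal is correct and follows essentially the same route as the paper: the paper also equips $H^2_0(\Om)$ with the inner product $a(\cdot,\cdot)$ (its space $\cH_V$), realizes the form $b$ by a compact, self-adjoint, injective operator (its $B=-(-\Delta+V)^{-2}(-\Delta+V)$, which is your $T$ constructed concretely rather than via Riesz representation, with compactness obtained from the factorization through $L^2(\Om;d^nx)\hookrightarrow H^{-2}(\Om)$ instead of $H^2_0(\Om)\hookrightarrow H^1_0(\Om)$), and then invokes the spectral theorem. The final ``bookkeeping'' you flag is handled in the paper exactly as you sketch, via the identity $(-\Delta+V)^{-2}A_{V,z}=I_{\cH_V}-zB$, which identifies $\sigma(B)$ with $\{(\lambda_{K,\Om,j})^{-1}\}_{j\in\bbN}$ including multiplicities.
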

\begin{proof}
Consider the vector space and inner product
\begin{equation} \label{mam-5}
\cH_V:=H^2_0(\Om),\quad
[u,v]_{\cH_V}:=\int_{\Om}d^nx\,\ol{(-\Delta+V)u}\,(-\Delta+V)v,
\quad u,v\in\cH_V.
\end{equation} 
We claim that $\bigl(\cH_V,[\dott,\dott]_{\cH_V}\bigr)$ is a Hilbert space.
This readily follows as soon as we show that
\begin{equation} \label{mam-6}
\|u\|_{H^2(\Om)}\leq C\|(-\Delta+V)u\|_{L^2(\Om;d^nx)},\quad u\in H^2_0(\Om),
\end{equation} 
for some finite constant $C=C(\Om,V)>0$. To justify this, observe that
for every $u\in C^\infty_0(\Om)$ we have
\begin{align}\label{mam-7}
\int_{\Omega}d^nx\,|u|^2 &\leq  C \sum_{j=1}^n\int_{\Omega}d^nx\,
\Big|\frac{\partial u}{\partial x_j}\Big|^2
\nonumber\\
&\leq  C\sum_{j,k=1}^n\int_{\Omega}d^nx\,\Big|
\frac{\partial^2 u}{\partial x_j\partial x_k}\Big|^2
=\int_{\Omega}d^nx\,|\Delta u|^2,
\end{align}
where we have used Poincar\'e's inequality in the first two steps.
Based on this, the fact that $V$ is bounded, and the density of
$C^\infty_0(\Om)$ in $H^2_0(\Om)$ we therefore have
\begin{equation} \label{mam-6Y}
\|u\|_{H^2(\Om)}\leq C\bigl(\|(-\Delta+V)u\|_{L^2(\Om;d^nx)}
+\|u\|_{L^2(\Om;d^nx)}\bigr),\quad u\in H^2_0(\Om),
\end{equation} 
for some finite constant $C=C(\Om,V)>0$. Hence, the operator
\begin{equation} \label{mam-6YY}
-\Delta+V\in\cB\bigl(H^2_0(\Om),L^2(\Om;d^nx)\bigr)
\end{equation} 
is bounded from below modulo compact operators, since the embedding
$H^2_0(\Om)\hookrightarrow L^2(\Om;d^nx)$ is compact.
Hence, it follows that \eqref{mam-6YY} has closed range.
Since this operator is also one-to-one (as $0\not\in\sigma(H_{D,\Om})$),
estimate \eqref{mam-6} follows from the Open Mapping Theorem.
This shows that
\begin{equation} \label{mam-8}
\cH_V=H^2_0(\Om)\, \mbox{ as Banach spaces, with equivalence of norms}.
\end{equation} 
Next, we recall from the proof of Lemma \ref{L-MM-2}
that the operator \eqref{Mi-1} is invertible for
$\lambda\in\bbC\backslash \Lambda_{\Om}$ (cf.\ \eqref{Mi-6}), and that
$\Lambda_{\Om}\subset(0,\infty)$. Taking $\lambda=0$ this shows that
\begin{equation} \label{mam-9}
(-\Delta+V)^{-2}:=((-\Delta+V)^2)^{-1}\in\cB\bigl(H^{-2}(\Om),H^2_0(\Om)\bigr)
\end{equation} 
is well-defined. Furthermore, this operator is self-adjoint (viewed as
a linear, bounded operator mapping a Banach space into its dual, cf.\ \eqref{B.5}).
Consider now
\begin{equation} \label{mam-10}
B:=-(-\Delta+V)^{-2}(-\Delta+V).
\end{equation} 
Since $B$ admits the factorization
\begin{equation} \label{mam-11}
B:H^2_0(\Om)\stackrel{-\Delta+V}
{-\!\!\!-\!\!\!-\!\!\!\longrightarrow}
L^2(\Om;d^nx)\stackrel{\iota}{\hookrightarrow}
H^{-2}(\Om)\stackrel{-(-\Delta+V)^{-2}}
{-\!\!\!-\!\!\!-\!\!\!-\!\!\!-\!\!\!-\!\!\!\longrightarrow}H^2_0(\Om),
\end{equation} 
where the middle arrow is a compact inclusion, it follows that
\begin{equation} \label{mam-12}
B\in\cB(\cH_V)\, \mbox{ is compact and injective}.
\end{equation} 
In addition, for every $u,v\in C^\infty_0(\Om)$ we have via repeated
integrations by parts
\begin{align}\label{mam-13}
[Bu,v]_{\cH_V}&=
- \big((-\Delta+V)(-\Delta+V)^{-2}(-\Delta+V)u,
(-\Delta+V)v\big)_{L^2(\Om;d^nx)}
\nonumber\\
&= -\big((-\Delta+V)^{-2}(-\Delta+V)u,(-\Delta+V)^2v\big)_{L^2(\Om;d^nx)}
\nonumber\\
&= - \big((-\Delta+V)u,(-\Delta+V)^{-2}(-\Delta+V)^2v\big)_{L^2(\Om;d^nx)}
\nonumber\\
&= -((-\Delta+V)u,v)_{L^2(\Om;d^nx)}
\nonumber\\
&= -(\nabla u,\nabla v)_{(L^2(\Om;d^nx))^n}
- \big(V^{1/2}u, V^{1/2}v\big)_{L^2(\Om;d^nx)}.
\end{align}
Consequently, by symmetry,
$[Bu,v]_{\cH_V}=\ol{[Bv,u]_{\cH_V}}$, $u,v\in C^\infty_0(\Om)$
and hence,
\begin{equation} \label{mam-14}
[Bu,v]_{\cH_V}=\ol{[Bv,u]_{\cH_V}} \quad u,v\in\cH_V,
\end{equation} 
since $C^\infty_0(\Om)\hookrightarrow\cH_V$ densely. Thus,
\begin{equation} \label{mam-15}
B\in\cB_\infty (\cH_V)\, \mbox{ is self-adjoint and injective}.
\end{equation} 
To continue, we recall the operator $A_{V,\lambda}$ from \eqref{Mi-1}
and observe that
\begin{equation} \label{mam-16}
(-\Delta+V)^{-2}A_{V,z}
=I_{\cH_V}- z B, \quad z\in\bbC,
\end{equation} 
as operators in $\cB\bigl(H^2_0(\Om)\bigr)$. Thus, the spectrum
of $B$ consists (including multiplicities) precisely of the reciprocals of
those numbers $z\in\bbC$ for which the operator
$A_{V,z}\in\cB\bigl(H^2_0(\Om),H^{-2}(\Om)\bigr)$ fails to be invertible.
In other words, the spectrum of $B\in\cB(\cH_V)$ is given by
\begin{equation} \label{mam-17}
\sigma(B)=\{(\lambda_{K,\Om,j})^{-1}\}_{j\in\bbN}.
\end{equation} 
Now, from the spectral theory of compact, self-adjoint (injective) operators
on Hilbert spaces (cf., e.g., \cite[Theorem 2.36]{Mc00}), it follows
that there exists a family of functions $\{u_j\}_{j\in\bbN}$ for which
\begin{align}\label{mam-18}
& u_j\in\cH_V\, \mbox{ and }\, 
Bu_j=(\lambda_{K,\Om,j})^{-1}u_j,   \quad j\in\bbN,
\\
& [u_j,u_k]_{\cH_V}=\delta_{j,k},  \quad  j,k\in\bbN,
\label{mam-19}\\
& u=\sum_{j=1}^\infty[u,u_j]_{\cH_V}\,u_j,   \quad u\in\cH_V,
\label{mam-20}
\end{align}
with convergence in $\cH_V$. Unraveling notation,
\eqref{mam-2}--\eqref{mam-4} then readily follow from
\eqref{mam-18}--\eqref{mam-20}.  
\end{proof}

\begin{remark}
We note that Lemma \ref{T-MAM-1} gives the orthogonality of the eigenfunctions $u_j$ in terms of the inner product for $\cH_V$ (cf.\ \eqref{mam-3} and \eqref{mam-5}, or see \eqref{mam-19} immediately above).  Here we remark that the given inner product for $\cH_V$ does not correspond to the inner product that has traditionally been used in treating the buckling problem for a clamped plate, even after specializing to the case $V \equiv 0$.  The traditional inner product in that case is the {\it Dirichlet inner product}, defined by 
\begin{equation} 
D(u,v)=\int_\Omega d^n x \, (\nabla u, \nabla v)_{\bbC^n}, \quad u, v \in H^1_0(\Om), 
\end{equation}
where 
$(\cdot,\cdot)_{\bbC^n}$ denotes the usual inner product for elements of $\bbC^n$, conjugate linear in its first entry, linear in its second.  When the potential $V \ge 0$ is included, the appropriate generalization of $D(u,v)$ is the inner product
\begin{equation} 
D_V (u,v) = D(u,v) + \int_\Omega d^nx \, V \ol{u} \, v, \quad u, v \in H^1_0(\Om)  
\end{equation} 
(we recall that throughout this survey $V$ is assumed nonnegative, and hence that this inner product gives rise to a well-defined norm).  Here we observe that orthogonality of the eigenfunctions of the buckling problem in the sense of $\cH_V$ is entirely equivalent to their orthogonality in the sense of $D_V (\cdot,\cdot)$: Indeed, starting from the orthogonality in \eqref{mam-19}, integrating by parts, and using the eigenvalue equation \eqref{mam-2}, one has, for $j \ne k$, 
\begin{align} 
0& = [u_j,u_k]_{\cH_V}=\int_\Omega d^nx \, \ol{(-\Delta+V)u_j}\,(-\Delta+V)u_k 
= \int_\Omega d^nx \, \ol{u_j}\,(-\Delta+V)^2 u_k   \no \\
& = \lambda_k \int_\Omega d^nx \,\ol{u_j}\,(-\Delta+V) u_k 
=\lambda_k \bigg[D(u_j,u_k)+\int_\Omega d^nx \, V \ol{u_j} \, u_k\bigg]    \no \\ 
& = \lambda_k \, D_V(u_j,u_k), \quad u, v \in H^2_0(\Om),
\end{align} 
where $\lambda_k$ is shorthand for $\lambda_{K,\Omega,k}$ of \eqref{mam-1}, the eigenvalue corresponding to the eigenfunction $u_k$ (cf.\ \eqref{mam-2}, which exhibits the eigenvalue equation for the eigenpair $(u_j,\lambda_j)$).  Since all the $\lambda_j$'s considered here are positive (see \eqref{mam-1}), this shows that the family of eigenfunctions $\{u_j\}_{j \in \bbN}$, orthogonal with respect to $[\cdot,\cdot]_{\cH_V}$, is also orthogonal with respect to the ``generalized Dirichlet inner product", 
$D_V (\cdot,\cdot)$.  Clearly, this argument can also be reversed (since all eigenvalues are positive), and one sees that a family of eigenfunctions of the generalized buckling problem orthogonal in the sense of the Dirichlet inner product $D_V (\cdot,\cdot)$ is also orthogonal with respect to the inner product for $\cH_V$, that is, with respect to 
$[\cdot,\cdot]_{\cH_V}$.  On the other hand, it should be mentioned that the normalization of each of the $u_k$'s changes if one passes from one of these inner products to the other, due to the factor of $\lambda_k$ encountered above (specifically, one has $[u_k,u_k]_{\cH_V}=\lambda_k \, D_V (u_k,u_k)$ for each $k$). 
\end{remark}

Next, we recall the following result (which provides a slight variation
of the case $V\equiv 0$ treated in \cite{GM11}).

\begin{lemma}\label{th-CL}
Assume Hypothesis \ref{h.VK}.
Then the subspace $(-\Delta+V)\,H^2_0(\Om)$ is closed in $L^2(\Omega;d^nx)$ and
\begin{equation} \label{Man-2}
L^2(\Omega;d^nx)=\ker(H_{V,\max,\Om}) \oplus \big[(-\Delta+V)\,H^2_0(\Om)\big],
\end{equation} 
as an orthogonal direct sum.
\end{lemma}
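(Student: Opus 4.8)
The plan is to prove Lemma \ref{th-CL} by exhibiting $(-\Delta+V)H^2_0(\Om)$ as the range of a bounded operator that is bounded below on the natural complement of its kernel, and then identifying the orthogonal complement of that range with $\ker(H_{max,\Om})$. First I would observe that $H_{min,\Om} = (-\Delta+V)|_{H^2_0(\Om)} \in \cB(H^2_0(\Om), L^2(\Om;d^nx))$ is bounded (here I use $\dom(H_{min,\Om}) = H^2_0(\Om)$ from Theorem \ref{T-DD1}), is injective (since $0 \notin \sigma(H_{D,\Om})$ by Theorem \ref{t2.5}, using $V \geq 0$, together with Proposition \ref{L-Fri1} identifying $H_{D,\Om}$ as the Friedrichs extension, and Lemma \ref{lKF}-type disjointness which forces $H^2_0(\Om) \cap \ker(H_{max,\Om}) = \{0\}$), and satisfies the a priori estimate $\|u\|_{H^2(\Om)} \leq C\|(-\Delta+V)u\|_{L^2(\Om;d^nx)}$ for $u \in H^2_0(\Om)$. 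This last estimate is exactly inequality \eqref{mam-6} established inside the proof of Lemma \ref{T-MAM-1} (via Poincar\'e's inequality \eqref{mam-7}, boundedness of $V$, compactness of $H^2_0(\Om) \hookrightarrow L^2(\Om;d^nx)$, and the Open Mapping Theorem), so I may simply invoke it. From the estimate it follows immediately that $(-\Delta+V)H^2_0(\Om)$ is closed in $L^2(\Om;d^nx)$: if $(-\Delta+V)u_j \to f$ in $L^2$, then $\{u_j\}$ is Cauchy in $H^2(\Om)$ by \eqref{mam-6}, hence $u_j \to u$ in $H^2_0(\Om)$, and continuity of $-\Delta+V$ gives $(-\Delta+V)u = f$.

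Next I would compute the orthogonal complement of $(-\Delta+V)H^2_0(\Om)$ in $L^2(\Om;d^nx)$. An element $w \in L^2(\Om;d^nx)$ is orthogonal to $(-\Delta+V)H^2_0(\Om)$ if and only if $(w, (-\Delta+V)u)_{L^2(\Om;d^nx)} = 0$ for all $u \in H^2_0(\Om) = \dom(H_{min,\Om})$; by definition of the adjoint this says precisely $w \in \dom((H_{min,\Om})^*)$ with $(H_{min,\Om})^* w = 0$, i.e.\ $w \in \ker((H_{min,\Om})^*)$. By Theorem \ref{T-DD1}, equation \eqref{Yan-10}, one has $(H_{min,\Om})^* = H_{max,\Om}$, so $\big[(-\Delta+V)H^2_0(\Om)\big]^{\bot} = \ker(H_{max,\Om})$. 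Since $(-\Delta+V)H^2_0(\Om)$ is closed, the standard orthogonal decomposition of the Hilbert space $L^2(\Om;d^nx)$ with respect to this closed subspace yields
\begin{equation}
L^2(\Om;d^nx) = \ker(H_{max,\Om}) \oplus \big[(-\Delta+V)H^2_0(\Om)\big],
\end{equation}
which is exactly \eqref{Man-2} (noting that $H_{V,\max,\Om}$ in the statement is the operator denoted $H_{max,\Om}$ in Hypothesis-\ref{h.VK} notation).

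The only genuine subtlety, and the step I would be most careful about, is the injectivity of $(-\Delta+V)$ on $H^2_0(\Om)$, which is what makes the a priori estimate \eqref{mam-6} available as a lower bound rather than merely a Fredholm-type estimate with a finite-dimensional kernel; this relies on $0 \notin \sigma(H_{D,\Om})$, which in turn uses $V \geq 0$ (Hypothesis \ref{h.VK}) to get strict positivity of $H_{D,\Om}$ via Theorem \ref{t2.5}, plus the identification $H_{D,\Om} = (H_{min,\Om})_F$ from Proposition \ref{L-Fri1} so that $u \in H^2_0(\Om)$ with $(-\Delta+V)u = 0$ forces $u \in \dom(H_{D,\Om})$ and hence $u = 0$. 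Everything else is routine Hilbert space bookkeeping. Alternatively, one can avoid re-deriving \eqref{mam-6} by noting that it is proved in the course of Lemma \ref{T-MAM-1}, or one can cite the $V \equiv 0$ version of this lemma from \cite{GM11} and observe that the boundedness of $V$ and the structure of the argument are insensitive to adding the zeroth-order term; I would present the self-contained version since the required estimate has already appeared.
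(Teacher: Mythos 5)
Your proof is correct. The paper itself gives no argument for this lemma (it simply defers to the $V\equiv 0$ case in \cite{GM11}), and your route is exactly the expected one: the a priori estimate \eqref{mam-6}, already established in the proof of Lemma \ref{T-MAM-1} and hence available here without circularity, yields closedness of the range, while the adjoint identification $(H_{min,\Om})^*=H_{max,\Om}$ from \eqref{Yan-10} identifies the orthogonal complement as $\ker(H_{max,\Om})$. You are also right to flag injectivity of $-\Delta+V$ on $H^2_0(\Om)$ (via $0\notin\sigma(H_{D,\Om})$) as the one point where the nonnegativity of $V$ is genuinely used; this is the same mechanism the paper employs when deriving \eqref{mam-6}.
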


Our next theorem shows that there exists a countable family of orthonormal
eigenfunctions for the perturbed Krein Laplacian which span the orthogonal
complement of the kernel of this operator: 

\begin{theorem}\label{TH-Mq1}
Assume Hypothesis \ref{h.VK}.
Then there exists a family of functions $\{w_j\}_{j\in\bbN}$ with the
following properties:
\begin{align}\label{mam-21}
& w_j\in\dom(H_{K,\Om})\cap H^{1/2}(\Om) \, \mbox{ and }\, 
H_{K,\Om}w_j=\lambda_{K,\Om,j} w_j,  \;\;  \lambda_{K,\Om,j}>0, \; j\in\bbN,
\\
& (w_j,w_k)_{L^2(\Om;d^nx)}=\delta_{j,k}, \; j,k\in\bbN,
\label{mam-22}\\
& L^2(\Omega;d^nx)=\ker(H_{K,\Om})\,\oplus\,
\ol{{\rm lin. \, span} \{w_j\}_{j\in\bbN}} \;\, \text{ $($orthogonal direct sum$)$.}
\label{mam-23}
\end{align}
\end{theorem}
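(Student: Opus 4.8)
The plan is to transfer the spectral decomposition for the buckling problem, already established in Lemma \ref{T-MAM-1}, over to the perturbed Krein Laplacian via the explicit correspondence furnished by Theorem \ref{T-MM-1}. First I would take the family $\{u_j\}_{j\in\bbN}\subset H^2_0(\Om)$ from Lemma \ref{T-MAM-1}, which satisfies the generalized buckling equation $(-\Delta+V)^2 u_j = \lambda_{K,\Om,j}(-\Delta+V)u_j$ together with the orthonormality relation \eqref{mam-3}, and define
\begin{equation}
w_j := \lambda_{K,\Om,j}^{-1/2}\,(-\Delta+V)u_j, \quad j\in\bbN.
\end{equation}
By Theorem \ref{T-MM-1} (the second half, with $u=u_j$ and the scaling $v=\lambda^{-1}(-\Delta+V)u$), each $\lambda_{K,\Om,j}^{-1}(-\Delta+V)u_j$ is an eigenfunction of $H_{K,\Om}$ for the eigenvalue $\lambda_{K,\Om,j}>0$; multiplying by the constant $\lambda_{K,\Om,j}^{1/2}$ to renormalize, $w_j\in\dom(H_{K,\Om})$ and $H_{K,\Om}w_j=\lambda_{K,\Om,j}w_j$, which is \eqref{mam-21}. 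The membership $w_j\in H^{1/2}(\Om)$ follows from Proposition \ref{pHKv}(i): since $V\in L^\infty$ is not assumed Lipschitz in general, one should instead note that $w_j$ is a fixed finite-energy eigenfunction and invoke the $H^{1/2}$-regularity already recorded in that proposition under the multiplier hypothesis; if one only assumes Hypothesis \ref{h.VK}, the statement \eqref{mam-21} as written includes $w_j\in H^{1/2}(\Om)$, and this is exactly the content of Proposition \ref{pHKv}(i) applied to each eigenpair (the hypotheses of that proposition are met in the relevant cases; for the general bounded $V$ case one drops the $H^{1/2}$-claim, consistent with Theorem \ref{Th-InM}(ii), which only asserts $H^{1/2}$-membership when $V$ is Lipschitz).

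For the orthonormality \eqref{mam-22}, I would compute directly:
\begin{align}
(w_j,w_k)_{L^2(\Om;d^nx)} &= (\lambda_{K,\Om,j}\lambda_{K,\Om,k})^{-1/2}\,\big((-\Delta+V)u_j,(-\Delta+V)u_k\big)_{L^2(\Om;d^nx)} \no \\
&= (\lambda_{K,\Om,j}\lambda_{K,\Om,k})^{-1/2}\,\delta_{j,k} = \delta_{j,k},
\end{align}
using \eqref{mam-3} in the middle step. The remaining and main point is the completeness assertion \eqref{mam-23}. Here I would use Lemma \ref{th-CL}, which gives the orthogonal decomposition $L^2(\Om;d^nx)=\ker(H_{max,\Om})\oplus[(-\Delta+V)H^2_0(\Om)]$, together with the fact (from Theorem \ref{T-Kr}) that $\ker(H_{K,\Om})=\ker(H_{max,\Om})$. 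Thus it suffices to show $\ol{\mathrm{lin.\,span}\{w_j\}_{j\in\bbN}}=(-\Delta+V)H^2_0(\Om)$ (the closure of the latter being itself by Lemma \ref{th-CL}). The inclusion $\subseteq$ is clear since $w_j\in(-\Delta+V)H^2_0(\Om)$. For $\supseteq$, take any $g=(-\Delta+V)u$ with $u\in H^2_0(\Om)$; by the expansion \eqref{mam-4} from Lemma \ref{T-MAM-1}, $u=\sum_j \big((-\Delta+V)u,(-\Delta+V)u_j\big)_{L^2}u_j$ with convergence in $H^2(\Om)$, hence applying the bounded operator $(-\Delta+V)\in\cB(H^2_0(\Om),L^2(\Om;d^nx))$ yields $g=\sum_j \big((-\Delta+V)u,(-\Delta+V)u_j\big)_{L^2}(-\Delta+V)u_j$ with convergence in $L^2(\Om;d^nx)$; rewriting in terms of the $w_j$'s shows $g\in\ol{\mathrm{lin.\,span}\{w_j\}}$. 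Combining this with Lemma \ref{th-CL} and $\ker(H_{K,\Om})=\ker(H_{max,\Om})$ gives \eqref{mam-23}.

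The step I expect to be the main obstacle is the careful bookkeeping in the completeness argument: one must ensure that the $H^2(\Om)$-convergence of the expansion \eqref{mam-4} survives the application of $-\Delta+V$ (this is immediate from boundedness of $-\Delta+V$ on $H^2_0(\Om)$ into $L^2$, so it is really routine), and, more subtly, that $\ol{\mathrm{lin.\,span}\{w_j\}}$ is genuinely all of $(-\Delta+V)H^2_0(\Om)$ rather than a proper closed subspace — this requires knowing that $\{u_j\}$ is a complete orthonormal system in $\cH_V=(H^2_0(\Om),[\cdot,\cdot]_{\cH_V})$, which is precisely \eqref{mam-4}/\eqref{mam-20}, and that the map $(-\Delta+V):\cH_V\to (-\Delta+V)H^2_0(\Om)\subset L^2$ is an isometric isomorphism onto its (closed, by Lemma \ref{th-CL}) range when $\cH_V$ carries the inner product \eqref{mam-5}. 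That isometry is built into the definition of $[\cdot,\cdot]_{\cH_V}$, so the argument closes cleanly; the only real care needed is to invoke Lemma \ref{th-CL} for the closedness of the range and Theorem \ref{T-Kr} for the identification of the kernel.
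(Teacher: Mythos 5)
Your overall strategy is exactly the paper's: take the buckling eigenfunctions $u_j$ from Lemma \ref{T-MAM-1}, map them to Krein eigenfunctions via Theorem \ref{T-MM-1}, and obtain completeness from Lemma \ref{th-CL} together with $\ker(H_{K,\Om})=\ker(H_{max,\Om})$ and the expansion \eqref{mam-4}. The completeness argument you spell out (closedness of $(-\Delta+V)H^2_0(\Om)$, the isometry $\cH_V\to L^2(\Om;d^nx)$ built into \eqref{mam-5}, and pushing the $H^2$-convergent expansion through the bounded map $-\Delta+V$) is correct and is precisely what the paper's terse proof is implicitly relying on.

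However, there is a concrete error in your normalization. With $w_j:=\lambda_{K,\Om,j}^{-1/2}(-\Delta+V)u_j$ and \eqref{mam-3}, one gets
\begin{equation}
(w_j,w_k)_{L^2(\Om;d^nx)}
=(\lambda_{K,\Om,j}\lambda_{K,\Om,k})^{-1/2}\,\delta_{j,k}
=\lambda_{K,\Om,j}^{-1}\,\delta_{j,k},
\end{equation}
which is \emph{not} $\delta_{j,k}$ unless $\lambda_{K,\Om,j}=1$; your final equality is false. The point of the normalization \eqref{mam-3} is that the functions $(-\Delta+V)u_j$ are \emph{already} $L^2$-orthonormal, so the correct (and the paper's) choice is simply $w_j:=(-\Delta+V)u_j$, with no extra scalar factor; the eigenfunction property \eqref{mam-21} and the completeness \eqref{mam-23} are unaffected by the rescaling, but \eqref{mam-22} fails for your $w_j$. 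Your side remark on the $H^{1/2}$-membership is fair: Proposition \ref{pHKv}\,$(i)$ requires the multiplier condition \eqref{MUL}, which is how the paper justifies that clause (and why Theorem \ref{Th-InM}\,$(ii)$ only asserts it for Lipschitz $V$).
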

\begin{proof}
That $w_j \in H^{1/2}(\Om)$, $j\in\bbN$, follows from Proposition \ref{pHKv}\,$(i)$. 
The rest is a direct consequence of Lemma \ref{th-CL}, the fact that
\begin{equation} \label{mam-24}
\ker(H_{V,\max,\Om})=\big\{u\in L^2(\Omega;d^nx)\,\big|\,(-\Delta+V)u=0\big\}
=\ker(H_{K,\Om}),
\end{equation} 
the second part of Theorem \ref{T-MM-1}, and Lemma \ref{T-MAM-1} in
which we set $w_j:=(-\Delta+V)u_j$, $j\in\bbN$. 
\end{proof}

Next, we define the following Rayleigh quotient
\begin{equation} \label{mam-25}
R_{K,\Om}[u]:=\frac{\|(-\Delta+V)u\|^2_{L^2(\Om;d^nx)}}
{\|\nabla u\|^2_{(L^2(\Om;d^nx))^n}+\|V^{1/2}u\|^2_{L^2(\Om;d^nx)}},
\quad 0\not=u\in H^2_0(\Om).
\end{equation} 
Then the following min-max principle holds:

\begin{proposition}\label{TH-Mq2}
Assume Hypothesis \ref{h.VK}. Then
\begin{equation} \label{mam-26}
\lambda_{K,\Om,j}
=\min_{\stackrel{W_j\text{ subspace of }H^2_0(\Om)}{\dim (W_j)=j}}
\Big(\max_{0\not=u\in W_j}R_{K,\Om}[u]\Big),\quad j\in\bbN.
\end{equation} 
As a consequence, given two domains $\Omega$, $\widetilde{\Om}$ as in
Hypothesis \ref{h.Conv} for which $\Omega\subseteq\widetilde{\Om}$, and given
a potential $0\leq \widetilde{V}\in L^\infty(\widetilde{\Om})$, one has 
\begin{equation} \label{mam-2S}
0 < \wti \lambda_{K,\widetilde{\Om},j} \leq \lambda_{K,\Om,j}, \quad j\in\bbN, 
\end{equation} 
where $V:=\widetilde{V}|_{\Om}$, and $\lambda_{K,\Om,j}$ and 
$\wti \lambda_{K,\widetilde{\Om},j}$, $j\in\bbN$, are the eigenvalues corresponding 
to the Krein--von Neumann extensions associated with $\Om, V$ and $\wti\Om, \wti V$, 
respectively.
\end{proposition}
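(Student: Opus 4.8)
The plan is to establish the min-max formula \eqref{mam-26} first, and then derive the monotonicity \eqref{mam-2S} as a consequence of domain monotonicity of the Rayleigh quotient together with a zero-extension argument. For the min-max principle, the key observation (made precise in Lemma \ref{T-MAM-1}) is that the buckling eigenvalue problem \eqref{mam-2} is exactly the eigenvalue problem for the compact, self-adjoint, injective operator $B$ on the Hilbert space $\cH_V = (H^2_0(\Om), [\cdot,\cdot]_{\cH_V})$ introduced in \eqref{mam-5}, \eqref{mam-10}, whose eigenvalues are $(\lambda_{K,\Om,j})^{-1}$ by \eqref{mam-17}. First I would rewrite, for $0\neq u \in H^2_0(\Om)$,
\begin{equation}
\frac{[u,u]_{\cH_V}}{[Bu,u]_{\cH_V}} = \frac{\|(-\Delta+V)u\|^2_{L^2(\Om;d^nx)}}{\|\nabla u\|^2_{(L^2(\Om;d^nx))^n}+\|V^{1/2}u\|^2_{L^2(\Om;d^nx)}} = R_{K,\Om}[u],
\end{equation}
using the integration-by-parts computation \eqref{mam-13} which gives $[Bu,u]_{\cH_V} = (\nabla u,\nabla u)_{(L^2)^n} + (V^{1/2}u,V^{1/2}u)_{L^2} > 0$ (strict positivity because $0\notin\sigma(H_{D,\Om})$, so $-\Delta+V$ is injective on $H^2_0(\Om)$). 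Then the classical Courant--Fischer min-max theorem applied to the compact self-adjoint positive operator $B$ on $\cH_V$ expresses its $j$-th eigenvalue $(\lambda_{K,\Om,j})^{-1}$ as a max-min of $[Bu,u]_{\cH_V}/[u,u]_{\cH_V}$ over $j$-dimensional subspaces; taking reciprocals turns this into the min-max \eqref{mam-26} over $j$-dimensional subspaces $W_j$ of $\cH_V = H^2_0(\Om)$. (The fact that $\cH_V = H^2_0(\Om)$ as topological vector spaces, \eqref{mam-8}, ensures the two notions of "$j$-dimensional subspace" and the relevant completeness match up.)

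For the monotonicity \eqref{mam-2S}, the plan is to use the variational characterization just established together with the natural inclusion $H^2_0(\Om) \hookrightarrow H^2_0(\wti\Om)$ obtained by extending functions by zero. Concretely, given $0\leq u \in H^2_0(\Om)$, let $\wti u \in H^2_0(\wti\Om)$ be its extension by zero (this lies in $H^2_0(\wti\Om)$ since $C^\infty_0(\Om)$-functions, extended by zero, are $C^\infty_0(\wti\Om)$-functions, and one passes to the closure in $H^2$). Since $V = \wti V|_\Om$ and all the relevant integrands vanish outside $\Om$, one has the pointwise-in-$\Om$ identities $(-\Delta+V)u = (-\Delta+\wti V)\wti u$ a.e.\ on $\Om$ and $=0$ a.e.\ on $\wti\Om\setminus\Om$, and likewise for $\nabla u$, $V^{1/2}u$; hence
\begin{equation}
R_{K,\wti\Om}[\wti u] = R_{K,\Om}[u], \quad 0\neq u \in H^2_0(\Om).
\end{equation}
Therefore, for any $j$-dimensional subspace $W_j \subseteq H^2_0(\Om)$, the image $\wti W_j \subseteq H^2_0(\wti\Om)$ under zero-extension is again $j$-dimensional and $\max_{0\neq u \in W_j} R_{K,\Om}[u] = \max_{0\neq w \in \wti W_j} R_{K,\wti\Om}[w] \geq \wti\lambda_{K,\wti\Om,j}$ by the min-max for $\wti\Om$. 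Taking the infimum over all such $W_j$ gives $\lambda_{K,\Om,j} \geq \wti\lambda_{K,\wti\Om,j}$, which is \eqref{mam-2S}; positivity of $\wti\lambda_{K,\wti\Om,j}$ is already contained in \eqref{mam-1} applied to $\wti\Om$.

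The main obstacle I expect is not conceptual but bookkeeping: one must be careful that zero-extension genuinely maps $H^2_0(\Om)$ into $H^2_0(\wti\Om)$ (this uses the definition of $H^2_0$ as the $H^2$-closure of $C^\infty_0$, which is where Lipschitz/quasi-convexity of the domains and the characterization \eqref{Tan-C3} of $H^2_0$ as the kernel of $\gamma_2$ enter — indeed, $\wti u$ has vanishing Dirichlet and Neumann traces on $\partial\wti\Om$, hence lies in $H^2_0(\wti\Om)$ by Theorem \ref{T-MMS}), and that the Rayleigh quotient is literally preserved, with no boundary contributions, under this extension. A secondary technical point is ensuring that a $j$-dimensional subspace of $H^2_0(\Om)$ maps to a genuinely $j$-dimensional subspace of $H^2_0(\wti\Om)$, which is immediate since zero-extension is linear and injective. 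Once these routine facts are in place, the argument is just the standard domain-monotonicity consequence of a min-max principle.
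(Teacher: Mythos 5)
Your proposal is correct and follows essentially the same route as the paper: both rest on the identity $R_{K,\Om}[u]=[u,u]_{\cH_V}/[Bu,u]_{\cH_V}$ for the compact, self-adjoint, injective operator $B$ on $\cH_V=H^2_0(\Om)$ from Lemma \ref{T-MAM-1}, the only difference being that the paper inlines the Courant--Fischer argument by expanding in the eigenbasis of $B$ and using a dimension count, whereas you cite the abstract min-max theorem, and you additionally spell out the zero-extension argument for \eqref{mam-2S} that the paper dismisses as obvious (note the harmless typo ``$0\leq u$'' where you mean ``$0\neq u$'').
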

\begin{proof}
Obviously, \eqref{mam-2S} is a consequence of \eqref{mam-26}, so we will
concentrate on the latter. We recall the Hilbert space $\cH_V$ from \eqref{mam-5}
and the orthogonal family $\{u_j\}_{j\in\bbN}$ in 
\eqref{mam-18}--\eqref{mam-20}. Next, consider the following
subspaces of $\cH_V$,
\begin{equation} \label{mam-27}
V_0:=\{0\},\quad
V_j:={\rm lin. \, span} \{u_i\,|\,1\leq i\leq j\},\quad j\in\bbN.
\end{equation} 
Finally, set
\begin{equation} \label{mam-28}
V_j^{\bot}
:=\{u\in\cH\,|\,[u,u_i]_{\cH_V}=0,\,1\leq i\leq j\},\quad j\in\bbN.
\end{equation} 
We claim that
\begin{equation} \label{mam-29}
\lambda_{K,\Om,j}=\min_{0\not=u\in V^{\bot}_{j-1}}R_{K,\Om}[u]
=R_{K,\Om}[u_j],\quad j\in\bbN.
\end{equation} 
Indeed, if $j\in\bbN$ and $u=\sum_{k=1}^\infty c_k u_k \in V^{\bot}_{j-1}$,
then $c_k = 0$ whenever $1\leq k \leq j-1$. Consequently,
\begin{equation} \label{mam-30}
\|(-\Delta+V)u\|^2_{L^2(\Om;d^nx)}
=\biggl\|\sum_{k=j}^\infty c_k (-\Delta+V)u_k \biggr\|^2_{L^2(\Om;d^nx)}
=\sum_{k=j}^\infty |c_k|^2
\end{equation} 
by \eqref{mam-3}, so that
\begin{align}\label{mam-31}
& \|\nabla u\|^2_{(L^2(\Om;d^nx))^n}+\|V^{1/2}u\|^2_{L^2(\Om;d^nx)}
=((-\Delta+V)u,u)_{L^2(\Om;d^nx)}
\nonumber\\
&\quad\quad
=\bigg(\sum_{k=j}^\infty
c_k (-\Delta+V)u_k, u\bigg)_{L^2(\Om;d^nx)}  \no \\
& \qquad =\bigg(\sum_{k=j}^\infty (\lambda_{K,\Om,k})^{-1}c_k (-\Delta+V)^2 u_k, 
u\bigg)_{L^2(\Om;d^nx)}
\nonumber\\
&\quad\quad
=\bigg(\sum_{k=j}^\infty (\lambda_{K,\Om,k})^{-1}c_k (-\Delta+V)u_k, 
(-\Delta+V)u\bigg)_{L^2(\Om;d^nx)}
\nonumber\\
&\quad\quad
=\bigg(\sum_{k=j}^\infty (\lambda_{K,\Om,k})^{-1}c_k (-\Delta+V)u_k, 
\sum_{k=j}^\infty c_k (-\Delta+V)u_k \bigg)_{L^2(\Om;d^nx)}
\nonumber\\
&\quad\quad
=\sum_{k=j}^\infty(\lambda_{K,\Om,k})^{-1}|c_k|^2
\leq (\lambda_{K,\Om,j})^{-1}\sum_{k=j}^\infty|c_k|^2
\nonumber\\
&\quad\quad
=(\lambda_{K,\Om,j})^{-1}\|(-\Delta+V)u\|^2_{L^2(\Om;d^nx)},
\end{align}
where in the third step we have relied on \eqref{mam-2}, and the last step
is based on \eqref{mam-30}.  Thus, $R_{K,\Om}[u]\geq\lambda_{K,\Om,j}$ with
equality if $u=u_j$ (cf.\ the calculation leading up to \eqref{MM-4}).
This proves \eqref{mam-29}.  In fact, the same type of argument as the
one just performed also shows that
\begin{equation} \label{mam-32}
\lambda_{K,\Om,j}=\max_{0\not=u\in V_j}R_{K,\Om}[u]=R_{K,\Om}[u_j],
\quad j\in\bbN.
\end{equation} 
Next, we claim that if $W_j$ is an arbitrary subspace of $\cH$
of dimension $j$ then
\begin{equation} \label{mam-33}
\lambda_{K,\Om,j}\leq\max_{0\not=u\in W_j}R_{K,\Om}[u],\quad j\in\bbN.
\end{equation} 
To justify this inequality, observe that
$W_j\cap V^{\bot}_{j-1}\not=\{0\}$ by dimensional considerations.
Hence, if $0\not=v_j\in W_j\cap V^{\bot}_{j-1}$ then
\begin{equation} \label{mam-34}
\lambda_{K,\Om,j}=\min_{0\not
=u\in V^{\bot}_{j-1}}R_{K,\Om}[u]\leq R_{K,\Om}[v_j]
\leq \max_{0\not=u\in W_j}R_{K,\Om}[u],
\end{equation} 
establishing \eqref{mam-33}. Now formula \eqref{mam-26} readily
follows from this and \eqref{mam-32}.
\end{proof}

If $\Omega\subset{\mathbb{R}}^n$ is a bounded Lipschitz domain denote by
\begin{equation} \label{mam-35}
0<\lambda_{D,\Om,1}\leq\lambda_{D,\Om,2}\leq\cdots\leq\lambda_{D,\Om,j}
\leq\lambda_{D,\Om,j+1}\leq\cdots
\end{equation} 
the collection of eigenvalues for the perturbed Dirichlet Laplacian
$H_{D,\Om}$ (again, listed according to their multiplicity).
Then, if $0\leq V\in L^\infty(\Om;d^nx)$, we have the well-known formula
(cf., e.g., \cite{DL90} for the case where $V\equiv 0$)
\begin{equation} \label{mam-37}
\lambda_{D,\Om,j} 
=\min_{\stackrel{W_j\text{ subspace of }H^1_0(\Om)}{\dim (W_j)=j}}
\Big(\max_{0\not=u\in W_j}R_{D,\Om}[u]\Big),\quad j\in\bbN,
\end{equation} 
where $R_{D,\Om}[u]$, the Rayleigh quotient for the perturbed
Dirichlet Laplacian, is given by
\begin{equation} \label{mam-38}
R_{D,\Om}[u]:=\frac{\|\nabla u\|^2_{(L^2(\Om;d^nx))^n}
+\|V^{1/2}u\|^2_{L^2(\Om;d^nx)}}{\|u\|^2_{L^2(\Om;d^nx)}},
\quad 0\not=u\in H^1_0(\Om).
\end{equation} 
From Theorem \ref{AS-thK}, Theorem \ref{t2.5}, and Proposition \ref{L-Fri1}, 
we already know that, granted Hypothesis \ref{h.VK}, the nonzero eigenvalues of 
the perturbed Krein Laplacian are at least as large as the corresponding 
eigenvalues of the perturbed Dirichlet Laplacian.  It is nonetheless of 
interest to provide a direct, analytical proof of this result.  We do so 
in the proposition below.

\begin{proposition}\label{TH-Mq3}
Assume Hypothesis \ref{h.VK}. Then
\begin{equation}\label{mam-39}
0 < \lambda_{D,\Om,j}\leq\lambda_{K,\Om,j},\quad j\in\bbN.
\end{equation} 
\end{proposition}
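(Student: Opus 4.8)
The plan is to compare the two min-max characterizations \eqref{mam-26} and \eqref{mam-37} by constructing, for each trial subspace of $H^1_0(\Om)$ relevant to the Dirichlet problem, a trial subspace of $H^2_0(\Om)$ relevant to the Krein problem on which the Krein Rayleigh quotient is controlled by the Dirichlet one. The natural device is the solution operator for the Dirichlet problem: since $H_{D,\Om}$ is strictly positive with compact resolvent (Theorem \ref{t2.5} together with $V\geq 0$), the inverse $H_{D,\Om}^{-1}\in\cB(L^2(\Om;d^nx))$ is well-defined, self-adjoint, positive, and compact, and it maps $L^2(\Om;d^nx)$ into $\dom(H_{D,\Om})\subset H^2_0(\Om)$ by \eqref{3.HDF} (or \eqref{2.39}). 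First I would record the elementary identity: if $0\neq\phi\in\dom(H_{D,\Om})$ and one sets $u:=H_{D,\Om}^{-1}\phi\in H^2_0(\Om)$, equivalently $(-\Delta+V)u=\phi$, then
\begin{equation}
R_{K,\Om}[u]=\frac{\|(-\Delta+V)u\|^2_{L^2(\Om;d^nx)}}{\|\nabla u\|^2_{(L^2(\Om;d^nx))^n}+\|V^{1/2}u\|^2_{L^2(\Om;d^nx)}}=\frac{\|\phi\|^2_{L^2(\Om;d^nx)}}{(u,(-\Delta+V)u)_{L^2(\Om;d^nx)}}=\frac{\|\phi\|^2_{L^2(\Om;d^nx)}}{(H_{D,\Om}^{-1}\phi,\phi)_{L^2(\Om;d^nx)}},
\end{equation}
where the middle equality uses the integration-by-parts identity $(u,(-\Delta+V)u)_{L^2(\Om;d^nx)}=\|\nabla u\|^2+\|V^{1/2}u\|^2$ valid for $u\in H^2_0(\Om)$ (this is the computation already carried out in \eqref{MM-3}).

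Next I would invoke the min-max (Rayleigh–Ritz) characterization of the eigenvalues of the compact self-adjoint positive operator $H_{D,\Om}^{-1}$: its $j$-th largest eigenvalue is $(\lambda_{D,\Om,j})^{-1}$, and
\begin{equation}
(\lambda_{D,\Om,j})^{-1}=\min_{\stackrel{F_{j-1}\text{ subspace of }L^2(\Om;d^nx)}{\dim(F_{j-1})=j-1}}\ \max_{\stackrel{0\neq\phi\in L^2(\Om;d^nx)}{\phi\perp F_{j-1}}}\frac{(H_{D,\Om}^{-1}\phi,\phi)_{L^2(\Om;d^nx)}}{\|\phi\|^2_{L^2(\Om;d^nx)}},
\end{equation}
hence, taking reciprocals,
\begin{equation}
\lambda_{D,\Om,j}=\max_{\stackrel{F_{j-1}\text{ subspace of }L^2(\Om;d^nx)}{\dim(F_{j-1})=j-1}}\ \min_{\stackrel{0\neq\phi\in L^2(\Om;d^nx)}{\phi\perp F_{j-1}}}\frac{\|\phi\|^2_{L^2(\Om;d^nx)}}{(H_{D,\Om}^{-1}\phi,\phi)_{L^2(\Om;d^nx)}}.
\end{equation}
Now, given any $j$-dimensional subspace $W_j\subset H^2_0(\Om)$, set $\Phi_j:=(-\Delta+V)W_j\subset L^2(\Om;d^nx)$; since $-\Delta+V$ is injective on $H^2_0(\Om)$ (because $0\notin\sigma(H_{D,\Om})$, as used in \eqref{mam-6}) this is again $j$-dimensional, and by the identity above $\max_{0\neq u\in W_j}R_{K,\Om}[u]=\max_{0\neq\phi\in\Phi_j}\|\phi\|^2/(H_{D,\Om}^{-1}\phi,\phi)$. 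Intersecting $\Phi_j$ with the orthogonal complement of any fixed $(j-1)$-dimensional $F_{j-1}$ gives a nonzero vector, so $\max_{0\neq\phi\in\Phi_j}\|\phi\|^2/(H_{D,\Om}^{-1}\phi,\phi)\geq\min_{0\neq\phi\perp F_{j-1}}\|\phi\|^2/(H_{D,\Om}^{-1}\phi,\phi)$; optimizing over $F_{j-1}$ yields $\max_{0\neq u\in W_j}R_{K,\Om}[u]\geq\lambda_{D,\Om,j}$, and then minimizing over $W_j$ via \eqref{mam-26} gives $\lambda_{K,\Om,j}\geq\lambda_{D,\Om,j}$. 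Positivity of $\lambda_{D,\Om,j}$ is the strict positivity of $H_{D,\Om}$ from Theorem \ref{t2.5}.

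The only genuinely delicate point — more bookkeeping than obstacle — is making sure the quotient manipulations are performed in the right space: the Krein min-max in \eqref{mam-26} ranges over subspaces of $H^2_0(\Om)$, whereas the Dirichlet min-max in \eqref{mam-37} ranges over subspaces of $H^1_0(\Om)$, and the bridge is precisely the identification $(-\Delta+V):H^2_0(\Om)\to L^2(\Om;d^nx)$ followed by $H_{D,\Om}^{-1}:L^2(\Om;d^nx)\to\dom(H_{D,\Om})$, which is an isomorphism of $H^2_0(\Om)$ onto $\dom(H_{D,\Om})$ and so preserves the dimension count. One should also note that the image $\Phi_j=(-\Delta+V)W_j$ need not be dense, but that is irrelevant: the inequality only needs $\Phi_j$ to meet every $(j-1)$-codimensional subspace, which holds by dimension. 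A cosmetically different but equivalent route — perhaps the one the authors intend for a "direct, analytical proof" — is to start from an eigenfunction expansion: given the $j$-dimensional span $W_j={\rm lin.\,span}\{u_1,\dots,u_j\}$ of the first $j$ buckling eigenfunctions from Lemma \ref{T-MAM-1}, observe $R_{K,\Om}[u]=\lambda_{K,\Om,j}$ is the maximum over $W_j$ by \eqref{mam-32}, and then show directly that $W_j$ (or rather a $j$-dimensional subspace of $H^1_0(\Om)$ built from it) forces $\lambda_{D,\Om,j}\leq\lambda_{K,\Om,j}$ by exhibiting, for $0\neq u\in W_j$, the inequality $R_{D,\Om}[u]\leq R_{K,\Om}[u]$ — but this last pointwise inequality between Rayleigh quotients is false in general, so the correct implementation really does go through the resolvent comparison above rather than a naive quotient comparison. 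I would therefore present the resolvent/min-max argument as the clean proof.
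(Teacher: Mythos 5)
Your resolvent/min--max argument is correct and does prove \eqref{mam-39}: for $u\in W_j\subset H^2_0(\Om)$ one indeed has $u=H_{D,\Om}^{-1}\phi$ with $\phi:=(-\Delta+V)u$, the identity $R_{K,\Om}[u]=\|\phi\|^2_{L^2(\Om;d^nx)}\big/(H_{D,\Om}^{-1}\phi,\phi)_{L^2(\Om;d^nx)}$ holds, and the dimension count against the Courant--Fischer characterization of the eigenvalues of the compact, positive operator $H_{D,\Om}^{-1}$ closes the argument. One slip in your setup: $\dom(H_{D,\Om})$ is \emph{not} contained in $H^2_0(\Om)$ --- it equals $\{v\in H^1_0(\Om)\,|\,\Delta v\in L^2(\Om;d^nx)\}$, and membership in $H^2_0(\Om)$ would in addition force the Neumann trace to vanish. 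This does not damage your proof, since you only ever use the map in the direction $W_j\subset H^2_0(\Om)\ni u\mapsto(-\Delta+V)u\in L^2(\Om;d^nx)$, but the sentence as written is false.

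The more substantive problem is your closing claim that the pointwise inequality $R_{D,\Om}[u]\leq R_{K,\Om}[u]$ is ``false in general,'' so that the ``naive quotient comparison'' cannot work. That claim is wrong, and the naive comparison is precisely the paper's proof. For $0\neq u\in C^\infty_0(\Om)$ the Cauchy--Schwarz inequality gives
\begin{equation}
\|\nabla u\|^2_{(L^2(\Om;d^nx))^n}+\|V^{1/2}u\|^2_{L^2(\Om;d^nx)}
=((-\Delta+V)u,u)_{L^2(\Om;d^nx)}
\leq\|(-\Delta+V)u\|_{L^2(\Om;d^nx)}\,\|u\|_{L^2(\Om;d^nx)},
\end{equation}
and this is literally equivalent to $R_{D,\Om}[u]\leq R_{K,\Om}[u]$: setting $B:=((-\Delta+V)u,u)_{L^2(\Om;d^nx)}$, $A:=\|(-\Delta+V)u\|_{L^2(\Om;d^nx)}$, $C:=\|u\|_{L^2(\Om;d^nx)}$, one has $R_{D,\Om}[u]=B/C^2\leq A^2/B=R_{K,\Om}[u]$ if and only if $B\leq AC$. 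The paper rewrites both \eqref{mam-26} and \eqref{mam-37} as infima over $j$-dimensional subspaces of $C^\infty_0(\Om)$ (by density of $C^\infty_0(\Om)$ in $H^2_0(\Om)$ and $H^1_0(\Om)$, respectively) and then concludes immediately from this pointwise inequality. So your longer route is valid but unnecessary, and the two-line argument you rejected is the intended one; you should correct the assertion that it fails.
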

\begin{proof}
By the density of $C^\infty_0(\Om)$ into $H^2_0(\Om)$ and $H^1_0(\Om)$,
respectively, we obtain from \eqref{mam-26} and \eqref{mam-37} that
\begin{align}\label{mam-40}
& \lambda_{K,\Om,j}
=\inf_{\stackrel{W_j\text{ subspace of }C^\infty_0(\Om)}{\dim(W_j)=j}}
\Big(\sup_{0\not=u\in W_j}R_{K,\Om}[u]\Big), 
\\
& \lambda_{D,\Om,j}
=\inf_{\stackrel{W_j\text{ subspace of }C^\infty_0(\Om)}{\dim (W_j)=j}}
\Big(\sup_{0\not=u\in W_j}R_{D,\Om}[u]\Big),
\label{mam-41}
\end{align}
for every $j\in\bbN$. Since, if $u\in C^\infty_0(\Om)$,
\begin{align}\label{mam-42}
& \|\nabla u\|^2_{(L^2(\Om;d^nx))^n}+\|V^{1/2}u\|^2_{L^2(\Om;d^nx)}
= ((-\Delta+V)u,u)_{L^2(\Om;d^nx)}
\nonumber\\[4pt]
& \quad \leq \|(-\Delta+V)u\|_{L^2(\Om;d^nx)}\|u\|_{L^2(\Om;d^nx)},
\end{align}
we deduce that
\begin{equation} \label{mam-43}
R_{D,\Om}[u]\leq R_{K,\Om}[u],
\, \mbox{ whenever }\, 0\not=u\in C^\infty_0(\Om).
\end{equation} 
With this at hand, \eqref{mam-39} follows from \eqref{mam-40}--\eqref{mam-41}.
\end{proof}

\begin{remark}\label{RRR-em}
Another analytical approach to \eqref{mam-39} which highlights the
connection between the perturbed Krein Laplacian
and a fourth-order boundary problem is as follows.
Granted Hypotheses \ref{h2.1} and \ref{h.V}, and given
$\lambda\in\bbC$, consider the following eigenvalue problem
\begin{equation}\label{MM-1H}
\begin{cases}
u\in\dom(-\Delta_{max,\Om}),\quad (-\Delta+V)u\in\dom(-\Delta_{max,\Om}),
\\
(-\Delta+V)^2u=\lambda\,(-\Delta+V)u \,\mbox{ in } \,\Omega,
\\
\widehat\ga_D(u)=0 \,\mbox{ in } \,\Bigl(N^{1/2}(\dOm)\Bigr)^*,
\\
\widehat\ga_D((-\Delta+V)u)=0 \,\mbox{ in } \,\Bigl(N^{1/2}(\dOm)\Bigr)^*.
\end{cases} 
\end{equation}
Associated with it is the sesquilinear form
\begin{equation}\label{MM-19X}
\begin{cases}
\widetilde{a}_{V,\lambda}(\dott,\dott):\widetilde{\cH}\times\widetilde{\cH}
\longrightarrow\bbC,\quad\widetilde{\cH}:=H^2(\Om)\cap H^1_0(\Om),
\\
\widetilde{a}_{V,\lambda}(u,v)
:=((-\Delta+V)u,(-\Delta+V)v)_{L^2(\Om;d^nx)}
+\big( V^{1/2}u,V^{1/2}v\big)_{L^2(\Om;d^nx)}
\\
\hskip 0.77in
-\lambda\,(\nabla u,\nabla v)_{(L^2(\Om;d^nx))^n},\quad
u,v\in\widetilde{\cH},
\end{cases}
\end{equation}
which has the property that
\begin{equation} \label{MM-19Y}
u\in\widetilde{\cH} \,\mbox{ satisfies }\,\widetilde{a}_{V,\lambda}(u,v)=0  
\,\mbox{ for every } \,v\in \widetilde{\cH} \,
\text{ if and only if } \, \mbox{$u$ solves \eqref{MM-1H}}.
\end{equation} 
We note that since the operator
$-\Delta+V:H^2(\Om)\cap H^1_0(\Om)\to L^2(\Om;d^nx)$ is an isomorphism,
it follows that $u\mapsto \|(-\Delta+V)u\|_{L^2(\Om;d^nx)}$ is an equivalent
norm on the Banach space $\widetilde{\cH}$, and the form
$\widetilde{a}_{V,\lambda}(\dott,\dott)$ is coercive if $\lambda<-M$,
where $M=M(\Om,V)>0$ is a sufficiently large constant.
Based on this and proceeding as in Section \ref{s10}, it can then be shown that
the problem \eqref{MM-1H} has nontrivial solutions if and only
if $\lambda$ belongs to an exceptional set
$\widetilde{\Lambda}_{\Om,V}\subset(0,\infty)$ which is discrete and
only accumulates at infinity. Furthermore,
$u$ solves \eqref{MM-1H} if and only if $v:=(-\Delta+V)u$ is an eigenfunction
for $H_{D,\Om}$, corresponding to the eigenvalue $\lambda$ and,
conversely, if $u$ is an eigenfunction for $H_{D,\Om}$
corresponding to the eigenvalue $\lambda$, then $u$ solves \eqref{MM-1H}.
Consequently, the problem \eqref{MM-1H} is spectrally equivalent to
$H_{D,\Om}$. From this, it follows that the eigenvalues
$\{\lambda_{D,\Om,j}\}_{j\in\bbN}$ of $H_{D,\Om}$ can be expressed as
\begin{equation} \label{mam-26X}
\lambda_{D,\Om,j}
=\min_{\stackrel{W_j\text{ subspace of }\widetilde{\cH}}{\dim (W_j)=j}}
\Big(\max_{0\not=u\in W_j}R_{K,\Om}[u]\Big),\quad j\in\bbN,
\end{equation} 
where the Rayleigh quotient $R_{K,\Om}[u]$ is as in \eqref{mam-25}.
The upshot of this representation is that it immediately yields
\eqref{mam-39}, on account of \eqref{mam-26} and the fact that
$H^2_0(\Om)\subset\widetilde{\cH}$.
\end{remark}

Next, let $\Omega$ be as in Hypothesis \ref{h2.1} and
$0\leq V\in L^\infty(\Om;d^nx)$. For $\lambda\in\bbR$ set
\begin{equation} \label{mam-44}
N_{X,\Om}(\lambda)
:=\#\{j\in\bbN\,|\,\lambda_{X,\Om,j}\leq\lambda\},\quad X\in\{D,K\},
\end{equation} 
where $\#S$ denotes the cardinality of the set $S$.

\begin{corollary}\label{TH-Mq4}
Assume Hypothesis \ref{h.VK}. Then
\begin{equation} \label{mam-45}
N_{K,\Om}(\lambda)\leq N_{D,\Om}(\lambda),\quad \lambda\in\bbR.
\end{equation} 
In particular,
\begin{equation}\label{mam-46}
N_{K,\Om}(\lambda)=O(\lambda^{n/2})\, \mbox{ as }\, \lambda\to\infty.
\end{equation} 
\end{corollary}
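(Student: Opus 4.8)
The proof of Corollary~\ref{TH-Mq4} is a short consequence of the eigenvalue comparison already established. The plan is as follows.

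First I would invoke Proposition~\ref{TH-Mq3}, which gives the inequality $0<\lambda_{D,\Om,j}\leq\lambda_{K,\Om,j}$ for every $j\in\bbN$. Fixing $\lambda\in\bbR$, I then observe that for any $j\in\bbN$ the implication $\lambda_{K,\Om,j}\leq\lambda\Rightarrow\lambda_{D,\Om,j}\leq\lambda$ holds, since $\lambda_{D,\Om,j}\leq\lambda_{K,\Om,j}$. Therefore, passing to the counting functions in \eqref{mam-44},
\begin{equation}
\{j\in\bbN\,|\,\lambda_{K,\Om,j}\leq\lambda\}\subseteq
\{j\in\bbN\,|\,\lambda_{D,\Om,j}\leq\lambda\},
\end{equation}
and taking cardinalities yields \eqref{mam-45}, namely $N_{K,\Om}(\lambda)\leq N_{D,\Om}(\lambda)$ for all $\lambda\in\bbR$. (Implicit here is that both sets are finite for each fixed $\lambda$, which follows because $H_{D,\Om}$ has purely discrete spectrum bounded from below by Theorem~\ref{t2.5}, and the nonzero spectrum of $H_{K,\Om}$ is discrete and accumulates only at infinity by Theorem~\ref{T-MM-2}.)

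For the second assertion \eqref{mam-46}, I would recall the classical Weyl asymptotics for the Dirichlet Laplacian. Since $0\leq V\in L^\infty(\Om;d^nx)$ and $\Om$ is bounded, the perturbed Dirichlet Laplacian $H_{D,\Om}$ has the same leading-order eigenvalue asymptotics as $-\Delta_{D,\Om}$ (a bounded perturbation does not affect the leading Weyl term), and by the Birman--Solomyak result \eqref{Wey-1} one has $N_{D,\Om}(\lambda)=(2\pi)^{-n}v_n|\Om|\,\lambda^{n/2}+o(\lambda^{n/2})$ as $\lambda\to\infty$; in particular $N_{D,\Om}(\lambda)=O(\lambda^{n/2})$. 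Combining this with \eqref{mam-45} gives $N_{K,\Om}(\lambda)=O(\lambda^{n/2})$ as $\lambda\to\infty$, which is \eqref{mam-46}.

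There is no serious obstacle here: the content is entirely contained in Proposition~\ref{TH-Mq3} and the standard Weyl bound for $N_{D,\Om}$. The only point requiring a word of care is the reduction of the perturbed Dirichlet counting function to the unperturbed one; this can be handled either by citing the fact that $0\leq V\in L^\infty$ forces $-\Delta_{D,\Om}\leq H_{D,\Om}\leq -\Delta_{D,\Om}+\|V\|_{L^\infty(\Om;d^nx)}I_\Om$, whence $N_{H_{D,\Om}}(\lambda)\leq N_{-\Delta_{D,\Om}}(\lambda)=O(\lambda^{n/2})$ by monotonicity of eigenvalues and \eqref{Wey-1}, or simply by noting the upper bound $N_{D,\Om}(\lambda)\leq N_{-\Delta_{D,\Om}}(\lambda)$ suffices for the $O(\lambda^{n/2})$ claim. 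This completes the proof.
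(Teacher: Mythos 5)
Your proposal is correct and follows essentially the same route as the paper: \eqref{mam-45} is deduced directly from the eigenvalue comparison \eqref{mam-39} of Proposition~\ref{TH-Mq3}, and \eqref{mam-46} then follows from the Weyl asymptotics for the Dirichlet Laplacian (Birman--Solomyak). Your extra remark on reducing $N_{D,\Om}$ to $N_{-\Delta_{D,\Om}}$ via $V\geq 0$ is a valid filling-in of a detail the paper leaves implicit.
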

\begin{proof}
Estimate \eqref{mam-45} is a trivial consequence of \eqref{mam-39},
whereas \eqref{mam-46} follows from \eqref{mam-35} and Weyl's asymptotic
formula for the Dirichlet Laplacian in a Lipschitz domain
(cf.\ \cite{BS70} and the references therein for very general results of
this nature).
\end{proof}

\subsection{The Unperturbed Case}
\label{s11Y}

What we have proved in Section \ref{s10} and Section \ref{s11X} shows 
that all known eigenvalue estimates for the (standard) buckling problem
\begin{equation} \label{MM-1F}
u\in H^2_0(\Om),\quad
\Delta^2 u=-\lambda\,\Delta u \,\mbox{ in } \,\Omega,
\end{equation} 
valid in the class of domains described in
Hypothesis \ref{h.Conv}, automatically hold, in the same format,
for the Krein Laplacian (corresponding to $V\equiv 0$).
For example, we have the following result with $\lambda^{(0)}_{K,\Om,j}$, 
$j\in\bbN$, denoting the nonzero eigenvalues of the Krein Laplacian 
$-\Delta_{K,\Om}$ and $\lambda^{(0)}_{D,\Om,j}$, $j\in\bbN$, denoting the 
eigenvalues of the Dirichlet Laplacian $-\Delta_{D,\Om}$:

\begin{theorem}\label{TRm-1}
If $\Omega\subset\bbR^n$ is as in Hypothesis \ref{h.Conv}, the nonzero 
eigenvalues of the Krein Laplacian $-\Delta_{K,\Om}$ satisfy
\begin{align}\label{mx-1}
& \lambda^{(0)}_{K,\Om,2}\leq\frac{n^2+8n+20}{(n+2)^2}\lambda^{(0)}_{K,\Om,1},
\\
& \sum_{j=1}^n\lambda^{(0)}_{K,\Om,j+1}< (n+4)\lambda^{(0)}_{K,\Om,1}
-\frac{4}{n+4}(\lambda^{(0)}_{K,\Om,2}-\lambda^{(0)}_{K,\Om,1})
\le (n+4)\lambda^{(0)}_{K,\Om,1},
\label{mx-2}
\\
& \sum_{j=1}^k \big(\lambda^{(0)}_{K,\Om,k+1}-\lambda^{(0)}_{K,\Om,j}\big)^2 
\leq\frac{4(n+2)}{n^2}
\sum_{j=1}^k \big(\lambda^{(0)}_{K,\Om,k+1}-\lambda_{K,0,j}\big)
\lambda^{(0)}_{K,\Om,j},
\quad k\in\bbN,
\label{mx-3}
\end{align}
Furthermore, if $j_{(n-2)/2,1}$ is the first positive zero of the
Bessel function of first kind and order $(n-2)/2$ $($cf.\ \cite[Sect.\ 9.5]{AS72}$)$, 
$v_n$ denotes the volume of the unit ball in $\bbR^n$, and $|\Omega|$ stands 
for the $n$-dimensional Euclidean volume of $\Omega$, then
\begin{equation} \label{mx-4} 
\frac{2^{2/n}j_{(n-2)/2,1}^2v_n^{2/n}}{|\Omega|^{2/n}} < \lambda^{(0)}_{D,\Om,2} 
\leq \lambda^{(0)}_{K,\Om,1}. 
\end{equation} 
\end{theorem}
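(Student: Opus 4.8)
The plan is to prove Theorem \ref{TRm-1} by transferring known eigenvalue inequalities for the classical buckling problem \eqref{MM-1F} to the Krein Laplacian, using the spectral equivalence established in Sections \ref{s10}--\ref{s11X}. The crucial observation, already available from Theorem \ref{T-MM-1} (specialized to $V\equiv 0$) together with Proposition \ref{TH-Mq2}, is that the nonzero eigenvalues $\lambda^{(0)}_{K,\Om,j}$ of $-\Delta_{K,\Om}$ coincide, including multiplicities, with the eigenvalues of the buckling problem \eqref{MM-1F}, and the min-max characterization \eqref{mam-26} holds with $V\equiv 0$. Hence any universal inequality among buckling eigenvalues that has been proven using only the variational (min-max) structure over $H^2_0(\Om)$ immediately applies verbatim to $\{\lambda^{(0)}_{K,\Om,j}\}_{j\in\bbN}$. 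First I would state this reduction cleanly as a lemma or simply invoke Theorem \ref{T-MM-1} and Proposition \ref{TH-Mq2}, emphasizing that the class of domains in Hypothesis \ref{h.Conv} is exactly where the regularity \eqref{Yan-9} makes the rephrasing \eqref{MM-2} of \eqref{MM-1} legitimate.

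For the three inequalities \eqref{mx-1}, \eqref{mx-2}, \eqref{mx-3}, the plan is to cite the corresponding results in the buckling-problem literature referenced in the introduction: the ratio bound \eqref{mx-1} is the Payne--P\'olya--Weinberger-type estimate for the buckling problem (Ashbaugh--Laugesen \cite{AL96}, see also \cite{As04}, \cite{As09}, \cite{CY06}), while \eqref{mx-2} and \eqref{mx-3} are the universal (Hile--Yeh type \cite{HY84} and Yang-type) inequalities for buckling eigenvalues; all of these are proven by test-function arguments feeding coordinate functions $x_k u_j$ (or their projections orthogonal to lower eigenfunctions) into the min-max quotient $R_{K,\Om}[\cdot]$ of \eqref{mam-25} over $H^2_0(\Om)$. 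Since these arguments only use that $C^\infty_0(\Om)$ is dense in $H^2_0(\Om)$ and the quotient structure, they transfer without modification; I would simply indicate the substitution $u_j \rightsquigarrow$ eigenfunction of the buckling problem and refer the reader to \cite{As04}, \cite{As09}, \cite{AL96}, \cite{HY84} for the detailed computations, noting the constants are exactly those obtained there.

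For the remaining estimate \eqref{mx-4}, the right-hand inequality $\lambda^{(0)}_{D,\Om,2} \leq \lambda^{(0)}_{K,\Om,1}$ is a consequence of Theorem \ref{AS-thK} (or Proposition \ref{TH-Mq3}) combined with the fact that the first buckling eigenvalue dominates the \emph{second} Dirichlet eigenvalue --- this last point itself follows from a test-function argument: given the first buckling eigenfunction $u_1\in H^2_0(\Om)$, the function $-\Delta u_1\in L^2(\Om)$ lies in $\dom(H_{D,\Om})$ and is orthogonal to constants, and a two-dimensional test space argument in $H^1_0(\Om)$ yields $\lambda^{(0)}_{D,\Om,2}\leq \lambda^{(0)}_{K,\Om,1}$ (this is the classical P\'olya-type inequality; cf.\ \cite{Pa55}, \cite{Pa67}, \cite{Pa91}, or Payne's observation reproduced in \cite{As04}). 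The left-hand inequality, the Faber--Krahn-type lower bound $2^{2/n}j_{(n-2)/2,1}^2 v_n^{2/n}|\Om|^{-2/n} < \lambda^{(0)}_{D,\Om,2}$, is exactly the Krahn--Szeg\H{o} inequality for the second Dirichlet eigenvalue: among sets of given volume, $\lambda^{(0)}_{D,\Om,2}$ is minimized (strictly, unless $\Om$ is a union of two equal balls) by the disjoint union of two balls each of volume $|\Om|/2$, whose second eigenvalue is the first eigenvalue of a single ball of volume $|\Om|/2$, giving precisely $2^{2/n}j_{(n-2)/2,1}^2 v_n^{2/n}|\Om|^{-2/n}$; strictness holds because a quasi-convex $\Om$ is connected-after-exhaustion in the relevant sense (or at any rate is not literally two equal balls). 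I would cite the standard reference for the Krahn--Szeg\H{o} inequality here and note that strictness is what produces the strict inequality in \eqref{mx-4}.

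The main obstacle I anticipate is \emph{not} any of the transfers --- those are formal once the spectral equivalence is in hand --- but rather making sure the low-regularity setting does not invalidate the buckling-eigenvalue test-function computations. Specifically, the classical proofs of \eqref{mx-1}--\eqref{mx-3} in \cite{AL96}, \cite{HY84} typically assume smooth domains and integrate by parts freely; one must check that every such manipulation can be carried out for $u_j\in H^2_0(\Om)$ on a mere quasi-convex domain. This is where Lemma \ref{T-MAM-1} is essential: it provides an orthonormal (in the $\cH_V$, here $\cH_0$, inner product) basis of eigenfunctions $u_j\in H^2_0(\Om)$ with the expansion \eqref{mam-4} converging in $H^2(\Om)$, so all the required integrations by parts reduce, by density of $C^\infty_0(\Om)$ in $H^2_0(\Om)$, to the smooth case; coordinate multipliers $x_k$ preserve $H^2_0(\Om)$ on a bounded domain. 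Thus the plan is to remark, once, that all test functions built from the $u_j$ by multiplication by polynomials remain in $H^2_0(\Om)$ and that every integration by parts used in \cite{AL96}, \cite{As04}, \cite{HY84} is justified by this density, after which the inequalities \eqref{mx-1}--\eqref{mx-3} follow with literally the same algebra, and \eqref{mx-4} follows from the Dirichlet-eigenvalue facts cited above.
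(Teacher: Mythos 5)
Your proposal is correct and follows essentially the same route as the paper: the proof there consists precisely of invoking the spectral equivalence between $-\Delta_{K,\Om}$ and the buckling problem (Theorem \ref{T-MM-1}, Proposition \ref{TH-Mq2}) and then citing the buckling-eigenvalue literature for \eqref{mx-1}--\eqref{mx-3} and Krahn \cite{Kra26} together with Payne \cite{Pa55} for the two parts of \eqref{mx-4}. Your additional remarks on justifying the integrations by parts for $u_j\in H^2_0(\Om)$ in the quasi-convex setting are a sensible elaboration of what the paper leaves implicit.
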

\begin{proof}
With the eigenvalues of the buckling plate problem replacing the
corresponding eigenvalues of the Krein Laplacian, estimates 
\eqref{mx-1}--\eqref{mx-3} have been proved in 
\cite{As99}, \cite{As04}, \cite{As09}, \cite{CY06},  
and \cite{HY84} (indeed, further strengthenings 
of \eqref{mx-2} are detailed in \cite{As04}, \cite{As09}), whereas the 
respective parts of \eqref{mx-4} are covered by results in \cite{Kra26} 
and \cite{Pa55} (see also \cite{AL96}, \cite{BP63}).
\end{proof}

\begin{remark}\label{Rm-1}
Given the physical interpretation of the first eigenvalue for \eqref{MM-1F},
it follows that $\lambda^{(0)}_{K,\Om,1}$, the first nonzero eigenvalue for 
the Krein Laplacian $-\Delta_{K,\Om}$, is proportional to the load 
compression at which the plate $\Omega$ (assumed to be as in Hypothesis 
\ref{h.Conv}) buckles.  In this connection, it is worth remembering the 
long-standing conjecture of P\'olya--Szeg{\H o}, to the effect that amongst 
all plates of a given area, the circular one will buckle first (assuming 
all relevant physical parameters to be identical).  
In \cite{AL96}, the authors 
have given a partial result in this direction which, in terms of the first 
eigenvalue  $\lambda^{(0)}_{K,\Om,1}$ for the Krein Laplacian 
$-\Delta_{K,\Om}$ in a domain $\Om$ as in Hypothesis \ref{h.Conv}, reads
\begin{equation} \label{A-L.1}
\lambda^{(0)}_{K,\Om,1}>\frac{2^{2/n}j_{(n-2)/2,1}^2v_n^{2/n}}{|\Omega|^{2/n}}
=c_n\lambda^{(0)}_{K,\Om^{\#},1}
\end{equation} 
where $\Om^{\#}$ is the $n$-dimensional ball with the same volume as $\Om$, 
and
\begin{equation} \label{A-L.2}
c_n= 2^{2/n}[j_{(n-2)/2,1}/j_{n/2,1}]^2 
=1-(4-{\rm log}\,4)/n+O(n^{-5/3})\to 1 \,\mbox{ as }\, n\to\infty.
\end{equation} 
This result implies an earlier inequality of Bramble and Payne
\cite{BP63} for the two-dimensional case, which reads 
\begin{equation} \label{A-L.3}
\lambda^{(0)}_{K,\Om,1}>\frac{2\pi j_{0,1}^2}{{\rm Area}\,(\Omega)}.
\end{equation} 

Given that \eqref{mx-1} 
states a universal inequality for the ratio of the first two nonzero 
eigenvalues of the Krein Laplacian, that is, of the buckling problem, it is natural to 
wonder what the best upper bound for this ratio might be, and the shape of domain 
that saturates it.  While this question is still open, the conjecture that springs most 
naturally to mind is that the ratio is maximized by the disk/ball, and that in $n$ 
dimensions the best upper bound is therefore $j_{(n+2)/2,1}^2/j_{n/2,1}^2$ (a ratio 
of squares of first positive zeros of Bessel functions of appropriate order).  In the 
context of the buckling problem this conjecture was stated in \cite{As99}  (see p.\ 129).  
This circle 
of ideas goes back to Payne, P\'olya, and Weinberger \cite{PPW55,PPW56}, 
who first considered bounds 
for ratios of eigenvalues and who formulated the corresponding conjecture for the first 
two membrane eigenvalues (i.e., that the disk/ball maximizes the ratio of the first two 
eigenvalues).  
\end{remark}

Before stating an interesting universal inequality concerning the ratio 
of the first (nonzero) Dirichlet and Krein Laplacian eigenvalues for a 
bounded domain with boundary of nonnegative Gaussian mean curvature 
(which includes, obviously, the case of a bounded convex domain),
we recall a well-known result due to Babu{\v s}ka 
and V\'yborn\'y \cite{BV65} concerning domain continuity of Dirichlet 
eigenvalues (see also \cite{BL08}, \cite{BLL08}, \cite{Da03}, \cite{Fu99}, 
\cite{St95}, \cite{We84}, and the literature cited therein):

\begin{theorem}\label{tDirichletapprox}
Let $\Om\subset \bbR^n$ be open and bounded, and suppose that $\Om_m\subset 
\Om$, $m\in\bbN$, are open and monotone increasing toward $\Om$, that is, 
\begin{equation}
\Om_m \subset \Om_{m+1} \subset \Om, \; m\in\bbN, \quad 
\bigcup_{m\in\bbN} \Om_m = \Om. 
\end{equation}
In addition, let $-\Delta_{D,\Om_m}$ and $-\Delta_{D,\Om}$ be the Dirichlet 
Laplacians in $L^2(\Om_m;d^n x)$ and $L^2(\Om;d^n x)$ $($cf.\ \eqref{3.QHD}, 
\eqref{3.HDF}$)$, 
and denote their respective spectra by 
\begin{equation}
\sigma(-\Delta_{D,\Om_m}) = \big\{\lambda^{(0)}_{D,\Om_m,j}\big\}_{j\in\bbN}, \; 
m\in\bbN, \, \text{ and } \, 
\sigma(-\Delta_{D,\Om}) = \big\{\lambda^{(0)}_{D,\Om,j}\big\}_{j\in\bbN}. 
\end{equation}
Then, for each $j\in\bbN$,  
\begin{equation} \label{A-P.1a}
\lim_{m\to\infty} \lambda^{(0)}_{D,\Om_m,j} = \lambda^{(0)}_{D,\Om,j}.
\end{equation} 
\end{theorem}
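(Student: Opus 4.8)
The plan is to prove the domain-continuity statement \eqref{A-P.1a} for Dirichlet Laplacian eigenvalues via the min-max principle, combined with an elementary approximation of $H^1_0$-functions. Recall from \eqref{mam-37} (specialized to $V\equiv 0$) that
\[
\lambda^{(0)}_{D,\Om,j} = \min_{\stackrel{W_j\text{ subspace of }H^1_0(\Om)}{\dim(W_j)=j}}
\Big(\max_{0\not=u\in W_j} R_{D,\Om}[u]\Big), \quad j\in\bbN,
\]
where $R_{D,\Om}[u]=\|\nabla u\|^2_{(L^2(\Om;d^nx))^n}/\|u\|^2_{L^2(\Om;d^nx)}$, and the analogous formula holds on each $\Om_m$.

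First I would establish monotonicity: extending a function in $H^1_0(\Om_m)$ by zero produces an element of $H^1_0(\Om_{m+1})\subseteq H^1_0(\Om)$ with the same Rayleigh quotient, so restricting the min-max over $\Om$ to trial subspaces supported in $\Om_m$ yields
\[
\lambda^{(0)}_{D,\Om,j} \leq \lambda^{(0)}_{D,\Om_{m+1},j} \leq \lambda^{(0)}_{D,\Om_m,j}, \quad m,j\in\bbN.
\]
Hence $\big\{\lambda^{(0)}_{D,\Om_m,j}\big\}_{m\in\bbN}$ is nonincreasing and bounded below by $\lambda^{(0)}_{D,\Om,j}$, so the limit $\ell_j:=\lim_{m\to\infty}\lambda^{(0)}_{D,\Om_m,j}$ exists and satisfies $\ell_j\geq\lambda^{(0)}_{D,\Om,j}$. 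It remains to prove the reverse inequality $\ell_j\leq\lambda^{(0)}_{D,\Om,j}$.

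For the reverse inequality I would fix $\varepsilon>0$, pick a $j$-dimensional subspace $W_j\subset H^1_0(\Om)$ nearly attaining the min-max for $\lambda^{(0)}_{D,\Om,j}$, and (using density of $C^\infty_0(\Om)$ in $H^1_0(\Om)$) arrange that $W_j$ is spanned by functions $\varphi_1,\dots,\varphi_j\in C^\infty_0(\Om)$. Since each $\varphi_i$ has compact support $K_i\subset\Om$ and $\bigcup_m \Om_m = \Om$ is an increasing exhaustion, the compact set $K_1\cup\dots\cup K_j$ is contained in some $\Om_{m_0}$ (a finite subcover argument, or simply $\overline{\Om_\ell}\subset\Om_{\ell+1}$ with compactness). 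Thus $W_j\subset H^1_0(\Om_m)$ for all $m\geq m_0$, and $W_j$ remains $j$-dimensional there. The min-max on $\Om_m$ then gives $\lambda^{(0)}_{D,\Om_m,j}\leq \max_{0\neq u\in W_j} R_{D,\Om_m}[u]=\max_{0\neq u\in W_j} R_{D,\Om}[u]\leq \lambda^{(0)}_{D,\Om,j}+\varepsilon$ for all $m\geq m_0$, where the middle equality holds because the Rayleigh quotient of a fixed function supported in $\Om_m$ is computed by the same integrals regardless of whether one integrates over $\Om_m$ or $\Om$. Letting $m\to\infty$ yields $\ell_j\leq \lambda^{(0)}_{D,\Om,j}+\varepsilon$, and since $\varepsilon>0$ was arbitrary, $\ell_j\leq\lambda^{(0)}_{D,\Om,j}$, completing the proof.

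There is essentially no serious obstacle here; the only point requiring mild care is the spectral-theoretic justification that the min-max characterization \eqref{mam-37} indeed delivers the $j$-th eigenvalue counted with multiplicity for the Dirichlet Laplacian on an arbitrary bounded open set — but this is standard (the embedding $H^1_0(\Om)\hookrightarrow L^2(\Om;d^nx)$ is compact by Rellich's theorem for bounded $\Om$, so $-\Delta_{D,\Om}$ has purely discrete spectrum accumulating only at $+\infty$) and is implicitly available in the material cited around \eqref{mam-35}. One should also note that no regularity of $\partial\Om$ or $\partial\Om_m$ is needed for the upper-semicontinuity direction, which is exactly why the hypothesis only requires the $\Om_m$ to be open and increasing to $\Om$; monotone \emph{increasing} exhaustion is essential, as domain continuity of Dirichlet eigenvalues can fail under decreasing exhaustions or more general perturbations.
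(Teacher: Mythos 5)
Your proof is correct. Note first that the paper itself does not prove Theorem \ref{tDirichletapprox}: it is quoted as a known result of Babu{\v s}ka and V\'yborn\'y \cite{BV65}, with further references, so there is no internal proof to compare against; your argument is a self-contained and entirely standard min-max proof of monotone domain continuity. Both halves are sound: domain monotonicity via extension by zero gives the nonincreasing bounded sequence, and the finite-subcover argument correctly places the compact support of a smooth trial basis inside some $\Om_{m_0}$ using only the stated hypothesis $\bigcup_m\Om_m=\Om$ (you do not need, and should not invoke, the unstated condition $\overline{\Om_\ell}\subset\Om_{\ell+1}$ that you mention parenthetically). The one step you elide is the replacement of a near-optimal $j$-dimensional subspace $W_j\subset H^1_0(\Om)$ by one spanned by $C^\infty_0(\Om)$ functions: approximating a basis of $W_j$ in $H^1_0$-norm preserves linear independence and perturbs $\max_{0\neq u\in W_j}R_{D,\Om}[u]$ only slightly, because this maximum is the top eigenvalue of the generalized eigenvalue problem for the Gram matrices of the basis in the Dirichlet and $L^2$ inner products, and these matrices depend continuously on the basis while the $L^2$ Gram matrix stays positive definite. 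This is routine and is exactly the kind of density reduction the paper itself performs in \eqref{mam-40}--\eqref{mam-41}, but it deserves a sentence. Finally, the spectral-theoretic input you flag (purely discrete spectrum of $-\Delta_{D,\Om}$ for arbitrary bounded open $\Om$, so that the min-max values are genuine eigenvalues) holds by Rellich compactness of $H^1_0(\Om)\hookrightarrow L^2(\Om;d^nx)$ and is stated in the paper right after \eqref{3.HDF}.
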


\begin{theorem}\label{T-Pay-1}
Assume that $\Om\subset\bbR^n$ is a bounded quasi-convex domain. 
In addition, assume there exists a sequence of $C^\infty$-smooth domains 
$\{\Om_m\}_{m\in\bbN}$ satisfying the following two conditions:
\begin{enumerate}
\item[$(i)$] The sequence $\{\Om_m\}_{m\in\bbN}$ monotonically converges 
to $\Om$ from inside, that is, 
\begin{equation} \label{A-P.2}
\Om_m \subset \Om_{m+1}\subset \Om, \; m\in\bbN,  \quad 
\bigcup_{m\in\bbN} \Om_m = \Om. 
\end{equation} 
\item[$(ii)$] If $\cG_m$ denotes the Gaussian mean curvature of 
$\partial\Om_m$, then 
\begin{equation}
\cG_m\geq 0 \, \text{ for all $m\in\bbN$.}    \lb{A-P.2a}
\end{equation}
\end{enumerate}
Then the first Dirichlet eigenvalue and the first nonzero eigenvalue for 
the Krein Laplacian in $\Om$ satisfy
\begin{equation} \label{A-P.1}
1\leq\frac{\lambda^{(0)}_{K,\Om,1}}{\lambda^{(0)}_{D,\Om,1}}\leq 4.
\end{equation} 
In particular, each bounded convex domain $\Om\subset\bbR^n$ satisfies 
conditions $(i)$ and $(ii)$ and hence \eqref{A-P.1} holds for such domains. 
\end{theorem}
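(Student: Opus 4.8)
The plan is to establish the two inequalities in \eqref{A-P.1} separately, the lower bound being the easy half and the upper bound requiring the approximation hypothesis. For the lower bound $\lambda^{(0)}_{D,\Om,1}\leq\lambda^{(0)}_{K,\Om,1}$, I would simply invoke Proposition \ref{TH-Mq3} (or equivalently Theorem \ref{AS-thK} together with Proposition \ref{L-Fri1}) in the case $V\equiv 0$, which gives $\lambda^{(0)}_{D,\Om,j}\leq\lambda^{(0)}_{K,\Om,j}$ for all $j\in\bbN$; specializing to $j=1$ yields the left inequality in \eqref{A-P.1}. Nothing further is needed here.

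For the upper bound, the key input is the classical eigenvalue inequality of Payne relating the first buckling eigenvalue of a clamped plate to the first membrane (Dirichlet) eigenvalue: on a bounded smooth domain $D\subset\bbR^n$ whose boundary has nonnegative Gaussian mean curvature, the first buckling eigenvalue $\Lambda_1(D)$ and the first Dirichlet eigenvalue $\mu_1(D)$ satisfy $\Lambda_1(D)\leq 4\,\mu_1(D)$ (this is the result of \cite{Pa60}, see also \cite{Pa67}, \cite{Pa91}, cited in the introduction precisely for this purpose). The strategy is then: (1) apply this inequality on each smooth approximating domain $\Om_m$, where it is legitimate because $\partial\Om_m$ is $C^\infty$ with $\cG_m\geq 0$; (2) identify, via Theorem \ref{T-MM-1} and Theorem \ref{T-MM-2} applied on $\Om_m$, the first nonzero Krein eigenvalue $\lambda^{(0)}_{K,\Om_m,1}$ with the first buckling eigenvalue $\Lambda_1(\Om_m)$, so that $\lambda^{(0)}_{K,\Om_m,1}\leq 4\,\lambda^{(0)}_{D,\Om_m,1}$ for every $m$; (3) pass to the limit $m\to\infty$ on both sides. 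On the right-hand side, $\lambda^{(0)}_{D,\Om_m,1}\to\lambda^{(0)}_{D,\Om,1}$ by the domain-continuity result Theorem \ref{tDirichletapprox} (applicable since $\Om_m\nearrow\Om$). On the left-hand side, using the variational characterization \eqref{mam-26} with $V\equiv 0$ together with the monotonicity \eqref{mam-2S} (extension by zero from $H^2_0(\Om_m)$ into $H^2_0(\Om)$ is norm-preserving for the relevant Rayleigh quotient), one has $\lambda^{(0)}_{K,\Om,1}\leq\lambda^{(0)}_{K,\Om_m,1}$ for every $m$; hence $\lambda^{(0)}_{K,\Om,1}\leq 4\,\lambda^{(0)}_{D,\Om_m,1}$ for all $m$, and letting $m\to\infty$ gives $\lambda^{(0)}_{K,\Om,1}\leq 4\,\lambda^{(0)}_{D,\Om,1}$.

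The two bounds together give \eqref{A-P.1}. The final assertion, that every bounded convex $\Om\subset\bbR^n$ satisfies $(i)$ and $(ii)$, follows from a standard smoothing-from-inside construction: one can exhaust a bounded convex domain by an increasing sequence of $C^\infty$-smooth strictly convex domains (for instance, by mollifying the support function, or by taking sublevel sets of a smooth strictly convex function approximating the gauge functional), and convex hypersurfaces have nonnegative principal curvatures, hence nonnegative Gaussian mean curvature. Since convex domains are quasi-convex (this is item $(i)$ in the list of examples of quasi-convex domains in the introduction, and also follows from \eqref{ewT-1}), Hypothesis \ref{h.Conv} is met, so \eqref{A-P.1} applies.

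The main obstacle I anticipate is step (2): one must verify carefully that on the smooth domain $\Om_m$ the first \emph{nonzero} eigenvalue of the Krein--von Neumann extension of $-\Delta|_{C_0^\infty(\Om_m)}$ coincides with the lowest buckling eigenvalue $\Lambda_1(\Om_m)$, i.e., that the spectral correspondence of Theorem \ref{T-MM-1}/Example \ref{e3.6} (which matches the \emph{sets} of nonzero Krein eigenvalues and buckling eigenvalues, counting multiplicities) indeed matches the \emph{bottoms} of these spectra. This is immediate once one knows the correspondence preserves multiplicities and that both spectra are discrete and bounded below by a positive constant --- which is exactly what Theorem \ref{T-MM-2} (applied with $V\equiv0$ on $\Om_m$) provides --- but it should be stated explicitly. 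A secondary, more routine point is checking that the hypotheses of Theorem \ref{tDirichletapprox} are literally satisfied by the chosen exhaustion $\{\Om_m\}$ (openness, monotone increase, union equal to $\Om$), which is built into condition $(i)$.
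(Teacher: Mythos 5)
Your proposal is correct and follows essentially the same route as the paper: the lower bound from Proposition \ref{TH-Mq3}, reduction to the smooth approximating domains via Theorem \ref{tDirichletapprox} and the domain monotonicity \eqref{mam-2S}, and Payne's inequality $\lambda^{(0)}_{K,\Om_m,1}\le 4\,\lambda^{(0)}_{D,\Om_m,1}$ on each $\Om_m$, with the Krein/buckling identification supplied by Theorems \ref{T-MM-1}--\ref{T-MM-2} and the Rayleigh quotient \eqref{mam-29}. The only divergence is that you cite Payne's inequality as a black box, whereas the paper reproves it in full (using $u_1^2$ as trial function in \eqref{mam-29}), precisely because the form you need --- all $n\ge 2$ and nonnegative Gaussian mean curvature rather than convexity --- is slightly stronger than what is literally established in \cite{Pa60} (two-dimensional convex domains; the extension to $n\ge 2$ is only remarked in \cite{Pa67}), so if you outsource this step you should check that the cited sources actually cover the exact hypotheses of condition $(ii)$.
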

\begin{proof} 
Of course, the lower bound in \eqref{A-P.1} is contained in \eqref{mam-39},
so we will concentrate on establishing the upper bound.  To this end, we 
recall that it is possible to approximate $\Omega$ with a sequence of 
$C^\infty$-smooth bounded domains satisfying \eqref{A-P.2} and 
\eqref{A-P.2a}.  By Theorem \ref{tDirichletapprox}, the Dirichlet 
eigenvalues are continuous under the domain perturbations described in 
\eqref{A-P.2} and one obtains, in particular, 
\begin{equation} \label{A-P.3}
\lim_{m\to\infty}\lambda^{(0)}_{D,\Om_{m},1} = \lambda^{(0)}_{D,\Om,1}.
\end{equation} 
On the other hand, \eqref{mam-2S} yields that
$\lambda^{(0)}_{K,\Om,1}\leq \lambda^{(0)}_{K,\Om_m,1}$.   
Together with \eqref{A-P.3}, this shows that it suffices to prove that
\begin{equation} \label{A-P.4}
\lambda^{(0)}_{K,\Om_m,1}\leq 4 \lambda^{(0)}_{D,\Om_m,1},\quad m\in\bbN.
\end{equation} 
Summarizing, it suffices to show that
\begin{align} \label{A-P.5}
\begin{split}
& \mbox{$\Om\subset\bbR^n$ a bounded, $C^\infty$-smooth domain, whose Gaussian 
mean}  \\
& \quad \text{curvature $\cG$ of $\partial\Om$ is nonnegative, implies }\, 
\lambda^{(0)}_{K,\Om,1} \leq 4\,\lambda^{(0)}_{D,\Om,1}. 
\end{split}
\end{align} 
Thus, we fix a bounded, $C^\infty$ domain $\Om\subset\bbR^n$ with $\cG\geq 0$ 
on $\partial\Om$ and denote by $u_1$ the (unique, up to normalization) first eigenfunction for the Dirichlet Laplacian in $\Om$. In the sequel, we abbreviate
$\lambda_D:=\lambda^{(0)}_{D,\Om,1}$ and $\lambda_K:=\lambda^{(0)}_{K,\Om,1}$. Then (cf.\ \cite[Theorems\ 8.13 and 8.38]{GT83}),
\begin{equation} \label{A-P.6}
u_1\in C^\infty(\ol{\Om}),\quad u_1|_{\dOm}=0,\quad u_1>0\mbox{ in }\Om,\quad
-\Delta u_1=\lambda_D\,u_1 \, \mbox{ in } \, \Om,
\end{equation} 
and
\begin{equation} \label{A-P.7}
\lambda_D=\frac{\int_{\Om}d^nx\,|\nabla u_1|^2}{\int_{\Om}d^nx\,|u_1|^2}.
\end{equation} 
In addition, \eqref{mam-29} (with $j=1$) and $u_1^2$ as a ``trial function'' yields
\begin{equation} \label{A-P.8}
\lambda_K \leq\frac{\int_{\Om}d^nx\,|\Delta(u_1^2)|^2}
{\int_{\Om}d^nx\,|\nabla(u_1^2)|^2}.
\end{equation} 
Then \eqref{A-P.5} follows as soon as one shows that the right-hand side
of \eqref{A-P.8} is less than or equal to the quadruple of the
right-hand side of \eqref{A-P.7}.  For bounded, smooth, convex domains
in the plane (i.e., for $n=2$), such an estimate was established in 
\cite{Pa60}.  For the convenience of the reader, below we review Payne's 
ingenious proof, primarily to make sure that it continues to hold in much 
the same format for our more general class of domains and in all space 
dimensions (in the process, we also shed more light on some less explicit 
steps in Payne's original proof, including the realization that the key 
hypothesis is not convexity of the domain, but rather nonnegativity of the 
Gaussian mean curvature $\cG$ of its boundary).  To get started, we expand
\begin{equation} \label{A-P.9}
(\Delta(u_1^2))^2=4\big[\lambda_D^2u_1^4-2\lambda_D\,u_1^2|\nabla u_1|^2
+|\nabla u_1|^4\big],\quad |\nabla(u_1^2)|^2=4\,u_1^2|\nabla u_1|^2,
\end{equation} 
and use \eqref{A-P.8} to write
\begin{equation} \label{A-P.10}
\lambda_K \leq\lambda_D^2\left(\frac{\int_{\Om}d^nx\,u_1^4}
{\int_{\Om}d^nx\,u_1^2|\nabla u_1|^2}\right)-2\lambda_D 
+\left(\frac{\int_{\Om}d^nx\,|\nabla u_1|^4}
{\int_{\Om}d^nx\,u_1^2|\nabla u_1|^2}\right).
\end{equation} 
Next, observe that based on \eqref{A-P.6} and the Divergence Theorem
we may write
\begin{align}\label{A-P.11}
\int_{\Om}d^nx\,\big[3u_1^2|\nabla u_1|^2-\lambda_D\,u_1^4\big] &=
\int_{\Om}d^nx\,\big[3u_1^2|\nabla u_1|^2+u_1^3\Delta u_1\big]
=\int_{\Om}d^nx\, {\rm div} \big(u_1^3\nabla u_1\big)
\nonumber\\
&= \int_{\partial\Om}d^{n-1}\omega\,u_1^3\partial_{\nu}u_1=0,
\end{align}
where $\nu$ is the outward unit normal to $\dOm$, and $d^{n-1}\omega$ 
denotes the induced surface measure on $\partial\Omega$.
This shows that the coefficient of $\lambda_D^2$ in \eqref{A-P.10} is
$3\lambda_D^{-1}$, so that
\begin{equation} \label{A-P.12}
\lambda_K \leq\lambda_D +\theta, \, \mbox{ where }\, 
\theta:=\frac{\int_{\Om}d^nx\,|\nabla u_1|^4}{\int_{\Om}d^nx\,u_1^2|\nabla u_1|^2}.
\end{equation} 
We begin to estimate $\theta$ by writing
\begin{align}\label{A-P.13}
\int_{\Om}d^nx\,|\nabla u_1|^4 &=
\int_{\Om}d^nx\,(\nabla u_1)\cdot(|\nabla u_1|^2\nabla u_1)
=-\int_{\Om}d^nx\,u_1\, {\rm div} (|\nabla u_1|^2\nabla u_1)
\nonumber\\
&= -\int_{\Om}d^nx\, \big[(u_1\,\nabla u_1)\cdot(\nabla|\nabla u_1|^2)
-\lambda_D\,u_1^2|\nabla u_1|^2\big],
\end{align}
so that
\begin{equation} \label{A-P.14}
\frac{\int_{\Om}d^nx\,(u_1\,\nabla u_1)\cdot(\nabla|\nabla u_1|^2)}
{\int_{\Om}d^nx\,u_1^2|\nabla u_1|^2}=\lambda_D-\theta.
\end{equation} 
To continue, one observes that because of \eqref{A-P.6} and the classical Hopf lemma 
(cf.\ \cite[Lemma\ 3.4]{GT83}) one has $\partial_{\nu} u_1 < 0$ on $\partial\Om$. 
Thus, $|\nabla u_1| \neq 0$ at points in $\Om$ near $\partial \Om$. This allows one to conclude that
\begin{equation} \label{A-P.15}
\nu=-\frac{\nabla u_1}{|\nabla u_1|}\, \mbox{ near and on }\, \dOm. 
\end{equation}

By a standard result from differential geometry (see, for example, 
\cite[p.\ 142]{Ca92})
\begin{equation} \label{A-P.16}
{\rm div} (\nu)=(n-1)\,\cG\, \mbox{ on }\, \partial\Om,
\end{equation}
where $\cG$ denotes the mean curvature of $\partial\Om$.  

To proceed further, we introduce the following notations for the second
derivative matrix, or {\it Hessian}, of $u_1$ and its norm:
\begin{equation} \label{A-P.18}
{\rm Hess} (u_1):=
\left(\frac{\partial^2 u_1}{\partial x_j\partial x_k}\right)_{1\leq j,k\leq n},
\quad
|{\rm Hess} (u_1)|:= \bigg(\sum_{j,k=1}^n|\partial_j\partial_k u_1|^2\bigg)^{1/2}.
\end{equation}
Relatively brief and straightforward computations (cf.\ \cite[Theorem 2.2.14]{KP99}) then yield
\begin{align} \label{A-P.19}
{\rm div} (\nu) = - \sum_{j=1}^n \partial_j \bigg(\frac{\partial_j u_1}{|\nabla u_1|}\bigg)
&=|\nabla u_1|^{-1}[-\Delta u_1 + \langle \nu, {\rm Hess}(u_1) \nu \rangle]
\nonumber\\
&=|\nabla u_1|^{-1}\langle \nu, {\rm Hess}(u_1) \nu \rangle  \, \mbox{ on } \, \partial\Om
\end{align}
(since $-\Delta u_1 = \lambda_D u_1 = 0$ on $\partial\Om$),
\begin{align} \label{A-P.20a}
\nu \cdot (\partial_{\nu} \nu) &= - \sum_{j,k=1}^n\nu_j \nu_k \partial_k
\bigg(\frac{\partial_j u_1}{|\nabla u_1|}\bigg) \no \\
& =-|\nabla u_1|^{-1} \langle \nu,{\rm Hess}(u_1) \nu \rangle
+ |\nabla u_1|^{-1}|\nu|^2 \langle \nu, {\rm Hess} (u_1) \nu \rangle
\nonumber\\
&=0,
\end{align}
and finally, by \eqref{A-P.19}, 
\begin{align}\label{A-P.21a}
\partial_\nu(|\nabla u_1|^2) &=\sum_{j,k=1}^n \nu_j \partial_j [(\partial_k u_1)^2] 
=2 \sum_{j,k=1}^n \nu_j (\partial_k u_1)(\partial_j \partial_k u_1)
\nonumber\\
&=-2|\nabla u_1| \langle \nu, {\rm Hess} (u_1) \nu \rangle 
=-2|\nabla u_1|^2 {\rm div} (\nu)
\nonumber\\
&=-2(n-1) \cG |\nabla u_1|^2 \leq 0 \, \mbox{ on } \, \partial\Om, 
\end{align}
given our assumption $\cG\geq 0$. 

Next, we compute
\begin{align}\label{A-P.20}
& \int_{\Om}d^nx\, \big[|\nabla(|\nabla u_1|^2)|^2-2\lambda_D\,|\nabla u_1|^4
+2|\nabla u_1|^2|{\rm Hess} (u_1)|^2\big]  \no \\
& \quad =\int_{\Om}d^nx\, div \big(|\nabla u_1|^2\nabla(|\nabla u_1|^2)\big) 
=\int_{\dOm}d^{n-1}\omega\,\nu\cdot \big(|\nabla u_1|^2\nabla(|\nabla u_1|^2)\big)  \no \\
& \quad =\int_{\dOm}d^{n-1}\omega\,|\nabla u_1|^2\partial_{\nu}\big(|\nabla u_1|^2\big)\leq
0,
\end{align}
since $\partial_{\nu}(|\nabla u_1|^2)\leq 0$ on $\dOm$ by \eqref{A-P.21a}.
As a consequence,
\begin{equation} \label{A-P.21}
2\lambda_D\,\int_{\Om}d^nx\,|\nabla u_1|^4\geq
\int_{\Om}d^nx\,\big[|\nabla(|\nabla u_1|^2)|^2+2|\nabla u_1|^2|{\rm Hess} (u_1)|^2\big].
\end{equation}
Now, simple algebra shows that
$|\nabla(|\nabla u_1|^2)|^2\leq 4\,|\nabla u_1|^2|{\rm Hess} (u_1)|^2$
which, when combined with \eqref{A-P.21}, yields
\begin{equation} \label{A-P.22}
\frac{4\lambda_D}{3}\,\int_{\Om}d^nx\,|\nabla u_1|^4\geq
\int_{\Om}d^nx\,|\nabla(|\nabla u_1|^2)|^2.
\end{equation}
Let us now return to \eqref{A-P.13} and rewrite this equality as
\begin{equation} \label{A-P.23}
\int_{\Om}d^nx\,|\nabla u_1|^4 =
-\int_{\Om}d^nx\,(u_1\,\nabla u_1)\cdot(\nabla|\nabla u_1|^2
-\lambda_D\,u_1\nabla u_1).
\end{equation}
An application of the Cauchy-Schwarz inequality then yields
\begin{equation} \label{A-P.24}
\left(\int_{\Om}d^nx\,|\nabla u_1|^4\right)^2
\leq\left(\int_{\Om}d^nx\,u_1^2\,|\nabla u_1|^2\right)
\left(\int_{\Om}d^nx\,|\nabla|\nabla u_1|^2-\lambda_D\,u_1\nabla u_1|^2\right).
\end{equation}
By expanding the last integrand and recalling the definition of $\theta$
we then arrive at
\begin{equation} \label{A-P.25}
\theta^2\leq\lambda_D^2-2\lambda_D\left(\frac{\int_{\Om}d^nx\,
(u_1\nabla u_1)\cdot(\nabla|\nabla u_1|^2)}{\int_{\Om}d^nx\,u_1^2|\nabla u_1|^2}\right)
+\left(\frac{\int_{\Om}d^nx\,|\nabla(|\nabla u_1|^2)|^2}
{\int_{\Om}d^nx\,u_1^2|\nabla u_1|^2}\right).
\end{equation}
Upon recalling \eqref{A-P.14} and \eqref{A-P.22}, this becomes
\begin{equation} \label{A-P.26}
\theta^2\leq
\lambda_D^2-2\lambda_D(\lambda_D-\theta)+\frac{4\lambda_D}{3}\theta
=-\lambda_D^2+\frac{10\lambda_D}{3}\theta.
\end{equation}
In turn, this forces $\theta\leq 3\lambda_D$ hence, ultimately,
$\lambda_K\leq 4\lambda_D$ due to this estimate and \eqref{A-P.12}.
This establishes \eqref{A-P.5} and completes the proof of the theorem.
\end{proof}

\begin{remark} 
$(i)$ The upper bound in \eqref{A-P.1} for two-dimensional smooth, convex 
$C^{\infty}$ domains $\Om$ is due to Payne \cite{Pa60} in 1960. He notes that the proof carries over without difficulty to dimensions $n\geq 2$ in \cite[p.\ 464]{Pa67}. In addition, one can avoid assuming smoothness in his proof by using smooth approximations $\Om_m$, $m\in\bbN$, of $\Om$ as discussed in our proof.  Of course, Payne did not consider the eigenvalues of the Krein Laplacian 
$-\Delta_{K,\Om}$, instead, he compared the first eigenvalue of the fixed membrane problem and the first eigenvalue of the problem of the buckling of a clamped plate. \\ 
$(ii)$ By thinking of ${\rm Hess} (u_1)$ represented in terms of an 
orthonormal basis for $\mathbb{R}^n$ that contains $\nu$, one sees that 
\eqref{A-P.19} yields
\begin{equation}\label{A-P.22a}
{\rm div} (\nu) = \bigg|\frac{\partial u_1}{\partial \nu}\bigg|^{-1} \,
\frac{\partial^2 u_1}{{\partial \nu}^2}
=-\bigg(\frac{\partial u_1}{\partial \nu}\bigg)^{-1} \frac{\partial^2 u_1}{{\partial \nu}^2}
\end{equation}
(the latter because $\partial u_1/\partial \nu < 0$ on $\partial\Om$ by our convention on the sign of $u_1$ (see \eqref{A-P.6})), and thus
\begin{equation}\label{A-P.23a}
\frac{\partial^2 u_1}{{\partial \nu}^2} = -(n-1) \cG \frac{\partial u_1}{\partial \nu} \,
\mbox{ on } \partial\Om. 
\end{equation}
For a different but related argument leading to this same result, see 
Ashbaugh and Levine \cite[pp.\ I-8, I-9]{AL97}. Aviles \cite{Av86}, Payne \cite{Pa55}, 
\cite{Pa60}, and Levine and Weinberger \cite{LW86} 
all use similar arguments as well. \\
$(iii)$ We note that Payne's basic result here, when done in $n$ dimensions, 
holds for smooth domains having a boundary which is everywhere of nonnegative 
mean curvature.  In addition, Levine and Weinberger \cite{LW86}, in the 
context of a related problem, consider nonsmooth domains for the nonnegative 
mean curvature case and a variety of cases intermediate between that and the 
convex case (including the convex case). \\
$(iv)$ Payne's argument (and the constant 4 in Theorem \ref{T-Pay-1}) would 
appear to be sharp, with any infinite slab in $\mathbb{R}^n$ bounded by 
parallel hyperplanes being a saturating case (in a limiting sense).  We note 
that such a slab is essentially one-dimensional, and that, up to 
normalization, the first Dirichlet eigenfunction $u_1$ for the interval 
$[0,a]$ (with $a>0$) is 
\begin{equation}
u_1(x)=\sin (\pi x/a) \, \text{ with eigenvalue } \, \lambda=\pi^2/a^2, 
\end{equation}
while the corresponding first buckling eigenfunction and eigenvalue are 
\begin{equation}
u_1(x)^2=\sin^2 (\pi x/a)=[1-\cos (2\pi x/a)]/2 \, \text{ and } \, 4 \lambda=4\pi^2/a^2.  
\end{equation}
Thus, Payne's choice of the trial function $u_1^2$, where $u_1$ is the first Dirichlet eigenfunction should be optimal for this limiting case,
implying that the bound 4 is best possible. Payne, too, made observations about the equality case of his inequality, and observed that the infinite strip saturates it in 2 dimensions.  His supporting arguments are via tracing the case of equality through the inequalities in his proof, which also yields interesting insights. 
\end{remark}

\begin{remark}\label{Rm-2}
The eigenvalues corresponding to the buckling of a two-dimensional
{\it square} plate, clamped along its boundary, have been analyzed numerically
by several authors (see, e.g., \cite{AD92}, \cite{AD93}, and \cite{BT99}).
All these results can now be naturally reinterpreted in the context of the
Krein Laplacian $-\Delta_{K,\Om}$ in the case where $\Om=(0,1)^2\subset\bbR^2$. 
Lower bounds for the first $k$ buckling problem eigenvalues were discussed in 
\cite{LP85}. The existence of convex domains $\Om$, for which the first eigenfunction of the problem of a clamped plate and the problem of the buckling of a clamped plate possesses a change of sign, was established in \cite{KKM90}. Relations between an eigenvalue problem governing the behavior of an elastic medium and the buckling problem were studied in \cite{Ho91}. Buckling eigenvalues as a function of the elasticity constant are investigated in \cite{KLV93}. Finally, spectral properties of linear operator pencils $A-\lambda B$ with discrete spectra, and basis properties of the corresponding eigenvectors, applicable to differential operators,  were discussed, for instance, in 
\cite{Pe68}, \cite{Tr00} (see also the references cited therein). 
\end{remark}

Formula \eqref{mam-46} suggests the issue of deriving a Weyl asymptotic
formula for the perturbed Krein Laplacian $H_{K,\Om}$. This is the topic
of our next section.

\section{Weyl Asymptotics for the Perturbed Krein Laplacian in Nonsmooth Domains}
\label{s12}

We begin by recording a very useful result due to V.A.\ Kozlov which,
for the convenience of the reader, we state here in more generality than
is actually required for our purposes. To set the stage,
let $\Omega\subset\bbR^n$, $n\ge2$, be a bounded Lipschitz domain.
In addition, assume that $m>r\geq 0$ are two fixed integers and set
\begin{equation} \label{kko-0}
\eta:=2(m-r)>0.
\end{equation} 
Let $W$ be a closed subspace in $H^m(\Omega)$ such that
$H^m_0(\Omega)\subseteq W$. On $W$, consider the symmetric forms
\begin{equation} \label{kko-1}
a(u,v):=\sum_{0 \leq |\alpha|,|\beta|\leq m}
\int_{\Omega}d^nx\,a_{\alpha,\beta}(x)\ol{(\partial^\beta u)(x)}
(\partial^\alpha v)(x),  \quad u, v \in W, 
\end{equation} 
and
\begin{equation} \label{kko-2}
b(u,v):=\sum_{0 \leq |\alpha|,|\beta|\le r}
\int_{\Omega}d^nx\,b_{\alpha,\beta}(x)\ol{(\partial^\beta u)(x)}
(\partial^\alpha v)(x),   \quad u, v \in W.
\end{equation} 
Suppose that the leading coefficients in $a(\dott,\dott)$ and $b(\dott,\dott)$
are Lipschitz functions, while the coefficients of all lower-order terms are
bounded, measurable functions in $\Omega$. Furthermore,
assume that the following coercivity, nondegeneracy, and nonnegativity
conditions hold: For some $C_0 \in (0,\infty)$, 
\begin{align}\label{kko-3}
& a(u,u)\ge C_0\|u\|^2_{H^m(\Omega)}, \quad u\in W,
\\
& \sum_{|\alpha|=|\beta|=r}b_{\alpha,\beta}(x) \, \xi^{\alpha+\beta}\not=0, 
\quad x\in\ol{\Om}, \; \xi\not=0,
\label{kko-4}
\\
& b(u,u)\ge 0, \quad u\in W. 
\label{kko-5}
\end{align}
Under the above assumptions, $W$ can be regarded as a Hilbert space when
equipped with the inner product $a(\dott,\dott)$. Next, consider the
operator $T\in\cB(W)$ uniquely defined by the requirement that
\begin{equation} \label{kko-6}
a(u,T v)=b(u,v),  \quad u,v\in W.
\end{equation} 
Then the operator $T$ is compact, nonnegative and self-adjoint on $W$
(when the latter is viewed as a Hilbert space). Going further, denote by
\begin{equation} \label{kko-7}
0\leq\cdots\leq\mu_{j+1}(T)\leq\mu_j(T)\leq\cdots\leq\mu_1(T),
\end{equation} 
the eigenvalues of $T$ listed according to their multiplicity, and set
\begin{equation} \label{kko-8}
N(\lambda;W,a,b):=\#\,\{j\in\bbN\,|\,\mu_j(T)\geq \lambda^{-1}\},  \quad  \lambda>0.
\end{equation} 
The following Weyl asymptotic formula is a particular case of a slightly
more general result which can be found in \cite{Ko83}.

\begin{theorem}\label{T-Koz}
Assume Hypothesis \ref{h2.1} and retain the above notation and assumptions on
$a(\dott,\dott)$, $b(\dott,\dott)$, $W$, and $T$. In addition, we recall \eqref{kko-0}.
Then the distribution function of the spectrum of $T$ introduced
in \eqref{kko-8} satisfies the asymptotic formula
\begin{equation} \label{kko-9}
N(\lambda;W,a,b)
=\omega_{a,b,\Omega}\,\lambda^{n/\eta}+O\big(\lambda^{(n-(1/2))/\eta}\big)
\, \mbox{ as }\, \lambda\to\infty,
\end{equation} 
where, with $d\omega_{n-1}$ denoting the surface measure on the
unit sphere $S^{n-1}=\{\xi\in\bbR^n\,|\,|\xi|=1\}$ in $\bbR^n$,
\begin{equation} \label{kko-10}
\omega_{a,b,\Omega}:=\frac{1}{n(2\pi)^n}
\int_{\Omega}d^nx\,\left(\int_{|\xi|=1}d\omega_{n-1}(\xi)\,
\left[\frac{\sum_{|\alpha|=|\beta|=r}b_{\alpha,\beta}(x)\xi^{\alpha+\beta}}
{\sum_{|\alpha|=|\beta|=m}a_{\alpha,\beta}(x)\xi^{\alpha+\beta}}
\right]^{\frac{n}{\eta}}\right).
\end{equation} 
\end{theorem}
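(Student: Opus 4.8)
\textbf{Proof proposal for Theorem \ref{T-Koz}.}

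The plan is to reduce the Weyl asymptotics \eqref{kko-9}--\eqref{kko-10} for the abstract operator pencil to the classical spectral asymptotics for an elliptic boundary value problem, and then invoke a perturbation argument. First I would observe that, by \eqref{kko-6} together with the coercivity \eqref{kko-3}, the counting function $N(\lambda;W,a,b)$ in \eqref{kko-8} admits the variational description
\begin{equation}
N(\lambda;W,a,b) = \max\,\big\{\dim(V)\,\big|\, V\subseteq W \text{ subspace},\ b(u,u)\geq\lambda^{-1}a(u,u)\ \text{for all }u\in V\big\},
\end{equation}
so that $N(\lambda;W,a,b)$ counts the eigenvalues $\geq\lambda$ of the generalized eigenvalue problem $b(u,\dott)=\mu^{-1}a(u,\dott)$ on $W$. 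Thus everything is governed by the quadratic forms $a$ and $b$, with the leading symbols entering through \eqref{kko-4} and \eqref{kko-10}. The key point is that only the principal parts of $a$ and $b$ influence the leading term in \eqref{kko-9}: the lower-order terms of $a$ and $b$ (which by hypothesis have merely bounded measurable coefficients) produce perturbations that are of lower order in the Schatten-type sense, hence they affect only the remainder, and in fact do so at a level compatible with the stated $O\big(\lambda^{(n-(1/2))/\eta}\big)$.

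The main steps, in order, would be: (1) Reduce to the model operator. Let $a_0(u,v):=\sum_{|\alpha|=|\beta|=m}\int_\Omega a_{\alpha,\beta}\,\ol{\partial^\beta u}\,\partial^\alpha v$ and $b_0(u,v):=\sum_{|\alpha|=|\beta|=r}\int_\Omega b_{\alpha,\beta}\,\ol{\partial^\beta u}\,\partial^\alpha v$ be the principal parts. Using \eqref{kko-3} and the Lipschitz regularity of the leading coefficients, show via the standard Birman--Schwinger/perturbation estimates for compact operators that the eigenvalue counting functions for $(W,a,b)$ and for $(W,a_0,b_0)$ differ by $o(\lambda^{n/\eta})$, and more precisely by a quantity absorbed in the remainder in \eqref{kko-9}; here the fact that $\eta=2(m-r)>0$ guarantees the correct homogeneity $n/\eta$ of the leading exponent. (2) Localize. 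Introduce a finite partition of unity subordinate to a cover of $\ol\Omega$ by small coordinate patches (with boundary patches flattened using the Lipschitz structure of $\partial\Omega$). Over each patch, freeze the leading coefficients at a point, obtaining a constant-coefficient model problem. (3) Compute the constant-coefficient asymptotics. For the frozen problem, a direct Fourier-analytic computation (or appeal to the classical results cited in the discussion of \eqref{WA-2}, e.g., \cite{Ag97}, \cite{Iv98}) yields that the number of eigenvalues $\geq\lambda$ behaves like
\begin{equation}
\frac{\lambda^{n/\eta}}{(2\pi)^n}\int_{\text{patch}}d^nx\,\big|\{\xi\in\bbR^n\,|\,P(x,\xi)\geq\lambda^{-1}\}\big| = \frac{\lambda^{n/\eta}}{n(2\pi)^n}\int_{\text{patch}}d^nx\int_{|\xi|=1}d\omega_{n-1}(\xi)\,\big[P_0(x,\xi)\big]^{n/\eta},
\end{equation}
where $P_0(x,\xi)=\big(\sum_{|\alpha|=|\beta|=r}b_{\alpha,\beta}(x)\xi^{\alpha+\beta}\big)/\big(\sum_{|\alpha|=|\beta|=m}a_{\alpha,\beta}(x)\xi^{\alpha+\beta}\big)$; the passage from the $\xi$-volume to the surface integral over $S^{n-1}$ uses the $\eta$-homogeneity of $P_0$ in $\xi$ and polar coordinates. (4) Patch together. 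Sum the local contributions, using a two-sided bracketing (Dirichlet--Neumann, or rather form-domain monotonicity) to control the errors from the partition of unity, and let the mesh tend to zero; this gives the leading term $\omega_{a,b,\Omega}\lambda^{n/\eta}$ with $\omega_{a,b,\Omega}$ exactly as in \eqref{kko-10}. (5) Remainder estimate. Track the errors from coefficient-freezing and from the boundary patches; the Lipschitz (rather than smooth) character of $\partial\Omega$ and of the leading coefficients is responsible for the loss from the optimal $\lambda^{(n-1)/\eta}$ down to $\lambda^{(n-(1/2))/\eta}$, in the spirit of the Netrusov--Safarov estimate \eqref{Wey-2}.

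The hard part will be Step (5), the remainder estimate in the merely Lipschitz setting. In the smooth case one has pseudodifferential calculus on $\partial\Omega$ available and the boundary layer can be analyzed precisely, but here one must instead use a careful dyadic decomposition near $\partial\Omega$ together with sharp bounds on Sobolev embeddings and on the local eigenvalue counting function in thin boundary shells, combined with the square-Dini-type control on the coefficients. Since this is precisely the content of Kozlov's work, I would at this point simply invoke \cite{Ko83} (as the statement of the theorem already does), having thereby reduced the present formulation to the slightly more general one proved there. In other words, the proof is: verify that the hypotheses \eqref{kko-0}--\eqref{kko-6} are a special case of Kozlov's framework, check that the constant $\omega_{a,b,\Omega}$ computed from his formula matches \eqref{kko-10} after the polar-coordinates rewriting, and then quote \cite[Theorem]{Ko83}.
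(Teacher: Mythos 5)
The paper offers no proof of this theorem: it is stated as a quoted result, introduced with the sentence that it ``is a particular case of a slightly more general result which can be found in \cite{Ko83}.'' Your proposal, after sketching the standard variational/localization machinery, ends by doing exactly the same thing --- verifying that the hypotheses \eqref{kko-0}--\eqref{kko-6} fit Kozlov's framework and citing \cite{Ko83} --- so it is correct and takes essentially the same route as the paper; the preliminary five-step outline is a reasonable account of how such a result is proved but is not something the survey itself attempts.
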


Various related results can be found in \cite{Ko79}, \cite{Ko84}.
After this preamble, we are in a position to state and prove the main result
of this section:

\begin{theorem}\label{T-KrWe}
Assume Hypothesis \ref{h.VK}. In addition, we recall that
\begin{equation} \label{kko-11}
N_{K,\Om}(\lambda)=\#\{j\in\bbN\,|\,\lambda_{K,\Om,j}\leq\lambda\},
\quad\lambda\in\bbR,
\end{equation} 
where the $($strictly$)$ positive eigenvalues $\{\lambda_{K,\Om,j}\}_{j\in\bbN}$
of the perturbed Krein Laplacian $H_{K,\Om}$ are enumerated as
in \eqref{mam-1} $($according to their multiplicities$)$. Then the following
Weyl asymptotic  formula holds:
\begin{equation} \label{kko-12}
N_{K,\Om}(\lambda)
=(2\pi)^{-n}v_n|\Omega|\,\lambda^{n/2}+O\big(\lambda^{(n-(1/2))/2}\big)
\, \mbox{ as }\, \lambda\to\infty,
\end{equation} 
where, as before, $v_n$ denotes the volume of the unit ball in $\bbR^n$,
and $|\Omega|$ stands for the $n$-dimensional Euclidean volume of $\Omega$.
\end{theorem}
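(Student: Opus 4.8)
The plan is to deduce the Weyl asymptotics for $N_{K,\Om}(\lambda)$ by recognizing that, thanks to the work in Sections~\ref{s10} and~\ref{s11}, the nonzero spectrum of $H_{K,\Om}$ coincides (with multiplicities) with the reciprocals of the eigenvalues of the compact, nonnegative, self-adjoint operator $B\in\cB_\infty(\cH_V)$ constructed in the proof of Lemma~\ref{T-MAM-1}. Concretely, by \eqref{mam-17} one has $\sigma(B)=\{(\lambda_{K,\Om,j})^{-1}\}_{j\in\bbN}$, so that in the notation of Theorem~\ref{T-Koz},
\begin{equation}
N_{K,\Om}(\lambda)=\#\{j\in\bbN\,|\,\lambda_{K,\Om,j}\leq\lambda\}
=\#\{j\in\bbN\,|\,\mu_j(B)\geq\lambda^{-1}\}=N(\lambda;W,a,b),
\end{equation}
provided we match the abstract set-up of Kozlov's theorem to ours. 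The first step is therefore to make this identification precise: take $m=2$, $r=1$, so that $\eta=2(m-r)=2$; take $W=\cH_V=H^2_0(\Om)$ (a closed subspace of $H^2(\Om)$ containing $H^2_0(\Om)$, trivially); let
\begin{equation}
a(u,v):=\int_\Om d^nx\,\ol{(-\Delta+V)u}\,(-\Delta+V)v,\qquad
b(u,v):=\int_\Om d^nx\,\ol{\nabla u}\cdot\nabla v+\int_\Om d^nx\,\ol{V^{1/2}u}\,V^{1/2}v,
\end{equation}
for $u,v\in W$. Then \eqref{kko-6} says exactly $a(u,Tv)=b(u,v)$, which by \eqref{mam-13} (with a sign adjustment) is the defining relation for $-B$; more carefully, $b(u,v)=-[Bu,v]_{\cH_V}=[(-B)u,v]_{\cH_V}=a(u,(-B)v)$, so Kozlov's $T$ is our $-B$... but $B$ is negative semidefinite by \eqref{mam-13}, so in fact $T=-B\geq 0$ and $\mu_j(T)=|\mu_j(B)|=(\lambda_{K,\Om,j})^{-1}$, consistent with \eqref{mam-17}. (Here one should double-check the sign bookkeeping against \eqref{mam-10}, \eqref{mam-16}: since $\Lambda_\Om\subset(0,\infty)$, the eigenvalues $(\lambda_{K,\Om,j})^{-1}$ of $B$ are positive, hence $B\geq 0$ after all, and $T=B$; the precise sign is routine and I will pin it down in the writeup, but it does not affect the final formula since $\mu_j(T)=(\lambda_{K,\Om,j})^{-1}>0$ either way.)

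The second step is to verify that the hypotheses of Theorem~\ref{T-Koz} hold for this choice of $a$, $b$, $W$. The coercivity \eqref{kko-3}, $a(u,u)\geq C_0\|u\|^2_{H^2(\Om)}$ for $u\in W=H^2_0(\Om)$, is precisely \eqref{mam-6} established in the proof of Lemma~\ref{T-MAM-1} (via Poincar\'e and the Open Mapping Theorem). The leading coefficients of $a$ are those of $\Delta^2$, hence constant (in particular Lipschitz), and the lower-order coefficients involve $V$ and $\nabla V$; since $V\in L^\infty(\Om;d^nx)$ only, one must be slightly careful—but the terms of $a$ of order $<2m=4$ have coefficients built from $V$ and its first derivatives, and Kozlov's theorem only requires the \emph{leading} coefficients to be Lipschitz while all lower-order coefficients may be merely bounded measurable; expanding $\abs{(-\Delta+V)u}^2=\abs{\Delta u}^2-2\Re(\ol{\Delta u}\,Vu)+V^2\abs{u}^2$ shows the lower-order coefficients are $V$ and $V^2$, both in $L^\infty(\Om;d^nx)$, so the hypothesis is met. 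For $b$, the leading (order $r=1$) coefficients are the constants $\delta_{\alpha\beta}$ from $\abs{\nabla u}^2$ (Lipschitz), the nondegeneracy \eqref{kko-4} reads $\sum_{|\alpha|=|\beta|=1}\delta_{\alpha\beta}\xi^{\alpha+\beta}=|\xi|^2\neq0$ for $\xi\neq0$, and the nonnegativity \eqref{kko-5}, $b(u,u)=\|\nabla u\|^2+\|V^{1/2}u\|^2\geq0$, is immediate since $V\geq0$.

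The third and final step is to compute the constant $\omega_{a,b,\Om}$ from \eqref{kko-10}. With $n/\eta=n/2$, the integrand is
\begin{equation}
\left[\frac{\sum_{|\alpha|=|\beta|=1}\delta_{\alpha\beta}\xi^{\alpha+\beta}}
{\sum_{|\alpha|=|\beta|=2}a_{\alpha,\beta}(x)\xi^{\alpha+\beta}}\right]^{n/2}
=\left[\frac{|\xi|^2}{|\xi|^4}\right]^{n/2}=|\xi|^{-n},
\end{equation}
which on $|\xi|=1$ equals $1$; here one uses that the principal symbol of $\Delta^2$ is $|\xi|^4$ (the potential $V$ contributes nothing to the top-order part of $a$). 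Hence
\begin{equation}
\omega_{a,b,\Om}=\frac{1}{n(2\pi)^n}\int_\Om d^nx\int_{|\xi|=1}d\omega_{n-1}(\xi)\cdot 1
=\frac{|\Om|\,\omega_{n-1}(S^{n-1})}{n(2\pi)^n}=\frac{|\Om|\,n v_n}{n(2\pi)^n}=(2\pi)^{-n}v_n|\Om|,
\end{equation}
using $\omega_{n-1}(S^{n-1})=n v_n$. Substituting into \eqref{kko-9} with $\eta=2$ yields
\begin{equation}
N_{K,\Om}(\lambda)=N(\lambda;W,a,b)=(2\pi)^{-n}v_n|\Om|\,\lambda^{n/2}
+O\big(\lambda^{(n-(1/2))/2}\big)\quad\text{as }\lambda\to\infty,
\end{equation}
which is \eqref{kko-12}. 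The main obstacle, to my mind, is not any single deep step but rather the careful matching of conventions between Kozlov's abstract framework and the operator $B$ of Lemma~\ref{T-MAM-1}—keeping the sign of $T$ versus $B$ straight, confirming that the relation \eqref{kko-6} is genuinely our \eqref{mam-16}/\eqref{mam-13}, and checking that the regularity demanded of the coefficients of $a$ and $b$ (Lipschitz leading, bounded measurable lower-order) is exactly what $V\in L^\infty(\Om;d^nx)$ delivers. Everything else (the symbol computation, the surface-measure identity) is routine.
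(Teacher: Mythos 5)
Your proposal is correct and follows essentially the same route as the paper's proof: both apply Kozlov's Theorem \ref{T-Koz} with $m=2$, $r=1$, $\eta=2$, $W=H^2_0(\Om)$ and the forms \eqref{kko-13}, \eqref{kko-13a}, identify Kozlov's operator $T$ with the operator $B$ of Lemma \ref{T-MAM-1} (whose nonzero eigenvalues are, by \eqref{mam-17} and Theorem \ref{T-MM-2}, the reciprocals of the nonzero eigenvalues of $H_{K,\Om}$), and evaluate $\omega_{a,b,\Om}=(2\pi)^{-n}v_n|\Om|$. The sign bookkeeping you flag between \eqref{mam-10} and \eqref{mam-16}--\eqref{mam-17} is indeed harmless and does not affect the final formula.
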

\begin{proof}
Set $W:=H^2_0(\Om)$ and consider the symmetric forms
\begin{align}\label{kko-13}
& a(u,v):=\int_{\Om}d^nx\,\ol{(-\Delta+V)u}\,(-\Delta+V)v,\quad u,v\in W,
\\
& b(u,v):=\int_{\Om}d^nx\,\ol{\nabla u}\cdot\nabla v
+\int_{\Om}d^nx\,\ol{V^{1/2}u}\,V^{1/2}v,\quad u,v\in W,
\label{kko-13a}
\end{align}
for which conditions \eqref{kko-3}--\eqref{kko-5} (with $m=2$) are
verified (cf.\ \eqref{mam-6}). Next, we recall the operator
$(-\Delta+V)^{-2}:=
((-\Delta+V)^2)^{-1}\in\cB\bigl(H^{-2}(\Om),H^2_0(\Om)\bigr)$
from \eqref{mam-9} along with the operator
\begin{equation} \label{kko-14}
B\in\cB_{\infty}(W),\quad
Bu:=-(-\Delta+V)^{-2}(-\Delta+V)u,\quad u\in W,
\end{equation} 
from \eqref{mam-11}. Then, in the current notation, formula
\eqref{mam-13} reads $a(Bu,v)=b(u,v)$ for every $u,v\in C^\infty_0(\Om)$.
Hence, by density,
\begin{equation} \label{kko-15}
a(Bu,v)=b(u,v),\quad u,v\in W.
\end{equation} 
This shows that actually $B=T$, the operator originally introduced in
\eqref{kko-6}. In particular, $T$ is one-to-one. Consequently, $Tu=\mu\,u$
for $u\in W$ and $0\not=\mu\in\bbC$, if and only if $u\in H^2_0(\Omega)$
satisfies $(-\Delta+V)^{-2}(-\Delta+V)u=\mu\,u$, that is, 
$(-\Delta+V)^2 u=\mu^{-1}(-\Delta+V)u$.
Hence, the eigenvalues of $T$ are precisely the reciprocals of the
eigenvalues of the buckling clamped plate problem \eqref{Yan-14z}.
Having established this, formula \eqref{kko-12} then follows from
Theorem \ref{T-MM-2} and \eqref{kko-9}, upon observing that in our case
$m=2$, $r=1$ (hence $\eta=2$) and $\omega_{a,b,\Omega}=(2\pi)^{-n}v_n|\Om|$. 
\end{proof}

Incidentally, Theorem \ref{T-KrWe} and Theorem \ref{T-MM-2} show that,
granted Hypothesis \ref{h.VK}, a Weyl asymptotic
formula holds in the case of the (perturbed) buckling problem \eqref{MM-1}.
For smoother domains and potentials, this is covered by Grubb's results
in \cite{Gr83}. In the smooth context, a sharpening of the remainder has 
been derived in \cite{Mi94}, \cite{Mi06}, and most recently, in \cite{Gr12}.

In the case where $\Omega\subset\bbR^2$ is a bounded domain with
a $C^\infty$-boundary and $0\leq V\in C^\infty(\ol{\Om})$,
a more precise form of the error term in \eqref{kko-12}
was obtained in \cite{Gr83} where Grubb has shown that
\begin{equation} \label{kko-13x}
N_{K,\Om}(\lambda)=\frac{|\Omega|}{4\pi}\,\lambda+O\big(\lambda^{2/3}\big)
\, \mbox{ as }\, \lambda\to\infty,
\end{equation} 
In fact, in \cite{Gr83}, Grubb deals with the Weyl asymptotic for the
Krein--von Neumann extension of a general strongly elliptic, formally self-adjoint
differential operator of arbitrary order, provided both its coefficients
as well as the the underlying domain $\Omega\subset\bbR^n$ ($n\geq 2$)
are $C^\infty$-smooth. In the special case where $\Om$ equals the open ball 
$B_n(0;R)$, $R>0$, in $\bbR^n$, and when $V\equiv 0$, it turns out that \eqref{kko-12}, 
\eqref{kko-13x} can be further refined to
\begin{align} \label{NN-y} 
N^{(0)}_{K,B_n(0;R)}(\lambda)&=(2\pi)^{-n}v_n^2 R^n \lambda^{n/2} 
- (2\pi)^{-(n-1)}v_{n-1}[(n/4)v_n + v_{n-1}] R^{n-1} \lambda^{(n-1)/2}  \no \\
& \quad + O\big(\lambda^{(n-2)/2}\big) \mbox{ as }\, \lambda\to\infty,   
\end{align} 
for every $n\geq 2$. This will be the object of the final Section \ref{s1vi} (cf.\ Proposition 
\ref{p10.1}).

\section{A Class of Domains for which the Krein and Dirichlet Laplacians Coincide}
\label{s1v}

Motivated by the special example where $\Omega=\bbR^2\backslash\{0\}$ and 
$S=\ol{-\Delta_{C_0^\infty(\bbR^2\backslash\{0\})}}$, in which case one can show the 
interesting fact that $S_F=S_K$ (cf.\ \cite{AGHKH87}, \cite[Ch.\ I.5]{AGHKH88}, 
\cite{GKMT01}, and Subsections \ref{s10.3} and \ref{s10.4}) and hence the nonnegative self-adjoint extension of $S$ is unique, the aim of this section is to present a class of (nonempty, proper) 
open sets $\Omega=\bbR^n\backslash K$, $K\subset \bbR^n$ compact and subject to a vanishing Bessel capacity condition, with the property that the Friedrichs and
Krein--von Neumann extensions of $-\Delta\big|_{C^\infty_0(\Om)}$ in $L^2(\Om; d^n x)$, coincide. 
To the best of our knowledge, the case where the set $K$ differs from a single point is without 
precedent and so the following results for more general sets $K$ appear to be new.  

We start by making some definitions and discussing some preliminary results,
of independent interest. Given an arbitrary open set $\Om\subset\bbR^n$, $n\geq 2$, 
we consider three realizations of $-\Delta$ as unbounded operators
in $L^2(\Om;d^nx)$, with domains given by (cf.\ Subsection \ref{s4X})
\begin{align}\label{YF-1}
\dom(-\Delta_{max,\Om})&:=\big\{u\in L^2(\Omega;d^nx)\,\big|\,
\Delta u\in L^2(\Omega;d^nx)\big\},
\\
\dom(-\Delta_{D,\Om})&:=\big\{u\in H^1_0(\Omega)\,\big|\,
\Delta u\in L^2(\Omega;d^nx)\big\},
\label{YF-2}
\\
\dom(-\Delta_{c,\Om})&:=C^\infty_0(\Omega).
\label{YF-3}
\end{align}

\begin{lemma}\label{L-ea}
For any open, nonempty subset $\Om\subseteq \bbR^n$, $n\geq 2$, the following statements hold:
\begin{enumerate}
\item[$(i)$] One has
\begin{equation} \label{Fga-2}
(-\Delta_{c,\Om})^*=-\Delta_{max,\Om}.
\end{equation} 
\item[$(ii)$] The Friedrichs extension of $-\Delta_{c,\Om}$ is given by 
\begin{equation} \label{Fga-3}
(-\Delta_{c,\Om})_F=-\Delta_{D,\Om}.
\end{equation} 
\item[$(iii)$] The Krein--von Neumann extension of $-\Delta_{c,\Om}$ has the domain
\begin{align} 
& \dom((-\Delta_{c,\Om})_K)=
\big\{u\in\dom(-\Delta_{\max,\Om})\,\big|\,\mbox{there exists }
\{u_j\}_{j\in\bbN} \in C^\infty_0(\Om)     \label{Gkj-1} \\
& \quad \mbox{with } 
\lim_{j\to\infty}\|\Delta u_j\ - \Delta u\|_{L^2(\Om;d^nx)} = 0   
 \mbox{ and $\{\nabla u_j\}_{j\in\bbN}$ Cauchy
in $L^2(\Om;d^nx)^n$}\big\}.    \no 
\end{align}
\item[$(iv)$] One has
\begin{equation} \label{F-2Lb}
\ker((-\Delta_{c,\Om})_{K})
= \big\{u\in L^2(\Om;d^nx)\,\big|\,\Delta\,u=0 \mbox{ in } \Om\big\},
\end{equation} 
and
\begin{equation} \label{F-2La}
\ker((-\Delta_{c,\Om})_{F})=\{0\}.
\end{equation} 
\end{enumerate}
\end{lemma}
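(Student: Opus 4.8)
\textbf{Proof plan for Lemma \ref{L-ea}.}

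The four statements are essentially bookkeeping built on the abstract machinery of Section \ref{s2} together with the maximal/minimal Laplacian framework of Subsection \ref{s4X}, so the plan is to assemble them in the order $(i) \Rightarrow (ii) \Rightarrow (iv) \Rightarrow (iii)$ (with $(iv)$ actually split, since the Friedrichs part follows at once from $(ii)$). For $(i)$, I would argue that $-\Delta_{c,\Om}$ is densely defined and symmetric, so $(-\Delta_{c,\Om})^*$ exists; a function $u$ lies in its domain precisely when the distributional Laplacian $\Delta u$, which a priori is only in $\cD'(\Om)$, represents a bounded conjugate-linear functional on $C_0^\infty(\Om)$ in the $L^2$ inner product, i.e. when $\Delta u \in L^2(\Om;d^n x)$. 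This is exactly $\dom(-\Delta_{max,\Om})$ from \eqref{YF-1}, and on that domain $(-\Delta_{c,\Om})^* u = \Delta u$ acts as the distributional Laplacian. No boundary regularity of $\Om$ is used here, which is the whole point.

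For $(ii)$, I would invoke Proposition \ref{L-Fri1} in its first (general, merely open-set) form, which identifies the Friedrichs extension of $(-\Delta+V)|_{C_0^\infty(\Om)}$ with the perturbed Dirichlet Laplacian $H_{D,\Om}$; specializing to $V\equiv 0$ gives $(-\Delta_{c,\Om})_F = -\Delta_{D,\Om}$, and the domain description \eqref{YF-2} matches \eqref{3.HDF}. Alternatively, and perhaps more self-containedly, I would use Freudenthal's characterization \eqref{Fr-2}: a function $v\in\dom((-\Delta_{c,\Om})^*)=\dom(-\Delta_{max,\Om})$ lies in the Friedrichs domain iff it is approximable by $C_0^\infty(\Om)$ functions in the form norm $\|\cdot\|_{L^2}^2 + \|\nabla\cdot\|_{L^2}^2$, i.e. iff $v\in H_0^1(\Om)$; this is precisely \eqref{YF-2}. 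Then \eqref{F-2La} is immediate: if $u\in\dom((-\Delta_{c,\Om})_F)$ and $\Delta u = 0$, then $0 = (u, -\Delta_{D,\Om} u)_{L^2} = \|\nabla u\|_{L^2}^2$, forcing $\nabla u = 0$; since $u\in H^1_0(\Om)$ and $\Om$ has infinite... well, since $u\in H^1_0(\Om)$ with $\nabla u=0$ one concludes $u=0$ (the closure of $C_0^\infty$ in $H^1$ contains no nonzero constants, as $\|u\|_{H^1_0}$ controls $\|u\|_{L^2}$ via the embedding into $H^1(\bbR^n)$ after extension by zero).

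For $(iv)$, the kernel identity \eqref{F-2Lb} follows from \eqref{Fr-4Tf}, which gives $\ker(S_K) = \ker(S^*)$ for $S = -\Delta_{c,\Om}$; by part $(i)$, $\ker(S^*) = \{u\in L^2(\Om;d^nx)\,|\,\Delta u = 0\text{ in }\Om\}$. For $(iii)$, I would use the intrinsic Ando--Nishio description \eqref{Fr-2X} of the Krein--von Neumann extension: $v\in\dom((-\Delta_{c,\Om})_K)$ iff $v\in\dom((-\Delta_{c,\Om})^*) = \dom(-\Delta_{max,\Om})$ and there exists $\{u_j\}_{j\in\bbN}\subset C_0^\infty(\Om)$ with $\|(-\Delta)u_j - (-\Delta_{c,\Om})^* v\|_{L^2}\to 0$ (i.e. $\|\Delta u_j - \Delta v\|_{L^2}\to 0$) and $((u_j - u_k), -\Delta(u_j - u_k))_{L^2} = \|\nabla(u_j - u_k)\|_{L^2}^2 \to 0$, i.e. $\{\nabla u_j\}$ Cauchy in $L^2(\Om;d^nx)^n$. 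This is verbatim \eqref{Gkj-1}. The only point requiring a word is that the Cauchy condition in \eqref{Fr-2X} is stated with the sesquilinear form $((\cdot),S(\cdot))$, which here integrates by parts on test functions to $\|\nabla(\cdot)\|_{L^2}^2$; that integration by parts is legitimate since all $u_j\in C_0^\infty(\Om)$.

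\textbf{Main obstacle.} Everything above is a direct translation of the abstract results of Section \ref{s2} to the concrete $-\Delta$ setting, so there is no deep analytic difficulty; the one genuinely load-bearing nontrivial input is Proposition \ref{L-Fri1} (equivalently, the Freudenthal computation) identifying the Friedrichs extension with $-\Delta_{D,\Om}$ \emph{for an arbitrary open set}, without any Lipschitz or cone hypothesis on $\Om$ — this is what makes $(ii)$, and hence \eqref{F-2La}, work in the stated generality. The remaining subtlety, minor but worth flagging, is to ensure in $(ii)$ that $H^1_0(\Om)$ contains no nonzero function with vanishing gradient; this is handled by extension-by-zero into $H^1(\bbR^n)$ and the fact that $\|\cdot\|_{H^1(\bbR^n)}$ dominates $\|\cdot\|_{L^2}$, so no connectivity or boundedness assumption on $\Om$ is actually needed.
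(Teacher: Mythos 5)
Your proposal matches the paper's proof in structure and substance: $(i)$ by unraveling definitions, $(ii)$ via Freudenthal's characterization \eqref{Fr-2}, $(iii)$ via the Ando--Nishio characterization \eqref{Fr-2X} combined with $(i)$, and $(iv)$ via \eqref{Fr-4Tf} for the Krein kernel together with the extension-by-zero argument showing that a harmonic element of $H^1_0(\Om)$ must vanish. Two small cautions: Proposition \ref{L-Fri1} as stated assumes $\Om$ bounded and Lipschitz, so for an arbitrary open set you must rely on your Freudenthal alternative (as the paper itself does); and the reason the zero-extension $\widetilde{u}\in H^1(\bbR^n)$ with $\nabla\widetilde{u}=0$ forces $u\equiv 0$ is that a nonzero constant does not belong to $L^2(\bbR^n)$, not that the $H^1$-norm dominates the $L^2$-norm.
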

\begin{proof}
Formula \eqref{Fga-2} follows in a straightforward fashion, by unraveling
definitions, whereas \eqref{Fga-3} is a direct consequence of \eqref{Fr-2} 
or \eqref{Fr-Q}  
(compare also with Proposition \ref{L-Fri1}). Next, \eqref{Gkj-1} is
readily implied by \eqref{Fr-2X} and \eqref{Fga-2}. In addition,
\eqref{F-2Lb} is easily derived from \eqref{Fr-4Tf}, \eqref{Fga-2} and
\eqref{YF-1}. Finally, consider \eqref{F-2La}. In a first stage,
\eqref{Fga-3} and \eqref{YF-2} yield that
\begin{equation} \label{F-2Lc}
\ker ((-\Delta_{c,\Om})_{F}) 
= \big\{u\in H^1_0(\Om)\,\big|\,\Delta\,u=0 \mbox{ in } \Om\big\},
\end{equation} 
so the goal is to show that the latter space is trivial. To this end,
pick a function $u\in H^1_0(\Om)$ which is harmonic in $\Om$, and observe
that this forces $\nabla u=0$ in $\Om$. Now, with tilde
denoting the extension by zero outside $\Om$, we have
$\widetilde{u}\in H^1(\bbR^n)$ and
$\nabla(\widetilde{u})=\widetilde{\nabla u}$.
In turn, this entails that $\widetilde{u}$ is a constant function
in $L^2(\bbR;d^nx)$ and hence $u\equiv 0$ in $\Om$, establishing 
\eqref{F-2La}.  
\end{proof}

Next, we record some useful capacity results. For an authoritative
extensive discussion on this topic see the monographs \cite{AH96},
\cite{Ma85}, \cite{Ta95}, and \cite{Zi89}. We denote by $B_{\alpha,2}(E)$ the Bessel
capacity of order $\alpha>0$ of a set $E\subset\bbR^n$. When 
$K\subset \bbR^n$ is a compact set, this is defined by
\begin{equation} \label{cap-1}
B_{\alpha,2}(K):=\inf\,\bigl\{\|f\|^2_{L^2(\bbR^n;d^nx)}\,\big|\,
g_\alpha\ast f\geq 1\mbox{ on }K,\,f\geq 0\bigr\},
\end{equation} 
where the Bessel kernel $g_\alpha$ is defined as the function whose
Fourier transform is given by
\begin{equation} \label{cap-2}
\widehat{g_\alpha}(\xi)=(2\pi)^{-n/2}(1+|\xi|^2)^{-\alpha/2},
\quad\xi\in\bbR^n.
\end{equation} 
When $\cO\subseteq\bbR^n$ is open, we define
\begin{equation} \label{cap-1X}
B_{\alpha,2}(\cO):=
\sup\,\{B_{\alpha,2}(K)\,|\,K\subset\cO,\,K\mbox{ compact}\,\},
\end{equation} 
and, finally, when $E\subseteq\bbR^n$ is an arbitrary set,
\begin{equation} \label{cap-1Y}
B_{\alpha,2}(E):=
\inf\,\{B_{\alpha,2}(\cO)\,|\,\cO\supset E,\,\cO\mbox{ open}\,\}.
\end{equation} 
In addition, denote by $\cH^k$ the $k$-dimensional Hausdorff measure on $\bbR^n$,
$0\leq k\leq n$. Finally, a compact subset $K\subset\bbR^n$ is said to
be {\it $L^2$-removable for the Laplacian} provided every
bounded, open neighborhood $\cO$ of $K$ has the property that
\begin{align} \label{Fga-2L.2}
& u\in L^2 (\cO\backslash  K;d^nx)\mbox{ with }\Delta u=0
\mbox{ in }\cO\backslash  K   
\, \text{ imply } 
\begin{cases}
\mbox{there exists $\widetilde{u}\in L^2 (\cO;d^nx)$ so that}
\\
\mbox{$\widetilde{u}\Bigl|_{\cO\backslash  K}=u$ and
$\Delta\widetilde{u}=0$ in $\cO$}.
\end{cases}
\end{align}

\begin{proposition}\label{R-Ca.1}
For $\alpha>0$, $k\in\bbN$, $n\geq 2$ and $E\subset\bbR^n$, the following
properties are valid:
\begin{enumerate}
\item[$(i)$] A compact set $K\subset\bbR^n$ is $L^2$-removable for
the Laplacian if and only if $B_{2,2}(K)=0$.
\item[$(ii)$] Assume that $\Om\subset\bbR^n$ is an open set and that
$K\subset\Omega$ is a closed set. Then the space $C^\infty_0(\Om\backslash  K)$
is dense in $H^k(\Om)$ $($i.e., one has the natural identification
$H^k_0(\Om)\equiv H^k_0(\Om\backslash  K)$$)$, if and only if $B_{k,2}(K)=0$. 
\item[$(iii)$] If $2\alpha\leq n$ and $\cH^{n-2\alpha}(E)<+\infty$
then $B_{\alpha,2}(E)=0$. Conversely, if $2\alpha\leq n$ and
$B_{\alpha,2}(E)=0$ then $\cH^{n-2\alpha+\varepsilon}(E)=0$ for
every $\varepsilon>0$. 
\item[$(iv)$] Whenever $2\alpha>n$ then there exists $C=C(\alpha,n)>0$
such that $B_{\alpha,2}(E)\geq C$ provided $E\not=\emptyset$.
\end{enumerate}
\end{proposition}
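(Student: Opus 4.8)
\textbf{Proof proposal for Proposition \ref{R-Ca.1}.}

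The plan is to treat the four items largely by citing the capacity literature, since each is a known fact once the definitions \eqref{cap-1}--\eqref{cap-1Y} are in place; the only genuinely ``new'' bookkeeping is stitching together the removability statement in $(i)$ with the density statement in $(ii)$ and checking that the capacity order $\alpha = 2$ (resp.\ $\alpha = k$) is the right one for $L^2$-based Sobolev spaces. For item $(iii)$ I would invoke the standard comparison between Bessel capacity and Hausdorff measure: the implication $\cH^{n-2\alpha}(E) < \infty \Rightarrow B_{\alpha,2}(E) = 0$ and the near-converse $B_{\alpha,2}(E) = 0 \Rightarrow \cH^{n-2\alpha+\varepsilon}(E) = 0$ for all $\varepsilon > 0$ are, for instance, Theorems in \cite{AH96} (the relevant statements are the ones relating $B_{\alpha,p}$ to Hausdorff content, specialized to $p = 2$); I would simply quote these. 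Item $(iv)$ is the elementary observation that when $2\alpha > n$ the Bessel kernel $g_\alpha$ lies in $L^2(\bbR^n; d^n x)$ (its Fourier transform $(2\pi)^{-n/2}(1+|\xi|^2)^{-\alpha/2}$ is square-integrable precisely when $2\alpha > n$), so for any single point $x_0$ the admissible class in \eqref{cap-1} is nonempty with a fixed lower bound on $\|f\|_{L^2}^2$ (take $f = g_\alpha(\cdot - x_0)/\inf_{|y-x_0|\le 1} (g_\alpha \ast g_\alpha)$ on a fixed ball, say), and monotonicity of $B_{\alpha,2}$ then forces $B_{\alpha,2}(E) \ge C(\alpha,n) > 0$ for every nonempty $E$; equivalently this is the embedding $H^\alpha(\bbR^n) \hookrightarrow C(\bbR^n)$ for $2\alpha > n$, which shows points have positive capacity.

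For item $(i)$, I would argue as follows. Suppose $B_{2,2}(K) = 0$. Let $\cO$ be a bounded open neighborhood of $K$ and let $u \in L^2(\cO \backslash K; d^n x)$ with $\Delta u = 0$ in $\cO \backslash K$. By interior elliptic regularity $u \in C^\infty(\cO \backslash K)$, and extending by an arbitrary $L^2$ value across $K$ gives $\widetilde u \in L^2(\cO; d^n x)$; the point is to show $\Delta \widetilde u = 0$ in $\cD'(\cO)$, i.e.\ that $\int_{\cO} u \, \Delta\varphi \, d^n x = 0$ for all $\varphi \in C_0^\infty(\cO)$. Using $B_{2,2}(K) = 0$ together with item $(ii)$ (applied with $k = 2$ on a slightly larger open set), one finds a sequence $\varphi_j \in C_0^\infty(\cO \backslash K)$ with $\varphi_j \to \varphi$ in $H^2$; since $\Delta u = 0$ away from $K$ one has $\int u \, \Delta \varphi_j = 0$, and passing to the limit (legitimate because $u \in L^2$ and $\Delta\varphi_j \to \Delta\varphi$ in $L^2$) yields $\int u \, \Delta\varphi = 0$, hence harmonicity of $\widetilde u$ across $K$. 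Conversely, if $B_{2,2}(K) > 0$, one produces a nonzero $L^2$ function harmonic off $K$ but not extendable — e.g.\ via the Newtonian/Bessel potential of a nonzero measure supported on $K$ of finite energy, which exists precisely because the capacity is positive — giving non-removability. For item $(ii)$ I would simply cite the Hedberg--type characterization of when $C_0^\infty(\Omega \backslash K)$ is dense in $H^k(\Omega)$ in terms of $B_{k,2}(K) = 0$; this is classical and available in \cite{AH96}, \cite{Ma85}.

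The main obstacle I anticipate is not any single hard estimate but rather getting the bookkeeping between $(i)$ and $(ii)$ exactly right: one must be careful that ``$L^2$-removable for the Laplacian'' (a statement about $\dom(-\Delta_{max})$) matches the capacity threshold $\alpha = 2$ coming from the $H^2$-density statement, and that the direction of the implications is handled symmetrically — in particular that positivity of $B_{2,2}(K)$ really does obstruct removability, which requires exhibiting an explicit non-removable singularity and checking it lies in $L^2_{\rm loc}$ but not in the harmonic extension class. Since the proposition is stated as a collection of ``useful capacity results'' to be used downstream (in Section \ref{s1v}, for the equality $S_F = S_K$), I expect the write-up to lean heavily on citations to \cite{AH96}, \cite{Ma85}, \cite{Ta95}, \cite{Zi89} rather than reproving these facts, with only $(i)$ receiving a short self-contained argument along the lines sketched above.
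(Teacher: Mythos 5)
Your proposal matches the paper's treatment: the paper offers no proof at all, simply citing \cite[Corollary 3.3.4]{AH96}, \cite[Theorem 3]{Ma85}, and \cite[Theorem 2.6.16 and Remark 2.6.15]{Zi89} for the respective items, which is exactly the citation-based route you take. Your additional sketches for $(i)$ and $(iv)$ (the $H^2$-density approximation argument for removability, and the observation that $g_\alpha\in L^2(\bbR^n;d^nx)$ for $2\alpha>n$ forces points to have positive capacity via Cauchy--Schwarz) are correct and consistent with the cited results, so nothing further is needed.
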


See, \cite[Corollary 3.3.4]{AH96}, \cite[Theorem 3]{Ma85}, 
\cite[Theorem 2.6.16 and Remark 2.6.15]{Zi89}, respectively.
For other useful removability criteria the interested reader may
wish to consult \cite{Ca67}, \cite{Ma65}, \cite{RS08}, and \cite{Tr08}.

The first main result of this section is then the following:

\begin{theorem}\label{L-ea-5}
Assume that $K\subset\bbR^n$, $n\geq 3$, is a compact set
with the property that
\begin{equation} \label{Ha-z}
B_{2,2}(K)=0.
\end{equation} 
Define $\Omega:=\bbR^n\backslash  K$. Then, in the domain $\Om$,
the Friedrichs and Krein--von Neumann extensions of $-\Delta$, initially
considered on $C^\infty_0(\Om)$, coincide, that is, 
\begin{equation} \label{exa-13}
(-\Delta_{c,\Om})_F = (-\Delta_{c,\Om})_K.
\end{equation} 
As a consequence, $-\Delta|_{C^\infty_0(\Om)}$ has a unique nonnegative self-adjoint extension in $L^2(\Om;d^nx)$.
\end{theorem}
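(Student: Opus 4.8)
\textbf{Proof strategy for Theorem \ref{L-ea-5}.}
The plan is to show that under the hypothesis $B_{2,2}(K)=0$ the nonnegative self-adjoint extensions of $S := \overline{-\Delta|_{C_0^\infty(\Om)}}$ collapse to a single operator, and this will follow at once from Theorem \ref{T-kkrr}: once we prove $(-\Delta_{c,\Om})_F = (-\Delta_{c,\Om})_K$, the order relation \eqref{Fr-Sa}, namely $S_K \le \wti S \le S_F$, forces every nonnegative self-adjoint extension $\wti S$ to equal this common operator. So the entire burden is \eqref{exa-13}.

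First I would record, via Lemma \ref{L-ea}, the two concrete descriptions that make the comparison possible: $(-\Delta_{c,\Om})_F = -\Delta_{D,\Om}$ with $\ker((-\Delta_{c,\Om})_F) = \{0\}$ (by \eqref{F-2La}), and $\ker((-\Delta_{c,\Om})_K) = \{u\in L^2(\Om;d^nx)\,|\,\Delta u = 0 \text{ in } \Om\}$ (by \eqref{F-2Lb}). Since by \eqref{Fr-4Tf} one has $\ker(S_K) = \ker(S^*) = \ran(S)^\bot$, the equality of the two extensions will follow if I can show this kernel is trivial, i.e. that \emph{every $u\in L^2(\Om;d^nx)$ which is harmonic on $\Om = \bbR^n\setminus K$ must vanish}. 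Indeed, if $\ker(S^*) = \{0\}$ then by \eqref{SK} and \eqref{SF} both $\dom(S_K)$ and $\dom(S_F)$ reduce to $\dom(S)$, giving $S_K = S_F$.

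The heart of the argument — and the step I expect to be the main obstacle — is therefore the removability claim: a globally $L^2$ function on $\bbR^n$ that is harmonic off the compact set $K$ with $B_{2,2}(K) = 0$ is harmonic everywhere on $\bbR^n$, hence (being in $L^2(\bbR^n;d^nx)$) identically zero by Liouville's theorem. This is exactly the $L^2$-removability of $K$ for the Laplacian, which Proposition \ref{R-Ca.1}$(i)$ identifies with the condition $B_{2,2}(K) = 0$; one must check that the local removability statement \eqref{Fga-2L.2}, applied on large balls $\cO \supset K$, can be globalized. Concretely: given $u\in L^2(\Om;d^nx)$ with $\Delta u = 0$ in $\Om$, fix an open ball $\cO$ with $K\subset \cO$; by \eqref{Fga-2L.2} there is $\wti u\in L^2(\cO;d^nx)$ with $\wti u|_{\cO\setminus K} = u$ and $\Delta \wti u = 0$ in $\cO$. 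Since $u$ is already harmonic on $\bbR^n\setminus K \supset \bbR^n\setminus\cO$, patching $\wti u$ on $\cO$ with $u$ on $\bbR^n\setminus K$ (they agree on the overlap $\cO\setminus K$) yields a function $U\in L^2_{\rm loc}(\bbR^n;d^nx)$, in fact $U\in L^2(\bbR^n;d^nx)$ because $|K| = 0$ (as $B_{2,2}(K)=0$ forces $\cH^{n-4+\varepsilon}(K) = 0$ for all $\varepsilon$ when $n\ge 4$, and directly for $n=3$, so in particular Lebesgue measure zero), with $\Delta U = 0$ on all of $\bbR^n$. By Weyl's lemma $U$ is a genuine harmonic function, and an $L^2(\bbR^n;d^nx)$ harmonic function on $\bbR^n$ vanishes identically; restricting back, $u = 0$. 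This proves $\ker(S^*) = \{0\}$, hence $(-\Delta_{c,\Om})_F = (-\Delta_{c,\Om})_K$, and the uniqueness of the nonnegative self-adjoint extension then follows immediately from \eqref{Fr-Sa} in Theorem \ref{T-kkrr}. The one delicate point to verify carefully is the measure-zero assertion $|K| = 0$ used to place $U$ in $L^2(\bbR^n;d^nx)$ rather than merely $L^2_{\rm loc}$; this is where Proposition \ref{R-Ca.1}$(iii)$ enters, noting $2\alpha = 4 \le n$ is automatic only for $n\ge 4$, so for $n = 3$ one argues instead that $B_{2,2}(K) = 0$ with $2\alpha = 4 > n = 3$ would, by Proposition \ref{R-Ca.1}$(iv)$, force $K = \emptyset$, in which case $\Om = \bbR^3$ and the statement is trivial.
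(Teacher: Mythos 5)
Your reduction of \eqref{exa-13} to the triviality of $\ker(S^*)$ is where the argument breaks. The decompositions \eqref{SF} and \eqref{SK} (and the identity \eqref{Fr-4Tf}) that you invoke are established in Theorem \ref{T-kkrr} only under the additional hypothesis $S\geq\varepsilon I_{\cH}$ for some $\varepsilon>0$, and this strict positivity fails for $S=\ol{-\Delta|_{C_0^\infty(\Om)}}$ when $\Om=\bbR^n\backslash K$ is unbounded: one has $\inf\sigma(S_F)=0$ (the paper points out immediately after the theorem that equality of the two extensions in fact \emph{forces} this, which is why bounded domains are excluded). For a merely nonnegative $S$, triviality of $\ker(S^*)$ does not imply $S_F=S_K$. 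The paper itself supplies a counterexample to your reduction: for $\Om=\bbR^3\backslash\{0\}$ (Subsection \ref{s10.3}) one has $\ker(S^*)=\{0\}$ --- any $u\in L^2(\bbR^3;d^3x)$ harmonic off the origin has $\Delta u$ supported at $\{0\}$, hence equals a combination of derivatives of the fundamental solution plus a harmonic polynomial, and no nonzero such function lies in $L^2(\bbR^3;d^3x)$ --- and yet \eqref{10.76}, \eqref{10.77} show that the Friedrichs extension ($-\Delta$ on $H^2(\bbR^3)$) differs from the Krein--von Neumann extension, which carries a Neumann-type condition at the origin in the $\ell=0$ channel. The half-line model $-d^2/dr^2$ on $C_0^\infty((0,\infty))$ exhibits the same phenomenon: $\ker(S^*)=\{0\}$, yet $S_F$ is the Dirichlet and $S_K$ the Neumann realization.

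What a correct proof must control --- and what the paper's proof does --- is the full domain of $S_K$ via the Ando--Nishio characterization \eqref{Fr-2X}, recorded concretely in \eqref{Gkj-1}: each $u\in\dom((-\Delta_{c,\Om})_K)$ comes with a sequence $u_j\in C_0^\infty(\Om)$ such that $\Delta u_j\to\Delta u$ in $L^2(\Om;d^nx)$ and $\{\nabla u_j\}_{j\in\bbN}$ is Cauchy in $L^2(\Om;d^nx)^n$. The Sobolev inequality \eqref{exa-15} (this is where $n\geq 3$ enters) yields a limit $w\in L^{2^*}(\bbR^n;d^nx)$, $2^*=2n/(n-2)$, with $\nabla u_j\to\nabla w$ in $L^2$; the function $f:=w-u\in L^2_{\loc}(\bbR^n;d^nx)$ is harmonic off $K$, and it is to \emph{this} $f$ that the $L^2$-removability guaranteed by $B_{2,2}(K)=0$ and Liouville's theorem are applied, giving $f=0$, hence $u\in H^1(\bbR^n)$; then $B_{1,2}(K)=0$ and Proposition \ref{R-Ca.1}\,$(ii)$ give $u\in H^1_0(\Om)$, so $u\in\dom(-\Delta_{D,\Om})=\dom(S_F)$ and $S_K\subseteq S_F$ forces equality. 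Your removability-plus-Liouville computation is exactly the right tool, but you apply it only to elements of $\ker(S^*)$, which establishes that \eqref{F-2Lb} equals $\{0\}$ --- a necessary condition for \eqref{exa-13}, as Theorem \ref{L-ea-5C} shows, but not a sufficient one.
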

\begin{proof}
We note that \eqref{Ha-z} implies that $K$ has zero $n$-dimensional
Lebesgue measure, so that $L^2(\Om;d^nx)\equiv L^2(\bbR^n;d^nx)$.
In addition, by $(iii)$ in Proposition \ref{R-Ca.1}, we also have $B_{1,2}(K)=0$.
Now, if $u\in\dom(-\Delta_{c,\Om})_K$, \eqref{Gkj-1}
entails that $u\in L^2(\Om;d^nx)$, $\Delta u\in L^2(\Om;d^nx)$,
and that there exists a sequence $u_j\in C^\infty_0(\Om)$, $j\in\bbN$,
for which
\begin{equation} \label{exa-14}
\Delta u_j\to\Delta u \,\mbox{ in } \,L^2(\Om;d^nx)
\,\mbox{ as } \, j\to\infty,
\mbox{ and $\{\nabla u_j\}_{j\in\bbN}$ is Cauchy in $L^2(\Om;d^nx)$}.
\end{equation} 
In view of the well-known estimate (cf.\ the Corollary on p.\ 56 of \cite{Ma85}),
\begin{equation} \label{exa-15}
\|v\|_{L^{2^*}(\bbR^n;d^nx)}\leq C_n\|\nabla v\|_{L^2(\bbR^n;d^nx)},\quad 
v\in C^\infty_0(\bbR^n),
\end{equation} 
where $2^*:=(2n)/(n-2)$, the last condition in \eqref{exa-14} implies
that there exists $w\in L^{2^*}(\bbR^n;d^nx)$ with the property that
\begin{equation} \label{exa-16}
u_j\to w \,\mbox{ in } \,L^{2^*}(\bbR^n;d^nx)\, \mbox{ and }\, 
\nabla u_j\to\nabla w \,\mbox{ in } \,L^2(\bbR^n;d^nx) 
\,\mbox{ as } \,j\to\infty.
\end{equation} 
Furthermore, by the first convergence in \eqref{exa-14}, we also have that
$\Delta w=\Delta u$ in the sense of distributions in $\Om$.
In particular, the function
\begin{equation} \label{exa-17}
f:=w-u\in L^{2^*}(\bbR^n;d^nx)+L^2(\bbR^n;d^nx)
\hookrightarrow L^2_{\loc}(\bbR^n;d^nx)
\end{equation} 
satisfies $\Delta f=0$ in $\Om=\bbR^n\backslash  K$.
Granted \eqref{Ha-z}, Proposition \ref{R-Ca.1} yields that $K$ is
$L^2$-removable for the Laplacian, so we may conclude that
$\Delta f=0$ in $\bbR^n$. With this at hand, Liouville's theorem then ensures
that $f\equiv 0$ in $\bbR^n$. This forces $u=w$ as distributions in $\Om$
and hence, $\nabla u=\nabla w$ distributionally in $\Om$. In view of the
last condition in \eqref{exa-16} we may therefore conclude that
$u\in H^1(\bbR^n)=H^1_0(\bbR^n)$. With this at hand, Proposition \ref{R-Ca.1}
yields that $u\in H^1_0(\Om)$. This proves that
$\dom(-\Delta_{c,\Om})_K \subseteq\dom(-\Delta_{c,\Om})_F$ and hence,
$(-\Delta_{c,\Om})_K \subseteq (-\Delta_{c,\Om})_F$. Since both operators
in question are self-adjoint, \eqref{exa-13} follows.
\end{proof}

We emphasize that equality of the Friedrichs and Krein Laplacians necessarily 
requires that fact that 
$\inf (\sigma((-\Delta_{c,\Om})_F)) = \inf(\sigma((-\Delta_{c,\Om})_K)) = 0$, 
and hence rules out the case of bounded domains $\Omega \subset \bbR^n$, 
$n \in \bbN$ (for which $\inf (\sigma((-\Delta_{c,\Om})_F)) > 0$). 

\begin{corollary}\label{C-capF}
Assume that $K\subset\bbR^n$, $n\geq 4$, is a compact set
with finite $(n-4)$-dimensional Hausdorff measure, that is, 
\begin{equation} \label{Ha-zX}
\cH^{n-4}(K)<+\infty.
\end{equation} 
Then, with $\Omega:=\bbR^n\backslash  K$, one has
$(-\Delta_{c,\Om})_F = (-\Delta_{c,\Om})_K$, and hence, $-\Delta|_{C^\infty_0(\Om)}$ has a unique nonnegative self-adjoint extension in $L^2(\Om;d^nx)$.
\end{corollary}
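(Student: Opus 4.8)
The plan is to deduce Corollary \ref{C-capF} directly from Theorem \ref{L-ea-5} by a capacity-theoretic reduction. The only hypothesis in Theorem \ref{L-ea-5} that needs to be supplied is the vanishing of the Bessel capacity $B_{2,2}(K) = 0$; once we have this, the conclusion $(-\Delta_{c,\Om})_F = (-\Delta_{c,\Om})_K$, and hence the uniqueness of the nonnegative self-adjoint extension of $-\Delta|_{C_0^\infty(\Om)}$, follows verbatim from that theorem. So the entire proof reduces to the implication
\begin{equation}
\cH^{n-4}(K) < +\infty \, \Longrightarrow \, B_{2,2}(K) = 0.
\end{equation}

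First I would invoke part $(iii)$ of Proposition \ref{R-Ca.1} with $\alpha = 2$. Since $n \geq 4$ we have $2\alpha = 4 \leq n$, so the hypothesis $2\alpha \leq n$ of that part is met, and $n - 2\alpha = n - 4$. The statement then reads: if $\cH^{n-2\alpha}(E) = \cH^{n-4}(K) < +\infty$, then $B_{\alpha,2}(E) = B_{2,2}(K) = 0$. This is exactly the implication we need. Thus, under the assumption \eqref{Ha-zX}, Proposition \ref{R-Ca.1}$(iii)$ yields $B_{2,2}(K) = 0$.

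With $B_{2,2}(K) = 0$ established, and noting that $K \subset \bbR^n$ with $n \geq 4 \geq 3$ is compact, all hypotheses of Theorem \ref{L-ea-5} are satisfied for $\Om := \bbR^n \backslash K$. Applying that theorem gives $(-\Delta_{c,\Om})_F = (-\Delta_{c,\Om})_K$, and, since any nonnegative self-adjoint extension $\wti S$ of $-\Delta|_{C_0^\infty(\Om)}$ satisfies $(-\Delta_{c,\Om})_K \leq \wti S \leq (-\Delta_{c,\Om})_F$ by Theorem \ref{T-kkrr}, the coincidence of the two extremal extensions forces $\wti S = (-\Delta_{c,\Om})_F$, i.e., the nonnegative self-adjoint extension is unique. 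This completes the proof.

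I do not anticipate a genuine obstacle here: the corollary is a straightforward specialization, and the only subtlety is the bookkeeping of the capacity-to-Hausdorff-measure comparison, which is entirely encapsulated in Proposition \ref{R-Ca.1}$(iii)$. The one point worth a remark is the dimensional restriction $n \geq 4$: it is precisely what guarantees $2\alpha = 4 \leq n$ so that the relevant branch of Proposition \ref{R-Ca.1}$(iii)$ applies (for $n = 2, 3$ one would be in the regime $2\alpha > n$ of part $(iv)$, where no nonempty set has zero $B_{2,2}$-capacity, so the argument genuinely requires $n \geq 4$). As with Theorem \ref{L-ea-5}, one should note in passing that this construction necessarily produces unbounded $\Om$ with $\inf(\sigma((-\Delta_{c,\Om})_F)) = 0$, consistent with the fact that equality of the Friedrichs and Krein extensions rules out bounded domains.
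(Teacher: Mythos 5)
Your proposal is correct and follows exactly the paper's own (one-line) argument: Proposition \ref{R-Ca.1}$(iii)$ with $\alpha=2$ converts the hypothesis $\cH^{n-4}(K)<+\infty$ into $B_{2,2}(K)=0$, and Theorem \ref{L-ea-5} then yields the conclusion. Your additional remarks on the role of the restriction $n\geq 4$ and on unboundedness of $\Om$ are accurate but not needed for the proof itself.
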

\begin{proof}
This is a direct consequence of Proposition \ref{R-Ca.1}
and Theorem \ref{L-ea-5}.
\end{proof}

In closing, we wish to remark that, as a trivial particular case of the
above corollary, formula \eqref{exa-13} holds for the punctured space
\begin{equation} \label{puct-u1}
\Omega:=\bbR^n\backslash \{0\},\quad n\geq 4,
\end{equation} 
however, this fact is also clear from the well-known fact that 
$-\Delta|_{C_0^\infty (\bbR^n\backslash \{0\})}$ is essentially self-adjoint 
in $L^2(\bbR^n; d^nx)$ if (and only if) $n\geq 4$ (cf., e.g., \cite[p.\ 161]{RS75}, and our 
discussion concerning the Bessel operator \eqref{10.86}). 
In \cite[Example 4.9]{GKMT01} (see also our discussion in Subsection\ 10.3), it has been shown (by using different methods) that \eqref{exa-13} continues to hold
for the choice \eqref{puct-u1} when $n=2$, but that the Friedrichs and
Krein--von Neumann extensions of $-\Delta$, initially considered on
$C^\infty_0(\Om)$ with $\Om$ as in \eqref{puct-u1}, are different when $n=3$. 

In light of Theorem \ref{L-ea-5}, a natural question is whether the
coincidence of the Friedrichs and Krein--von Neumann extensions of $-\Delta$,
initially defined on $C^\infty_0(\Om)$ for some open set $\Om\subset\bbR^n$,
actually implies that the complement of $\Om$ has zero Bessel capacity of
order two. Below, under some mild background assumptions on the domain
in question, we shall establish this type of converse result.
Specifically, we now prove the following fact:

\begin{theorem}\label{L-ea-5C}
Assume that $K\subset\bbR^n$, $n>4$, is a compact set of zero
$n$-dimensional Lebesgue measure, and set $\Omega:=\bbR^n\backslash  K$. Then
\begin{equation} \label{exa-13H}
(-\Delta_{c,\Om})_F =(-\Delta_{c,\Om})_K \,\text{ implies } \, B_{2,2}(K)=0.
\end{equation} 
\end{theorem}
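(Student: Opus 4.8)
The plan is to prove the contrapositive: assuming $B_{2,2}(K) > 0$, I will exhibit an element of $\dom((-\Delta_{c,\Om})_K)$ that does not lie in $\dom((-\Delta_{c,\Om})_F)$. By Lemma~\ref{L-ea}\,(ii)--(iv), this amounts to producing a function $u \in \dom(-\Delta_{max,\Om})$ that is harmonic in $\Om = \bbR^n\setminus K$ (hence in $\ker((-\Delta_{c,\Om})_K)$ by \eqref{F-2Lb}) but is \emph{not} in $H^1_0(\Om)$; equivalently, by \eqref{F-2La} and \eqref{F-2Lc}, a nonzero $L^2(\Om;d^nx)$ harmonic function on $\Om$ suffices to show $\ker((-\Delta_{c,\Om})_K) \neq \{0\} = \ker((-\Delta_{c,\Om})_F)$, which already forces $(-\Delta_{c,\Om})_F \neq (-\Delta_{c,\Om})_K$. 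So the real task is: \textbf{if $B_{2,2}(K) > 0$ and $|K| = 0$, construct a nonzero $u \in L^2(\bbR^n;d^nx)$ with $\Delta u = 0$ in $\bbR^n \setminus K$.}

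The natural candidate is a Newtonian-type potential of a measure supported on $K$. Since $B_{2,2}(K) > 0$, there exists a nonzero positive measure $\mu$ supported on $K$ with finite energy in the appropriate sense — concretely, one can take $\mu$ so that its Bessel potential $g_2 * \mu \in L^2(\bbR^n;d^nx)$ (this is exactly the dual characterization of positive Bessel capacity $B_{2,2}$: a compact set has positive $B_{2,2}$-capacity iff it carries a nontrivial measure $\mu \geq 0$ with $\|g_2 * \mu\|_{L^2} < \infty$). Set $w := g_2 * \mu$. Then $w \in L^2(\bbR^n;d^nx)$, $w \geq 0$, $w \not\equiv 0$, and $(-\Delta + I)w = \mu$ in the distributional sense, so that $-\Delta w = w$ on $\bbR^n \setminus \operatorname{supp}\mu \supseteq \bbR^n \setminus K = \Om$. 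This $w$ is not quite harmonic off $K$, but that is easily repaired: since $n > 4$ one has $2 < n/2$, and elliptic/capacitary estimates for the Bessel kernel give $w \in H^2_{\loc}$ away from $K$, with $\Delta w = -w \in L^2$; thus $w \in \dom(-\Delta_{max,\Om})$. Now the operator $H_{\max} + I$ on $\Om$ and the interplay $-\Delta w - (-1)w = 0$ on $\Om$ shows $w$ is an eigenfunction-type object. A cleaner route: replace $w$ by the solution $u$ of $-\Delta u = 0$ in $\Om$ with the same ``singular behavior'' on $K$ — i.e., project $w$ onto $\ker((-\Delta_{c,\Om})_K) = \{v \in L^2(\Om;d^nx)\,|\,\Delta v = 0\}$ along $\operatorname{ran}((-\Delta_{c,\Om})_F)$; the point is only that this kernel is nontrivial, and nontriviality of $w$ combined with $|K| = 0$ and removability theory forces it. The key mechanism is the \emph{converse} of Proposition~\ref{R-Ca.1}\,(i): if $B_{2,2}(K) > 0$ then $K$ is \emph{not} $L^2$-removable for the Laplacian, so there exists a bounded open neighborhood $\cO$ of $K$ and a function $u \in L^2(\cO \setminus K;d^nx)$ with $\Delta u = 0$ in $\cO \setminus K$ that admits \emph{no} extension to a harmonic function on $\cO$. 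Cutting $u$ off smoothly away from $K$ and adjusting by a function in $C^\infty_0(\Om)$ (to kill the error term supported in $\cO$ away from $K$, harmless since that error is in the range of the Friedrichs extension), one obtains a nonzero element of $\dom((-\Delta_{c,\Om})_K) \setminus \dom((-\Delta_{c,\Om})_F)$.

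Therefore the skeleton is: (1) record that by Lemma~\ref{L-ea}, $(-\Delta_{c,\Om})_F = (-\Delta_{c,\Om})_K$ would force, via \eqref{Gkj-1}, every $u \in \dom(-\Delta_{max,\Om})$ that is a limit in the sense of \eqref{Gkj-1} to lie in $H^1_0(\Om) = H^1_0(\bbR^n)$ (using $|K|=0$ and Proposition~\ref{R-Ca.1}\,(ii) with $k=1$, which needs $B_{1,2}(K)=0$ — but here I must be careful, see below); (2) invoke the non-removability direction of Proposition~\ref{R-Ca.1}\,(i) to get a local harmonic $L^2$ function on $\cO\setminus K$ with no harmonic extension across $K$; (3) globalize it to a genuine element of $\dom((-\Delta_{c,\Om})_K)$ and verify it escapes $\dom((-\Delta_{c,\Om})_F)$, obtaining the contrapositive of \eqref{exa-13H}.

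The main obstacle — and the reason the hypothesis $n > 4$ (strictly bigger than $4$) appears, as opposed to $n \geq 4$ in Corollary~\ref{C-capF} — is the following subtlety. The argument in the forward direction (Theorem~\ref{L-ea-5}) used that $B_{2,2}(K) = 0 \Rightarrow B_{1,2}(K) = 0$, via Proposition~\ref{R-Ca.1}\,(iii). In the converse direction I need the reverse implication at the level of the \emph{obstruction}: I must ensure that the harmonic $L^2$ function I construct is genuinely \emph{not} in $H^1$, i.e., that the failure of $B_{2,2}$-removability is not ``absorbed'' by $H^1_0$-density. When $B_{2,2}(K) > 0$ but possibly $B_{1,2}(K) = 0$ (which happens precisely in the range where $\cH^{n-2}(K)$ can be finite while $\cH^{n-4}(K) = \infty$), one has $H^1_0(\Om) = H^1_0(\bbR^n)$ yet $H^2_0(\Om) \subsetneq H^2_0(\bbR^n)$, and one must check that the Krein extension genuinely sees this gap. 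This is exactly where $n > 4$ is used: it guarantees $2 < n/2$ so that the relevant Sobolev/capacity exponents are in the ``interesting'' regime where $B_{2,2}$ is a strictly finer gauge than Lebesgue measure, and the Bessel potential $g_2 * \mu$ of a finite-energy measure on $K$ is in $L^2$ but its gradient need not be. I expect the technical heart of the write-up to be the verification that this constructed potential (or the localized non-removable harmonic function) indeed fails to lie in $H^1_0(\bbR^n)$ — one shows that if it did, then by the $H^1_0(\Om) = H^1_0(\Om\setminus K)$ identification (Proposition~\ref{R-Ca.1}\,(ii), $k=1$) it would be approximable by $C^\infty_0(\Om)$ functions in $H^1$, forcing (via the removability argument of Theorem~\ref{L-ea-5} run in reverse) $B_{2,2}(K) = 0$, a contradiction. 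I would structure this final verification as a short lemma isolating the dichotomy ``$u \in L^2$ harmonic off $K$ and $u \in H^1_0(\bbR^n)$ $\Rightarrow$ $u$ extends harmonically across $K$'' and then quote Proposition~\ref{R-Ca.1}\,(i) to conclude.
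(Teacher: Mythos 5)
Your opening reduction is exactly right and matches the paper: by Lemma \ref{L-ea}, $\ker((-\Delta_{c,\Om})_F)=\{0\}$ while $\ker((-\Delta_{c,\Om})_K)$ consists of the $L^2(\bbR^n;d^nx)$ functions harmonic in $\bbR^n\backslash K$ (recall $|K|=0$), so equality of the two extensions forces that space to be trivial, and for the contrapositive it suffices to exhibit one nonzero such function when $B_{2,2}(K)>0$. The paper likewise starts from a measure $\mu\geq 0$, $\mu\neq 0$, supported in $K$ with $g_2\ast\mu\in L^2(\bbR^n;d^nx)$. The gap is in the construction itself. Your primary candidate $w=g_2\ast\mu$ satisfies $(-\Delta+I)w=\mu$, hence $\Delta w=w\neq 0$ on $\Om$, and none of your proposed repairs is viable as stated: the ``projection onto $\ker((-\Delta_{c,\Om})_K)$ along $\ran((-\Delta_{c,\Om})_F)$'' presupposes a direct sum decomposition of $L^2(\Om;d^nx)$ that is not available here (the analogue, Lemma \ref{th-CL}, is proved only for bounded quasi-convex domains), and the cutoff-and-correct route requires solving $\Delta v=f$ in $L^2(\Om;d^nx)$ for a compactly supported $f\in L^2$, which is not possible in general since $0\in\sigma_{\rm ess}(-\Delta_{D,\Om})$ for the unbounded set $\Om=\bbR^n\backslash K$. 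The object that works, and the one the paper uses, is the Newtonian potential $u=E_n\ast\mu$ with $E_n$ as in \eqref{Fr-Ta1}: it is harmonic off $K$, nonzero because $|x|^{n-2}(E_n\ast\mu)(x)\to c_n\mu(K)>0$ as $|x|\to\infty$, and lies in $L^2_{\loc}$ because $g_2=c_nE_n+o(|x|^{2-n})$ near the origin.

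The second, related, gap is your account of where $n>4$ enters. It has nothing to do with $B_{1,2}$ versus $B_{2,2}$ or with deciding whether the candidate lies in $H^1_0(\bbR^n)$; that last verification is superfluous, since the kernel comparison above already separates the two extensions once a nonzero $L^2$ harmonic function exists. The hypothesis $n>4$ is needed to pass from the global statement that no nonzero $L^2(\bbR^n)$ function is harmonic off $K$ (which is what equality of the extensions yields, i.e., \eqref{F-2Ld}) to the local statement \eqref{F-5Ld} that no nonzero $L^2_{\loc}$ function is harmonic off $K$; the paper accomplishes this by recasting \eqref{F-2Ld} as the vanishing of a capacity and invoking \cite[Theorem 1.5\,(a)]{HP72}, which requires $n>4$. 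Equivalently, $n>4$ is precisely the condition under which $E_n\ast\mu\sim c\,|x|^{2-n}$ is square integrable at infinity, so that the constructed potential is a genuine global $L^2$ harmonic function rather than merely an $L^2_{\loc}$ one. Without one of these two steps your argument does not close: a nonzero $L^2_{\loc}$ harmonic function off $K$ does not by itself contradict $(-\Delta_{c,\Om})_F=(-\Delta_{c,\Om})_K$.
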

\begin{proof}
Let $K$ be as in the statement of the theorem.
In particular, $L^2(\Om;d^nx)\equiv L^2(\bbR^n;d^nx)$.
Hence, granted that
$(-\Delta_{c,\Om})_K=(-\Delta_{c,\Om})_F$, in view of
\eqref{F-2Lb}, \eqref{F-2La} this yields
\begin{equation} \label{F-2Ld}
\big\{u\in L^2(\bbR^n;d^nx) \,\big|\, \Delta\,u=0 \mbox{ in } \bbR^n\backslash K\big\}=\{0\}.
\end{equation} 
It is useful to think of \eqref{F-2Ld} as a capacitary condition.
More precisely, \eqref{F-2Ld} implies that ${\rm Cap} (K)=0$, where
\begin{equation} \label{F-3Ld}
{\rm Cap} (K):=\sup\,\bigl\{
\bigl|{}_{\cE'(\bbR^n)}\langle\Delta u,1\rangle_{\cE(\bbR^n)}\bigr|\,\big|\,
\|u\|_{L^2(\bbR^n;d^nx)}\leq 1\mbox{ and }{\rm supp} (\Delta u)\subseteq K
\bigr\}.
\end{equation} 
Above, $\cE(\bbR^n)$ is the space of smooth functions in $\bbR^n$ equipped
with the usual Frech\'et topology, which ensures that its dual,
$\cE'(\bbR^n)$, is the space of compactly supported distributions in
$\bbR^n$. At this stage, we recall the fundamental solution for the
Laplacian in $\bbR^n$, $n\geq 3$, that is, 
\begin{equation} \label{Fr-Ta1}
E_n(x):=\frac{\Gamma(n/2)}{2(2-n)\pi^{n/2}|x|^{n-2}},\quad
x\in\bbR^n\backslash \{0\}
\end{equation} 
($\Gamma(\cdot)$ the classical Gamma function \cite[Sect.\ 6.1]{AS72}), and introduce a related capacity, namely
\begin{align} \label{F-4Ld}
& {\rm Cap}_{\ast} (K):=\sup\,\bigl\{
\big|{}_{\cE'(\bbR^n)}\langle f,1\rangle_{\cE(\bbR^n)}\bigr| \,\big|\,
f\in \cE'(\bbR^n),\,\,{\rm supp} (f) \subseteq K,  
\|E_n\ast f\|_{L^2(\bbR^n;d^nx)}\leq 1\big\}.
\end{align} 
Then
\begin{equation} \label{Fr-Ta2}
0\leq{\rm Cap}_{\ast} (K)\leq {\rm Cap} (K)=0
\end{equation} 
so that ${\rm Cap}_{\ast} (K)=0$. With this at hand,
\cite[Theorem 1.5\,(a)]{HP72} (here we make use of the
fact that $n>4$) then allows us to
strengthen \eqref{F-2Ld} to
\begin{equation} \label{F-5Ld}
\big\{u\in L^2_{\loc}(\bbR^n;d^nx) \,\big|\, \Delta\,u=0 \mbox{ in } 
\bbR^n\backslash  K\big\}=\{0\}.
\end{equation} 
Next, we follow the argument used in the proof of \cite[Lemma 5.5]{MH73} and 
\cite[Theorem 2.7.4]{AH96}.
Reasoning by contradiction, assume that $B_{2,2}(K)>0$. Then there exists
a nonzero, positive measure $\mu$ supported in $K$ such that
$g_2\ast\mu\in L^2(\bbR^n)$.
Since $g_2(x)=c_n\,E_n (x)+o(|x|^{2-n})$ as $|x|\to 0$ (cf.\ the discussion in
Section 1.2.4 of \cite{AH96}) this further implies that
$E_n\ast\mu\in L^2_{\loc}(\bbR^n;d^nx)$. However, $E_n\ast\mu$ is a
harmonic function in $\bbR^n\backslash  K$, which is not identically zero since
\begin{equation} \label{Niz}
\lim_{x\to\infty}|x|^{n-2}(E_n\ast\mu)(x)=c_n\mu(K)>0,
\end{equation} 
so this contradicts \eqref{F-5Ld}. This shows that $B_{2,2}(K)=0$.
\end{proof}

In this context we also refer to \cite[Sect.\ 3.3]{FOT11} for necessary and sufficient conditions 
for equality of (certain generalizations of) the Friedrichs and the Krein Laplacians 
in terms of appropriate notions of capacity and Dirichlet forms. 

Theorems \ref{L-ea-5}--\ref{L-ea-5C} readily generalize to other types of
elliptic operators (including higher-order systems). For example, using the
polyharmonic operator $(-\Delta)^\ell$, $\ell\in\bbN$, as a prototype,
we have the following result:

\begin{theorem}\label{L-ea-8}
Fix $\ell\in\bbN$, $n\geq 2\ell+1$, and assume that $K\subset\bbR^n$
is a compact set of zero $n$-dimensional Lebesgue measure.
Define $\Omega:=\bbR^n\backslash  K$. Then, in the domain $\Om$,
the Friedrichs and Krein--von Neumann extensions of the polyharmonic operator
$(-\Delta)^\ell$, initially considered on $C^\infty_0(\Om)$, coincide
if and only if $B_{2\ell,2}(K)=0$.
\end{theorem}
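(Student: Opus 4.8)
The plan is to mimic the proof of Theorem \ref{L-ea-5} together with its converse Theorem \ref{L-ea-5C}, replacing $-\Delta$ by $(-\Delta)^\ell$ throughout, and using the higher-order Bessel capacity $B_{2\ell,2}$ in place of $B_{2,2}$. First I would set up the analog of Lemma \ref{L-ea}: with $S:=(-\Delta)^\ell|_{C_0^\infty(\Om)}$, one has $S^* = (-\Delta)^\ell_{\max,\Om}$ (the maximal realization, on $\{u\in L^2(\Om;d^nx)\,|\,(-\Delta)^\ell u\in L^2(\Om;d^nx)\}$), $S_F = (-\Delta)^\ell_{D,\Om}$ (the Dirichlet-type realization with domain inside $H^\ell_0(\Om)$), and by \eqref{Fr-2X} the Krein extension has domain characterized by approximability of $u$ in the appropriate seminorm: there exist $u_j\in C_0^\infty(\Om)$ with $(-\Delta)^\ell u_j\to (-\Delta)^\ell u$ in $L^2$ and $(u_j)$ Cauchy in the Dirichlet-type form norm $\|u\|^2 = \sum_{|\alpha|=\ell}\|\partial^\alpha u\|^2_{L^2}$ (or $\|\Delta^{\ell/2} u\|^2$ interpreted via Fourier on $\bbR^n$). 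The identity $\ker(S_F) = \{0\}$ follows as before since $u\in H^\ell_0(\bbR^n)$ with $(-\Delta)^\ell u = 0$ forces $\nabla^\ell u = 0$, hence (extending by zero) $u$ is a polynomial in $L^2(\bbR^n)$, hence $u\equiv 0$; and $\ker(S_K) = \{u\in L^2(\Om;d^nx)\,|\,(-\Delta)^\ell u = 0\text{ in }\Om\}$ by \eqref{Fr-4Tf}.

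For the sufficiency direction, suppose $B_{2\ell,2}(K) = 0$. The key classical input replacing Proposition \ref{R-Ca.1}\,$(i)$ is the $L^2$-removability theorem for the polyharmonic operator: a compact $K$ is removable for $L^2$-solutions of $(-\Delta)^\ell u = 0$ precisely when $B_{2\ell,2}(K) = 0$ — this is exactly the statement of Theorem 2.7.4 in \cite{AH96} (and is why the hypothesis $n\geq 2\ell+1$ appears, so that $2\ell \leq n$ and the capacity is nontrivial; cf.\ Proposition \ref{R-Ca.1}\,$(iv)$). Also $B_{2\ell,2}(K) = 0$ implies $B_{j,2}(K) = 0$ for $1\leq j\leq 2\ell$ by monotonicity of Bessel capacities in the order, so in particular $B_{\ell,2}(K) = 0$, which by Proposition \ref{R-Ca.1}\,$(ii)$ gives $H^\ell_0(\bbR^n) \equiv H^\ell_0(\Om)$. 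Now take $u\in\dom(S_K)$ with approximating sequence $u_j$. By the higher-order Sobolev inequality $\|v\|_{L^{2^*_\ell}(\bbR^n)} \leq C_n\|\nabla^\ell v\|_{L^2(\bbR^n)}$ for $v\in C_0^\infty(\bbR^n)$, with $2^*_\ell = 2n/(n-2\ell)$ (valid since $n > 2\ell$), the Cauchy condition on $(u_j)$ in the $\ell$-th order seminorm yields a limit $w\in L^{2^*_\ell}(\bbR^n)$ with $u_j\to w$ in $L^{2^*_\ell}$ and $\partial^\alpha u_j\to\partial^\alpha w$ in $L^2$ for $|\alpha|=\ell$; moreover $(-\Delta)^\ell w = (-\Delta)^\ell u$ in $\cD'(\Om)$. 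Then $f:=w-u\in L^{2^*_\ell}(\bbR^n)+L^2(\bbR^n)\hookrightarrow L^2_{\loc}(\bbR^n)$ satisfies $(-\Delta)^\ell f = 0$ in $\Om = \bbR^n\setminus K$; removability extends this to all of $\bbR^n$, and then a polyharmonic Liouville theorem (a polyharmonic function in $L^2_{\loc}(\bbR^n)$ with at most polynomial growth of controlled degree must be a polynomial, and being in $L^{2^*_\ell}+L^2$ it must vanish) gives $f\equiv 0$, i.e.\ $u = w\in H^\ell(\bbR^n) = H^\ell_0(\bbR^n) = H^\ell_0(\Om)$, so $u\in\dom(S_F)$. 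Hence $S_K\subseteq S_F$, and both being self-adjoint, $S_K = S_F$.

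For the necessity direction, suppose $S_F = S_K$; since $K$ has zero Lebesgue measure, $L^2(\Om;d^nx)\equiv L^2(\bbR^n;d^nx)$, and combining $\ker(S_F) = \{0\}$ with the description of $\ker(S_K)$ gives $\{u\in L^2(\bbR^n;d^nx)\,|\,(-\Delta)^\ell u = 0\text{ in }\bbR^n\setminus K\} = \{0\}$. As in the proof of Theorem \ref{L-ea-5C}, this forces a capacity of $K$ relative to $(-\Delta)^\ell$ to vanish, and using the fundamental solution $E_{n,\ell}$ of $(-\Delta)^\ell$ (a constant multiple of $|x|^{2\ell-n}$ when $n$ is odd or $n > 2\ell$, with a possible logarithmic factor in exceptional even-dimensional cases, which one handles by the growth condition $n\geq 2\ell+1$ is not quite enough — one genuinely wants $n>2\ell$, which holds) one upgrades to $\{u\in L^2_{\loc}(\bbR^n)\,|\,(-\Delta)^\ell u = 0\text{ in }\bbR^n\setminus K\} = \{0\}$ via the appropriate higher-order analog of \cite[Theorem 1.5(a)]{HP72}. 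Finally, if $B_{2\ell,2}(K) > 0$ there is a nonzero positive measure $\mu$ on $K$ with $g_{2\ell}\ast\mu\in L^2(\bbR^n)$; since $g_{2\ell}(x) = c_{n,\ell}E_{n,\ell}(x) + o(|x|^{2\ell-n})$ as $|x|\to 0$, the potential $E_{n,\ell}\ast\mu$ lies in $L^2_{\loc}(\bbR^n)$, is polyharmonic of order $\ell$ off $K$, and is not identically zero because $|x|^{n-2\ell}(E_{n,\ell}\ast\mu)(x)\to c_{n,\ell}\mu(K) > 0$ as $|x|\to\infty$ — contradicting the displayed triviality. Hence $B_{2\ell,2}(K) = 0$.

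The main obstacle I anticipate is the higher-order removability and uniqueness machinery: one needs the polyharmonic $L^2$-removability criterion (Theorem 2.7.4 in \cite{AH96} is stated for general Bessel-capacity orders and covers this), the correct form of the fundamental solution $E_{n,\ell}$ and its singularity expansion relative to $g_{2\ell}$ near the origin, and a polyharmonic Liouville/uniqueness statement for $L^2_{\loc}$ solutions — together with making sure the dimensional hypothesis is exactly what is needed for the Sobolev embedding $2^*_\ell = 2n/(n-2\ell)$, the nontriviality of $B_{2\ell,2}$, and the $o(|x|^{2\ell-n})$ remainder control (all of which are fine under $n\geq 2\ell+1$, hence $n > 2\ell$). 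Everything else — the extension-by-zero arguments, the abstract identifications of $S^*, S_F, S_K$ via \eqref{Fr-2}, \eqref{Fr-2X}, \eqref{Fr-4Tf}, and the capacitary monotonicity $B_{2\ell,2}(K) = 0\Rightarrow B_{j,2}(K) = 0$ — is a routine transcription of the $\ell = 1$ arguments already carried out in the excerpt.
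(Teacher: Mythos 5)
Your overall architecture is exactly the intended one: the paper presents Theorem \ref{L-ea-8} as a direct generalization of Theorems \ref{L-ea-5} and \ref{L-ea-5C} and supplies no separate proof, so a faithful transcription with $(-\Delta)^\ell$, the capacity $B_{2\ell,2}$, the Sobolev exponent $2n/(n-2\ell)$, and polyharmonic removability and Liouville theorems in place of their second-order counterparts is the right plan. Your sufficiency half is sound: the monotonicity $B_{2\ell,2}(K)=0\Rightarrow B_{\ell,2}(K)=0$, the identification $H^\ell_0(\bbR^n)=H^\ell_0(\Om)$ via Proposition \ref{R-Ca.1}\,$(ii)$, the passage to the limit $w$ via the higher-order Sobolev inequality (valid since $n>2\ell$), removability of $K$ for $L^2_{\loc}$ polyharmonic functions, and the Fourier-support Liouville argument all go through; the only detail worth making explicit is that $u\in L^2$ together with $\nabla^\ell u\in L^2$ yields $u\in H^\ell(\bbR^n)$ by interpolation.

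The gap is in the necessity direction. The paper's Theorem \ref{L-ea-5C} is proved only for $n>4$, and that restriction is not cosmetic: it enters both in the appeal to \cite[Theorem 1.5\,(a)]{HP72} (the upgrade from ``no nonzero $L^2(\bbR^n)$ solutions off $K$'' to ``no nonzero $L^2_{\loc}(\bbR^n)$ solutions off $K$'') and, equivalently, in the fact that the potential $E_n\ast\mu\sim c\,\mu(K)|x|^{2-n}$ is square integrable near infinity precisely when $n>4$. The polyharmonic analog of this condition is $n>4\ell$ (so that $|x|^{2\ell-n}\in L^2(\{|x|>1\})$), not the condition $n>2\ell$ you invoke; the latter only guarantees that $E_{n,\ell}$ is a pure power with no logarithmic factor. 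Consequently your argument establishes the ``only if'' implication only for $n>4\ell$, while the theorem claims it for all $n\geq 2\ell+1$. In the remaining range $2\ell+1\leq n\leq 4\ell$ the function $E_{n,\ell}\ast\mu$ you produce lies in $L^2_{\loc}(\bbR^n)$ but not in $L^2(\bbR^n)$, so it does not by itself contradict $\ker(S_K)=\{0\}$, and a separate argument is required; note that for $n<4\ell$ the condition $B_{2\ell,2}(K)=0$ forces $K=\emptyset$ by Proposition \ref{R-Ca.1}\,$(iv)$, so what must actually be shown there is that every nonempty compact $K$ of measure zero destroys the equality $S_F=S_K$, which your construction does not deliver.
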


For some related results in the punctured space
$\Om:=\bbR^n\backslash \{0\}$, see also the recent article \cite{Ad07}. Moreover, we mention 
that in the case of the Bessel operator $h_\nu = (-d^2/dr^2) + (\nu^2 - (1/4))r^{-2}$ defined on 
$C_0^\infty((0,\infty))$, equality of the Friedrichs and Krein extension of $h_\nu$ in 
$L^2((0,\infty); dr)$ if and only if $\nu = 0$ has been established in \cite{MT07}. (The sufficiency 
of the condition $\nu = 0$ was established earlier in \cite{GKMT01}.)

While this section focused on differential operators, we conclude with a very brief 
remark on half-line Jacobi, that is, tridiagonal (and hence, second-order finite difference) 
operators: As discussed in depth by Simon \cite{Si98}, the Friedrichs and Krein--von Neumann extensions of a minimally defined symmetric half-line Jacobi operator (cf.\ also 
\cite{BC05}) coincide, if and only if the associated Stieltjes moment problem is determinate 
(i.e., has a unique solution) while the corresponding Hamburger moment problem is 
indeterminate (and hence has uncountably many solutions).

\section{Examples}
\label{s1vi}

\subsection{The Case of a Bounded Interval $(a,b)$, $-\infty < a < b < \infty$, $V=0$.}

We briefly recall the essence of the one-dimensional example $\Om=(a,b)$, 
$-\infty < a < b < \infty$, and $V=0$. This was first discussed in detail by \cite{AS80} 
and \cite[Sect.\ 2.3]{Fu80} (see also \cite[Sect.\ 3.3]{FOT11}). 

Consider the minimal operator $-\Delta_{min,(a,b)}$ 
in $L^2((a,b);dx)$, given by
\begin{align}
& -\Delta_{min,(a,b)} u = -u'',  \no \\
& \;u \in \dom(-\Delta_{min,(a,b)}) 
=\big\{v \in L^2((a,b);dx) \,\big|\, v, v' \in AC([a,b]);    \lb{10.1} \\ 
& \hspace*{1.7cm} v(a)=v'(a)=v(b)=v'(b)=0; \, 
v'' \in L^2((a,b);dx)\big\},     \no
\end{align}
where $AC([a,b])$ denotes the set of absolutely continuous functions on $[a,b]$. Evidently, 
\begin{equation}
-\Delta_{min,(a,b)} = \ol{- \f{d^2}{dx^2}\bigg|_{C_0^\infty((a,b))}} \, , 
\end{equation}
and one can show that 
\begin{equation}
-\Delta_{min,(a,b)} \geq [\pi/(b-a)]^2 I_{L^2((a,b);dx)}. 
\end{equation}
In addition, one infers that 
\begin{equation}
(-\Delta_{min,(a,b)})^* = -\Delta_{max,(a,b)},
\end{equation}
where 
\begin{align} \lb{10.5}
& -\Delta_{max,(a,b)} u = -u'',   \no \\ 
& \; u \in \dom(-\Delta_{max,(a,b)}) = \big\{v \in L^2((a,b);dx) \,\big|\, v, v' \in AC([a,b]);  \, 
v'' \in L^2((a,b);dx)\big\}.   
\end{align}
In particular,
\begin{equation}
{\rm def} (-\Delta_{min,(a,b)}) = (2,2) \, \text{ and } \, 
\ker ((-\Delta_{min,(a,b)})^*) = {\rm lin. \, span}\{1, x\}. 
\end{equation}

The Friedrichs (equivalently, the Dirichlet) extension $-\Delta_{D,(a,b)}$ of 
$-\Delta_{min,(a,b)}$ is then given by 
\begin{align}
& -\Delta_{D,(a,b)} u = -u'', \no \\
& \; u \in \dom(-\Delta_{D,(a,b)}) 
=\big\{v \in L^2((a,b);dx) \,\big|\, v, v' \in AC([a,b]);    \lb{10.7} \\ 
& \hspace*{3.85cm} v(a)=v(b)=0; \, v'' \in L^2((a,b);dx)\big\}.    \no 
\end{align}
In addition,
\begin{equation}
\sigma(-\Delta_{D,(a,b)}) = \{j^2 \pi^2 (b-a)^{-2}\}_{j\in\bbN}, 
\end{equation}
and
\begin{align} 
\dom \big((-\Delta_{D,(a,b)})^{1/2}\big) 
= \big\{v \in L^2((a,b);dx) \,\big|\, v \in AC([a,b]); \,  
v(a)=v(b)=0;  \,   
v' \in L^2((a,b);dx)\big\}.
\end{align}
By \eqref{SK}, 
\begin{equation}
\dom(-\Delta_{K,(a,b)}) = \dom(-\Delta_{min,(a,b)}) \dotplus \ker((-\Delta_{min,(a,b)})^*),
\end{equation} 
and hence any $u \in \dom(-\Delta_{K,(a,b)})$ is of the type 
\begin{align}
& u = f + \eta, \quad f \in \dom(-\Delta_{min,(a,b)}), 
\quad \eta (x) = u(a) + [u(b)-u(a)] \bigg(\f{x-a}{b-a}\bigg),   
\; x \in (a,b), 
\end{align}
in particular, $f(a)=f'(a)=f(b)=f'(b)=0$. Thus, the Krein--von Neumann extension 
$-\Delta_{K,(a,b)}$ of $-\Delta_{min,(a,b)}$ is given by 
\begin{align}
& -\Delta_{K,(a,b)} u = -u'', \no \\
& \; u \in \dom(-\Delta_{K,(a,b)}) 
=\big\{v \in L^2((a,b);dx) \,\big|\, v, v' \in AC([a,b]);    \lb{10.9} \\ 
& \hspace*{8mm} v'(a)=v'(b)=[v(b)-v(a)]/(b-a); \, v'' \in L^2((a,b);dx)\big\}.  \no 
\end{align}
Using the characterization of all self-adjoint extensions of general Sturm--Liouville operators 
in \cite[Theorem 13.14]{We03}, one can also directly verify that 
$-\Delta_{K,(a,b)}$ as given by \eqref{10.9} is a self-adjoint extension of 
$-\Delta_{min,(a,b)}$. 

In connection with \eqref{10.1}, \eqref{10.5}, \eqref{10.7}, and \eqref{10.9},  we also 
note that the well-known fact that 
\begin{equation}
v, v'' \in  L^2((a,b);dx) \, \text{ implies } \, v' \in  L^2((a,b);dx).   \lb{10.10} 
\end{equation}

Utilizing \eqref{10.10}, we briefly consider the quadratic form associated with  
the Krein Laplacian $-\Delta_{K,(a,b)}$. By \eqref{SKform1} and \eqref{SKform2}, 
one infers,
\begin{align}
&\dom\big((-\Delta_{K,(a,b)})^{1/2}\big) = \dom\big((-\Delta_{D,(a,b)})^{1/2}\big) 
\dotplus \ker ((-\Delta_{min,(a,b)})^*),   \lb{SKformab1}  \\
&\big\|(-\Delta_{K,(a,b)})^{1/2}(u+g)\big\|_{L^2((a,b);dx)}^2 
=\big\|(-\Delta_{D,(a,b)})^{1/2} u\big\|_{L^2((a,b);dx)}^2  \no \\
& \quad = ((u+g)',(u+g)')_{L^2((a,b);dx)} - [\ol{g(b)}g'(b) - \ol{g(a)} g'(a)]   \no \\
& \quad = ((u+g)',(u+g)')_{L^2((a,b);dx)} - |[u(b) + g(b)] - [u(a) + g(a)]|^2/(b-a),  \no \\
& \hspace*{3cm} u \in \dom\big((-\Delta_{D,(a,b)})^{1/2}\big), \; 
g \in \ker ((-\Delta_{min,(a,b)})^*).    \lb{SKformab2}
\end{align}

Finally, we turn to the spectrum of $-\Delta_{K,(a,b)}$. The boundary conditions in 
\eqref{10.9} lead to two kinds of (nonnormalized) eigenfunctions and eigenvalue equations 
\begin{align}
\begin{split}
& \psi(k,x) = \cos(k(x-[(a+b)/2])), \quad 
k \sin(k(b-a)/2) = 0,  \\  
& k_{K,(a,b),j} = (j+1)\pi/(b-a), \;  j=-1, 1, 3, 5, \dots,  
\end{split}
\end{align}
and
\begin{align}
& \phi(k,x) = \sin(k(x-[(a+b)/2])), \quad 
k(b-a)/2 = \tan(k(b-a)/2) , \no \\ 
& k_{K,(a,b),0} =0, \;  j\pi < k_{K,(a,b),j} < (j+1)\pi, \; j=2, 4, 6, 8, \dots,  \\ 
& \lim_{\ell\to\infty} [k_{K,(a,b),2\ell} - ((2\ell +1) \pi/(b-a))] =0.  \no 
\end{align}
The associated eigenvalues of $-\Delta_{K,(a,b)}$ are thus given by 
\begin{equation}
\sigma(-\Delta_{K,(a,b)}) = \{0\} \cup \{k_{K,(a,b),j}^2\}_{j\in\bbN},  
\end{equation}
where the eigenvalue $0$ of $-\Delta_{K,(a,b)}$ is of multiplicity two, but the remaining nonzero eigenvalues of $-\Delta_{K,(a,b)}$ are all simple. 

\subsection{The Case of a Bounded Interval $(a,b)$, $-\infty < a < b < \infty$, $0 \leq V \in L^1((a,b); dx)$.}

The general case with a nonvanishing potential $0 \leq V \in L^1((a,b); dx)$ has very recently been worked out in \cite{CGNZ12}. We briefly summarize these findings next. 

Suppose $\tau = - \f{d^2}{dx^2} + V(x)$, $x \in (a,b),$ and $H_{\text{min}, (a,b)}$, defined by  
\begin{align}
& H_{\min, (a,b)} u= - u'' + V u,     \no   \\
& \, u \in\dom(H_{\min, (a,b)})=\big\{v \in L^2((a,b);dx)\,\big|\, v, \, v'\in \AC({[a,b]});  \lb{3.0b}  \\
& \hspace*{3.7cm} v(a)=v^{\p}(a)=v(b)=v^{\p}(b)=0; \,
[- v'' + V v] \in L^2((a,b);dx)\big\},   \no
\end{align}
is strictly positive in the sense that there exists an $\varepsilon>0$ for which
\begin{equation}\lb{3.94}
(u,H_{\text{min, (a,b)}} u)_{L^2((a,b); dx)} \geq \varepsilon \|u\|_{L^2((a,b); dx)}^2, \quad u\in \dom(H_{\text{min, (a,b)}}).
\end{equation}
Since the deficiency indices of $H_{\text{min, (a,b)}}$ are $(2,2)$, the assumption \eqref{3.94} implies that
\begin{equation}\lb{3.95}
\dim(\ker(H_{\text{min, (a,b)}}^{\ast}))=2.
\end{equation}
As a basis for $\ker(H_{\text{min, (a,b)}}^{\ast})$, we choose $\{u_1(\cdot),u_2(\cdot)\}$, where 
$u_1(\cdot)$ and $u_2(\cdot)$ are real-valued and satisfy 
\begin{equation}\lb{3.8}
u_1(a)=0, \quad u_1(b)=1,  \quad 
u_2(a)=1, \quad u_2(b)=0.  
\end{equation}

The Krein--von Neumann extension $H_{\text{K}, (a,b)}$ of $H_{\text{min, (a,b)}}$ in $L^2((a,b); dx)$ is defined as the restriction of $H_{\text{min}, (a,b)}^{\ast}$ with domain
\begin{equation}\lb{3.96}
\dom(H_{\text{K}, (a,b)})=\dom(H_{\text{min}, (a,b)}) \dotplus \ker(H_{\text{min}, (a,b)}^{\ast}).
\end{equation}
Since $H_{\text{K}, (a,b)}$ is a self-adjoint extension of $H_{\text{min}, (a,b)}$, functions in 
$\dom(H_{\text{K}, (a,b)})$ must satisfy certain boundary conditions; we now provide a 
characterization of these boundary conditions.  Let $u\in \dom(H_{\text{K}, (a,b)})$; by \eqref{3.96} there exist $f\in \dom(H_{\text{min}, (a,b)})$ and $\eta\in \ker(H_{\text{min}, (a,b)}^{\ast})$ with
\begin{equation}\lb{3.97}
u(x)=f(x)+\eta(x), \quad x\in [a,b].
\end{equation}
Since $f\in \dom(H_{\text{min}, (a,b)})$,
\begin{equation}\lb{3.98}
f(a)=f^{\p}(a)=f(b)=f^{\p}(b)=0,
\end{equation}
and as a result,
\begin{equation}\lb{3.100}
u(a)=\eta(a),  \quad  u(b)=\eta(b).
\end{equation}
Since $\eta\in \ker(H_{\text{min}, (a,b)}^{\ast})$, we write (cf.\ \eqref{3.8})
\begin{equation}\lb{3.101}
\eta(x)=c_1u_1(0,x)+c_2u_2(0,x), \quad x\in [a,b],
\end{equation}
for appropriate scalars $c_1,c_2\in \bbC$.  By separately evaluating \eqref{3.101} at $x=a$ 
and $x=b$, one infers from \eqref{3.8} that
\begin{equation}\lb{3.103}
\eta(a)=c_2,   \quad   \eta(b)=c_1.
\end{equation}
Comparing \eqref{3.103} and \eqref{3.100} allows one to write \eqref{3.101} as
\begin{equation}\lb{3.104}
\eta(x)=u(b)u_1(x)+u(a)u_2(x), \quad x\in [a,b].
\end{equation}
Finally, \eqref{3.97} and \eqref{3.104} imply
\begin{equation}\lb{3.105}
u(x)=f(x)+u(b)u_1(x) + u(a)u_2(x), \quad x\in [a,b],
\end{equation}
and as a result,
\begin{equation}\lb{3.106}
u^{\p}(x)=f^{\p}(x)+u(b)u_1^{\p}(x) + u(a)u_2^{\p}(x), \quad x\in [a,b].
\end{equation}
Evaluating \eqref{3.106} separately at $x=a$ and $x=b$, and using \eqref{3.98}
yields the following boundary conditions for $u$:
\begin{equation}
u^{\p}(a)=u(b)u_1^{\p}(a)+u(a)u_2^{\p}(a),   \quad
u^{\p}(b)=u(b)u_1^{\p}(b)+u(a)u_2^{\p}(b).       \lb{3.108}
\end{equation}
Since $u_1^{\p}(a) \neq 0$ (one recalls that $u_1(a) = 0$), relations \eqref{3.108} can be
recast as
\begin{equation}\lb{3.109}
\begin{pmatrix} u(b) \\ u^{\p}(b)\end{pmatrix} =F_{\text{K}}\begin{pmatrix} u(a) \\ u^{\p}(a)\end{pmatrix},
\end{equation}
where
\begin{equation}\lb{3.110}
F_{\text{K}}=\frac{1}{u_1^{\p}(a)} \begin{pmatrix}
-u_2^{\p}(a) & 1\\
u_1^{\p}(a)u_2^{\p}(b) - u_1^{\p}(b)u_2^{\p}(a) & u_1^{\p}(b)
\end{pmatrix}.
\end{equation}
Then $F_{\text{K}}\in \SL_2(\bbR)$ since \eqref{3.110} implies 
\begin{equation}\lb{3.111}
\det(F_{\text{K}})=-\frac{u_2^{\p}(b)}{u_1^{\p}(a)}=1.
\end{equation}
Thus, $H_{\text{K}, (a,b)}$, the Krein--von Neumann extension of $H_{\text{min},(a,b)}$ explicitly reads  
\begin{align}
&H_{\text{K}, (a,b)} u = - u'' + V u,     \no  \\
& \, u \in \dom(H_{\text{K}, (a,b)})= \bigg\{v\in L^2((a,b);dx) \,\bigg|\, 
v, \, v^{\p}\in \AC([a,b]); \, 
\begin{pmatrix} v(b) \\ v^{\p}(b) \end{pmatrix}=
F_{\text{K}} \begin{pmatrix} v(a) \\ v^{\p}(a) \end{pmatrix};     \lb{3.4} \\
& \hspace*{8.65cm} [- v'' + V v] \in L^2((a,b);dx)\bigg\}.     \no
\end{align}

Taking $V\equiv 0$, one readily verifies that \eqref{3.4} reduces to \eqref{10.9} as in this case, a basis for
$\ker(H_{\text{min}, (a,b)}^{\ast})$ is provided by 
\begin{equation}
u_1^{(0)}(x) = \frac{x-a}{b-a},   \quad
u_2^{(0)}(x) = \frac{b-x}{b-a},\quad x\in [a,b].       \lb{3.115}
\end{equation}
The case $V \equiv 0$ and $-d^2/dx^2$ replaced by $-(d/dx) p (d/x)$, with $p>0$ a.e., 
$p \in L^1_{\loc}((a,b); dx)$, $p^{-1} \in L^1 ((a,b); dx)$, was recently discussed in 
\cite{FN09}. 

\subsection{The Case of the Ball $B_n(0;R)$, $R>0$, in $\bbR^n$, $n\geq 2$, $V=0$.}

In this subsection, we consider in great detail the scenario when
the domain $\Omega$ equals a ball of radius $R>0$ (for convenience, centered at the origin) in $\bbR^n$,  
\begin{equation}
\Om=B_n(0;R)\subset\bbR^n, \quad R>0, \, n\ge 2.    \lb{10.19}
\end{equation}
Since both the domain $B_n(0;R)$ in \eqref{10.19}, as well as the Laplacian $-\Delta$ are invariant under rotations in $\bbR^n$ centered at the origin, we will employ the (angular momentum)  decomposition of $L^2(B_n(0;R); d^nx)$ into the direct sum of tensor products 
\begin{align}
& L^2(B_n(0;R); d^nx) = L^2((0,R); r^{n-1}dr) \otimes L^2(S^{n-1}; d\omega_{n-1}) 
= \bigoplus_{\ell\in\bbN_0} \cH_{n,\ell,(0,R)},   \lb{10.20} \\
& \cH_{n,\ell,(0,R)} = L^2((0,R); r^{n-1}dr) \otimes \cK_{n,\ell}, 
\quad \ell \in \bbN_0, \; n\geq 2,   \lb{10.21}
\end{align}
where $S^{n-1}= \partial B_n(0;1)=\{x\in\bbR^n\,|\, |x|=1\}$ denotes the $(n-1)$-dimensional unit sphere in $\bbR^n$, $d\omega_{n-1}$ represents the surface measure on $S^{n-1}$, $n\geq 2$, and $\cK_{n,\ell}$ denoting the eigenspace of the Laplace--Beltrami operator 
$-\Delta_{S^{n-1}}$ in $L^2(S^{n-1}; d\omega_{n-1})$ corresponding to the $\ell$th eigenvalue $\kappa_{n,\ell}$ of $-\Delta_{S^{n-1}}$ counting multiplicity,
\begin{align}
\begin{split} 
& \kappa_{n,\ell} = \ell(\ell + n-2), \\ 
& \dim(\cK_{n,\ell}) 
= \f{(2\ell+n-2)\Gamma(\ell+n-2)}{\Gamma(\ell+1)\Gamma(n-1)} : =d_{n,\ell}, 
\quad \ell\in\bbN_0, \; n\geq 2   \lb{10.22}
\end{split} 
\end{align}
(cf.\ \cite[p.\ 4]{Mu66}). In other words, $\cK_{n,\ell}$ is spanned by the $n$-dimensional spherical harmonics of degree $\ell\in\bbN_0$. For more details in this connection we refer to \cite[App.\ to Sect.\ X.1]{RS75} and \cite[Ch.\ 18]{We03}.

As a result, the minimal Laplacian in $L^2(B_n(0;R);d^n x)$ can be decomposed as follows  
\begin{align}
\begin{split} 
& -\Delta_{min,B_n(0;R)}= \ol{-\Delta|_{C_0^\infty(B_n(0;R))}} 
= \bigoplus_{\ell\in\bbN_0} H_{n,\ell,min}^{(0)} \otimes I_{\cK_{n,\ell}},  \\ 
& \; \dom (-\Delta_{min,B_n(0;R)}) = H^2_0(B_n(0;R)),    \lb{10.23}
\end{split}
\end{align}
where $H_{n,\ell,min}^{(0)}$ in $L^2((0,R); r^{n-1}dr)$ are given by
\begin{equation}
H_{n,\ell,min}^{(0)} = \ol{\bigg(-\f{d^2}{dr^2} - \f{n-1}{r}\f{d}{dr} 
+ \f{\kappa_{n,\ell}}{r^2}\bigg)_{C_0^\infty((0,R))}} \, , \quad \ell \in \bbN_0.
\end{equation}
Using the unitary operator $U_n$ defined by 
\begin{equation}
U_n \colon \begin{cases}   
L^2((0,R); r^{n-1}dr) \to L^2((0,R); dr),  \\
\hspace*{2.55cm} \phi  \mapsto (U_n \phi)(r) = r^{(n-1)/2} \phi(r), 
\end{cases}
\end{equation} 
it will also be convenient to consider the unitary transformation of $H_{n,\ell,min}^{(0)}$ given by 
\begin{equation}
h_{n,\ell,min}^{(0)} = U_n H_{n,\ell,min}^{(0)} U_n^{-1}, \quad \ell \in \bbN_0, 
\end{equation} 
where 
\begin{align}
& h_{n,0,min}^{(0)} = -\f{d^2}{dr^2} + \f{(n-1)(n-3)}{4 r^2}, \quad 0<r<R,  \no \\
& \dom\big(h_{n,0,min}^{(0)}\big) = \big\{f \in L^2((0,R); dr) \,\big|\, f, f' \in AC([\varepsilon,R]) \, 
\text{for all $\varepsilon>0$};   \no \\  
& \hspace*{6.1cm} f(R_-)=f'(R_-)=0, \,  f_0=0;    \\
& \hspace*{3.1cm}   (-f'' +[(n-1)(n-3)/4]r^{-2}f) \in L^2((0,R); dr)\big\}  \, 
\text{ for $n=2,3$},   \no \\
& h_{n,\ell,min}^{(0)} = -\f{d^2}{dr^2} + \f{4 \kappa_{n,\ell} + (n-1)(n-3)}{4 r^2},  
\quad 0<r<R, \no \\
& \dom\big(h_{n,\ell,min}^{(0)}\big) 
= \big\{f \in L^2((0,R); dr) \,|\, f, f' \in AC([\varepsilon,R]) \, 
\text{for all $\varepsilon>0$};   \no \\ 
& \hspace*{7.3cm} f(R_-)=f'(R_-)=0;    \\ 
& \hspace*{1.9cm} (-f'' +[\kappa_{n,\ell} + ((n-1)(n-3)/4)]r^{-2}f) \in L^2((0,R); dr)\big\}  \no \\
& \hspace*{5.4cm} \text{ for $\ell\in\bbN$, $n\geq 2$ and $\ell=0$, $n\geq 4$.}  \no
\end{align}
In particular, for $\ell\in\bbN$, $n\geq 2$, and $\ell=0$, $n\geq 4$, one obtains 
\begin{align}
\begin{split}
& h_{n,\ell,min}^{(0)} = \ol{\bigg(-\f{d^2}{dr^2} + \f{4 \kappa_{n,\ell} 
+ (n-1)(n-3)}{4 r^2}\bigg)\bigg|_{C_0^\infty((0,R))}} \\ 
& \hspace*{3.2cm} \text{ for $\ell \in \bbN$, $n\geq 2$, and $\ell=0$, $n\geq 4$.}  
\end{split} 
\end{align}
On the other hand, for $n=2,3$, the domain of the closure of 
$h_{n,0,min}^{(0)}\big|_{C_0^\infty((0,R))}$ is strictly contained in that of 
$\dom\big(h_{n,0,min}^{(0)}\big)$, and in this case one obtains for 
\begin{equation}
\hatt h_{n,0, min}^{(0)} = \ol{\bigg(-\f{d^2}{dr^2} + \f{(n-1)(n-3)}{4 r^2}\bigg)\bigg|_{C_0^\infty((0,R))}}, 
\quad n=2,3, 
\end{equation}
that
\begin{align}
& \hatt h_{n,0, min}^{(0)} = -\f{d^2}{dr^2} + \f{(n-1)(n-3)}{4 r^2}, \quad 0<r<R, \no \\
& \dom\big(\hatt h_{n,0, min}^{(0)}\big) = 
\big\{f\in L^2((0,R); dr) \,\big|\, f, f' \in AC([\varepsilon,R]) \, \text{for all $\varepsilon>0$}; 
\no \\ 
& \hspace*{5.35cm} f(R_-)=f'(R_-)=0, \, f_0=f'_0=0; \\ 
& \hspace*{3.3cm} (-f'' +[(n-1)(n-3)/4]r^{-2}f) \in L^2((0,R); dr)\big\}.   \no 
\end{align}
Here we used the abbreviations (cf.\ \cite{BG85} for details)
\begin{align}
\begin{split} 
& f_0 = \begin{cases} \lim_{r\downarrow 0} [-r^{1/2}\ln(r)]^{-1} f(r), & n=2, \\
f(0_+), & n=3, \end{cases} \\
& f'_0 = \begin{cases} \lim_{r\downarrow 0} r^{-1/2} [f(r) + f_0 r^{1/2}\ln(r)], & n=2, \\
f'(0_+), & n=3.  \end{cases}   \lb{10.33}
\end{split} 
\end{align}
We also recall the adjoints of $h_{n,\ell, min}^{(0)}$ which are given by
\begin{align}
& \big(h_{n,0,min}^{(0)}\big)^* = -\f{d^2}{dr^2} + \f{(n-1)(n-3)}{4 r^2}, 
\quad 0<r<R, \no \\
& \dom\big(\big(h_{n,0,min}^{(0)}\big)^*\big) = \big\{f \in L^2((0,R); dr) \,\big|\, f, f' \in 
AC([\varepsilon,R]) \, \text{for all $\varepsilon>0$};  \lb{10.34}  \\
& \hspace*{.4cm}   f_0=0; \, (-f'' +[(n-1)(n-3)/4]r^{-2}f) \in L^2((0,R); dr)\big\}  \, 
\text{ for $n=2,3$},  \no \\
& \big(h_{n,\ell,min}^{(0)}\big)^* = -\f{d^2}{dr^2} + \f{4 \kappa_{n,\ell} + (n-1)(n-3)}{4 r^2},   \quad 0<r<R,   \no \\
& \dom\big(\big(h_{n,\ell,min}^{(0)}\big)^*\big) = \big\{f \in L^2((0,R); dr) \,\big|\, f, f' \in 
AC([\varepsilon,R]) \, 
\text{for all $\varepsilon>0$};     \\ 
& \hspace*{2.35cm} (-f'' +[\kappa_{n,\ell} + ((n-1)(n-3)/4)]r^{-2}f) \in L^2((0,R); dr)\big\}   
 \no \\
& \hspace*{5.85cm} \text{ for $\ell\in\bbN$, $n\geq 2$ and $\ell=0$, $n\geq 4$.}  \no
\end{align}
In particular,
\begin{equation}
h_{n,\ell,max}^{(0)} = \big(h_{n,\ell,min}^{(0)}\big)^*, \quad \ell\in\bbN_0, \; n\geq 2. 
\end{equation}
All self-adjoint extensions of $h_{n,\ell, min}^{(0)}$ are given by the following 
one-parameter families $h_{n,\ell, \alpha_{n,\ell}}^{(0)}$, 
$\alpha_{n,\ell} \in\bbR\cup\{\infty\}$,
\begin{align}
& h_{n,0,\alpha_{n,0}}^{(0)} = -\f{d^2}{dr^2} + \f{(n-1)(n-3)}{4 r^2}, \quad 0<r<R, \no \\
& \dom\big(h_{n,0,\alpha_{n,0}}^{(0)}\big) = \big\{f \in L^2((0,R); dr) \,\big|\, f, f' \in AC([\varepsilon,R]) \, \text{for all $\varepsilon>0$};   \no \\ 
& \hspace*{5.6cm}  f'(R_-)+\alpha_{n,0}f(R_-)=0, \,  f_0=0;    \lb{10.37} \\
& \hspace*{3cm}  (-f'' +[(n-1)(n-3)/4]r^{-2}f) \in L^2((0,R); dr)\big\}  \, 
\text{ for $n=2,3$},   \no \\
& h_{n,\ell,\alpha_{n,\ell}}^{(0)} = -\f{d^2}{dr^2} + \f{4 \kappa_{n,\ell} + (n-1)(n-3)}{4 r^2},   
\quad 0<r<R,  \no \\
& \dom\big(h_{n,\ell,\alpha_{n,\ell}}^{(0)}\big) = \big\{f \in L^2((0,R); dr) \,\big|\, f, f' \in 
AC([\varepsilon,R]) \, \text{for all $\varepsilon>0$};   \no \\
& \hspace*{6.73cm} f'(R_-)+\alpha_{n,\ell}f(R_-)=0;   \lb{10.38}  \\ 
& \hspace*{1.95cm} (-f'' +[\kappa_{n,\ell} + ((n-1)(n-3)/4)]r^{-2}f) \in L^2((0,R); dr)\big\} 
\no \\
& \hspace*{5.5cm} \text{ for $\ell\in\bbN$, $n\geq 2$ and $\ell=0$, $n\geq 4$.}  \no
\end{align}
Here, in obvious notation, the boundary condition for $\alpha_{n,\ell}=\infty$ simply represents the Dirichlet boundary condition $f(R_-)=0$. In particular, the Friedrichs or Dirichlet extension $h_{n,\ell, D}^{(0)}$ of $h_{n,\ell, min}^{(0)}$ is given by 
$h_{n,\ell, \infty}^{(0)}$, that is, by
\begin{align}
& h_{n,0,D}^{(0)} = -\f{d^2}{dr^2} + \f{(n-1)(n-3)}{4 r^2}, \quad 0<r<R,  \no \\
& \dom\big(h_{n,0,D}^{(0)}\big) = \big\{f \in L^2((0,R); dr) \,\big|\, f, f' \in AC([\varepsilon,R]) \, \text{for all $\varepsilon>0$}; \, f(R_-)=0,   \no \\
& \hspace*{.4cm}   f_0=0; \, (-f'' +[(n-1)(n-3)/4]r^{-2}f) \in L^2((0,R); dr)\big\}  \, 
\text{ for $n=2,3$},   \lb{10.39} \\
& h_{n,\ell,D}^{(0)} = -\f{d^2}{dr^2} + \f{4 \kappa_{n,\ell} + (n-1)(n-3)}{4 r^2},   
\quad 0<r<R,  \no \\
& \dom\big(h_{n,\ell,D}^{(0)}\big) = \big\{f \in L^2((0,R); dr) \,\big|\, f, f' \in 
AC([\varepsilon,R]) \, \text{for all $\varepsilon>0$}; \, f(R_-)=0;   \no   \\ 
& \hspace*{3.5cm} (-f'' +[\kappa_{n,\ell} + ((n-1)(n-3)/4)]r^{-2}f) \in L^2((0,R); dr)\big\}  
\no \\
& \hspace*{7.1cm} \text{ for $\ell\in\bbN$, $n\geq 2$ and $\ell=0$, $n\geq 4$.}   \lb{10.40}
\end{align}
To find the boundary condition for the Krein--von Neumann extension $h_{n,\ell,K}^{(0)}$ of $h_{n,\ell,min}^{(0)}$, that is, to find the corresponding boundary condition parameter 
$\alpha_{n,\ell,K}$ in \eqref{10.37}, \eqref{10.38}, we recall \eqref{SK}, that is,
\begin{equation}
\dom\big(h_{n,\ell,K}^{(0)}\big) = \dom\big(h_{n,\ell,min}^{(0)}\big) \dotplus 
\ker\big(\big(h_{n,\ell,min}^{(0)}\big)^*\big). 
\end{equation}
By inspection, the general solution of 
\begin{equation}
\bigg(-\f{d^2}{dr^2} + \f{4 \kappa_{n,\ell} + (n-1)(n-3)}{4 r^2}\bigg) \psi(r) = 0,  
\quad r \in (0,R),   \lb{10.41}
\end{equation}
is given by
\begin{equation}
\psi(r) = A r^{\ell +[(n-1)/2]} + B r^{-\ell - [(n-3)/2]}, \quad A, B \in \bbC, \; r \in (0,R).   
\lb{10.42}
\end{equation}
However, for $\ell\geq 1$, $n \geq 2$ and for $\ell=0$, $n\geq 4$, the requirement 
$\psi \in L^2((0,R); dr)$ requires $B=0$ in \eqref{10.42}. Similarly, also the requirement 
$\psi_0=0$ (cf.\ \eqref{10.34}) for $\ell=0$, $n=2,3$, enforces $B=0$ in \eqref{10.42}.  

Hence, any $u \in \dom\big(h_{n,\ell,K}^{(0)}\big)$ is of the type 
\begin{equation}
u = f + \eta, \quad f \in \dom\big(h_{n,\ell,min}^{(0)}\big), 
\quad \eta (r) = u(R_-)r^{\ell +[(n-1)/2]}, \; r \in [0,R), 
\end{equation}
in particular, $f(R_-)=f'(R_-)=0$. Denoting by $\alpha_{n,\ell,K}$ the boundary condition parameter for $h_{n,\ell,K}^{(0)}$ one thus computes
\begin{equation}
-\alpha_{n,\ell,K} = \f{u'(R_-)}{u(R_-)} = \f{\eta'(R_-)}{\eta(R_-)} = [\ell + ((n-1)/2)]/R.
\end{equation}
Thus, the Krein--von Neumann extension $h_{n,\ell,K}^{(0)}$ of $h_{n,\ell,min}^{(0)}$ is given by 
\begin{align}
& h_{n,0,K}^{(0)} = -\f{d^2}{dr^2} + \f{(n-1)(n-3)}{4 r^2},  \quad 0<r<R,  \no \\
& \dom\big(h_{n,0,K}^{(0)}\big) = \big\{f \in L^2((0,R); dr) \,\big|\, f, f' \in AC([\varepsilon,R]) \, \text{for all $\varepsilon>0$}; \no \\ 
& \hspace*{3.6cm}  f'(R_-)- [(n-1)/2]R^{-1} f(R_-)=0, \,  f_0=0;    \lb{10.46} \\
& \hspace*{3cm}   (-f'' +[(n-1)(n-3)/4]r^{-2}f) \in L^2((0,R); dr)\big\}  \, 
\text{ for $n=2,3$},   \no \\
& h_{n,\ell,K}^{(0)} = -\f{d^2}{dr^2} + \f{4 \kappa_{n,\ell} + (n-1)(n-3)}{4 r^2},   
\quad 0<r<R,  \no \\
& \dom\big(h_{n,\ell,K}^{(0)}\big) = \big\{f \in L^2((0,R); dr) \,\big|\, f, f' \in 
AC([\varepsilon,R]) \, \text{for all $\varepsilon>0$};   \no \\ 
& \hspace*{3.94cm}  f'(R_-)-[\ell +((n-1)/2)] R^{-1} f(R_-)=0;    \lb{10.47}  \\ 
& \hspace*{1.65cm} (-f'' +[\kappa_{n,\ell} + ((n-1)(n-3)/4)]r^{-2}f) \in L^2((0,R); dr)\big\} 
\no  \\
& \hspace*{5.15cm} \text{ for $\ell\in\bbN$, $n\geq 2$ and $\ell=0$, $n\geq 4$.}  \no
\end{align}

Next we  briefly turn to the eigenvalues of $h_{n,\ell,D}^{(0)}$ and $h_{n,\ell,K}^{(0)}$. 
In analogy to \eqref{10.41}, the solution $\psi$ of 
\begin{equation}
\bigg(-\f{d^2}{dr^2} + \f{4 \kappa_{n,\ell} + (n-1)(n-3)}{4 r^2} -z \bigg) \psi(r,z) = 0,  
\quad r \in (0,R), 
\lb{10.48}
\end{equation}
satisfying the condition $\psi(\cdot,z) \in L^2((0,R); dr)$ for $\ell=0$, $n\geq 4$ and 
$\psi_0(z)=0$ (cf.\ \eqref{10.34}) for $\ell=0$, $n=2,3$, yields
\begin{equation}
\psi(r,z) = A r^{1/2} J_{l+[(n-2)/2]}(z^{1/2} r), \quad A \in \bbC, \; r \in (0,R),   
\lb{10.49}
\end{equation}
Here $J_{\nu}(\cdot)$ denotes the Bessel function of the first kind and order $\nu$ 
(cf.\ \cite[Sect.\ 9.1]{AS72}).
Thus, by the boundary condition $f(R_-)=0$ in \eqref{10.39}, \eqref{10.40}, the eigenvalues of the Dirichlet extension $h_{n,\ell,D}^{(0)}$ are determined by the equation 
$\psi(R_-,z)=0$, and hence by 
\begin{equation}
J_{l+[(n-2)/2]}(z^{1/2}R) = 0. 
\end{equation}
Following \cite[Sect.\ 9.5]{AS72}, we denote the zeros of $J_{\nu}(\cdot)$ by $j_{\nu,k}$, 
$k\in\bbN$, and hence obtain for the spectrum of $h_{n,\ell,F}^{(0)}$,
\begin{equation}
\sigma\big(h_{n,\ell,D}^{(0)}\big) 
= \big\{\lambda^{(0)}_{n,\ell,D,k}\big\}_{k\in\bbN}  
= \big\{ j_{\ell+[(n-2)/2],k}^2R^{-2}\big\}_{k\in\bbN}, \quad \ell \in \bbN_0, 
\; n\geq 2.   \lb{10.51}
\end{equation}
Each eigenvalue of of $h_{n,\ell,D}^{(0)}$ is simple.

Similarly, by the boundary condition $f'(R_-) - [\ell +((n-1)/2)]R^{-1}f(R_-)=0$ in 
\eqref{10.46}, \eqref{10.47}, the eigenvalues of the Krein--von Neumann extension 
$h_{n,\ell,K}^{(0)}$ are determined by the equation
\begin{equation}
\psi'(R,z) - [\ell+ ((n-1)/2)] \psi(R,z) = - A z^{1/2} R^{1/2} J_{\ell+(n/2)}(z^{1/2}R)=0
\end{equation}
(cf.\ \cite[eq.\ (9.1.27)]{AS72}), and hence by 
\begin{equation}
z^{1/2} J_{\ell+(n/2)}(z^{1/2}R) = 0. 
\end{equation}
Thus, one obtains for the spectrum of $h_{n,\ell,K}^{(0)}$,
\begin{equation}
\sigma\big(h_{n,\ell,K}^{(0)}\big) 
= \{0\} \cup \big\{\lambda^{(0)}_{n,\ell,K,k}\big\}_{k\in\bbN}  
= \{0\} \cup \big\{ j_{\ell+(n/2),k}^2R^{-2}\big\}_{k\in\bbN}, \quad \ell \in \bbN_0, \; n\geq 2. 
\lb{10.54}
\end{equation}
Again, each eigenvalue of $h_{n,\ell,K}^{(0)}$ is simple, and 
$\eta (r) = Cr^{\ell +[(n-1)/2]}$, $C\in\bbC$, represents the (unnormalized) eigenfunction 
of $h_{n,\ell,K}^{(0)}$ corresponding to the eigenvalue $0$.

Combining Propositions \ref{p2.2a}--\ref{p2.4}, one then obtains 
\begin{align}
& -\Delta_{max,B_n(0;R)} = (-\Delta_{min,B_n(0;R)})^* 
= \bigoplus_{\ell\in\bbN_0} \big(H_{n,\ell,min}^{(0)}\big)^* \otimes I_{\cK_{n,\ell}}, \\
& -\Delta_{D,B_n(0;R)}  
= \bigoplus_{\ell\in\bbN_0} H_{n,\ell,D}^{(0)} \otimes I_{\cK_{n,\ell}}, \\
& -\Delta_{K,B_n(0;R)}  
= \bigoplus_{\ell\in\bbN_0} H_{n,\ell,K}^{(0)} \otimes I_{\cK_{n,\ell}}, 
\end{align}
where (cf.\ \eqref{10.23})
\begin{align}
& H_{n,\ell,max}^{(0)} = \big(H_{n,\ell,min}^{(0)}\big)^* 
= U_n^{-1} \big(h_{n,\ell,min}^{(0)}\big)^* U_n, \quad \ell \in \bbN_0, \\
& H_{n,\ell,D}^{(0)} = U_n^{-1} h_{n,\ell,D}^{(0)} U_n, \quad \ell \in \bbN_0, \\ 
& H_{n,\ell,K}^{(0)} = U_n^{-1} h_{n,\ell,K}^{(0)} U_n, \quad \ell \in \bbN_0.
\end{align}
Consequently,
\begin{align}
& \sigma( -\Delta_{D,B_n(0;R)}) 
= \big\{\lambda^{(0)}_{n,\ell,D,k}\big\}_{\ell\in\bbN_0, k\in\bbN}  
= \big\{ j_{\ell+[(n-2)/2],k}^2R^{-2}\big\}_{\ell\in\bbN_0, k\in\bbN},  \\
& \sigma_{\rm ess}( -\Delta_{D,B_n(0;R)}) = \emptyset,  \\
& \sigma( -\Delta_{K,B_n(0;R)}) 
= \{0\} \cup \big\{\lambda^{(0)}_{n,\ell,K,k}\big\}_{\ell\in\bbN_0, k\in\bbN}  
= \{0\} \cup \big\{  j_{\ell+(n/2),k}^2R^{-2}\big\}_{\ell\in\bbN_0, k\in\bbN},  \\ 
& \dim(\ker( -\Delta_{K,B_n(0;R)})) = \infty, \quad 
\sigma_{\rm ess}( -\Delta_{K,B_n(0;R)}) = \{0\}.  
\end{align}
By \eqref{10.22}, each eigenvalue $\lambda^{(0)}_{n,\ell,D,k}$, $k\in\bbN$, of 
$-\Delta_{D,B_n(0;R)}$ has multiplicity $d_{n,\ell}$ and similarly, again by \eqref{10.22}, each eigenvalue $\lambda^{(0)}_{n,\ell,K,k}$, $k\in\bbN$, of $ -\Delta_{K,B_n(0;R)}$ has multiplicity $d_{n,\ell}$. 

Finally, we briefly turn to the Weyl asymptotics for the eigenvalue counting function  
\eqref{mam-44} associated with the Krein Laplacian $-\Delta_{K,B_n(0;R)}$ for the ball 
$B_n(0;R)$, $R>0$, in $\bbR^n$, $n\geq 2$. We will discuss a direct approach to the Weyl asymptotics that is independent of the general treatment presented in Section \ref{s12}. Due to the smooth nature of the ball, we will obtain an improvement in the remainder term of the Weyl asymptotics of the Krein Laplacian. 

First we recall the well-known fact that in the case of the Dirichlet Laplacian associated with the ball $B_n(0;R)$,  
\begin{align}\label{10.65} 
N^{(0)}_{D,B_n(0;R)}(\lambda) &= (2\pi)^{-n}v_n^2 R^n\lambda^{n/2} 
- (2\pi)^{-(n-1)} v_{n-1} (n/4) v_n R^{n-1} \lambda^{(n-1)/2}   \no \\
& \quad + O\big(\lambda^{(n-2)/2}\big) \, \mbox{ as }\, \lambda\to\infty,  
\end{align}
with $v_n=\pi^{n/2}/ \Gamma((n/2)+1)$ the volume of the unit ball in $\bbR^n$ (and 
$n v_n$ representing the surface area of the unit ball in $\bbR^n$). 

\begin{proposition}\label{p10.1}
The strictly positive eigenvalues of the Krein Laplacian associated with the ball of radius $R>0$, 
$B_n(0;R)\subset \bbR^n$, $R>0$, $n\geq 2$, satisfy the following Weyl-type 
eigenvalue asymptotics,
\begin{align}\label{10.66}
N^{(0)}_{K,B_n(0;R)}(\lambda)&=(2\pi)^{-n}v_n^2 R^n \lambda^{n/2} 
- (2\pi)^{-(n-1)} v_{n-1} [(n/4) v_n + v_{n-1}] R^{n-1} \lambda^{(n-1)/2}  \no \\
& \quad + O\big(\lambda^{(n-2)/2}\big) \, \mbox{ as }\, \lambda\to\infty.   
\end{align}
\end{proposition}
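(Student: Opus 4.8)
The plan is to exploit the explicit decomposition of $-\Delta_{K,B_n(0;R)}$ into the angular momentum channels established just above, namely $-\Delta_{K,B_n(0;R)} = \bigoplus_{\ell\in\bbN_0} H_{n,\ell,K}^{(0)} \otimes I_{\cK_{n,\ell}}$, together with the identification \eqref{10.54} of the nonzero spectrum of each channel, $\sigma\big(h_{n,\ell,K}^{(0)}\big)\backslash\{0\} = \big\{ j_{\ell+(n/2),k}^2R^{-2}\big\}_{k\in\bbN}$, each eigenvalue simple in the radial problem but carrying multiplicity $d_{n,\ell}$ in the full space (cf.\ \eqref{10.22}). Therefore
\begin{equation}
N^{(0)}_{K,B_n(0;R)}(\lambda) = \sum_{\ell\in\bbN_0} d_{n,\ell}\,
\#\big\{k\in\bbN \,\big|\, j_{\ell+(n/2),k}^2 \leq R^2\lambda\big\},
\end{equation}
and the whole problem reduces to an asymptotic count of zeros of Bessel functions with a shift of $n/2$ in the order, weighted by $d_{n,\ell}$. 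First I would record the analogous exact formula for the Dirichlet Laplacian, where by \eqref{10.51} the same sum appears but with order $\ell + (n-2)/2$ in place of $\ell + n/2$; the known two-term asymptotics \eqref{10.65} for $N^{(0)}_{D,B_n(0;R)}(\lambda)$ is then available as a benchmark.

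The key analytic input is a uniform (in $\ell$) asymptotic description of the Bessel zeros $j_{\nu,k}$ as functions of $\nu$ and $k$ — the classical McMahon-type expansion for $k$ large relative to $\nu$, and the uniform Olver-type asymptotics valid as $\nu\to\infty$ — which together allow one to count, for each fixed $\lambda$, the pairs $(\ell,k)$ with $j_{\ell+(n/2),k}^2 \leq R^2\lambda$. The crucial observation is that passing from Dirichlet to Krein shifts every Bessel order by exactly one unit (from $\ell + (n-2)/2$ to $\ell + (n-2)/2 + 1$). Since $j_{\nu,k}$ is increasing in $\nu$ with $\partial j_{\nu,k}/\partial\nu \to 1$ in the regime that governs the boundary layer of the counting function, a unit increase in order effectively decreases the count in each channel by an amount that, after summation against the weights $d_{n,\ell}$, contributes precisely at the order $\lambda^{(n-1)/2}$. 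The plan is thus to write
\begin{equation}
N^{(0)}_{K,B_n(0;R)}(\lambda) - N^{(0)}_{D,B_n(0;R)}(\lambda)
= \sum_{\ell\in\bbN_0} d_{n,\ell}\Big[\#\big\{k \,\big|\, j_{\ell+(n/2),k}^2 \leq R^2\lambda\big\}
- \#\big\{k \,\big|\, j_{\ell+(n-2)/2,k}^2 \leq R^2\lambda\big\}\Big],
\end{equation}
estimate each bracket using the mean value theorem applied to $\nu\mapsto j_{\nu,k}$ over the interval $[\ell+(n-2)/2,\ell+n/2]$ of length one, control the resulting sum via the density of Bessel zeros and the asymptotic $d_{n,\ell} \sim 2\ell^{n-2}/\Gamma(n-1)$, and show the difference equals $-(2\pi)^{-(n-1)} v_{n-1}^2 R^{n-1}\lambda^{(n-1)/2} + O\big(\lambda^{(n-2)/2}\big)$. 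Adding \eqref{10.65} then yields \eqref{10.66}, since $(n/4)v_n + v_{n-1}$ is exactly the Dirichlet surface coefficient augmented by $v_{n-1}$.

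The main obstacle I anticipate is making the channel-by-channel Bessel-zero counting \emph{uniform} across the three asymptotic regimes — $\ell$ small, $\ell$ comparable to $\sqrt{R^2\lambda}$ (the turning-point regime where Olver's uniform asymptotics are needed), and $\ell$ large (where the channel contributes nothing because $j_{\ell+(n/2),1}^2 > R^2\lambda$) — and then summing the per-channel remainder errors without losing the $O\big(\lambda^{(n-2)/2}\big)$ control. The delicate point is the turning-point layer $\ell \approx R\sqrt\lambda$: there the spacing of zeros and the $\nu$-derivative of $j_{\nu,k}$ both degenerate, and one must verify that the number of channels for which the unit shift in order actually changes the count is $O\big(\lambda^{(n-1)/2}\big)$ with the stated coefficient, with the correction from the transitional channels absorbed into the error term. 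A cleaner alternative, which I would pursue in parallel, is to avoid direct zero-counting altogether: use the explicit boundary conditions \eqref{10.39}--\eqref{10.40} and \eqref{10.46}--\eqref{10.47} to write the resolvents (or heat kernels) of $h_{n,\ell,D}^{(0)}$ and $h_{n,\ell,K}^{(0)}$ in closed form via Bessel functions, form the difference of the corresponding traces $\sum_\ell d_{n,\ell}\,\tr\big(e^{-t h_{n,\ell,K}^{(0)}} - e^{-t h_{n,\ell,D}^{(0)}}\big)$, extract its small-$t$ expansion (a boundary term of order $t^{-(n-1)/2}$ whose coefficient is computable from the ratio $J_{\ell+n/2}/J_{\ell+(n-2)/2}$ of Bessel functions at the turning point), and invoke a Tauberian theorem of Karamata type to transfer the heat-trace expansion to the eigenvalue counting asymptotics. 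Either route leads to the same conclusion, and the consistency check against the general Theorem \ref{T-KrWe} (which already gives the leading term $(2\pi)^{-n}v_n|\Omega|\lambda^{n/2} = (2\pi)^{-n}v_n^2 R^n\lambda^{n/2}$) provides confidence in the computation.
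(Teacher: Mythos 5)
Your starting point (the exact formulas $N^{(0)}_{X,B_n(0;R)}(\lambda)=\sum_{\ell\in\bbN_0} d_{n,\ell}\,\#\{k\,|\,j_{\nu_X+\ell,k}^2\le R^2\lambda\}$ with $\nu_D=(n-2)/2$, $\nu_K=n/2$) coincides with the paper's, and your heuristic — that the unit shift in Bessel order is responsible for the extra $v_{n-1}^2$ in the second coefficient — is correct. But the route you propose for converting that heuristic into a proof is left open exactly where it is hardest. A channel-by-channel mean-value estimate on $\nu\mapsto j_{\nu,k}$, summed against $d_{n,\ell}\sim 2\ell^{n-2}/\Gamma(n-1)$, requires uniform control of $\partial j_{\nu,k}/\partial\nu$ and of the zero spacing through the turning-point layer $\ell\approx R\sqrt{\lambda}$, where $O(\lambda^{(n-1)/2})$ channels each contribute $O(1)$ and the error budget is only $O(\lambda^{(n-2)/2})$; you name this obstacle but do not resolve it. Your fallback via heat traces and a Karamata-type Tauberian theorem does not work as stated: Tauberian theorems of that type recover only the leading term of the counting function from the heat-trace expansion, so a two-term small-$t$ expansion of $\sum_\ell d_{n,\ell}\tr\big(e^{-th_{n,\ell,K}^{(0)}}-e^{-th_{n,\ell,D}^{(0)}}\big)$ does not by itself yield the second term in \eqref{10.66}.

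The paper avoids all Bessel asymptotics by a purely combinatorial dimensional reduction. Writing $\cN_\nu(\lambda)$ for the number of $k$ with $j_{\nu,k}^2R^{-2}\le\lambda$ and using the monotonicity of $j_{\nu,k}$ in $\nu$ together with the identity $d_{n,\ell}=d_{n-1,\ell}+d_{n,\ell-1}$ from \eqref{10.22}, one reindexes the sum for $N^{(0)}_{D,B_n(0;R)}$ so that the order shift by one unit turns the ``excess'' into the counting function of a ball in dimension $n-1$; this gives the sandwich
\begin{equation*}
N^{(0)}_{K,B_{n-1}(0;R)}(\lambda)\;\le\;N^{(0)}_{D,B_n(0;R)}(\lambda)-N^{(0)}_{K,B_n(0;R)}(\lambda)\;\le\;N^{(0)}_{D,B_{n-1}(0;R)}(\lambda)
\end{equation*}
(cf.\ \eqref{10.74a}, \eqref{10.76a}, \eqref{10.77A}). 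Since both bounds have the same leading asymptotics $(2\pi)^{-(n-1)}v_{n-1}^2R^{n-1}\lambda^{(n-1)/2}$, the known two-term Dirichlet expansion \eqref{10.65} yields \eqref{10.66} with remainder $\oh(\lambda^{(n-1)/2})$, and one further iteration of the same inequality in dimension $n-1$ (as in \eqref{10.80A}--\eqref{10.81A}) sharpens the remainder to $O(\lambda^{(n-2)/2})$. If you want to salvage your approach, I would recommend replacing the uniform-asymptotics program by this reindexing argument: it is elementary, two lines of algebra on the multiplicities, and it delivers precisely the coefficient $v_{n-1}^2$ you identified heuristically.
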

\begin{proof}
From the outset one observes that 
\begin{equation}
\lambda^{(0)}_{n,\ell,D,k} \le \lambda^{(0)}_{n,\ell,K,k} \le \lambda^{(0)}_{n,\ell,D,k+1}, \quad \ell\in\bbN_0, \;
k\in\bbN,
\end{equation}
implying 
\begin{equation}\label{10.67}
N^{(0)}_{K,B_n(0;R)}(\lambda) \le N^{(0)}_{D,B_n(0;R)}(\lambda), \quad \lambda\in\bbR.
\end{equation}
Next, introducing 
\begin{equation}
\cN_\nu (\lambda) :=\begin{cases} \text{the largest $k\in\bbN$ such that 
$j_{\nu,k}^2 R^{-2} \leq \lambda$}, \\
0, \text{ if no such $k\geq 1$ exists,} \end{cases} \quad \lambda \in\bbR,  
\end{equation}
we note the well-known monotonicity of $j_{\nu,k}$ with respect to $\nu$ 
(cf.\ \cite[Sect.\ 15.6, p.\ 508]{Wa96}), implying that for each $\lambda \in \bbR$ 
(and fixed $R>0$), 
\begin{equation}
\cN_{\nu'} (\lambda) \leq \cN_{\nu} (\lambda) \, \text{ for } \, \nu' \geq \nu \geq 0.
\end{equation}
Then one infers 
\begin{equation}
N^{(0)}_{D,B_n(0;R)}(\lambda) = \sum_{\ell\in\bbN_0} d_{n,\ell} \, \cN_{(n/2)-1+\ell} (\lambda), \quad 
N^{(0)}_{K,B_n(0;R)}(\lambda) = \sum_{\ell\in\bbN_0} d_{n,\ell} \, 
\cN_{(n/2)+\ell} (\lambda).
\end{equation}
Hence, using the fact that 
\begin{equation}
d_{n,\ell} = d_{n-1,\ell} + d_{n,\ell-1} 
\end{equation}
(cf.\ \eqref{10.22}), setting $d_{n, -1} =0$, $n\geq 2$, one computes 
\begin{align}
N^{(0)}_{D,B_n(0;R)}(\lambda) &= \sum_{\ell\in\bbN} d_{n,\ell-1} \, \cN_{(n/2)-1+\ell} (\lambda) 
+ \sum_{\ell\in\bbN_0} d_{n-1,\ell} \, \cN_{(n/2)-1+\ell} (\lambda)  \no \\
& \leq \sum_{\ell\in\bbN_0} d_{n,\ell} \, \cN_{(n/2)+\ell} (\lambda) 
+ \sum_{\ell\in\bbN_0} d_{n-1,\ell} \, \cN_{((n-1)/2)-1+\ell} (\lambda)  \no \\
& = N^{(0)}_{K,B_n(0;R)}(\lambda) + N^{(0)}_{D,B_{n-1}(0;R)}(\lambda),
\end{align}
that is,
\begin{equation}
N^{(0)}_{D,B_n(0;R)}(\lambda) \leq N^{(0)}_{K,B_n(0;R)}(\lambda) 
+ N^{(0)}_{D,B_{n-1}(0;R)}(\lambda).   \lb{10.74a}
\end{equation}
Similarly,
\begin{align}
N^{(0)}_{D,B_n(0;R)}(\lambda) &= \sum_{\ell\in\bbN} d_{n,\ell-1} \, 
\cN_{(n/2)-1+\ell} (\lambda) 
+ \sum_{\ell\in\bbN_0} d_{n-1,\ell} \, \cN_{(n/2)-1+\ell} (\lambda)  \no \\
& \geq \sum_{\ell\in\bbN_0} d_{n,\ell} \, \cN_{(n/2)+\ell} (\lambda) 
+ \sum_{\ell\in\bbN_0} d_{n-1,\ell} \, \cN_{((n-1)/2)+\ell} (\lambda)  \no \\
& = N^{(0)}_{K,B_n(0;R)}(\lambda) + N^{(0)}_{K,B_{n-1}(0;R)}(\lambda), 
\end{align}
that is,
\begin{equation}
N^{(0)}_{D,B_n(0;R)}(\lambda) \geq N^{(0)}_{K,B_n(0;R)}(\lambda) 
+ N^{(0)}_{K,B_{n-1}(0;R)}(\lambda),   \lb{10.76a}
\end{equation}
and hence,
\begin{equation}
N^{(0)}_{K,B_{n-1}(0;R)}(\lambda) \leq \big[N^{(0)}_{D,B_n(0;R)}(\lambda) 
- N^{(0)}_{K,B_n(0;R)}(\lambda)\big] \leq N^{(0)}_{D,B_{n-1}(0;R)}(\lambda).   \lb{10.77A}
\end{equation}
Thus, using 
\begin{equation}
0 \leq \big[N^{(0)}_{D,B_n(0;R)}(\lambda) - N^{(0)}_{K,B_n(0;R)}(\lambda)\big] 
\leq N^{(0)}_{D,B_{n-1}(0;R)}(\lambda) = \Oh\big(\lambda^{(n-1)/2}\big) \, 
\text{ as $\lambda\to\infty$,}
\end{equation}
one first concludes that 
$\big[N^{(0)}_{D,B_n(0;R)}(\lambda) - N^{(0)}_{K,B_n(0;R)}(\lambda)\big] 
= \Oh\big(\lambda^{(n-1)/2}\big)$ as $\lambda\to\infty$, and hence using \eqref{10.65}, 
\begin{equation}
N^{(0)}_{K,B_n(0;R)}(\lambda) 
= (2\pi)^{-n}v_n^2 R^n \lambda^{n/2} + \Oh\big(\lambda^{(n-1)/2}\big)
\, \mbox{ as }\, \lambda\to\infty.    
\end{equation}
This type of reasoning actually yields a bit more: Dividing \eqref{10.77A} by $\lambda^{(n-1)/2}$, and using that both, 
$N^{(0)}_{D,B_{n-1}(0;R)}(\lambda)$ and $N^{(0)}_{K,B_{n-1}(0;R)}(\lambda)$ have the same leading asymptotics $(2\pi)^{-(n-1)}v_{n-1}^2 R^{n-1}\lambda^{(n-1)/2}$ as 
$\lambda \to\infty$, one infers, using \eqref{10.65} again, 
\begin{align}
N^{(0)}_{K,B_n(0;R)}(\lambda) &= N^{(0)}_{D,B_n(0;R)}(\lambda)  
- \big[N^{(0)}_{D,B_n(0;R)}(\lambda) - N^{(0)}_{K,B_n(0;R)}(\lambda)\big]  \no \\
&= N^{(0)}_{D,B_n(0;R)}(\lambda) - (2\pi)^{-(n-1)} v_{n-1}^2 R^{n-1} \lambda^{(n-1)/2} 
+ \oh\big(\lambda^{(n-1)/2}\big)   \no \\
&=(2\pi)^{-n}v_n^2 R^n \lambda^{n/2} 
- (2\pi)^{-(n-1)} v_{n-1} [(n/4) v_n + v_{n-1}] R^{n-1} \lambda^{(n-1)/2}   \no \\
& \quad + \oh\big(\lambda^{(n-1)/2}\big) \, \mbox{ as }\, \lambda\to\infty.  
 \lb{10.79A}
\end{align}
Finally, it is possible to improve the remainder term in \eqref{10.79A} from 
$\oh\big(\lambda^{(n-1)/2}\big)$ to $O\big(\lambda^{(n-2)/2}\big)$ as follows: Replacing 
$n$ by $n-1$ in \eqref{10.74a} yields
\begin{equation}
N^{(0)}_{D,B_{n-1}(0;R)}(\lambda) \leq N^{(0)}_{K,B_{n-1}(0;R)}(\lambda) 
+ N^{(0)}_{D,B_{n-2}(0;R)}(\lambda).   \lb{10.80A}
\end{equation}
Insertion of \eqref{10.80A} into \eqref{10.76a} permits one to eliminate 
$N_{K,B_{n-1}(0;R)}^{(0)}$ as follows:  
\begin{equation}
N^{(0)}_{D,B_n(0;R)}(\lambda) \geq N^{(0)}_{K,B_n(0;R)}(\lambda) 
+ N^{(0)}_{D,B_{n-1}(0;R)}(\lambda) - N^{(0)}_{D,B_{n-2}(0;R)}(\lambda),   \lb{10.81A}
\end{equation}
which implies  
\begin{align}
\begin{split}
& \big[N^{(0)}_{D,B_{n}(0;R)}(\lambda) - N^{(0)}_{D,B_{n-1}(0;R)}(\lambda)\big]  \leq 
N^{(0)}_{K,B_n(0;R)}(\lambda)   \\
& \quad \leq \big[N^{(0)}_{D,B_{n}(0;R)}(\lambda) - N^{(0)}_{D,B_{n-1}(0;R)}(\lambda)\big]+ N^{(0)}_{D,B_{n-2}(0;R)}(\lambda), 
\end{split} 
\end{align}
and hence,
\begin{equation}
0 \leq N^{(0)}_{K,B_n(0;R)}(\lambda) - 
\big[N^{(0)}_{D,B_{n}(0;R)}(\lambda) - N^{(0)}_{D,B_{n-1}(0;R)}(\lambda)\big]  \leq
N^{(0)}_{D,B_{n-2}(0;R)}(\lambda). 
\end{equation}
Thus, $N^{(0)}_{K,B_n(0;R)}(\lambda) - \big[N^{(0)}_{D,B_{n}(0;R)}(\lambda) 
- N^{(0)}_{D,B_{n-1}(0;R)}(\lambda)\big] = \Oh\big(\lambda^{(n-2)/2}\big)$ as 
$\lambda\to \infty$, proving \eqref{10.66}.  
\end{proof}

Due to the smoothness of the domain $B_n(0;R)$, the remainder terms in \eqref{10.66} represent a marked improvement over the general result \eqref{kko-12} for domains 
$\Om$ satisfying Hypothesis \ref{h.VK}. A comparison of the second term in the asymptotic relations \eqref{10.65} and \eqref{10.66} exhibits the difference between Dirichlet and Krein--von Neumann eigenvalues.

\subsection{The Case $\Om=\bbR^n\backslash\{0\}$, $n=2,3$, $V=0$.}   \lb{s10.3}

In this subsection we consider the following minimal operator 
$-\Delta_{min, \bbR^n\backslash\{0\}}$ in $L^2(\bbR^n;d^nx)$, $n=2,3$,
\begin{equation}
-\Delta_{min, \bbR^n\backslash\{0\}}  
=\ol{-\Delta\big|_{C^\infty_0 (\bbR^n\backslash\{0\})}}\geq 0, \quad n=2,3. 
\lb{10.74}
\end{equation}
Then
\begin{align}
\begin{split} 
& H_{F,\bbR^2\backslash\{0\}}=H_{K,\bbR^2\backslash\{0\}} 
=-\Delta,   \\
& \dom(H_{F,\bbR^2\backslash\{0\}})
= \dom(H_{K,\bbR^2\backslash\{0\}})=H^{2}(\bbR^2) \, \text{ if } \, n=2 \lb{10.75}
\end{split} 
\end{align}
is the unique nonnegative self-adjoint  
extension of $-\Delta_{min, \bbR^2\backslash\{0\}}$ in $L^2(\bbR^2;d^2x)$ and 
\begin{align}
\begin{split}
& H_{F,\bbR^3\backslash\{0\}}=H_{D,\bbR^3\backslash\{0\}} =-\Delta,    \\ 
& \dom(H_{F,\bbR^3\backslash\{0\}}) = \dom(H_{D,\bbR^3\backslash\{0\}})
=H^{2}(\bbR^3) \, \text{ if } \, n=3, \lb{10.76} 
\end{split} \\
&H_{K,\bbR^3\backslash\{0\}} = H_{N,\bbR^3\backslash\{0\}}
=U^{-1}h_{0,N,\bbR_+}^{(0)}U \oplus\bigoplus_{\ell\in\bbN} 
U^{-1}h_{\ell,\bbR_+}^{(0)} U \, \text{ if } \, n=3,  \lb{10.77}
\end{align} 
where $H_{D,\bbR^3\backslash\{0\}}$ and $H_{N,\bbR^3\backslash\{0\}}$ denote the Dirichlet and Neumann\footnote{The Neumann extension $H_{N,\bbR^3\backslash\{0\}}$ of 
$-\Delta_{min, \bbR^n\backslash\{0\}}$, associated with a Neumann boundary condition, 
in honor of Carl Gottfried Neumann, should of course not be confused with the Krein--von Neumann 
extension $H_{K,\bbR^3\backslash\{0\}}$ of $-\Delta_{min, \bbR^n\backslash\{0\}}$.} extension of 
$-\Delta_{min, \bbR^n\backslash\{0\}}$ in $L^2(\bbR^3; d^3 x)$, respectively. 
Here we used the angular momentum decomposition (cf.\ also \eqref{10.20}, \eqref{10.21}), 
\begin{align}
& L^2(\bbR^n; d^nx) = L^2((0,\infty); r^{n-1}dr) \otimes L^2(S^{n-1}; d\omega_{n-1}) 
= \bigoplus_{\ell\in\bbN_0} \cH_{n,\ell,(0,\infty)},   \lb{10.77a} \\
& \cH_{n,\ell,(0,\infty)} = L^2((0,\infty); r^{n-1}dr) \otimes \cK_{n,\ell}, 
\quad \ell \in \bbN_0, \; n=2,3.   \lb{10.77b}
\end{align}
Moreover, we abbreviated $\bbR_+=(0,\infty)$ and introduced 
\begin{align}
&h_{0,N,\bbR_+}^{(0)}=-\f{d^2}{dr^2}, \quad r>0,   \no \\
&\dom\big(h_{0,N,\bbR_+}^{(0)}\big)= \big\{f\in L^2((0,\infty);dr)\,\big|\, 
f,f'\in AC([0,R]) \text{ for all } R>0;    \lb{10.78} \\ 
& \hspace*{5.95cm} f'(0_+)=0; \,  f''\in L^2((0,\infty);dr)\big\}, \no \\
&h_{\ell,\bbR_+}^{(0)}=-\f{d^2}{dr^2}+\f{\ell(\ell +1)}{r^2}, \quad r>0,   \no \\
&\dom\big(h_{\ell,\bbR_+}^{(0)}\big)= \big\{f\in L^2((0,\infty);dr)\,\big|\, 
f,f'\in AC([0,R]) \text{ for all } R>0;    \lb{10.79} \\
&\hspace*{4.65cm} -f''+\ell(\ell +1)r^{-2}f\in 
L^2((0,\infty);dr)\big\}, \quad \, \ell\in\bbN.  \no 
\end{align}
The operators $h_{\ell,\bbR_+}^{(0)}|_{C_0^{\infty}((0,\infty))}$, $\ell\in\bbN$, are 
essentially self-adjoint in $L^2((0,\infty);dr)$ (but we note that 
$f\in \dom\big(h_{\ell,\bbR_+}^{(0)}\big)$ implies that $f(0_+)=0$).  
In addition, $U$ in \eqref{10.77} denotes the unitary operator,
\begin{equation}
U:\begin{cases} L^2((0,\infty); r^2dr)\to L^2((0,\infty);dr), \\
\hspace*{1.81cm} f(r)\mapsto (Uf)(r)= r f(r). \end{cases}   \lb{10.80}
\end{equation}
As discussed in detail in \cite[Sects.\ 4, 5]{GKMT01}, equations 
\eqref{10.75}--\eqref{10.77} follow from Corollary\ 4.8 in \cite{GKMT01} and the facts that 
\begin{equation} \lb{10.81}
(u_+,M_{H_{F,\bbR^n\backslash\{0\}},\cN_+}(z)u_+)_{L^2(\bbR^n;d^nx)} = 
\begin{cases} 
-(2/\pi) \ln(z) +2i, & n=2, \\
i(2z)^{1/2} +1, & n=3,
\end{cases}
\end{equation}
and
\begin{equation}
(u_+,M_{H_{K,\bbR^3\backslash\{0\}},\cN_+}(z)u_+)_{L^2(\bbR^3;d^3x)} = i(2/z)^{1/2} - 1. \lb{10.82}
\end{equation}
Here   
\begin{align}
\begin{split} 
& \cN_+= {\rm lin. \, span}\{u_+\}, \\
&  u_+ (x) = G_0(i,x,0)/\|G_0(i,\cdot,0)\|_{L^2(\bbR^n;d^nx)}, 
\; x\in\bbR^n\backslash\{0\}, \; n=2,3,  \lb{10.83} 
\end{split} 
\end{align}
and 
\begin{equation} \lb{10.84}
G_0(z,x,y) = 
\begin{cases}
\f{i}{4}H_0^{(1)}(z^{1/2}|x-y|), &x\neq y, \, n=2, \\
e^{iz^{1/2}|x-y|}/(4\pi |x-y|), &x\neq y, \, n=3
\end{cases}
\end{equation}
denotes the Green's function of $-\Delta$ defined on 
$H^{2}(\bbR^n),$ $n=2,3$ (i.e., the 
integral kernel of the resolvent $(-\Delta -z)^{-1}$),  
and $H_0^{(1)}(\cdot)$ abbreviates the Hankel function 
of the first kind and order zero (cf., \cite[Sect.\ 9.1]{AS72}). Here the Donoghue-type 
Weyl--Titchmarsh operators (cf.\ \cite{Do65} in the case 
where $\dim (\cN_+) =1$ and \cite{GKMT01}, \cite{GMT98}, and \cite{GT00} in the general abstract 
case where $\dim (\cN_+) \in \bbN \cup \{\infty\}$) $M_{H_{F,\bbR^n\backslash\{0\}},\cN_+}$ and 
$M_{H_{K,\bbR^n\backslash\{0\}},\cN_+}$ are defined according to equation (4.8) 
in \cite{GKMT01}: More precisely, given a self-adjoint extension $\wti S$ of the densely defined closed symmetric operator $S$ in a complex separable Hilbert space $\cH$, and a closed linear 
subspace $\cN$ of $\cN_+ = \ker({S}^* - i I_{\cH})$, 
$\cN\subseteq \cN_+$, the Donoghue-type Weyl--Titchmarsh operator $M_{\wti S,\cN}(z)
\in\cB(\cN)$ associated with the pair $(\wti S,\cN)$  is defined by
\begin{align}
\begin{split}
M_{\wti S,\cN}(z)&=P_\cN (z\wti S+I_\cH)(\wti S-z I_{\cH})^{-1} P_\cN\big\vert_\cN   \\
&=zI_\cN+(1+z^2)P_\cN(\wti S-z I_{\cH})^{-1} P_\cN\big\vert_\cN\,, \quad  
z\in \bbC\backslash \bbR, \lb{10.84a} 
\end{split}
\end{align}
with $I_\cN$ the identity operator in $\cN$ and $P_\cN$ the orthogonal projection in 
$\cH$ onto $\cN$.

Equation \eqref{10.81} then immediately follows from 
repeated use of the 
identity (the first resolvent equation),
\begin{align}
&\int_{\bbR^n} d^nx' G_0(z_1,x,x')G_0(z_2,x',0) = 
(z_1-z_2)^{-1}[G_0(z_1,x,0)-G_0(z_2,x,0)], \no \\
&\hspace*{7.7cm} x\neq 0, \, z_1\neq z_2, \, n=2,3, 
\lb{10.85}
\end{align}
and its limiting case as $x\to 0$. 

Finally, \eqref{10.82} 
follows from the following arguments: First one notices that 
\begin{equation} 
\big[-(d^2/dr^2)+ \alpha \, r^{-2}\big]\big|_{C_0^\infty((0,\infty))}     \lb{10.86}
\end{equation} 
is essentially self-adjoint in $L^2(\bbR_+; dr)$ if and only if $\alpha \geq 3/4$. 
Hence it suffices to consider the restriction of 
$H_{min, \bbR^3\backslash\{0\}}$ to 
the centrally symmetric subspace $\cH_{3,0,(0,\infty)}$ of $L^2(\bbR^3;d^3x)$ 
corresponding to angular momentum $\ell=0$ in \eqref{10.77a}, \eqref{10.77b}. 
But then it is a well-known fact (cf.\ \cite[Sects.\ 4,5]{GKMT01}) that the Donoghue-type 
Dirichlet $m$-function 
$(u_+,M_{H_{D,\bbR^3\backslash\{0\}},\cN_+}(z)u_+)_{L^2(\bbR^3;d^3 x)}$, satisfies
\begin{align} 
(u_+,M_{H_{D,\bbR^3\backslash\{0\}},\cN_+}(z)u_+)_{L^2(\bbR^3;d^3 x)} &= 
(u_{0,+},M_{h_{0,D,\bbR_+}^{(0)},\cN_{0,+}}(z)u_{0,+})_{L^2(\bbR_+; dr)},   \no \\
& =  i (2z)^{1/2} + 1,   \lb{10.89}
\end{align}
where
\begin{equation}
\cN_{0,+}= {\rm lin. \, span} \{u_{0,+}\}, \quad u_{0,+} (r)=
e^{iz^{1/2}r}/[2 \Im(z^{1/2}]^{1/2},  \; r>0,
\end{equation}
and $M_{h_{0,D,\bbR_+}^{(0)},\cN_{0,+}}(z)$ denotes the Donoghue-type Dirichlet  
$m$-function corresponding to the operator 
\begin{align}
&h_{0,D,\bbR_+}^{(0)}=-\f{d^2}{dr^2}, \quad r>0,  \no \\
&\dom\big(h_{0,D,\bbR_+}^{(0)}\big)= \big\{f\in L^2((0,\infty);dr)\,\big|\, 
f,f'\in AC([0,R]) \text{ for all } R>0;  \lb{10.91}  \\ 
& \hspace*{6.05cm} \, f(0_+)=0; \, f''\in L^2((0,\infty);dr)\big\}, \no
\end{align}
Next, turning to the Donoghue-type Neumann $m$-function given by 
$(u_+,M_{H_{N,\bbR^3\backslash\{0\}},\cN_+}(z)u_+)_{L^2(\bbR^3;d^3 x)}$ one obtains 
analogously to \eqref{10.89} that 
\begin{equation}
(u_+,M_{H_{N,\bbR^3\backslash\{0\}},\cN_+}(z)u_+)_{L^2(\bbR^3;d^3 x)} = 
(u_{0,+},M_{h_{0,N,\bbR_+}^{(0)},\cN_{0,+}}(z)u_{0,+})_{L^2(\bbR_+; dr)}, 
\end{equation}
where $M_{h_{0,N,\bbR_+}^{(0)},\cN_{0,+}}(z)$ denotes the Donoghue-type Neumann  
$m$-function corresponding to the operator $h_{0,N,\bbR_+}^{(0)}$ in \eqref{10.78}. The well-known linear fractional transformation relating the operators 
$M_{h_{0,D,\bbR_+}^{(0)},\cN_{0,+}}(z)$ and $M_{h_{0,N,\bbR_+}^{(0)},\cN_{0,+}}(z)$ 
(cf.\ \cite[Lemmas 5.3, 5.4, Theorem 5.5, and Corollary 5.6]{GKMT01}) then yields
\begin{equation}
(u_{0,+},M_{h_{0,N,\bbR_+}^{(0)},\cN_{0,+}}(z)u_{0,+})_{L^2(\bbR_+; dr)} = 
i (2/z)^{1/2} - 1,
\end{equation}
verifying \eqref{10.82}. 

The fact that the operator $T=-\Delta$, $\dom(T)=H^{2}(\bbR^2)$ 
is the unique nonnegative self-adjoint extension of 
$-\Delta_{min, \bbR^2\backslash\{0\}}$ in $L^2(\bbR^2;d^2x)$, has been shown 
in \cite{AGHKH87} (see also \cite[Ch.\ I.5]{AGHKH88}).

\subsection{The Case $\Om=\bbR^n\backslash\{0\}$,  
$V=-[(n-2)^2/4]|x|^{-2}$, $n\geq 2$.}   \lb{s10.4}

In our final subsection we briefly consider the following minimal operator 
$H_{min, \bbR^n\backslash\{0\}}$ in $L^2(\bbR^n;d^nx)$, $n\geq 2$,
\begin{equation}
H_{min, \bbR^n\backslash\{0\}}
=\ol{(-\Delta - ((n-2)^2/4)|x|^{-2})\big|_{C^\infty_0 (\bbR^n\backslash\{0\})}}\geq 0, 
\quad n\geq 2. 
\lb{10.94}
\end{equation}
Then, using again the angular momentum decomposition (cf.\ also \eqref{10.20}, 
\eqref{10.21}), 
\begin{align}
& L^2(\bbR^n; d^nx) = L^2((0,\infty); r^{n-1}dr) \otimes L^2(S^{n-1}; d\omega_{n-1}) 
= \bigoplus_{\ell\in\bbN_0} \cH_{n,\ell,(0,\infty)},   \lb{10.95} \\
& \cH_{n,\ell,(0,\infty)} = L^2((0,\infty); r^{n-1}dr) \otimes \cK_{n,\ell}, 
\quad \ell \in \bbN_0, \; n\geq 2,    \lb{10.96}
\end{align}
one finally obtains that 
\begin{equation}
H_{F,\bbR^n\backslash\{0\}}=H_{K,\bbR^n\backslash\{0\}} 
= U^{-1}h_{0,\bbR_+} U \oplus\bigoplus_{\ell\in\bbN} 
U^{-1}h_{n,\ell,\bbR_+} U,  \; n\geq 2,  \lb{10.97}
\end{equation}
is the unique nonnegative self-adjoint  
extension of $H_{min, \bbR^n\backslash\{0\}}$ in $L^2(\bbR^n;d^n x)$, where  
\begin{align}
& h_{0,\bbR_+} = -\f{d^2}{dr^2} - \f{1}{4 r^2}, \quad r>0,  \no \\
& \dom(h_{0,\bbR_+}) = \big\{f \in L^2((0,\infty); dr) \,\big|\, f, f' \in 
AC([\varepsilon,R]) \, \text{for all $0<\varepsilon<R$};     \lb{10.98} \\
& \hspace*{4.25cm}   f_0=0; \, (-f'' -(1/4)r^{-2}f) \in L^2((0,\infty); dr)\big\},  \no \\
& h_{n,\ell,\bbR_+} = -\f{d^2}{dr^2} + \f{4 \kappa_{n,\ell} - 1}{4 r^2},   
\quad r>0,  \no \\
& \dom(h_{n,\ell,\bbR_+}) = \big\{f \in L^2((0,\infty); dr) \,\big|\, f, f' \in 
AC([\varepsilon,R]) \, \text{for all $0<\varepsilon<R$};   \no  \\ 
& \hspace*{1.2cm} (-f'' +[\kappa_{n,\ell} - (1/4)]r^{-2}f) \in L^2((0,\infty); dr)\big\}, 
\quad \ell\in\bbN, \; n\geq 2.   \lb{10.99}
\end{align}
Here $f_0$ in \eqref{10.98} is defined by (cf.\ also \eqref{10.33})
\begin{equation}
f_0 = \lim_{r\downarrow 0} [-r^{1/2}\ln(r)]^{-1} f(r). 
\end{equation}
As in the previous subsection, $h_{n,\ell,\bbR_+}|_{C_0^{\infty}((0,\infty))}$, $\ell\in\bbN$,  $n\geq 2$, are essentially self-adjoint in $L^2((0,\infty);dr)$. In addition, $h_{0,\bbR_+}$ is the unique nonnegative self-adjoint extension of $h_{0,\bbR_+}|_{C_0^{\infty}((0,\infty))}$ in $L^2((0,\infty);dr)$. We omit further details. 

\medskip

\noindent {\bf Acknowledgments.}
We are indebted to Yury Arlinskii, Gerd Grubb, John Lewis, Konstantin Makarov, 
Mark Malamud, Vladimir Maz'ya, Michael Pang, Larry Payne, Barry Simon, 
Nikolai Tarkhanov, Hans Triebel, and Eduard Tsekanovskii for many helpful discussions and very valuable correspondence on various topics discussed in this survey.


\end{document}